%pdflatex
\documentclass[a4paper,10pt,envcountsame,envcountsect]{llncs}

\usepackage{hyperref}
\usepackage{amsmath,amssymb,amsfonts,amscd}%,amsthm}
\usepackage[latin1]{inputenc}
\usepackage{xcolor}
\usepackage{cite} % for citation order in \cite command
\usepackage{url}
\usepackage{pdflscape}
\usepackage{fullpage}
\usepackage{graphicx, psfrag}
\usepackage[english]{babel}
\usepackage[latin1]{inputenc}
\usepackage{siunitx}
\usepackage[textsize=tiny,colorinlistoftodos]{todonotes}
\usepackage{multido}
\usepackage{longtable}
\usepackage{tabu}
\usepackage{float}
\usepackage{multirow}

\usepackage[T1]{fontenc}
\usepackage[bitstream-charter]{mathdesign}
\usepackage{rotating}
\usepackage{arydshln}
\usepackage{url}
\usepackage{algorithm}
\usepackage{algorithmicx}
\usepackage[noend]{algpseudocode}
\usepackage{textcomp}
\usepackage{enumerate}
\usepackage{enumitem}
\usepackage{tikz}
\usetikzlibrary{matrix,decorations.pathreplacing,backgrounds,positioning,calc,shapes.misc,mindmap}
% caches tikz pictures in folder images/ as pdfs => faster compilation
% if you want to update the tikz picture you have to remove the corresponding
% pdf file from folder images/
%%%%%%%%%%%%%%%%%%%%%%%%%%%%%%%%%%%%%%%%%%
% NOTE: This does not work with todonotes!
%%%%%%%%%%%%%%%%%%%%%%%%%%%%%%%%%%%%%%%%%%
%\usetikzlibrary{external}
%\tikzexternalize[prefix=]

%%%%%%%%%%%%%%%%%%%%%%%%%%%%%%%%%%%%%%%%%%
%               TOODOO
%%%%%%%%%%%%%%%%%%%%%%%%%%%%%%%%%%%%%%%%%%
% possible changes for most of the remarks
%
% properties (of algorithms)
% specifications
% characteristics / characteristic properties
% invariants of the algorithm

% enumi styles

\setlist{noitemsep,leftmargin=20pt,topsep=0pt,parsep=1pt,partopsep=0pt}
\setlength{\parskip}{0.4em}
\setlength{\parindent}{0em}

% for tables
\setlength{\tabcolsep}{8pt}
\renewcommand{\arraystretch}{1.4}

\spnewtheorem{convention}[theorem]{Convention}{\bfseries}{\itshape}
\spnewtheorem{notation}[theorem]{Notation}{\bfseries}{\itshape}

% adjust both respectively
\spnewtheorem{characteristics}[remark]{Algorithmic Property}{\itshape}{}
\newcommand{\characteristic}{Property}

% adjust both respectively
\spnewtheorem{variants}[remark]{Variants \& Specifications}{\itshape}{}

% adjust both respectively
\spnewtheorem{vocabulary}[remark]{Vocabulary}{\itshape}{}
\DeclareMathOperator{\sigSym}{{\mathfrak{s}}}

\DeclareMathOperator{\headSym}{lt}
\DeclareMathOperator{\indexSym}{ind}
\DeclareMathOperator{\lcm}{lcm}

\newcommand{\comment}[1]{}

 % for pseudo code function names

\newcommand{\field}{\mathcal{K}}
\newcommand{\ring}{\mathcal{R}}
\newcommand{\invring}{\mathcal{R}^G}
\newcommand{\gl}[1]{\mathrm{GL}(#1,\field)}
\newcommand{\rey}{\mathfrak{R}}
\newcommand{\numberGenerators}{m}
\newcommand{\gen}[1]{{#1}_1,\ldots,{#1}_\numberGenerators}
\newcommand{\mgen}{\boldsymbol{e}_1,\ldots,\boldsymbol{e}_\numberGenerators}
\newcommand{\module}{\ring^\numberGenerators}
\newcommand{\mon}{\mathcal{M}}
\newcommand{\modmon}{\mathcal{N}}
\newcommand{\const}{\kappa}
\newcommand{\hdsyz}{\ensuremath{\mathcal{H}}}
\newcommand{\pairset}{\mathcal{P}}
\newcommand{\F}{\mathbb{F}}

\newcommand{\Q}{\mathbb{Q}}
\newcommand{\Z}{\mathbb{Z}}

\newcommand{\proj}[1]{\overline{#1}}
\newcommand{\mbasis}[1]{\boldsymbol{e}_{#1}}
\newcommand{\gbasissome}{\alpha}
\newcommand{\gbasis}[1]{\gbasissome_{#1}}
\newcommand{\sig}[1]{\sigSym\left({#1}\right)}
\newcommand{\syz}[1]{\mathrm{syz}\left({#1}\right)}
\newcommand{\origBasis}{\mathcal G}
\newcommand{\basis}{\ensuremath{\origBasis}}

\newcommand{\ind}[1]{\indexSym\left({#1}\right)}
\newcommand{\rewriters}[1]{\mathcal{C}_{#1}}

% usual rewrite order
% \newcommand{\rleq}{\preceq}
% \newcommand{\rlt}{\prec}
% \newcommand{\rgt}{\succ}
% \newcommand{\rgeq}{\succeq}

%%%%%%%%%%%%%%%%%%%%%%%%%%%%%%%%%%%%%%%%%%%%%%%%%%
%%                PAIRSET ORDERS                %%
%%%%%%%%%%%%%%%%%%%%%%%%%%%%%%%%%%%%%%%%%%%%%%%%%%

\newcommand{\pleq}{\preceq}

% increasing signature
\newcommand{\pleqs}{\preceq_{\sigSym}}

%%%%%%%%%%%%%%%%%%%%%%%%%%%%%%%%%%%%%%%%%%%%%%%%%%
%%                REWRITE ORDERS                %%
%%%%%%%%%%%%%%%%%%%%%%%%%%%%%%%%%%%%%%%%%%%%%%%%%%

% F5 rewrite order
\newcommand{\rleqff}{\trianglelefteq_{\mathrm{add}}}

% SB/AP/GVW rewrite order
\newcommand{\rleqsb}{\trianglelefteq_{\mathrm{rat}}}

\newcommand{\rleq}{\trianglelefteq}

\newcommand{\set}[1]{\left\{{#1}\right\}}
\newcommand{\setBuilder}[2]{\left\{{#1}\left|{#2}\right.\right\}}

\newcommand{\ideal}[1]{\left\langle{#1}\right\rangle}
\newcommand{\lc}[1]{\mathrm{lc}\left({#1}\right)}
\newcommand{\hd}[1]{\headSym\left({#1}\right)}
\newcommand{\hdp}[1]{\hd{\proj{#1}}}
\newcommand{\card}[1]{\left|#1\right|}
\newcommand{\spair}[2]{\ensuremath{\mathrm{spair}\left({#1},{#2}\right)}}
\newcommand{\spoly}[2]{\ensuremath{\mathrm{spol}\left({#1},{#2}\right)}}
\newcommand{\jac}[2]{\ensuremath{\mathrm{jac}_{#1}\left({#2}\right)}}
\newcommand{\spp}[1]{\ensuremath{\left(\sig{#1},\proj{#1}\right)}}
\newcommand{\koz}[2]{\ensuremath{\mathrm{ksyz}\left({#1},{#2}\right)}}

\newcommand{\potg}{\ensuremath{>_\textrm{pot}}}
\newcommand{\potl}{\ensuremath{<_\textrm{pot}}}

\newcommand{\topl}{\ensuremath{<_\textrm{top}}}

\newcommand{\dpotl}{\ensuremath{<_\textrm{d-pot}}}

\newcommand{\dtopl}{\ensuremath{<_\textrm{d-top}}}

\newcommand{\hdpotl}{\ensuremath{<_\textrm{lt-pot}}}

\newcommand{\hdtopl}{\ensuremath{<_\textrm{lt-top}}}
\newcommand{\potlp}{\ensuremath{<_\textrm{pot'}}}

\newcommand{\schl}{\hdpotl}

\newcommand{\simplify}{\ensuremath{\mathrm{\bf Simplify}}}
\newcommand{\select}[1]{\ensuremath{\mathrm{\bf
  Select}\left({#1}\right)}}
\newcommand{\symbolicpreprocessing}{\ensuremath{\text{\bf
  Symbolic Preprocessing}}}
\newcommand{\rewritablenoarg}{\ensuremath{\mathrm{\bf Rewritable}}}

\newcommand{\notrewritable}[1]{\ensuremath{\mathrm{\bf not\;
  Rewritable}\left({#1,\basis\cup\hdsyz,\rleq}\right)}}
% random systems
\newcommand{\hrandom}{\ensuremath{\mathrm{
  HRandom}}}
\newcommand{\random}{\ensuremath{\mathrm{
  Random}}}

% algorithms

\newcommand{\gsb}{{\bf genSB}}
\newcommand{\sba}{{\bf SB}}
\newcommand{\sgb}{{\bf SGB}}
\newcommand{\rba}{{\bf RB}}
\newcommand{\ffrba}{{\bf F4-RB}}
\newcommand{\trb}{{\bf TRB}}
\newcommand{\trbff}{{\bf TRB-F5}}
\newcommand{\trbeff}{{\bf TRB-EF5}}
\newcommand{\trbgvw}{{\bf TRB-GVW}}
\newcommand{\trbmj}{{\bf TRB-MJ}}
\newcommand{\gbgc}{{\bf GBGC}}
\newcommand{\ap}{{\bf AP}}
\newcommand{\gvwhs}{{\bf GVWHS}}
\newcommand{\gvw}{{\bf GVW}}
\newcommand{\ggv}{{\bf G2V}}
\newcommand{\gggv}{{\bf ImpG2V}}
\newcommand{\iggv}{{\bf iG2V}}
\newcommand{\ffgen}{{\bf F5GEN}}
\newcommand{\ssg}{{\bf SSG}}

\newcommand{\ffo}{\ensuremath{\text{\bf F4}}}
\newcommand{\ff}{\ensuremath{\text{\bf F5}}}
\newcommand{\ffr}{\ensuremath{\text{\bf F5R}}}
\newcommand{\ffb}{\ensuremath{\text{\bf F5B}}}
\newcommand{\ffc}{\ensuremath{\text{\bf F5C}}}
\newcommand{\iffc}{\ensuremath{\text{\bf iF5C}}}
\newcommand{\iffa}{\ensuremath{\text{\bf iF5A}}}
\newcommand{\ffa}{\ensuremath{\text{\bf F5A}}}
\newcommand{\ffs}{\ensuremath{\text{\bf F4/5}}}
\newcommand{\fft}{\ensuremath{\text{\bf F5t}}}
\newcommand{\fftwo}{\ensuremath{\text{\bf F5/2}}}
\newcommand{\ffp}{\ensuremath{\text{\bf F5+}}}
\newcommand{\ffpr}{\ensuremath{\text{\bf F5'}}}
\newcommand{\ffprpr}{\ensuremath{\text{\bf F5''}}}
\newcommand{\mff}{\ensuremath{\text{\bf MatrixF5}}}

\newcommand{\mac}[2]{\ensuremath{M_{#1,#2}}}

% systems algorithms are implemented in
\newcommand{\singular}{\ensuremath{\text{\sc Singular}}}
\newcommand{\magma}{\ensuremath{\text{\sc Magma}}}

\newcommand*{\defeq}{\mathrel{\vcenter{\baselineskip0.5ex \lineskiplimit0pt
                     \hbox{\scriptsize.}\hbox{\scriptsize.}}}%
                     =}

\newcommand{\sreduction}{\ensuremath{\sigSym}-reduction}
\newcommand{\sreductions}{\ensuremath{\sigSym}-reductions}
\newcommand{\sreduce}{\ensuremath{\sigSym}-reduce}
\newcommand{\sreduces}{\ensuremath{\sigSym}-reduces}
\newcommand{\sreduced}{\ensuremath{\sigSym}-reduced}
\newcommand{\sreducible}{\ensuremath{\sigSym}-reducible}
\newcommand{\sreducing}{\ensuremath{\sigSym}-reducing}
\newcommand{\sreducibility}{\ensuremath{\sigSym}-reducibility}
\newcommand{\sreducer}{\ensuremath{\sigSym}-reducer}
\newcommand{\sreducers}{\ensuremath{\sigSym}-reducers}

\newcommand{\grobner}{Gr\"obner}
\newcommand{\faugere}{Faug\`ere}

% to make ISSAC paper sections compile
\newcommand{\hideAppendix}[1]{}\newcommand{\appRef}[1]{the online appendix of \cite{pracgb}}\newcommand{\AppRef}[1]{The online appendix of \cite{pracgb}}

%%%%%%%%%%%%%%%%%%%%%%%%
% Hack to enable title
% for references section
%%%%%%%%%%%%%%%%%%%%%%%%

%%%%%%%%%%%%%%%%%%%%%%%%%%%%
% printing numbers in tables
%%%%%%%%%%%%%%%%%%%%%%%%%%%%
\sisetup{ input-symbols = {()},
          group-digits  = true,
          exponent-base = 2,
          retain-unity-mantissa = false}

%%%%%%%%%%%%%%%
% Matrix stuff
%%%%%%%%%%%%%%%

% zeros are light grey
\newcommand{\zero}{\ensuremath{\textcolor{vlgrey}{0}}}

% green entries

% border elements are dark grey
\newcommand{\bo}[1]{\ensuremath{\textcolor{ggrey}{#1}}}

%%%%%%%%%%%%%%%%%%%%%%%%%%%%%%%%%%%%%%%%%%%%%%%%%%%%%%%%%%%%%%%%%%%%%%%%%%%%%%
%
% NOTE THE VARIABLE USAGE
%
% 1.  use \const for elements in \field
% 2.  use f,g,h,\dots for elements in \ring = \field[x1,\dots,xn]
% 3.  use \alpha,\beta,\gamma,\dots for elements in \module
% 4.  use a,b,c,\dots for monomials/terms/multiples from \ring multiplied
%     to elements in \module
% 5.  number of initial generators is defined in \numberGenerators. changing
%     this value changes \module, \gen f, \mgen
% 6.  \gen f gives generating polynomials, e.g. f_1,\ldots,f_\numberGenerators
% 7.  \mgen gives canonical generators of module: e_1,\ldots,e_\numberGenerators
%
%%%%%%%%%%%%%%%%%%%%%%%%%%%%%%%%%%%%%%%%%%%%%%%%%%%%%%%%%%%%%%%%%%%%%%%%%%%%%

%%%%%%%%%%%%%%%%%%%%%%%%%%%%%%%%%
%%%%%%% for beamer style %%%%%%%%
%%%%%%%%%%%%%%%%%%%%%%%%%%%%%%%%%
\definecolor{dblue}{HTML}{205488}
\definecolor{lblue}{HTML}{adc0d3}
\definecolor{dgreen}{HTML}{4F8355}
\definecolor{lgreen}{HTML}{BBCFBD}
\definecolor{yellow}{HTML}{F9EB0F}
%\definecolor{yellow}{HTML}{F99E0F}
\definecolor{red}{HTML}{FF4848}
\definecolor{greenalg}{HTML}{2cca2c}
%\definecolor{red}{HTML}{4F8355}
\definecolor{ggrey}{HTML}{303030}
\definecolor{lgrey}{HTML}{404040}
\definecolor{vlgrey}{HTML}{707070}
\definecolor{nearwhite}{HTML}{DFDFDF}

%%%%%%%%%%%%%%%%%%%%%%%%%%%%%%%%%
%%%%%%%% for tikz style %%%%%%%%%
%%%%%%%%%%%%%%%%%%%%%%%%%%%%%%%%%
\definecolor{mybl}{HTML}{62ABAB}
\definecolor{myblued}{HTML}{236DFF}
\definecolor{mybluedl}{HTML}{239BFF}
\definecolor{mybluel}{HTML}{697Fd4}
\definecolor{myredd}{HTML}{c31313}
\definecolor{myredl}{HTML}{c36767}
\definecolor{mygreend}{HTML}{2ca92c}
\definecolor{mygreenl}{HTML}{79a979}
\definecolor{myyellowd}{HTML}{F9EB0F}
\definecolor{mypinkd}{HTML}{E227A7}
\definecolor{myyellowl}{HTML}{e2a97c}
\definecolor{mybrown}{HTML}{442e0c}
\definecolor{mydpink}{HTML}{4d4d7c}
\definecolor{mylblue}{HTML}{338DF3}
\definecolor{mygreenyellow}{HTML}{ABF484}

%%%%%%%%%%%%%%%%%%%%%%%%%%%%%%%%%%%%%%%%%%%%%%%%%%%%%%%
% tools for building matrices border:
% NOTE: works only for homogeneous polynomials in x,y,z
%
% Once matrix is done call
%
% \matBorder n D d M
%
% where
% n = number of generators
% D = degree of polynomials in matrix
% d = degree of polynomials initially
% M = name of matrix in tikz picture
%%%%%%%%%%%%%%%%%%%%%%%%%%%%%%%%%%%%%%%%%%%%%%%%%%%%%%%
% string for term
\def\term{}
\def\extterm#1{\edef\term{\term#1}}
% generates border of matrix - everything
\newcommand{\matBorder}[4] {
% number of generators
\def\gen{#1}
% degree
\def\d{#2}
% initial degree
\def\di{#3}
% name of matrix
\def\matName{#4}
\node[anchor=south east] (cornernode) at (\matName-1-1.north west) {\hspace*{3em}};
% degree
\def\d{#2}
\pgfmathparse{int(\d - \di)}
\global\let\d\pgfmathresult
\ifnum\pgfmathresult>0
  \pgfmathparse{int(1)}
  \global\let\idex\pgfmathresult
  \foreach[count=\xi] \x in \gen{ %\xi is the counter \x is the value
    % row counter
    \foreach[count=\ki] \k in {0,...,\d}{
    \pgfmathparse{int(\d-\k)}
    \global\let\dk\pgfmathresult
    \foreach[count=\ii] \i in {\dk,...,0}{
    \ifnum\i>0\relax
      \ifnum\i>1\relax
        \extterm{x^\i}
      \else
        \extterm{x}
      \fi
    \fi
    \pgfmathparse{int(\d-\k-\i)}
    \let\dki\pgfmathresult
    \ifnum\dki>0\relax
      \ifnum\dki>1\relax
        \extterm{y^\dki}
      \else
        \extterm{y}
      \fi
    \fi
    \ifnum\k>0\relax
      \ifnum\k>1\relax
        \extterm{z^\k}
      \else
        \extterm{z}
      \fi
    \fi
    \node[text depth=0pt,font=\scriptsize,right]
    (\matName-\idex-0) [left=1.5em of \matName-\idex-1] {$\bo{\term \mbasis \x}$};
    \pgfmathparse{int(\idex + 1)}
    \global\let\idex\pgfmathresult
      }
    }
  }
\else
  \foreach[count=\xi] \x in \gen{ %\xi is the counter \x is the value
    \node[font=\scriptsize,right] (\matName-\xi-0) [left=1.5em of
      \matName-\xi-1] {\bo{\mbasis \x}}; %Gets the left most column
  }
\fi

% reset degree in matrix for top border
\def\d{#2}

% gives matrix its name:
% M_\di, if reduced N_\di
\node[right] (mat-node) [left=3.0em of \matName] {$\matName_\d =$};
% generates top border of matrix - terms
\pgfmathparse{int(1)}
\global\let\idex\pgfmathresult

% row counter
\foreach[count=\ki] \k in {0,...,\d}{
\pgfmathparse{int(\d-\k)}
\global\let\dk\pgfmathresult
\foreach[count=\ii] \i in {\dk,...,0}{
\ifnum\i>0\relax
  \ifnum\i>1\relax
    \extterm{x^\i}
  \else
    \extterm{x}
  \fi
\fi
\pgfmathparse{int(\d-\k-\i)}
\let\dki\pgfmathresult
\ifnum\dki>0\relax
  \ifnum\dki>1\relax
    \extterm{y^\dki}
  \else
    \extterm{y}
  \fi
\fi
\ifnum\k>0\relax
  \ifnum\k>1\relax
    \extterm{z^\k}
  \else
    \extterm{z}
  \fi
\fi
\node[above=0.3em of \matName-1-\idex,text depth=0pt,font=\scriptsize]
(\matName-0-\idex) {$\bo{\term}$};
\pgfmathparse{int(\idex + 1)}
\global\let\idex\pgfmathresult
  }
}
} % matBorder done

% generates border of matrix - no matrix name
\newcommand{\matBorderNoName}[4] {
% number of generators
\def\gen{#1}
% degree
\def\d{#2}
% initial degree
\def\di{#3}
% name of matrix
\def\matName{#4}
\node[anchor=south east] (cornernode) at (\matName-1-1.north west) {\hspace*{3em}};
% degree
\def\d{#2}
\pgfmathparse{int(\d - \di)}
\global\let\d\pgfmathresult
\ifnum\pgfmathresult>0
  \pgfmathparse{int(1)}
  \global\let\idex\pgfmathresult
  \foreach[count=\xi] \x in \gen{ %\xi is the counter \x is the value
    % row counter
    \foreach[count=\ki] \k in {0,...,\d}{
    \pgfmathparse{int(\d-\k)}
    \global\let\dk\pgfmathresult
    \foreach[count=\ii] \i in {\dk,...,0}{
    \ifnum\i>0\relax
      \ifnum\i>1\relax
        \extterm{x^\i}
      \else
        \extterm{x}
      \fi
    \fi
    \pgfmathparse{int(\d-\k-\i)}
    \let\dki\pgfmathresult
    \ifnum\dki>0\relax
      \ifnum\dki>1\relax
        \extterm{y^\dki}
      \else
        \extterm{y}
      \fi
    \fi
    \ifnum\k>0\relax
      \ifnum\k>1\relax
        \extterm{z^\k}
      \else
        \extterm{z}
      \fi
    \fi
    \node[text depth=0pt,font=\scriptsize,right]
    (\matName-\idex-0) [left=1.5em of \matName-\idex-1] {$\bo{\term \mbasis \x}$};
    \pgfmathparse{int(\idex + 1)}
    \global\let\idex\pgfmathresult
      }
    }
  }
\else
  \foreach[count=\xi] \x in \gen{ %\xi is the counter \x is the value
    \node[font=\scriptsize,right] (\matName-\xi-0) [left=1.5em of
      \matName-\xi-1] {\bo{\mbasis \x}}; %Gets the left most column
  }
\fi

% reset degree in matrix for top border
\def\d{#2}

% generates top border of matrix - terms
\pgfmathparse{int(1)}
\global\let\idex\pgfmathresult

% row counter
\foreach[count=\ki] \k in {0,...,\d}{
\pgfmathparse{int(\d-\k)}
\global\let\dk\pgfmathresult
\foreach[count=\ii] \i in {\dk,...,0}{
\ifnum\i>0\relax
  \ifnum\i>1\relax
    \extterm{x^\i}
  \else
    \extterm{x}
  \fi
\fi
\pgfmathparse{int(\d-\k-\i)}
\let\dki\pgfmathresult
\ifnum\dki>0\relax
  \ifnum\dki>1\relax
    \extterm{y^\dki}
  \else
    \extterm{y}
  \fi
\fi
\ifnum\k>0\relax
  \ifnum\k>1\relax
    \extterm{z^\k}
  \else
    \extterm{z}
  \fi
\fi
\node[above=0.3em of \matName-1-\idex,text depth=0pt,font=\scriptsize]
(\matName-0-\idex) {$\bo{\term}$};
\pgfmathparse{int(\idex + 1)}
\global\let\idex\pgfmathresult
  }
}
} % matBorderNoName done

% generates only top border of matrix
\newcommand{\matTopBorder}[4] {
% number of generators
\def\gen{#1}
% degree
\def\d{#2}
% initial degree
\def\di{#3}
% name of matrix
\def\matName{#4}
\node[anchor=south east] (cornernode) at (\matName-1-1.north west) {\hspace*{3em}};
% degree
\def\d{#2}
\pgfmathparse{int(\d - \di)}
\global\let\d\pgfmathresult
% reset degree in matrix for top border
\def\d{#2}

% gives matrix its name:
% generates top border of matrix - terms
\pgfmathparse{int(1)}
\global\let\idex\pgfmathresult

% row counter
\foreach[count=\ki] \k in {0,...,\d}{
\pgfmathparse{int(\d-\k)}
\global\let\dk\pgfmathresult
\foreach[count=\ii] \i in {\dk,...,0}{
\ifnum\i>0\relax
  \ifnum\i>1\relax
    \extterm{x^\i}
  \else
    \extterm{x}
  \fi
\fi
\pgfmathparse{int(\d-\k-\i)}
\let\dki\pgfmathresult
\ifnum\dki>0\relax
  \ifnum\dki>1\relax
    \extterm{y^\dki}
  \else
    \extterm{y}
  \fi
\fi
\ifnum\k>0\relax
  \ifnum\k>1\relax
    \extterm{z^\k}
  \else
    \extterm{z}
  \fi
\fi
\node[above=0.3em of \matName-1-\idex,text depth=0pt,font=\scriptsize]
(\matName-0-\idex) {$\bo{\term}$};
\pgfmathparse{int(\idex + 1)}
\global\let\idex\pgfmathresult
  }
}
} % matTopBorder done

% highlight reduced rows
%
% reducedRows r1 r2 c mat
% where
% r1  = upper row
% r2  = lower row
% c   = number of columns
% mat = matrix name
\newcommand\reducedRows[4]{
  \begin{scope}[on background layer]
  \draw [fill=mygreend!90!yellow,draw=mygreend!60!lgrey,rounded corners=2pt,fill
  opacity=0.5,draw opacity=0.7]
  (#4-#1-1.north west) rectangle (#4-#2-#3.south east);
  %\draw [draw=green!60!vlgrey,dashed,rounded corners=2pt,draw opacity=0.7]
  \draw [fill=mygreend!90!yellow,draw=mygreend!60!lgrey,rounded corners=2pt,fill
  opacity=0.5,draw opacity=0.7]
  (#4-#1-0.north west) rectangle (#4-#2-0.south east);
  \end{scope}
}

% highlight reduced rows and signature reduction problem
%
% reducedRowsWrong r1 r2 c mat
% where
% r1  = upper row
% r2  = lower row
% c   = number of columns
% mat = matrix name
\newcommand\reducedRowsWrong[4]{
  \begin{scope}[on background layer]
  \draw [fill=mygreend!90!yellow,draw=mygreend!60!lgrey,rounded corners=2pt,fill
  opacity=0.5,draw opacity=0.7]
  (#4-#1-1.north west) rectangle (#4-#2-#3.south east);
  %\draw [draw=green!60!vlgrey,dashed,rounded corners=2pt,draw opacity=0.7]
  \draw [fill=myredd!90!yellow,draw=myredd!60!lgrey,rounded corners=2pt,fill
  opacity=0.5,draw opacity=0.7]
  (#4-#1-0.north west) rectangle (#4-#2-0.south east);
  \end{scope}
}

% highlight rewritten rows
%
% rewriteRows r1 r2 c mat
% where
% r1  = upper row
% r2  = lower row
% c   = number of columns
% mat = matrix name
\newcommand\rewriteRows[4]{
  \begin{scope}[on background layer]
  \draw [fill=myblued!90!yellow,draw=myblued!60!lgrey,rounded corners=2pt,fill
  opacity=0.5,draw opacity=0.7]
  (#4-#1-1.north west) rectangle (#4-#2-#3.south east);
  %\draw [draw=green!60!vlgrey,dashed,rounded corners=2pt,draw opacity=0.7]
  \draw [fill=myblued!90!yellow,draw=myblued!60!lgrey,rounded corners=2pt,fill
  opacity=0.5,draw opacity=0.7]
  (#4-#1-0.north west) rectangle (#4-#2-0.south east);
  \end{scope}
}

% highlight useless reduction reducer
%
% ulRed r c mat
% where
% r   = row
% c   = columns
% mat = matrix name

% highlight useless reduction removed element
%
% ulRem r c mat
% where
% r   = row
% c   = column
% mat = matrix name

% highlights rewritten reducer and removed element
%
% rewrite r1 c1 mat1 r2 c2 mat2
% where
% r1   = row1
% c1   = column1
% mat1 = matrix1 name
% r2   = row2
% c2   = column2
% mat2 = matrix2 name
\newcommand\rewrite[6]{
  \def\r{#1}
  \def\c{#2}
  \def\m{#3}
  \def\rr{#4}
  \def\cc{#5}
  \def\mm{#6}
  \begin{scope}[on background layer]
  \draw [draw=vlgrey,dashed]
    (\m-\r-\c.south west) -- (\mm-\rr-\cc.north west);
  \draw [draw=vlgrey,dashed]
    (\m-\r-\c.south east) -- (\mm-\rr-\cc.north east);
  \draw [draw=none,fill=vlgrey!30!white,fill opacity=0.2,rounded corners=2pt]
    (\m-\r-\c.north west) rectangle (\mm-\rr-\cc.south east);
  \end{scope}
  \begin{scope}
  \draw [fill=vlgrey,fill opacity=0.2,draw=red!90!yellow,rounded corners=2pt]
  (\m-\r-\c.north west) rectangle (\m-\r-\c.south east);
  \end{scope}
  \begin{scope}
  \draw [fill=vlgrey,fill opacity=0.2,draw=red!50!yellow,rounded corners=2pt]
  (\mm-\rr-\cc.north west) rectangle (\mm-\rr-\cc.south east);
  \end{scope}
}

% highlight not signature restricted reducer rows
%
% notRestricted r1 r2 c mat
% where
% r1  = upper row
% r2  = lower row
% c   = number of columns
% mat = matrix name
\newcommand\notRestricted[4]{
  \begin{scope}[on background layer]
  \draw [fill=mygreend!90!yellow,draw=mygreend!60!lgrey,rounded corners=2pt,fill
  opacity=0.5,draw opacity=0.7]
  (#4-#1-1.north west) rectangle (#4-#2-#3.south east);
  %\draw [draw=green!60!vlgrey,dashed,rounded corners=2pt,draw opacity=0.7]
  \draw [fill=mygreend!90!yellow,draw=mygreend!60!lgrey,rounded corners=2pt,fill
  opacity=0.5,draw opacity=0.7]
  (#4-#1-0.north west) rectangle (#4-#2-0.south east);
  \end{scope}
}

% highlight signature restricted reducer rows
%
% restricted r1 r2 c mat
% where
% r1  = upper row
% r2  = lower row
% c   = number of columns
% mat = matrix name
\newcommand\restricted[4]{
  \begin{scope}[on background layer]
  \draw [fill=myyellowd!90!vlgrey,draw=myyellowd!60!lgrey,rounded corners=2pt,fill
  opacity=0.5,draw opacity=0.7]
  (#4-#1-1.north west) rectangle (#4-#2-#3.south east);
  %\draw [draw=green!60!vlgrey,dashed,rounded corners=2pt,draw opacity=0.7]
  \draw [fill=myyellowd!90!vlgrey,draw=myyellowd!60!lgrey,rounded corners=2pt,fill
  opacity=0.5,draw opacity=0.7]
  (#4-#1-0.north west) rectangle (#4-#2-0.south east);
  \end{scope}
}

% highlight signature completely restricted reducer rows
%
% completelyRestricted r1 r2 c mat
% where
% r1  = upper row
% r2  = lower row
% c   = number of columns
% mat = matrix name
\newcommand\completelyRestricted[4]{
  \begin{scope}[on background layer]
  \draw [fill=myredd!90!yellow,draw=myredd!60!lgrey,rounded corners=2pt,fill
  opacity=0.5,draw opacity=0.7]
  (#4-#1-1.north west) rectangle (#4-#2-#3.south east);
  %\draw [draw=green!60!vlgrey,dashed,rounded corners=2pt,draw opacity=0.7]
  \draw [fill=myredd!90!yellow,draw=myredd!60!lgrey,rounded corners=2pt,fill
  opacity=0.5,draw opacity=0.7]
  (#4-#1-0.north west) rectangle (#4-#2-0.south east);
  \end{scope}
}

\title{A survey on signature-based \grobner{} basis computations}

\author{Christian Eder\thanks{The author was supported by the
  EXACTA grant (ANR-09-BLAN-0371-01) of the French National
  Research Agency.}\inst{1} \and Jean-Charles Faug\`ere\inst{2}}
\institute{
University of Kaiserslautern\\
Department of Mathematics\\
PO box 3049\\
67653 Kaiserslautern\\
\email{ederc@mathematik.uni-kl.de}
\ \\
\and
INRIA, Paris-Rocquencourt Center, PolSys Project\\
UPMC, Univ. Paris 06, LIP6\\
CNRS, UMR 7606, LIP6\\
UFR Ing\'enierie 919, LIP6\\
Case 169, 4, Place Jussieu, F-75252 Paris\\
\email{Jean-Charles.Faugere@inria.fr}
}

\begin{document}
% we need to try to keep the references complete
\nocite{ACFP12}
\nocite{BFS04}
\nocite{BFS05}
\nocite{ahExtF52010}
\nocite{apF452010}
\nocite{apF5CritRevised2011}
\nocite{arsPhd2005}
\nocite{bGroebner1965}
\nocite{bardetComplexity2002}
\nocite{bardetPhD}
\nocite{bcpMagma}
\nocite{bcr2011}
\nocite{blsr1999}
\nocite{buchberger2ndCriterion1985}
\nocite{ckmConvertingBases1997}
\nocite{eF5Criteria2008}
\nocite{eF5Criterion2008}
\nocite{ederImprovedF52013}
\nocite{ederPhD2012}
\nocite{egpF52011}
\nocite{epF5C2009}
\nocite{epSig2011}
\nocite{erF5SB2013}
\nocite{fF41999}
\nocite{faugeresvartz-2013}
\nocite{faugeresafeyverron-2013}
\nocite{fglmFGLM1993}
\nocite{F03}
\nocite{FJ03}
\nocite{FR09}
\nocite{FSS10}
\nocite{FSS10b}
\nocite{galkinSimple2012}
\nocite{galkinTermination2012}
\nocite{gashPhD2008}
\nocite{gerdtHashemiG2V}
\nocite{gmInstallation1988}
\nocite{gmStaggered1986}
\nocite{gpSingularBook2007}
\nocite{gvwGVW2010}
\nocite{hbrInvariantG2V}
\nocite{huang2010}
\nocite{kollBuchberger1978}
\nocite{mmm93}
\nocite{mmtSyzygies1992}
\nocite{moraBook2}
\nocite{panhuwang2012}
\nocite{panhuwang2013}
% for benchmark names in text
\nocite{posso}
\nocite{stF5Rev2005}
\nocite{sun2013}
\nocite{sw2009}
\nocite{sw2010}
\nocite{sw2011a}
\nocite{sw2011b}
\nocite{sw2011c}
\nocite{swmz2012}
\nocite{sw2013a}
\nocite{sw2013b}
\nocite{volnyGVW2011}
\nocite{wichmannFGLM1997}
\nocite{zobnin2010}

\maketitle

\begin{abstract}
This paper is a survey on the area of signature-based \grobner{} basis
algorithms that was initiated by \faugere{}'s \ff{} algorithm in 2002.
We explain the general ideas behind the usage of signatures.
We show how to classify the various known variants by $3$ different orderings.
For this we give translations between different
notations and show that besides notations many approaches are just the same.
Moreover, we give a general description of how the idea of signatures is quite
natural when performing the reduction process using linear algebra. This survey
shall help to outline this field of active research.
\end{abstract}

%%%%%%%%%%%%%%%%%%%%%%%%%%%%%%%%%%%%%%%%%%%%%%%%%%%%%%%%%%%
% depending on what we want use either \input or \include:
% --------------------------------------------------------
% \input    =>  no page break after a section
% \include  =>  page break resp. \clearpage after a section
%%%%%%%%%%%%%%%%%%%%%%%%%%%%%%%%%%%%%%%%%%%%%%%%%%%%%%%%%%%
\section{Introduction}
\grobner{} bases are a fundamental tool in computer algebra with many
applications in various areas. In 1965 Buchberger introduced a first algorithmic
approach for their computation~\cite{bGroebner1965}. Over the years many
improvements and optimizations were found, for example, criteria to remove
useless elements during the computation~\cite{bGroebnerCriterion1979,buchberger2ndCriterion1985,gmInstallation1988}.

In 2002 \faugere{} presented the \ff{} algorithm~\cite{fF52002Corrected} which
was a significant development in \grobner{} basis computation. This algorithm
used for the first time signatures to detect efficiently useless data. The
\ff{} algorithm is well-known for computing no zero reduction, that means no
useless computation if the input system is regular.

Beginning 2008, many researchers worked on understanding the new criteria behind
\ff{}, which lead to new insights, but also optimizations and new variants of
the signature-based
approach~\cite{eF5Criterion2008,eF5Criteria2008,epF5C2009,apF5CritRevised2011}.

While the question of \ff{}'s termination was still an open one until recently
~\cite{galkinTermination2012,panhuwang2012,panhuwang2013}, many new variants of
\ff{} were introduced, for example, \ggv{}~\cite{ggvGGV2010} resp.
\gvw{}~\cite{gvwGVW2010,volnyGVW2011,gvwGVW2011,gvwGVW2013} or
\sba{}~\cite{rs-2012,rs-ext-2012}. 

Moreover, first papers trying to classify all the different variants of
signature-based \grobner{} basis algorithms came
up~\cite{huang2010,epSig2011,sw2011b,panhuwang2012,panhuwang2013,erF5SB2013}.

At the moment the area of signature-based \grobner{} basis algorithms
is confusing and vast. More and more papers are published proving
statements already proven before, and even more publications can be found on
``new'' variants that boil down to be a known one just with a different
notation.

In this paper we try to give a rigorous survey on signature-based
\grobner{} basis theory, including all variants known up to now. We lay an
emphasis on understanding and we show how the variants presented over the last
years are mostly differ in small parts only. Moreover, we give the reader a
vocabulary book at hand which helps to understand how notations, varying
for different authors, coincide.

Since this is a survey, we do not give proofs if they are long, complex, or do not help in
understanding the topic. We always explain the idea behind the proofs and refer
to the related publication which includes a complete proof. There the reader is
then, with our descriptions and
explanations, able to understand the proof in the used notation and language.
Table~\ref{table:variants-overview} gives the outline of this paper and can be
used as an index for finding the variant the reader might be interested in.
Moreover, Figure~\ref{fig:decade-in-sig-based-algorithms} gives a
graphical overview on the connection between the different algorithms that are
explained in the following.

In Section~\ref{sec:f5-termination} we give the problem of proving \ff{}'s termination
an in-depth discussion, where we also explain how termination-ensuring variants
as described in~\cite{arsPhd2005,gashF5t2009,egpF52011} are still useful from an
algorithmic point of view.

Moreover, we present descriptions of signature-based computations using linear
algebra for the reduction process, see Sections~\ref{sec:matrix-f5}
and~\ref{sec:f4-f5}. Besides~\cite{apF452010} which is restricted to
\ff{} this is the first known discussion on this topic and shows how the ideas
of signatures rather naturally come up in this setting.

Furthermore, we give in Section~\ref{sec:available-implementations} detailed experimental
results generated with various variants of signature-based \grobner{} basis
algorithms presented in this survey. There we do not focus on timings, but on the
characteristics of the different variants, like size of the resulting
\grobner{} basis, size of the recovered syzyy module, number of zero reductions
and number of operations overall. The code those computations are done with is
implemented in \singular{}~\cite{singular400} and available open-source. Thus the
implementation is transparent and the reader is able to understand the
different outcomes in the various algorithms.

All in all, this is the first extensive classification of signature-based
\grobner{} basis algorithms and we hope that it can be used as a useful handbook
for researchers and students.

\begin{center}
\begin{longtable}{r p{0.45\textwidth} c c}
{\bf Name/case} & {\bf modification w.r.t. \ff{}~\cite{fF52002Corrected}(Section~\ref{sec:efficient-implementations-f5})}
    & {\bf Section} & {\bf Reference}\\
\hline
\endhead

\hline \multicolumn{4}{c}{ \textcolor{ggrey}{Continued on the next page
  $\longrightarrow$}}
\endfoot

\endlastfoot
%\hline
%\hline
\mff{} & uses Macaulay matrices and linear algebra for reduction purposes,
does not build S-pairs but generates all multiples of the generators for
a given degree step & \ref{sec:matrix-f5} & \cite{FJ03}\\

\rba{} & generalized algorithm to compare $\rleqff$ and $\rleqsb$, special case
of \rba{} defined in Section~\ref{sec:rewrite-bases} &
\ref{sec:rewrite-basis-algorithm} & \cite{erF5SB2013}\\

\ffpr{} & homogenizes inhomogeneous input, interreduces intermediate \grobner{}
basis& \ref{sec:efficient-implementations-f5} & \cite{fF52002Corrected} \\
%\hline
\ffprpr{} & uses $\dpotl$ instead of $\potl$ &
\ref{sec:efficient-implementations-f5} & \cite{fF52002Corrected}\\
%\hline
\ffr{} & interreduces intermediate \grobner{} basis, uses it only for reduction purposes &
\ref{sec:f5r} & \cite{stF5Rev2005}\\
%\hline
\ffc{} & interreduces intermediate \grobner{} basis, uses it for reduction purposes and for
creation of new S-pairs & \ref{sec:f5c} & \cite{epF5C2009}\\
%\hline
\ffa{} & variant of \ffc{} directly using a zero reduction as signature for the
syzygy module & \ref{sec:f5c} & \cite{epSig2011}\\
%\hline
\iffa{} & variant of \ffa{} recomputing signatures after interreducing between
two incremental steps, also \iggv{}, \ldots & \ref{sec:f5c} & \cite{ederImprovedF52013}\\
%\hline
Extended \ff{} criteria & uses different module monomial orders &
\ref{sec:ext-f5} & \cite{ahExtF52010}\\
%\hline
\fftwo{} & adds field equations to the input systems for computations over
$\mathbb{F}_2$ & \ref{sec:f52} & \cite{FJ03}\\
%\hline
bihomogeneous case & uses maximal minors of Jacobian matrices to enlarge
system of syzygies & \ref{sec:f5-bihomog} & \cite{FSS10b}\\
%\hline
SAGBI \grobner{} bases & uses the Reynolds operator on the syzygy
criterion & \ref{sec:f5-sagbi} & \cite{FR09}\\
%\hline

\ffgen{} & generalized algorithm for different rewrite orders, applicable with
any compatible module monomial order & \ref{sec:f5-termination-directly} &
\cite{panhuwang2012,panhuwang2013}\\

\fft{} & uses the Macaulay bound, once it is exceeded the algorithm transforms
to Buchberger's algorithm & \ref{sec:f5-termination-algorithmically} &
\cite{gashPhD2008,gashF5t2009}\\

\ffb{} & uses two lists of S-pairs: one for usual \ff{}, another one for
computing a lower degree bound using Buchberger's chain criterion &
\ref{sec:f5-termination-algorithmically} & \cite{arsPhd2005}\\

\ffp{} & distinguishes S-pairs needed for the \grobner{} basis and those needed
for \ff{}'s correctness only, once only the later ones are left it uses the idea
of \ffb{} & \ref{sec:f5-termination-algorithmically} & \cite{egpF52011}\\

Arri \& Perry's work & introduces rewrite order $\rleqsb$, works for any
compatible module monomial order, directly uses zero reduction as signature
for syzygy module, also known by \ap{} & \ref{sec:arri-perry} & \cite{apF5CritRevised2011} \\

\trb{} & generalized algorithm to compare \ff{} and \gvw{}, also introduces
$\rleqsb$ as rewrite order, applicable with any compatible module monomial order
& \ref{sec:huang} & \cite{huang2010}\\

\gbgc{} & generalized algorithm, uses $\rleqsb$ but also generalizes to
use partial rewrite orders, applicable with any compatible module monomial
order, later on further generalized to work on algebras of solvable type
& \ref{sec:sun-wang} & \cite{sw2011b,swmz2012}\\

\ggv{} & directly uses zero reduction as signature for syzygy module,
rewriting is done implicitly w.r.t. $\rleqff$ & \ref{sec:g2v} &
\cite{ggvGGV2010}\\

\gvw{} & generalizes \ggv{} to be applicable with any compatible module monomial
order, uses $\rleqsb$ since 2011 (and thus coincides with \ap{}; also known as
    \gvwhs{}) & \ref{sec:gvw} &
\cite{gvwGVW2010,gvwGVW2011,gvwGVW2013}\\

\sba{} & coincides with \gvw{} and \ap{}, $\schl$ only & \ref{sec:sb} & \cite{rs-2012}\\

\ssg{} & coincides with \sba{}, \gvw{} and \ap{} &
\ref{sec:ssg} & \cite{galkinSimple2012}\\

\gggv{} & uses Buchberger's Product and Chain criterion in \ggv{} (this is also
    introduced in the 2013 revision of \gvw{}) &
\ref{sec:buch} & \cite{gerdtHashemiG2V,gvwGVW2013}\\

\ffs{} & uses \ffo{}-style \sreduction{} & \ref{sec:f4-f5} & \cite{apF452010}\\
\hline
\caption{Variants of \ff{} and their modifications (in the order of appearance
    in this survey)}
\label{table:variants-overview}
\end{longtable}
\end{center}

%\begin{remark}
%Note that some of the variants presented in
%Figure~\ref{fig:decade-in-sig-based-algorithms} are not mentioned in
%Table~\ref{table:variants-overview}. Variants like the one implementing
%the \ff{} criterion for involutive bases do not modify signature-based
%\grobner{} basis algorithms. Other variants, like the one in quasihomogeneous
%setting modify the algorithm in a way that is not based on the signature-based
%structure. These modifications can be added to a usual Buchberger algorithm
%resp. Gebauer-M\"oller implementation of it. We show them in the graph in order
%to give an overview of what is going on all in all, still those variants
%are not based on modifying signatures and thus not in the focus of this survey.
%We mention them shortly in the respective sections, but do not describe them in
%further detail. We refer the reader to the corresponding papers cited here.
%\end{remark}
\begin{landscape}
\begin{figure}
\begin{center}
\begin{tikzpicture}[scale=.61,transform shape]
\tikzset{level1 concept/.append style={level distance=350,sibling 
    angle=10, minimum size = 2cm}}
\tikzset{level2 concept/.append style={level distance=350,sibling 
    angle=10, minimum size = 4cm}}
\tikzset{level3 concept/.append style={level distance=350,sibling 
    angle=10, minimum size = 4cm}}
\tikzset{level4 concept/.append style={level distance=350,sibling 
    angle=10, minimum size = 4cm}}
%\tikzset{ every concept/.append style={minimum size = 1.5cm}}

\node[dashed, circle, radius=3cm, text
centered,  inner ysep=1em, inner xsep=1em,
  draw=white, fill=, fill opacity=0, draw opacity=0.5, text opacity=1,
  align=center] (trb) at (11,8.5)
{
\begin{minipage}{50pt}
\begin{center}
\textcolor{ggrey}{\large{\textbf{\trb{}\\\cite{huang2010}\\(2010)}}}
\end{center}
\end{minipage}
};

\node[dashed, circle, radius=3cm, text
centered,  inner ysep=1em, inner xsep=1em,
  draw=white, fill=, fill opacity=0, draw opacity=0.5, text opacity=1,
  align=center] (gbgc) at (12.5,6)
{
\begin{minipage}{50pt}
\begin{center}
\textcolor{ggrey}{\large{\textbf{\gbgc{}\\\cite{sw2011b}\\(2011)}}}
\end{center}
\end{minipage}
};

\node[dashed, circle, radius=3cm, text
centered,  inner ysep=1em, inner xsep=1em,
  draw=white, fill=, fill opacity=0, draw opacity=0.4, text opacity=1,
  align=center] (rba) at (13.5,3.25)
{
\begin{minipage}{50pt}
\begin{center}
\textcolor{ggrey}{\large{\textbf{\rba{}\\\cite{erF5SB2013}\\(2013)}}}
\end{center}
\end{minipage}
};

  [huge mindmap]
  [decoration={start radius=6cm,end radius=5cm,amplitude=7mm,angle=20}]
  \path[mindmap,concept color=ggrey!85!white,text=white,
  minimum size=5cm,outer sep=0pt]
    node[concept] (ff) { {\Large \textbf{\ff{}}\\\cite{fF52002Corrected}\\(2002)}}
    [clockwise from=180]
    child[grow=65,concept color=myblued!35!ggrey,text=white] { node[concept] 
      (ffprpr) { {\scriptsize \textbf{\ffprpr{}}\\\cite{fF52002Corrected}\\(2002)}}
      child[grow=0,concept color=myblued!55!ggrey,text=white] { node[concept] 
        at (2,1.5)
        (extended) { {\scriptsize {\textbf{Extended \ff{}
          Criteria}}\\\cite{ahExtF52010}\\(2010)}}
      }
      child[grow=0,concept color=myblued!50!mygreend,text=white] { node[concept] 
        at (0,3)
        (ffgen) {
          {\scriptsize{\textbf{\ffgen{}}}\\\cite{panhuwang2012,panhuwang2013}\\(2013)}}
      }
    }
    child[concept color=mygreend!35!myredd,text=white] { node[concept] at
      (-6.5,-9.5)
      (fftwo) { {\scriptsize{\textbf{\fftwo{}}}\\\cite{FJ03}\\(2003)}}}
    child[grow=-98,concept color=myyellowd!20!myredd,text=white] { node[concept] 
      (ffb) { {\scriptsize{\textbf{\ffb{}}}\\\cite{arsPhd2005}\\(2005)}}
      child[concept color=myyellowd!30!myredd,text=white] { node[concept] at
        (-3,0.2)
        (fft) { \scriptsize{{\textbf{\fft{}}}\\\cite{gashPhD2008,gashF5t2009}\\(2009)}}}
      child[grow=259,concept color=myyellowd!40!myredd,text=white] { node[concept] 
        (ffp) { {\scriptsize{\textbf{\ffp{}}}\\\cite{egpF52011}\\(2011)}}}
    }
    child[concept color=mygreend!45!myredd,text=white] { node[concept] at
      (-14,-2.6)
      (sagbi) { {\scriptsize {\textbf{SAGBI \grobner{} bases}}\\\cite{FR09}\\(2009)}}
      child[grow=0,concept color=mygreend!55!myredd,text=white] { node[concept]
      at (-7,-2)
        (symmetry1) { {\scriptsize {\textbf{globally invariant (group
            action)}}\\\cite{faugere-svartz-2012}\\(2012)}}
      child[grow=0,concept color=mygreend!65!myredd,text=white, minimum
        size=2.2cm] { node[concept] 
        at (-1,-3)
        (symmetry2) { {\scriptsize {\textbf{invariant (group
            action)}}\\\cite{faugeresvartz-2013}\\(2013)}}}
      }
    }
    child[concept color=mypinkd!15!ggrey,text=white] { node[concept] 
    at (-4,10)
      (matrixff) { {\scriptsize {\textbf{\mff{}}}\\\cite{FJ03}\\(2003)}}
    }
    child[concept color=mypinkd!25!ggrey,text=white] { node[concept] 
    at (-1.9,11.5)
      (ffs) { {\scriptsize {\textbf{\ffs{}}}\\\cite{apF452010}\\(2010)}}
    }
    child[concept color=mygreenyellow!25!ggrey,text=white] { node[concept] at
      (-4.5,0)
      (bihomog) { {\scriptsize {\textbf{bihomogeneous case}}\\\cite{FSS10b}\\(2010)}}
    }
    child[concept color=mygreenyellow!35!ggrey,text=white] { node[concept] 
      at (-9.5,-2.5)
      (quasihomog) { {\scriptsize {\textbf{quasi\-homogeneous
        case}}\\\cite{faugeresafeyverron-2013}\\(2013)}}
    }
    child[concept color=ggrey,text=white] { node[concept] 
      at (-13,2.2)
      (involutive) { {\scriptsize {\textbf{involutive
        bases}}\\\cite{gerdtHashemiG2V,ghmInvolutiveF5-2013}\\(2013)}}
    }
    child[grow=0,concept color=mygreend!60!ggrey,text=white] { node[concept] 
      at (0,3)
      (ap) { {\scriptsize \textbf{\ap{}}}\\\cite{apF5CritRevised2011}\\(2009)}
      [clockwise from=-30]
      child[grow=0,concept color=mygreend!50!ggrey,text=white] { node[concept] 
        at (1.3,-2.5)
        (gvw2) { {\scriptsize
          \textbf{\gvw{}(HS)}\\\cite{gvwGVW2011,volnyGVW2011,gvwGVW2013}\\(2011)}}
        child[grow=0,concept color=mygreend!10!ggrey,text=white, minimum
      size=2cm] { node[concept] 
        at (0.3,0.4)
          (solv) { {\scriptsize {\textbf{on\\solvable algebras}}\\\cite{swmz2012}\\(2012)}}
        }
      }
      child[grow=0,concept color=mygreend!50!ggrey,text=white] { node[concept] 
        at (0,0)
        (sba) { {\scriptsize \textbf{\sba{}}\\\cite{rs-2012}\\(2012)}}}
      child[grow=0,concept color=mygreend!50!ggrey,text=white] { node[concept] 
        at (1.7,1.9)
        (ssg) { {\scriptsize \textbf{\ssg{}}\\\cite{galkinSimple2012}\\(2012)}}}
    }
    child[grow=-24, concept color=mypinkd!45!ggrey,text=white] {
      node[concept] (ggv) { \scriptsize{\textbf{\ggv{}}\\\cite{ggvGGV2010}\\(2010)}}
      child[grow=64, concept color=myblued!10!mydpink] { node[concept] (gvw1) 
        {{\scriptsize \textbf{\gvw{}(v1)}\\\cite{gvwGVW2010}\\(2010)}}
      }
      child[grow=5, concept color=mypinkd!60!ggrey] { node[concept] (iggv) 
        {\scriptsize{\textbf{\iggv{}}\\\cite{ederImprovedF52013}\\(2012)}}
      child[grow=0, concept color=mypinkd!75!ggrey, minimum size=1.7cm] { node[concept] 
        at (1,-0.3)
        (gggv) 
        {\scriptsize{\textbf{\gggv{}}\\\cite{gerdtHashemiG2V}\\(2013)}}}
      }
    }
    child[grow=-55,concept color=myredd!35!ggrey,text=white] { node[concept] 
      (ffpr) { {\scriptsize \textbf{\ffpr{}}\\\cite{fF52002Corrected}\\(2002)}}
      [clockwise from=-30]
    child[grow=-45,concept color=myredd!35!ggrey,text=white] { node[concept] 
      (ffr) { {\scriptsize \textbf{\ffr{}}\\\cite{stF5Rev2005}\\(2005)}}
      [clockwise from=-30]
    child[grow=0,concept color=myredd!35!ggrey,text=white, minimum size=1.7cm] { node[concept] 
      at (0.4,0)
      (ffc) { {\scriptsize \textbf{\ffc{}}\\\cite{epF5C2009}\\(2009)}}
      [clockwise from=-30]
      child[grow=0,concept color=myredd!50!ggrey,text=white, minimum size=1.7cm] { node[concept] 
        at (-3,2.2)
        (ffa) { {\scriptsize \textbf{\ffa{}}\\\cite{epSig2011}\\(2011)} }
        child[grow=0,concept color=myredd!60!ggrey,text=white, minimum size=1.7cm] { node[concept] 
        at (0.4,0.2)
          (iffa) { {\scriptsize\textbf{\iffa{}}\\\cite{ederImprovedF52013}\\(2012)}} }
      }
      child[grow=0,concept color=myredd!60!ggrey,text=white, minimum size=1.7cm] { node[concept] 
        at (1.9,2.2)
        (iffc) { {\scriptsize
          \textbf{\iffc{}}\\\cite{ederImprovedF52013}\\(2012)}} }
    }
    }
    }
;

\node[text=ggrey] at (-10,10.5) {
\begin{minipage}{150px}
\begin{center}
\large{\textbf{\color{ggrey}{Modifications not specific to signature-based \grobner{} basis algorithms}}}
\end{center}
\end{minipage}
};

\node[text=ggrey] at (1.5,10.5) {
\begin{minipage}{105px}
\begin{center}
\large{\textbf{\color{ggrey}{Variants covered by this survey}}}
\end{center}
\end{minipage}
};

\node[text=ggrey] at (-2,8) {
\begin{minipage}{105px}
\begin{center}
\large{\textbf{\color{ggrey}{\ffo{}-style\\reduction}}}
\end{center}
\end{minipage}
};

\node[text=ggrey] at (-3.8,-4.7) {
\begin{minipage}{100px}
\begin{center}
\large{\textbf{\color{ggrey}{Algorithmically\\ensuring\\termination}}}
\end{center}
\end{minipage}
};

\node[text=ggrey] at (-13,-1.2) {
\begin{minipage}{100px}
\begin{center}
\large{\textbf{\color{ggrey}{Exploit\\algebraic\\structures}}}
\end{center}
\end{minipage}
};

\begin{pgfonlayer}{background}
%\filldraw[fill=vlgrey, draw=vlgrey, opacity=0.2] (0,0) ellipse (60em and 35em);
%\filldraw[fill=lgrey, draw=vlgrey, fill opacity=0.2, densely dashed, thick]
%(0,0) ellipse (57em and 31em);
\filldraw[fill=lgrey, draw=vlgrey, fill opacity=0.3, densely dashed, thick]
(8,1) ellipse (25.5em and 35em);
\filldraw[fill=lgrey, fill opacity=0.3, draw=white, rounded corners=3pt, thick]
(-10,-10.7) -- (-11.2,0.3) -- (-4.2,12.8) --
(18.2,12.8) -- (18.2,-10.7) -- (-10,-10.7);
\fill[fill=lgrey, fill opacity=0.2, rounded corners=5pt, thick]
(-16.5,-11) rectangle  (18.5,13.1);

\filldraw[fill=mypinkd!50!ggrey, draw=mypinkd!50!ggrey, fill opacity=0.3, densely dashed, thick]
(-2.7,6.8) ellipse (10.5em and 12em);

\filldraw[fill=orange!50!ggrey, draw=orange!50!ggrey, fill opacity=0.3, densely dashed, thick]
(-2.5,-6.2) ellipse (13.2em and 11.5em);

\filldraw[fill=mygreend!50!ggrey, draw=mygreend!50!ggrey, fill opacity=0.3, densely dashed, thick]
(-11,-2.5) ellipse (13em and 22em);

%\filldraw[fill=vlgrey,opacity=0.2] (0,0) circle [radius=5cm];
  \path (iffc) to[circle connection bar switch color=from (myredd!60!ggrey) to 
  (myredd!60!ggrey)] (iffa);
  \path (iffc) to[circle connection bar switch color=from (myredd!60!ggrey) to 
  (mypinkd!60!ggrey)] (iggv);
  \path (iffa) to[circle connection bar switch color=from (myredd!60!ggrey) to 
  (mypinkd!60!ggrey)] (iggv);
  \path (ffa) to[circle connection bar switch color=from (myredd!50!ggrey) to 
  (mypinkd!45!ggrey)] (ggv);
  
  \path (gvw1) to[circle connection bar switch color=from (myblued!10!mydpink) to 
  (mygreend!50!ggrey)] (gvw2);
  
  \path (gggv) to[circle connection bar switch color=from (mypinkd!75!ggrey) to 
  (mygreend!50!ggrey)] (gvw2);
  
  \path (sba) to[circle connection bar switch color=from (mygreend!50!ggrey) to 
  (mygreend!50!ggrey)] (gvw2);
  \path (ssg) to[circle connection bar switch color=from (mygreend!50!ggrey) to 
  (mygreend!50!ggrey)] (gvw2);
  \path (ssg) to[circle connection bar switch color=from (mygreend!50!ggrey) to 
  (mygreend!50!ggrey)] (sba);

  \path (gvw1) to[circle connection bar switch color=from (myblued!10!mydpink) to 
  (myblued!35!ggrey)] (ffprpr);
  \path (gvw1) to[circle connection bar switch color=from (myblued!10!mydpink) to 
  (myblued!55!ggrey)] (extended);
  \path (ffgen) to[circle connection bar switch color=from (myblued!50!mygreend) to 
  (myblued!55!ggrey)] (extended);
  \path (ffgen) to[circle connection bar switch color=from (myblued!50!mygreend) to 
  (mygreend!60!ggrey)] (ap);

  \path (fft) to[circle connection bar switch color=from (myyellowd!30!myredd) to 
  (myyellowd!40!myredd)] (ffp);

  \path (matrixff) to[circle connection bar switch color=from (mypinkd!15!ggrey) to 
  (mypinkd!25!ggrey)] (ffs);
\end{pgfonlayer}
\end{tikzpicture}
\end{center}
\caption{A good decade in signature-based \grobner{} basis algorithms (status:
    March 2014)}
\label{fig:decade-in-sig-based-algorithms}
\end{figure}
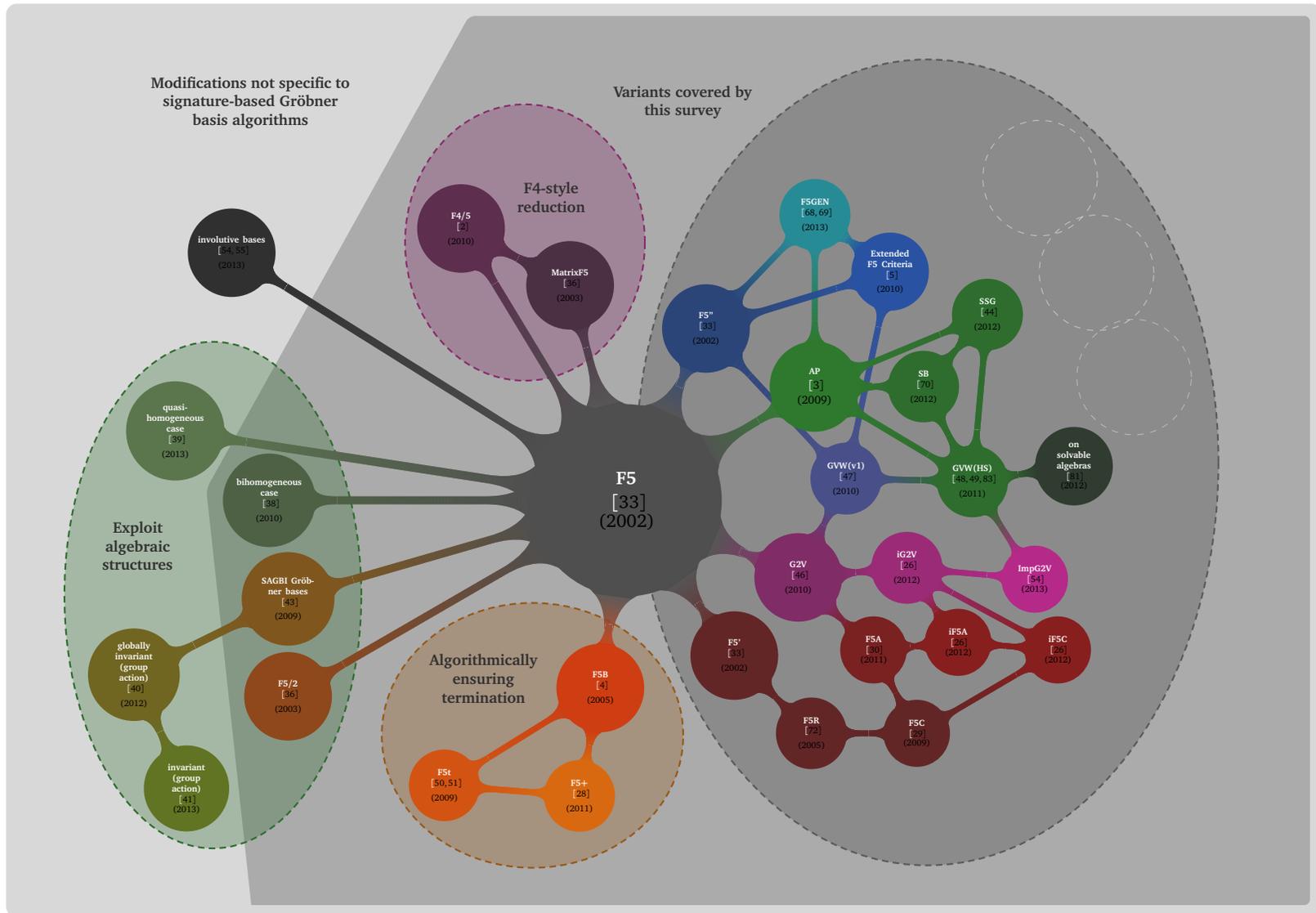
\end{landscape}

\section{Notations and terminology}
\label{sec:notations}
In this section we introduce notations and basic terminology used in this
survey. Readers already familiar with signature-based algorithms
might skip this section. Still note that notations itself play an important role
in the following, especially when comparing different variants of
signature-based algorithms. We extend the notation introduced in~\cite{erF5SB2013}.

Let $\ring$ be a polynomial ring over a field $\field$. All polynomials
$f\in \ring$ can be uniquely written as a finite sum $f=\sum_{\const_v x^v\in
  \mon}\const_v x^v$ where $\const_v \in\field$, $x^v\defeq\prod_ix_i^{v_i}$ and $\mon$ is
minimal. The elements of $\mon$ are the \emph{terms} of $f$. A
\emph{monomial} is a polynomial with exactly one term. A monomial with
a coefficient of 1 is \emph{monic}. Neither monomials nor terms of
polynomials are necessarily monic. We write $f\simeq g$ for $f,g\in \ring$
if there exists a non-zero $\const \in\field$ such that $f=\const g$.

Let $\module$ be a free $\ring$-module and let $\mgen$ be
the standard basis of unit vectors in $\module$. All module elements
$\alpha\in \module$ can be uniquely written as a finite sum
$\alpha=\sum_{a\mbasis i\in \modmon}a\mbasis i$ where the $a$ are monomials
and $\modmon$ is minimal. The elements of $\modmon$ are the \emph{terms} of
$\alpha$. A \emph{module monomial} is an element of $\module$ with exactly
one term. A module monomial with a coefficient of 1 is
\emph{monic}. Neither module monomials nor terms of module elements
are necessarily monic. Let $\alpha\simeq\beta$ for
$\alpha,\beta\in \module$ if $\alpha=\const \beta$ for some non-zero $\const \in\field$.

Let $\leq$ denote two different orders -- one for $\ring$ and one for
$\module$:
The order for $\ring$ is a monomial order, which means that it is a
well-order on the set of monomials in $\ring$ such that $a\leq b$ implies
$ca\leq cb$ for all monomials $a,b,c\in \ring$.
The order for $\module$ is a module monomial order which means that it
is a well-order on the set
of module monomials in $\module$ such that $S \leq T$ implies $cS\leq cT$
for all module monomials $S,T\in \module$ and monomials $c\in \ring$.
We require the two orders to be \emph{compatible} in the sense that $a\leq b$ if
and only if $a\mbasis i\leq b\mbasis i$ for all monomials $a,b\in \ring$
and $i=1,\ldots,m$.

Consider a finite sequence of polynomials $\gen f \in \ring$ that
we call the \emph{input (polynomials)}. We call $\gen f$ a regular sequence if
$f_i$ is a non-zero-divisor on $\ring / \ideal{f_1,\ldots,f_{i-1}}$ for
$i=2,\ldots,m$.
For $\alpha = \sum_{i=1}^m a_i \mbasis i$, $a_i \in \ring$ we define the homomorphism
$\alpha\mapsto\proj\alpha$ from $\module$ to $\ring$ by
$\proj\alpha\defeq\sum_{i=1}^m a_if_i$. An element $\alpha\in \module$
with $\proj\alpha=0$ is called a \emph{syzygy}. The module of all syzygies of
$\gen f$ is denoted by $\syz{\gen f}$.

Next we introduce the notion of signatures together with related
structures in the plain polynomial setting.

\begin{definition}\
\label{def:signature}
\begin{enumerate}
\item The \emph{lead term} $\hd f$ of $f\in
\ring\setminus\set 0$ is the $\leq$-maximal term of $f$. The \emph{lead
  coefficient} $\lc f$ of $f$ is the coefficient of $\hd f$. For a set $F \in
  \ring$ we define the \emph{lead ideal of $F$} by $L(F) := \ideal{\hd f \mid f
    \in F}.$
\item The \emph{lead term} resp. \emph{signature} $\sig\alpha$ of
$\alpha\in \module\setminus\set 0$ denotes the
$\leq$-maximal term of $\alpha$. If $a\mbasis i=\sig\alpha$ then we call
$\ind{\alpha}\defeq i$ the \emph{index} of $\alpha$.
%%%%%%%%%%%%%%%%%%%%%%%%%%%%%%%%%%%%%%%%%%%%%%%%%%%
% Ratios are not so good:
% what is the ratio for a syzygy? divding by zero?
% for rewrite orders it makes more sense to look at
% the signature and the lead term directly
%%%%%%%%%%%%%%%%%%%%%%%%%%%%%%%%%%%%%%%%%%%%%%%%%%%
% \item Consider the unique extension of the module monomial order $<$ to
% module monomials that may have negative exponents such that $a\mbasis
% i<b\mbasis j\biimp ca\mbasis i<cb\mbasis j$ for all monomials
% $a, b, c \in \ring$. We define the \emph{sig-lead ratio} of $\alpha\in \module$
% with $\hdp\alpha \neq 0$ by $\ratio\alpha\defeq\frac{\sig\alpha}{\hdp\alpha}$.
% For a syzygy $\alpha\in\module$ we define $\ratio\alpha\defeq\infty$.
\item For $\alpha \in \module$ we define the \emph{sig-poly pair} of $\alpha$ by
$\spp\alpha \in \module \times \ring$.
\item $\alpha,\beta\in \module$ are \emph{equal up to
  sig-poly pairs} if $\sig\alpha=\sig{\const \beta}$ and
$\proj\alpha=\proj{\const\beta}$ for some non-zero $\const\in\field$.
Correspondingly, $\alpha,\beta$ are said to be \emph{equal up to
  sig-lead pairs} if $\sig\alpha=\sig{\const\beta}$ and
$\hdp\alpha=\hdp{\const\beta}$ for some non-zero $\const \in\field$.
\end{enumerate}
\end{definition}

With these definitions
every non-syzygy module element $\alpha\in \module$ has two main
associated characteristics -- the signature $\sig\alpha\in \module$ and
the lead term $\hdp\alpha\in \ring$ of its image $\proj\alpha$.
Lead terms and signatures include a coefficient for mathematical
convenience, though an implementation of an signature-based \grobner{} Basis
algorithm need not store the signature coefficients as we discuss in
Sections~\ref{sec:efficient-implementations-f5}
and~\ref{sec:efficient-implementations-sb}.

We define some canonical module monomial orders that are
useful in the following.
\begin{definition}
Let $<$ be a monomial order on $\ring$ and let $a e_i, b e_j$ be two module
monomials in $\module$.
\begin{enumerate}
\item $a e_i \potl b e_j$ if and only if either $i < j$ or $i = j$ and $a < b$.
\item $a e_i \topl b e_j$ if and only if either $a < b$ or $a = b$ and $i < j$.
\end{enumerate}
These two orders can be combined with either a weighted degree or a weighted
leading monomial:
\begin{enumerate}
\item $a e_i \dpotl b e_j$ if and only if either $\deg\left(\proj{a e_i}\right)
< \deg\left(\proj{b e_j}\right)$ or
$\deg\left(\proj{a e_i}\right) = \deg\left(\proj{b e_j}\right)$ and $a\mbasis i
\potl b \mbasis j$. In the same way we define $a \mbasis i \dtopl b \mbasis j$.
\item $a e_i \hdpotl b e_j$ if and only if either $\hdp{a e_i} < \hdp{b e_j}$ or
$\hdp{a e_i} = \hdp{b e_j}$ and $a \mbasis i \potl b \mbasis j$. In the same way
we define $a\mbasis i \hdtopl b \mbasis j$.
\end{enumerate}
Note that $\hdpotl$ is also known as Schreyer's order, for example,
see~\cite{gpSingularBook2007}.
\end{definition}

The above introduced notation of the orders represent that the position in the module resp. the
lead term in the polynomial ring are preferred.

\begin{example}
Note that a polynomial can have infinitely many different module representations with
distinct signatures. Consider the three input polynomials
$f_1 = x^2 - y^2$, $f_2 = xyz -z^3$, and $f_3 = yz^2 - xy$
in $\ring=\Q[x,y,z]$ where $<$ denotes the graded
reverse lexicographical monomial order. Moreover, assume $<$ to extend to
$\potl$ on the set of monomials of $\ring^3$.
For example, we can represent $f_2$ by $\mbasis 2$. Since $\proj{f_1\mbasis 3 -
f_3\mbasis 1} = 0$ another representation of $f_2$ might be $f_1\mbasis{3} +
\mbasis 2 - f_3\mbasis{1}$. Note that the two representations of $f_2$ have two
different signatures, $\mbasis 2$ and $\hd{f_1} \mbasis 3$, respectively.
We also want to point out that $\hdp{\sig\alpha}\neq\hdp\alpha$ is possible: In the above
example $\hdp{\sig{\mbasis 2}} = \hd{f_2}$, but
$\hdp{\sig{f_1\mbasis 3 + \mbasis 2 - f_3 \mbasis 1}} = \hd{\hd{f_1} f_3}
\neq \hd{f_2}$.
\end{example}

Finally, we introduce the notion of \grobner{} bases. For this, the reduction
of polynomials is essential.

\begin{definition}
\label{def:reduce}
Let $f \in \ring$ and let $t$ be a term of~$f$. Then we
can \emph{reduce} $t$ by $g \in \ring$ if there exists a monomial $b$
such that $\hd{b g}=t$.
The outcome of the reduction step is then $f-bg$ and
$g$ is called the \emph{reducer}.  When $g$ reduces $t$ we also say for convenience
that $bg$ reduces $f$. That way $b$ is introduced
implicitly instead of having to repeat the equation $\hd{bg}=t$.
\end{definition}

The result of an reduction of $f \in \ring$ is an element $h \in \ring$ that has
been calculated from $f$ by a sequence of reduction steps. Thus, reductions can
always be assumed to be done w.r.t. some finite subset $G \subset \ring$.

\begin{definition}
\label{def:gb}
Let $I= \langle \gen f \rangle$ be an ideal in $\ring$.
A finite subset $G$ of $\ring$ is a \emph{\grobner{} basis up to degree $d$}
for $I$ if $G \subset I$ and for all $f \in I$ with $\deg(f)\leq d$ $f$ reduces
to zero w.r.t. $G$. $G$ is a \emph{\grobner{} basis} for $I$ if $G$ is a
\grobner{} basis in all degrees.
\end{definition}

In the very same way one can define \grobner{} basis with the notion of
standard representations:

\begin{definition}
\label{def:standard-representation}
Let $f \in \ring$ and $G \subset \ring$ finite.
A representation $f = \sum_{i=1}^k m_i g_i$ with monomials $m_i \neq 0$, $g_i
\in G$ pairwise different is called a \emph{standard representation} if
\[\max_\leq\left\{\hd{m_ig_i} \mid 1 \leq i \leq k\right\} \leq \hd f.\]
\end{definition}

One can show that if for any $f \in \ideal G$ with $f\neq 0$ $f$ has a standard
representation w.r.t. $G$ and
$\leq$ then $G$ is a \grobner{} basis for $\ideal G$. Moreover, note that the
existence of a standard representation does not imply reducibility to zero, see,
for example, Exercise~5.63 in~\cite{bwkGroebnerBases1993}).

Luckily, Buchberger also gave an algorithmic description of \grobner{} bases using
the notion of so-called S-polynomials:

\begin{definition}
\label{def:spoly}
Let $f \neq 0,g\neq 0 \in \ring$ and let $\lambda = \lcm\left(\hd f,\hd g\right)$ be the monic
least common multiple of $\hd f$ and $\hd g$. The
\emph{S-polynomial} between $f$ and $g$ is given by
\[\spoly{f}{g} \defeq \frac{\lambda}{\hd f} f - \frac{\lambda}{\hd g}g.\]
\end{definition}

\begin{theorem}[Buchberger's criterion]
\label{thm:gb}
Let $I= \langle \gen f \rangle$ be an ideal in $\ring$.
A finite subset $G$ of $\ring$ is a \emph{\grobner{} basis}
for $I$ if $G \subset I$ and for all $f,g \in G$ $\spoly f g$ reduces
to zero w.r.t. $G$.
\end{theorem}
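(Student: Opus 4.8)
The plan is to prove Buchberger's criterion via the classical route of
\emph{standard representations}, using the criterion stated just before the
theorem: if every nonzero $f \in \ideal{G}$ has a standard representation with
respect to $G$ and $\leq$, then $G$ is a \grobner{} basis. So it suffices to
show that, under the hypothesis that all S-polynomials $\spoly{f}{g}$ with
$f,g \in G$ reduce to zero w.r.t.\ $G$, every $h \in \ideal{G} \setminus
\set{0}$ admits a standard representation. First I would fix $h$ and write
$h = \sum_{i=1}^{k} m_i g_i$ with monomials $m_i$ and $g_i \in G$; among all
such representations, choose one that minimizes $t := \max_\leq \set{\hd{m_i
g_i}}$, and among those minimizes the number of indices $i$ with $\hd{m_i g_i}
= t$. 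If $t \leq \hd{h}$ we are done, so assume $t \pgt \hd{h}$; then at least
two summands attain $t$, say $\hd{m_1 g_1} = \hd{m_2 g_2} = t$ (after
reordering), and their top terms cancel in the sum.

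The heart of the argument is then to rewrite the pair $m_1 g_1 + m_2 g_2$ so
as to strictly decrease the chosen statistic. Let $\lambda = \lcm(\hd{g_1},
\hd{g_2})$; since $t$ is a common multiple of $\hd{g_1}$ and $\hd{g_2}$, the
monomial $\lambda$ divides $t$, say $t = u \lambda$ for some monomial $u$. A
short computation shows $m_1 g_1 + m_2 g_2 = c\, u\, \spoly{g_1}{g_2} +
(\text{terms with lead term} \plt t)$ for a suitable constant $c \in \field$
(absorbing the $\simeq$ normalization of leading coefficients), using the
definition $\spoly{g_1}{g_2} = \frac{\lambda}{\hd{g_1}} g_1 -
\frac{\lambda}{\hd{g_2}} g_2$. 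By hypothesis $\spoly{g_1}{g_2}$ reduces to
zero w.r.t.\ $G$, which (as noted after Definition~\ref{def:standard-representation},
since reducibility to zero \emph{does} yield a standard representation) means
$\spoly{g_1}{g_2} = \sum_j n_j g_{i_j}$ with $\hd{n_j g_{i_j}} \leq
\hd{\spoly{g_1}{g_2}} \plt \lambda$. Multiplying through by $u$ gives a
representation of $u\,\spoly{g_1}{g_2}$ all of whose terms have lead term
$\plt u\lambda = t$.

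Substituting this back, I obtain a new representation of $h$ in which the
contribution previously responsible for the cancellation at $t$ has been
replaced by terms of lead term $\plt t$; all the other summands $m_i g_i$ for
$i \geq 3$ are unchanged. Hence either the new maximum is $\plt t$, or it is
still $t$ but the number of summands attaining it has dropped by at least one.
Either way this contradicts the minimality of our chosen representation.
Therefore $t \leq \hd{h}$, i.e.\ $h$ has a standard representation, and by the
cited fact $G$ is a \grobner{} basis for $I = \ideal{G}$; note $I =
\ideal{\gen f}$ is immaterial beyond $G \subset I$ and $\ideal G = I$, the
latter holding because $G$ generates $I$ (this should be part of the running
hypotheses or added as ``$\ideal G = I$'').

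The main obstacle, and the only place requiring genuine care, is the
bookkeeping in the rewriting step: one must track leading coefficients through
the $\simeq$-equivalences so that the cancellation at $t$ is expressed cleanly
as a scalar multiple of $u\,\spoly{g_1}{g_2}$ plus lower terms, and one must
ensure the induction is genuinely well-founded. Well-foundedness follows
because $\leq$ is a well-order on monomials, so there is no infinite strictly
decreasing sequence of values $t$, and for fixed $t$ the number of summands
attaining $t$ is a non-negative integer; thus the lexicographic pair $(t,
\#\set{i : \hd{m_i g_i} = t})$ cannot decrease infinitely. Everything else is
the routine manipulation of monomial multiples, which I would not spell out in
full.
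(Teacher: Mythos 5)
Note first that the paper itself gives no proof of this theorem: it is Buchberger's classical criterion, stated as a quoted result just after the definition of an S-polynomial, and never argued. So there is no in-paper proof to compare against. Your plan is the standard classical one --- reduce to showing every $h \in \ideal{G}$ admits a standard representation, minimize the lexicographic pair $(t, \#\{i : \hd{m_i g_i}=t\})$ over all representations $h=\sum m_i g_i$, and rewrite using the zero reduction of an S-polynomial. The structure and the well-foundedness argument are correct, and you are right to flag that the theorem as stated needs $\ideal{G}=I$ rather than just $G\subset I$.

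The one step that fails as written is the claimed identity $m_1 g_1 + m_2 g_2 = c\,u\,\spoly{g_1}{g_2} + (\text{terms of lead term} < t)$. Write $m_1g_1 = a_1 t + r_1$ and $m_2g_2 = a_2 t + r_2$ with $\hd{r_i}<t$; a direct computation gives $u\,\spoly{g_1}{g_2} = \tfrac{1}{a_1}r_1 - \tfrac{1}{a_2}r_2$, which has no $t$-term, so your identity forces $a_1 + a_2 = 0$. But when three or more summands attain $t$ --- which you explicitly allow, since you only posit ``at least two'' --- there is no reason for $a_1$ and $a_2$ to be opposite, and then the left side retains a nonzero $t$-term while the right side does not. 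The standard fix is the one-sided rewrite $m_1 g_1 = a_1 u\,\spoly{g_1}{g_2} + \tfrac{a_1}{a_2} m_2 g_2$; after substituting $u$ times the standard representation of $\spoly{g_1}{g_2}$ coming from its reduction to zero, the count of summands at $t$ drops by exactly one (the $m_2g_2$ coefficient merely changes by a scalar), which is in fact the weaker conclusion you already invoke, so the rest of the argument goes through unchanged. A further small point: in this minimization you must allow repeated $g_i$ (equivalently, polynomial coefficients), since the substituted standard representation of the S-polynomial may reuse generators already present in the sum. The paper's Definition~\ref{def:standard-representation} asks for pairwise distinct $g_i$ with monomial coefficients, which is literally incompatible with this --- and in fact with the subsequent remark that reducibility to zero yields a standard representation --- and should be read with polynomial coefficients.
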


\section{Matrix \ff{}}
\label{sec:matrix-f5}
Before we approach signature-based \grobner{} basis algorithms theoretically let
us look at a small \grobner{} basis computation. 
We start with a slightly simplified version of the \ff{} algorithm, the \mff{}.
With this introduction to the topic we are able to give an easy description of the main
ideas behind the classification of signature-based algorithms which is discussed in detail 
later on.
In order to keep this section plain and easy we keep signature-based details at
a minimum and focus on presenting their usefulness discarding useless elements
from the computation. 

Descriptions of \mff{} can be also found, for example, in~\cite{bardetPhD,FR09}. It
is first publicly mentioned in~\cite{FJ03} and known for breaking challenge $1$
of the hidden field equations (HFE) crypto system.

Algebraic systems are solved by computing a \grobner{} basis for a corresponding
ideal, \cite{bGroebner1965,buchberger2ndCriterion1985}. The link between solving
such systems and linear algebra is already very old, see, for example,
\cite{macaulay1916,laz83}. In~1999 Faug\`ere introduced the \ffo{} algorithm,
\cite{fF41999}. A simplified description of this algorithm using signature-based
criteria is \mff{} which we present here. The important fact is that polynomial
reduction coincides with Gaussian elimination in \mff{} and thus the process of
computing the basis can be illustrated nicely.

Let $I = \ideal{\gen f} \subset \ring$ be the \emph{homogeneous} input ideal.
We want to compute a \grobner{} basis for $I$ w.r.t. a given monomial order $<$. 
The idea is to incrementally construct \emph{Macaulay matrices} $M_d$ which are
generalizations of the Sylvester matrix for finitely many ($>2$ possible),
multivariate polynomials. In the above setting the rows of $M_d$ represent the
polynomials $t_{j,k} f_k$ where $t_{j,k}$ are monomials in $\ring$ such that
$\deg(t_{j,k} f_k) \leq d$ for all $1 \leq k \leq m$.
The columns of $M_d$ are labelled by all possible terms $x^v$ such that
$\deg(x^v) \leq d$. Moreover, the columns are sorted, from left to right, by
decreasing monomial order $<$. Thus a row of $M_d$
labelled by $t_{j,k} f_k= \sum_{x^v \in \mon,\\ \deg(x^v) \leq d} \const_v x^v$
has in column $x^v$ entry $\const_v \in \field$. Note that by the this representation of
$t_{j,k} f_k$ $\const_v = 0$ is possible. Once $M_d$ is generated, the row
echelon form $N_d$ of $M_d$ is computed. The rows of $N_d$ now correspond to
polynomials in $\ring$ that generate a \grobner{} basis up to degree $d$ for
$I$. So, in contrast to \grobner{} basis algorithms in the vein of Buchberger's
description, \mff{} needs another parameter, a degree bound $D$ up to which the
computations are carried out. We introduce the variant of this algorithm using
signature-based criteria to improve computations by an example.

Consider the three homogeneous input polynomials $f_1= y^2+4yz$, $f_2 = 2x^2+3xy+4y^2+3z^2$,
and $f_3 = 3x^2+4xy+2y^2$ in $\ring=\F_5[x,y,z]$ where $<$ denotes the graded
reverse lexicographical monomial order. By the above description it is clear
that the labels $t_{j,k} f_k$ of the rows coincide with the corresponding
signatures $t_{j,k} \mbasis k$. We want to use these
signatures to label the rows of the Macaulay matrices built in the following.
Thus we need to extend $<$ on $\ring^3$, say we use $\potl$. Let us assume we
want to compute a \grobner{} basis up to degree $D=4$.

%%%%%%%%%%%%%%%%%%%%%%%%%%%%%%%%%%%%%%
% matrices of degree 2
%%%%%%%%%%%%%%%%%%%%%%%%%%%%%%%%%%%%%%
\begin{figure}[h]
\begin{center}
\begin{tikzpicture}[]
\begin{scope}[shift={(0,7)}]
\matrix (M) [ matrix of math nodes,left delimiter={(},right delimiter={)},
              column sep=0.7em]{ 
                              3 & 4 & 2 & \zero & \zero & \zero \\[0.5em]
                              2 & 3 & 4 & \zero & \zero & 3 \\[0.5em]
                              \zero & \zero & 1 & \zero & 4 & \zero \\
};
\matBorder {3,2,1} 2 2 M
\end{scope}
\begin{scope}[shift={(7,7)}]
\matrix (M2) [ matrix of math nodes,left delimiter={(},right delimiter={)},
              column sep=0.7em]{ 
                              3 & 4 & 2 & \zero & \zero & \zero \\[0.5em]
                              2 & 3 & \zero & \zero & 4 & 3 \\[0.5em]
                              \zero & \zero & 1 & \zero & 4 & \zero \\
};
\matTopBorder {3,2,1} 2 2 {M2}
% do left border by hand
\node[font=\scriptsize,right] (\matName-1-0) [left=1.5em of
  \matName-1-1] {\bo{\mbasis 3}}; %Gets the left most column
\node[font=\scriptsize,right] (\matName-2-0) [left=1.5em of
  \matName-2-1] {\bo{\mbasis 2 + \mbasis 1}}; %Gets the left most column
\node[font=\scriptsize,right] (\matName-3-0) [left=1.5em of
  \matName-3-1] {\bo{\mbasis 1}}; %Gets the left most column
% reduced row
\reducedRows 2 2 6 {M2}
\end{scope}
\begin{scope}[shift={(0,3)}]
\matrix (M3) [ matrix of math nodes,left delimiter={(},right delimiter={)},
              column sep=0.7em]{ 
                              3 & 4 & \zero & \zero & 2 & \zero \\[0.5em]
                              2 & 3 & \zero & \zero & 4 & 3 \\[0.5em]
                              \zero & \zero & 1 & \zero & 4 & \zero \\
};
\matTopBorder {3,2,1} 2 2 {M3}
% do left border by hand
\node[font=\scriptsize,right] (\matName-1-0) [left=1.5em of
  \matName-1-1] {\bo{\mbasis 3 + 3\mbasis 1}}; %Gets the left most column
\node[font=\scriptsize,right] (\matName-2-0) [left=1.5em of
  \matName-2-1] {\bo{\mbasis 2 + \mbasis 1}}; %Gets the left most column
\node[font=\scriptsize,right] (\matName-3-0) [left=1.5em of
  \matName-3-1] {\bo{\mbasis 1}}; %Gets the left most column
% reduced row
\reducedRows 1 1 6 {M3}

\end{scope}
\begin{scope}[shift={(7,3)}]
\matrix (M4) [ matrix of math nodes,left delimiter={(},right delimiter={)},
              column sep=0.7em]{ 
                              \zero & 2 & \zero & \zero & 1 & 3 \\[0.5em]
                              2 & 3 & \zero & \zero & 4 & 3 \\[0.5em]
                              \zero & \zero & 1 & \zero & 4 & \zero \\
};
\matTopBorder {3,2,1} 2 2 {M4}
% do left border by hand
\node[font=\scriptsize,right] (\matName-1-0) [left=1.5em of
  \matName-1-1] {\bo{\mbasis 3 + \mbasis 2 + 4\mbasis 1}}; %Gets the left most column
\node[font=\scriptsize,right] (\matName-2-0) [left=1.5em of
  \matName-2-1] {\bo{\mbasis 2 + \mbasis 1}}; %Gets the left most column
\node[font=\scriptsize,right] (\matName-3-0) [left=1.5em of
  \matName-3-1] {\bo{\mbasis 1}}; %Gets the left most column
% reduced row
\reducedRows 1 1 6 {M4}

\end{scope}
\begin{scope}[shift={(3.5,-1)}]
\matrix (M5) [ matrix of math nodes,left delimiter={(},right delimiter={)},
              column sep=0.7em]{ 
                              \zero & 2 & \zero & \zero & 1 & 3 \\[0.5em]
                              2 & \zero & \zero & \zero & \zero & 1 \\[0.5em]
                              \zero & \zero & 1 & \zero & 4 & \zero \\
};
\matTopBorder {3,2,1} 2 2 {M5}
% do left border by hand
\node[font=\scriptsize,right] (\matName-1-0) [left=1.5em of
  \matName-1-1] {\bo{\mbasis 3 + \mbasis 2 + 4\mbasis 1}}; %Gets the left most column
\node[font=\scriptsize,right] (\matName-2-0) [left=1.5em of
  \matName-2-1] {\bo{\mbasis 3 + 2\mbasis 2 + 5 \mbasis 1}}; %Gets the left most column
\node[font=\scriptsize,right] (\matName-3-0) [left=1.5em of
  \matName-3-1] {\bo{\mbasis 1}}; %Gets the left most column
% reduced row
\reducedRowsWrong 2 2 6 {M5}

\end{scope}
\draw[->,draw=vlgrey,fill=vlgrey,shorten >=2em,shorten <=1.5em]
 (M) edge [out=25,in=155] (M2);
\draw[->,draw=vlgrey,fill=vlgrey,shorten >=2em,shorten <=1.5em]
 (M2) edge [out=-150,in=80] (M3);
\draw[->,draw=vlgrey,fill=vlgrey,shorten >=2em,shorten <=1.5em]
 (M3) edge [out=-25,in=205] (M4);
\draw[->,draw=vlgrey,fill=vlgrey,shorten >=2em,shorten <=0.5em]
 (M4) edge [out=-110,in=80] (M5);
\end{tikzpicture}
\end{center}
\caption{Computing the row echelon form of $M_2$.}
\label{fig:row-echelon-form-deg-2-mf5}
\end{figure}

The main idea of using Macaulay matrices is now to calculate all possible elements in $I$ for
a given degree $d$. In Buchberger's attempt (Theorem~\ref{thm:gb}) one considers
S-polynomials of degree $d$ and has to find reducers of these. Here we do not need to search
for such elements, all possible reducers are already in $M_d$. So we can focus
on the main question: How do signature-based criteria work to improve
\grobner{} basis computations?

Let us start with the lowest possible degree, $d=2$. Building the Macaulay matrix
$M_2$ in Figure~\ref{fig:row-echelon-form-deg-2-mf5} we label the rows by the
corresponding signatures. Throughout the steps of reducing $M_2$ we keep track
in the label of the rows what computational steps have been done.

One of the reduction steps differs, the last step: Looking at the label of the second
row after reducing it with the first row we see that there is a change:
\[ \mbasis 2 + \mbasis 1 \quad\Longrightarrow\quad \mbasis 3 + 2\mbasis 2 + 5\mbasis 1.\]
Since the labels change also in the other reduction steps, the question is, what
is special in this step ? Looking at the lead term of the module element we see the
difference: Before the reduction the label of the row has a lead term of
$\mbasis 2$ w.r.t. $\potl$, afterwards it is $\mbasis 3$. In none of the other
reduction steps above the lead term changed. And that is the general idea of
the signature: We want to easily keep track of where the new rows are coming
from. Storing the complete module representation as done above, the overhead of
computing a \grobner{} basis is too big (see also~\cite{mmtSyzygies1992}). Thus,
instead of keeping the full module representation, we only store the lead
term of it, the signature. Applied to our example above the last step would lead
to the situation illustrated in Figure~\ref{fig:signature-problem-deg-2-mf5}.

In other words, we would loose the connection between the second row and $\mbasis
2$ resp. $f_2$. As we see in following, to remember this connection is crucial
for the strength of signature-based criteria to remove useless computations.

We agree to not do any such reduction. In terms of the Macaulay matrix
this means that
\begin{enumerate}
\item rows are sorted from top to bottom by decreasing signatures, and
\item the row we reduce with must be below the row to be reduced.
\end{enumerate}

\begin{figure}[h]
\begin{center}
\hspace*{-15mm}
\begin{tikzpicture}[]
\begin{scope}[shift={(0,0)}]
\matrix (M4) [ matrix of math nodes,left delimiter={(},right delimiter={)},
              column sep=0.7em]{ 
                              \zero & 2 & \zero & \zero & 1 & 3 \\[0.5em]
                              2 & 3 & \zero & \zero & 4 & 3 \\[0.5em]
                              \zero & \zero & 1 & \zero & 4 & \zero \\
};
\matTopBorder {3,2,1} 2 2 {M4}
% do left border by hand
\node[font=\scriptsize,right] (\matName-1-0) [left=1.5em of
  \matName-1-1] {\bo{\mbasis 3}}; %Gets the left most column
\node[font=\scriptsize,right] (\matName-2-0) [left=1.5em of
  \matName-2-1] {\bo{\mbasis 2}}; %Gets the left most column
\node[font=\scriptsize,right] (\matName-3-0) [left=1.5em of
  \matName-3-1] {\bo{\mbasis 1}}; %Gets the left most column

\end{scope}
\begin{scope}[shift={(7,0)}]
\matrix (M5) [ matrix of math nodes,left delimiter={(},right delimiter={)},
              column sep=0.7em]{ 
                              \zero & 2 & \zero & \zero & 1 & 3 \\[0.5em]
                              2 & \zero & \zero & \zero & \zero & 1 \\[0.5em]
                              \zero & \zero & 1 & \zero & 4 & \zero \\
};
\matTopBorder {3,2,1} 2 2 {M5}
% do left border by hand
\node[font=\scriptsize,right] (\matName-1-0) [left=1.5em of
  \matName-1-1] {\bo{\mbasis 3}}; %Gets the left most column
\node[font=\scriptsize,right] (\matName-2-0) [left=1.5em of
  \matName-2-1] {\bo{\mbasis 3}}; %Gets the left most column
\node[font=\scriptsize,right] (\matName-3-0) [left=1.5em of
  \matName-3-1] {\bo{\mbasis 1}}; %Gets the left most column
% reduced row
\reducedRowsWrong 2 2 6 {M5}
\draw [draw=vlgrey,rounded corners=2pt,densely dashed]
  (M4-2-0.south east) rectangle (M4-2-0.north west);
\draw[->,draw=vlgrey,fill=vlgrey,shorten >=0.3em,shorten <=0.3em]
 (M4-2-0) edge [out=-150,in=-120] (M5-2-0);
\end{scope}
\end{tikzpicture}
\end{center}
\vspace*{-10mm}
\caption{Change of signature due to a reduction step.}
\label{fig:signature-problem-deg-2-mf5}
\end{figure}
% move next paragraph up a bit, too low due to above edge between signatures
%\vspace*{-10mm}

Thus for our purpose to keep the signatures, the row echelon form of $M_2$ 
received by restricting reductions is

\begin{center}
\begin{tikzpicture}[]
\begin{scope}[shift={(0,0)}]
\matrix (N) [ matrix of math nodes,left delimiter={(},right delimiter={)},
              column sep=0.7em]{ 
                              \zero & 2 & \zero & \zero & 1 & 3 \\[0.5em]
                              2 & 3 & \zero & \zero & 4 & 3 \\[0.5em]
                              \zero & \zero & 1 & \zero & 4 & \zero \\
};
% do left border by hand
\matBorder {3,2,1} 2 2 {N}
\node[font=\scriptsize,right] (\matName-1-0) [left=1.5em of
  \matName-1-1] {\bo{\mbasis 3}}; %Gets the left most column
\node[font=\scriptsize,right] (\matName-2-0) [left=1.5em of
  \matName-2-1] {\bo{\mbasis 2}}; %Gets the left most column
\node[font=\scriptsize,right] (\matName-3-0) [left=1.5em of
  \matName-3-1] {\bo{\mbasis 1}}; %Gets the left most column

\end{scope}
\end{tikzpicture}.
\end{center}

After computing the row echelon form $N_2$ of the Macaulay matrix $M_2$ we get
two new polynomials, namely $f_4 = 2xy+yz+3z^2$ and $f_5 = 2x^2+3xy+4yz+3z^2$,
corresponding to the first and the second row of $N_2$.
$f_3$ and $f_4$ have the same signature $\mbasis 3$, thus we can say that there
is a connection between them. The same holds for $f_2$ and $f_5$.

%%%%%%%%%%%%%%%%%%%%%%%%%%%%%%%%%%%%%%
% matrices of degree 3
%%%%%%%%%%%%%%%%%%%%%%%%%%%%%%%%%%%%%%
\begin{figure}[H]
\begin{center}
\begin{tikzpicture}[]
\matrix (M) [ matrix of math nodes,left delimiter={(},right delimiter={)},
              column sep=0.7em]{ 
3 & 4 & 2 & \zero & \zero & \zero & \zero  & \zero & \zero & \zero \\
\zero & 3 & 4 & 2 & \zero & \zero & \zero  & \zero & \zero & \zero \\
\zero & \zero & \zero & \zero  &3 & 4 & 2 &  \zero & \zero & \zero \\[0.5em]
2 & 3 & 4 & \zero & \zero & \zero & \zero  & 3     & \zero & \zero \\
\zero & 2 & 3 & 4 & \zero & \zero & \zero  & \zero & 3     & \zero \\
\zero & \zero & \zero & \zero  & 2 & 3& 4 & \zero & \zero & 3     \\[0.5em]
\zero & \zero & 1 & \zero & \zero & 4 & \zero  & \zero & \zero & \zero \\
\zero & \zero & \zero & 1 & \zero & \zero & 4 & \zero  & \zero & \zero \\
\zero & \zero & \zero & \zero & \zero & \zero & 1 & \zero & 4  & \zero \\
};

\matBorder {3,2,1} 3 2 M
\rewrite 4 3 M 7 3 M
\rewrite 5 4 M 8 4 M
\rewrite 6 7 M 9 7 M
\end{tikzpicture}
\end{center}
\caption{Initial Macaulay matrix $M_3$}
\label{fig:deg-3-mf5}
\end{figure}

\begin{figure}[H]
\begin{center}
\begin{tikzpicture}[]
\begin{scope}[shift={(0,0)}]
\matrix (M) [ matrix of math nodes,left delimiter={(},right delimiter={)},
              column sep=0.7em]{ 
3 & 4 & 2 & \zero & \zero & \zero & \zero  & \zero & \zero & \zero \\
\zero & 3 & 4 & 2 & \zero & \zero & \zero  & \zero & \zero & \zero \\
\zero & \zero & \zero & \zero  &3 & 4 & 2 &  \zero & \zero & \zero \\[0.5em]
2 & 3 & \zero & \zero & \zero & 4 & \zero  & 3     & \zero & \zero \\
\zero & 2 & 3 & \zero & \zero & \zero & 4  & \zero & 3     & \zero \\
\zero & \zero & \zero & \zero  & 2 & 3& \zero & \zero & 4 & 3     \\[0.5em]
\zero & \zero & 1 & \zero & \zero & 4 & \zero  & \zero & \zero & \zero \\
\zero & \zero & \zero & 1 & \zero & \zero & 4 & \zero  & \zero & \zero \\
\zero & \zero & \zero & \zero & \zero & \zero & 1 & \zero & 4  & \zero \\
};

\matBorder {3,2,1} 3 2 M

\rewriteRows 4 6 {10} M
\end{scope}
\begin{scope}[shift={(0,-6)}]
\matrix (N) [ matrix of math nodes,left delimiter={(},right delimiter={)},
              column sep=0.7em]{ 
\zero & 2 & \zero & \zero & \zero & 1 & \zero  & 3 & \zero & \zero \\
\zero & \zero & 2 & \zero & \zero & \zero & 1  & \zero & 3 & \zero \\
\zero & \zero & \zero & \zero  & \zero & 2 & \zero &  \zero & 1 & 3 \\[0.5em]
2 & 3 & \zero & \zero & \zero & 4 & \zero  & 3     & \zero & \zero \\
\zero & 2 & 3 & \zero & \zero & \zero & 4  & \zero & 3     & \zero \\
\zero & \zero & \zero & \zero  & 2 & 3& \zero & \zero & 4 & 3     \\[0.5em]
\zero & \zero & 1 & \zero & \zero & 4 & \zero  & \zero & \zero & \zero \\
\zero & \zero & \zero & 1 & \zero & \zero & 4 & \zero  & \zero & \zero \\
\zero & \zero & \zero & \zero & \zero & \zero & 1 & \zero & 4  & \zero \\
};

\matBorderNoName {3,2,1} 3 2 {N}

\rewriteRows 1 3 {10} {N}
\draw[->,draw=vlgrey,fill=vlgrey,shorten >=1.5em,shorten <=0em]
 (M) edge [out=-90,in=90] (N);
\end{scope}
\end{tikzpicture}
\end{center}
\caption{Rewriting rows: $f_2 \longrightarrow f_5$(top), $f_3 \longrightarrow
  f_4$ (bottom)}
\label{fig:rewrite-deg-3-mf5}
\end{figure}
At this point we have not done any reduction in $M_3$ but just used the
information stored in the signatures. Let us rearrange the rows of $M_3$ to see
how near we are already to a row echelon form:
\begin{center}
\begin{tikzpicture}
\begin{scope}[shift={(0,0)}]
\matrix (N) [ matrix of math nodes,left delimiter={(},right delimiter={)},
              column sep=0.7em]{ 
2 & 3 & \zero & \zero & \zero & 4 & \zero  & 3     & \zero & \zero \\[0.2em]
\zero & 2 & \zero & \zero & \zero & 1 & \zero  & 3 & \zero & \zero \\[0.2em]
\zero & 2 & 3 & \zero & \zero & \zero & 4  & \zero & 3     & \zero \\[0.2em]
\zero & \zero & 2 & \zero & \zero & \zero & 1  & \zero & 3 & \zero \\[0.2em]
\zero & \zero & 1 & \zero & \zero & 4 & \zero  & \zero & \zero & \zero \\[0.2em]
\zero & \zero & \zero & 1 & \zero & \zero & 4 & \zero  & \zero & \zero \\[0.2em]
\zero & \zero & \zero & \zero  & 2 & 3& \zero & \zero & 4 & 3     \\[0.2em]
\zero & \zero & \zero & \zero  & \zero & 2 & \zero &  \zero & 1 & 3 \\[0.2em]
\zero & \zero & \zero & \zero & \zero & \zero & 1 & \zero & 4  & \zero \\
};

\matTopBorder {3,2,1} 3 2 {N}
% do left border by hand
\node[font=\scriptsize,right] (\matName-1-0) [left=1.5em of
  \matName-1-1] {\bo{x \mbasis 2}}; %Gets the left most column
\node[font=\scriptsize,right] (\matName-2-0) [left=1.5em of
  \matName-2-1] {\bo{x \mbasis 3}}; %Gets the left most column
\node[font=\scriptsize,right] (\matName-3-0) [left=1.5em of
  \matName-3-1] {\bo{y \mbasis 2}}; %Gets the left most column
\node[font=\scriptsize,right] (\matName-4-0) [left=1.5em of
  \matName-4-1] {\bo{y \mbasis 3}}; %Gets the left most column
\node[font=\scriptsize,right] (\matName-5-0) [left=1.5em of
  \matName-5-1] {\bo{x \mbasis 1}}; %Gets the left most column
\node[font=\scriptsize,right] (\matName-6-0) [left=1.5em of
  \matName-6-1] {\bo{y \mbasis 1}}; %Gets the left most column
\node[font=\scriptsize,right] (\matName-7-0) [left=1.5em of
  \matName-7-1] {\bo{z \mbasis 2}}; %Gets the left most column
\node[font=\scriptsize,right] (\matName-8-0) [left=1.5em of
  \matName-8-1] {\bo{z \mbasis 3}}; %Gets the left most column
\node[font=\scriptsize,right] (\matName-9-0) [left=1.5em of
  \matName-9-1] {\bo{z \mbasis 1}}; %Gets the left most column

\completelyRestricted 1 2 {10} {N}
\restricted 4 4 {10} {N}
\restricted 7 8 {10} {N}
\notRestricted 3 3 {10} {N}
\notRestricted 5 6 {10} {N}
\notRestricted 9 9 {10} {N}
\draw[draw=ggrey,shorten <=0.2em]
 (N-1-1.north west) -- (N-1-1.south west) -- (N-1-2.south west) --
 (N-3-2.south west) -- (N-3-3.south west) --
 (N-5-3.south west) -- (N-5-4.south west) --
 (N-6-4.south west) -- (N-6-5.south west) --
 (N-7-5.south west) -- (N-7-6.south west) --
 (N-8-6.south west) -- (N-8-7.south west) --
 (N-9-7.south west);

\fill[fill=vlgrey, fill opacity=0.3]
 (N-1-1.north west) -- (N-1-1.south west) -- (N-1-2.south west) --
 (N-3-2.south west) -- (N-3-3.south west) --
 (N-5-3.south west) -- (N-5-4.south west) --
 (N-6-4.south west) -- (N-6-5.south west) --
 (N-7-5.south west) -- (N-7-6.south west) --
 (N-8-6.south west) -- (N-8-7.south west) --
 (N-9-7.south west) -- (N-9-1.south west) -- (N-2-1.north west);

\draw[->,draw=vlgrey,shorten <=0.3em, shorten >=0.3em]
  (N-3-0) edge [out=170,in=190] (N-1-0);
\draw[->,draw=vlgrey,shorten <=0.3em, shorten >=0.3em,densely dashed]
  (N-8-0) edge [out=170,in=190] node[pos=0.5,sloped,text=vlgrey]
{\(\tiny\hspace*{-3mm}\not\;\not\)} (N-5-0);
\end{scope}
\end{tikzpicture}
\end{center}

Next we can go on with degree $3$. Generating $M_3$ we get all multiples
$xf_i$, $yf_i$ and $zf_i$ for $1\leq i \leq 3$ as it is shown in
Figure~\ref{fig:deg-3-mf5}. Looking at $M_3$ more closely we see some relation
to $M_2$. The three steps highlighted correspond to reduction
steps that have already occured in degree $2$:
\begin{align*}
x\mbasis 2 - x \mbasis 1 &= x (\mbasis 2 - \mbasis 1)\\
y\mbasis 2 - y \mbasis 1 &= y (\mbasis 2 - \mbasis 1)\\
z\mbasis 2 - z \mbasis 1 &= z (\mbasis 2 - \mbasis 1).
\end{align*}
Since we have done these reductions already it makes sense to not redo them
again, but use the information from $M_2$. We know that $f_5$ comes from $f_2$,
both share the same signature. So we just \emph{rewrite} $xf_2,yf_2,zf_2$ by
$xf_5,yf_5,zf_5$ in
$M_3$, respectively. The very same holds for $f_3$ and $f_4$.
Figure~\ref{fig:rewrite-deg-3-mf5} illustrates this process.

Only rewriting $f_2$ and $f_3$ with ``better'' elements lead to this matrix in
near row echelon form. Again, note that not all elements in the above picture
are allowed to reduce freely: The rows highlighted in green can reduce any other
row above them. So, for example, the row with signature $y \mbasis 2$ can reduce
the rows with signatures $x \mbasis 3$ and $x \mbasis 2$, respectively. In none of
these reductions the signature of any row changes. On the other hand, the row
labelled by signature $z \mbasis 3$ is not allowed to reduce the row labelled by
$x \mbasis 1$. Otherwise the signature might change. Nevertheless, this row is
still allowed to reduce the one labelled with $x \mbasis 3$. Thus we highlighted
this row in yellow to illustrate this restriction. The row labelled with $x
\mbasis 3$, and highlighted in red, is not allowed to reduce any other row.
$x \mbasis 3$ is the highest signature in degree $3$ w.r.t. $\potl$, thus any
reduction of another row would lead to a change in signatures.

Executing all not signature changing reduction steps we end up with a
\grobner{} basis up to degree $3$ represented by the row
echelon form

\begin{center}
\begin{tikzpicture}
\begin{scope}[shift={(0,0)}]
\matrix (N) [ matrix of math nodes,left delimiter={(},right delimiter={)},
              column sep=0.7em]{ 
2 & \zero & \zero & \zero & \zero & 4 & \zero  & 3     & 3 & 3 \\[0.2em]
\zero & 2 & \zero & \zero & \zero & \zero & \zero  & \zero & 3 & 3 \\[0.2em]
\zero & \zero & 1 & \zero & \zero & \zero &  \zero  & \zero & 3 & 4 \\[0.2em]
\zero & \zero & \zero & 1 & \zero & \zero & \zero  & \zero & 4 & \zero \\[0.2em]
\zero & \zero & \zero & \zero  & 2 & 3& \zero & \zero & 4 & 3     \\[0.2em]
\zero & \zero & \zero & \zero  & \zero & 2 & \zero &  \zero & 1 & 3 \\[0.2em]
\zero & \zero & \zero & \zero & \zero & \zero & 1 & \zero & 4  & \zero \\
\zero & \zero & \zero & \zero & \zero & \zero & \zero & 3  & \zero & 2\\
\zero & \zero & \zero & \zero & \zero & \zero & \zero & \zero & 3 & 2\\
};

\matTopBorder {3,2,1} 3 2 {N}
% do left border by hand
\node[font=\scriptsize,right] (\matName-1-0) [left=1.5em of
  \matName-1-1] {\bo{x \mbasis 2}}; %Gets the left most column
\node[font=\scriptsize,right] (\matName-2-0) [left=1.5em of
  \matName-2-1] {\bo{y \mbasis 2}}; %Gets the left most column
\node[font=\scriptsize,right] (\matName-3-0) [left=1.5em of
  \matName-3-1] {\bo{x \mbasis 1}}; %Gets the left most column
\node[font=\scriptsize,right] (\matName-4-0) [left=1.5em of
  \matName-4-1] {\bo{y \mbasis 1}}; %Gets the left most column
\node[font=\scriptsize,right] (\matName-5-0) [left=1.5em of
  \matName-5-1] {\bo{z \mbasis 2}}; %Gets the left most column
\node[font=\scriptsize,right] (\matName-6-0) [left=1.5em of
  \matName-6-1] {\bo{z \mbasis 3}}; %Gets the left most column
\node[font=\scriptsize,right] (\matName-7-0) [left=1.5em of
  \matName-7-1] {\bo{z \mbasis 1}}; %Gets the left most column
\node[font=\scriptsize,right] (\matName-8-0) [left=1.5em of
  \matName-8-1] {\bo{x \mbasis 3}}; %Gets the left most column
\node[font=\scriptsize,right] (\matName-9-0) [left=1.5em of
  \matName-9-1] {\bo{y \mbasis 3}}; %Gets the left most column

\node[right] (mat-node) [left=3.0em of \matName] {$\matName_\d =$};

\draw[draw=ggrey]
 (N-1-1.south west) -- (N-1-2.south west) --
 (N-2-2.south west) -- (N-2-3.south west) --
 (N-3-3.south west) -- (N-3-4.south west) --
 (N-4-4.south west) -- (N-4-5.south west) --
 (N-5-5.south west) -- (N-5-6.south west) --
 (N-6-6.south west) -- (N-6-7.south west) --
 (N-7-7.south west) -- (N-7-8.south west) --
 (N-8-8.south west) -- (N-8-9.south west) --
 (N-9-9.south west);

\fill[fill=vlgrey, fill opacity=0.3]
 (N-1-1.south west) -- (N-1-2.south west) --
 (N-2-2.south west) -- (N-2-3.south west) --
 (N-3-3.south west) -- (N-3-4.south west) --
 (N-4-4.south west) -- (N-4-5.south west) --
 (N-5-5.south west) -- (N-5-6.south west) --
 (N-6-6.south west) -- (N-6-7.south west) --
 (N-7-7.south west) -- (N-7-8.south west) --
 (N-8-8.south west) -- (N-8-9.south west) --
 (N-9-9.south west) -- (N-9-1.south west) --
 (N-1-1.south west);

\end{scope}
\end{tikzpicture}.
\end{center}

Next we are computing a \grobner{} basis for $\ideal{f_1,f_2,f_3}$ up to degree
$4$. Again we generate the matrix $M_4$ building all combinations of monomials
of degree $2$ and $f_i$ for $1\leq i \leq 3$. This time we note that $M_4$
consists of $\binom 3 2 \cdot 3 = 18$ rows and $\binom 3 4 = 15$ columns. This means
that when we are reducing $M_4$ we might end up with rows that reduced to zero
(or rows that are multiples of others due to the restricted reduction process).
Nevertheless, these rows correspond to useless steps during a \grobner{} basis
algorithm. So how can we find out which to remove?

A polynomial reduction to zero corresponds to a syzygy in $\ring^3$. There are
principal (or trivial syzygies) we know already without any previous
computations: $f_1 \mbasis 2 - f_2 \mbasis 1$, $f_1 \mbasis 3 - f_3 \mbasis 1$ and 
$f_2 \mbasis 3 - f_3 \mbasis 2$.
Let us look at the signatures of these syzygies w.r.t. $\potl$:
\begin{center}
$
\setlength\arraycolsep{0.5em}
\begin{array}[]{ccccc}
\sig{f_1 \mbasis 2 - f_2 \mbasis 1} &=& \hd{f_1} \mbasis 2
&=& y^2 \mbasis 2 \\
\sig{f_1 \mbasis 3 - f_3 \mbasis 1} &=& \hd{f_1} \mbasis 3
&=& y^2 \mbasis 3 \\
\sig{f_2 \mbasis 3 - f_3 \mbasis 2} &=& \hd{f_2} \mbasis 3
&=& x^2 \mbasis 3.
\end{array}
$
\end{center}
We have seen in the degree $3$ case that we can rewrite elements with a given
signature $T$ by other elements that have the same signature. Thus for $T \in
\{y^2 \mbasis 2, y^2 \mbasis 3, x^2 \mbasis 3\}$ we can just use the above
syzygies resp. corresponding trivial relations in $\ring$. So the following $3$
elements are exchanged, respectively, in $M_4$
\begin{center}
$
\setlength\arraycolsep{0.5em}
\begin{array}[]{ccc}
y^2 f_2 &\longrightarrow& f_1 f_2 - f_2 f_1,\\
y^2 f_3 &\longrightarrow& f_1 f_3 - f_3 f_1,\\
x^2 f_3 &\longrightarrow& f_2 f_3 - f_3 f_2.
\end{array}
$
\end{center}
This means that we would add rows that have only zero entries for $y^2 \mbasis
2$, $y^2 \mbasis 3$ and $x^2 \mbasis 3$. Those rows do not play any role during
the reduction process of $M_4$, so we can remove them directly from the matrix.
In the end we receive a matrix $M_4$ of dimensions $15 \times 15$, thus we
know that when reducing $M_4$ to its row reduced echelon form $N_4$, all rows
are useful. Clearly, as we have done for $M_3$, we try to rewrite the
$15$ rows remaining in $M_4$ that correspond to elements $x^j y^k z^\ell f_i$
with $1\leq i \leq 3$ and $j+k+\ell = 2$ with elements from $N_2$ and $N_3$
in order to not repeat calculations already done at a lower degree. Computing
the row echelon form of $M_4$ we then receive a \grobner{} basis for
$\ideal{f_1,f_2,f_3}$ up to degree $4$.

Let us try to summarize the main ideas behind using signatures when computing
\grobner{} bases:
\begin{itemize}
\item Try to rewrite data and reuse already done calculations.
\item Keep track of this rewriting by not changing the signatures during
  the reduction process.
\item If the rewritten data is trivial resp. corresponds to a syzygy
  (relations that are already known) then discard this data.
\end{itemize}

\begin{remark}
Note that building Macaulay matrices as done in \mff{} is useful and efficient
only if the corresponding polynomial system is dense. Otherwise it makes more
sense to combine Buchberger's S-polynomial attempt with linear algebra. That
means, one first searches for all S-polynomials in a given degree $d$ and
\emph{all} needed reducers and generates a corresponding matrix afterwards. This
is the main idea behind Faug\`ere's \ffo{} algorithm (\cite{fF41999}).
In Section~\ref{sec:f4-f5} we present an efficient way of combining
signature-based criteria for discarding useless data with \ffo{}.
\end{remark}

With this in mind we are able to give a more theoretical introduction to
signature-based \grobner{} basis computations in the vein of Buchberger's
algorithm.

\section{\grobner{} bases with signatures}
\label{sec:signature-gb}

In this section we give an introduction to signature-based \grobner{} basis
algorithms from a mathematical point of view. Thus the content is dedicated to a
complete and correct description of the algorithms' underlying ideas.
Motivated by the specialized row echelon forms we presented in
Section~\ref{sec:matrix-f5} the notion of a polynomial reduction process
taking care of the signatures is introduced. Connections and differences to
classic polynomial \grobner{} basis theory are explained in detail.

Readers interested in the optimized variants only might skip most of this
section, but should at least consider notations introduced in
Section~\ref{sec:notations} and here as we agree on those throughout the paper.

\subsection{Signature reduction}
\label{sec:sreduction}
In order to keep track of the signatures when reducing corresponding polynomial
data we need to adjust Definition~\ref{def:reduce}. Sloppy speaking we get a
classic polynomial reduction together with a further condition.

\begin{definition}
\label{def:sreduce}
Let $\alpha\in \module$ and let $t$ be a term of~$\proj\alpha$. Then we
can \emph{\sreduce{}} $t$ by $\beta\in \module$ if
\begin{enumerate}
\item\label{cond:preduce} there exists a monomial $b$ such that $\hdp{b\beta}=t$ and
\item\label{cond:sreduce} $\sig{b\beta}\leq\sig\alpha$.
\end{enumerate}
The outcome of the \sreduction{} step is then $\alpha-b\beta$ and
$\beta$ is called the \emph{\sreducer{}}.  When $\beta$ \sreduces{} $t$ we also say for convenience
that $b\beta$ \sreduces{} $\alpha$. That way $b$ is introduced
implicitly instead of having to repeat the equation $\hdp{b\beta}=t$.
\end{definition}

\begin{remark}
Note that Condition~(\ref{cond:preduce}) from Definition~\ref{def:sreduce} defines
a classic polynomial reduction step (see~\ref{def:reduce}). It implies that $\hdp{b\beta} \leq \hdp{\alpha}$.
Moreover, Condition~(\ref{cond:sreduce}) lifts the above implication to $\module$ so that it
involves signatures. Since we are interested in computing \grobner{} Bases in
$\ring$ one
can interpret an \sreduction{} of $\alpha$ by $\beta$ as classic polynomial
reduction of $\proj\alpha$ by $\proj\beta$ together with
Condition~(\ref{cond:sreduce}). Thus an \sreduction{} represents a connection
between data in $\ring$ and corresponding data in $\module$ when a polynomial reduction
takes place.
\end{remark}

Just as for classic polynomial reduction, if
$\hdp{b\beta}\simeq\hdp\alpha$ then the \sreduction{} step is a
\emph{top \sreduction{} step} and otherwise it is a \emph{tail
\sreduction{} step}. Analogously we define the distinction for
signatures: If $\sig{b\beta}\simeq\sig\alpha$ then the reduction
step is a \emph{singular \sreduction{} step} and otherwise it is a
\emph{regular \sreduction{} step}.

The result of \sreduction{} of $\alpha\in \module$ is a $\gamma\in \module$
that has been calculated from $\alpha$ through a sequence of
\sreduction{} steps such that $\gamma$ cannot be further
\sreduced{}. The reduction is a \emph{tail \sreduction} if only tail
\sreduction{} steps are allowed and it is a \emph{top \sreduction{}}
if only top \sreduction{} steps are allowed. The reduction is a
\emph{regular \sreduction} if only regular \sreduction{} steps are
allowed. A module element $\alpha\in \module$ is \emph{\sreducible{}} if
it can be \sreduced{}.

If $\alpha$ \sreduces{} to $\gamma$ and
$\gamma$ is a syzygy then we say that $\alpha$ \emph{\sreduces{} to
zero} even if $\gamma\neq 0$.

\begin{example}
\label{ex:sreduction}
Assume an ideal $I=\langle f_1,f_2,f_3 \rangle \subset \ring = \Q[x,y,z,t]$ with
$f_1 = xyz - z^2t$, $f_2 = x^2y-y^3$ and $f_3 = y^3-zt^2-t^3$.
Furthermore, $<$ denotes the graded reverse lexicographical monomial order
which we extend to $\potl$ on the set of monomials of $\ring^3$.
Clearly, we have $\alpha_i = \mbasis i$ with $\proj{\alpha_i} = f_i$ for $i \in
\{1,2,3\}$. We start with $\basis = \{\alpha_1,\alpha_2,\alpha_3\}$.

Looking at $z\alpha_2$ we can regular top \sreduce{} $\hdp{z\alpha_2}$
with $x\alpha_1$ since $\hdp{x\alpha_1} = \hdp{z\alpha_2}$ and
$\sig{x\alpha_1} \potl \sig{z\alpha_2}$.
Call the resulting element $\alpha_4 = z\alpha_2 - x\alpha_1$. We can see that
we cannot further \sreduce{} $\proj{\alpha_4} = -y^3z+xz^2t$: The only
possible candidate is $\alpha_3$ but $\sig{z\alpha_3} = z\mbasis 3 \potg
z\mbasis 2 =
\sig{\alpha_4}$. Note that $\proj{\alpha_4} + z\proj{\alpha_3}$ would be a correct
classical polynomial reduction step, but it contradicts
Condition~(\ref{cond:sreduce}) of an \sreduction{}.
On the other hand, adding $\alpha_4$ to $\basis$ we are able to regular top
\sreduce{} $z\alpha_3$ w.r.t. $\basis$, namely by $\alpha_4$. We see that
whereas from a pure polynomial point of view reducing
$\proj{\alpha_4}+z\proj{\alpha_3}$ is the same as
$z\proj{\alpha_3}+\proj{\alpha_4}$ taking
the signatures into account destroys this equality. Only the second operation is
a valid \sreduction{}.

Again, we can regular top \sreduce{} $x\alpha_4$ with $y^2\alpha_1$. This gives
a new element $\alpha_5 = x\alpha_4 + y^2\alpha_1$ whereas $\proj{\alpha_5} =
x^2z^2t-y^2z^2t$.

Looking at $x^2\proj{\alpha_4} = -x^2y^3z+x^3z^2t$ one can use $\hdp{x\alpha_5}$
to tail \sreduce{}. Note that this \sreduction{} is singular due to
$\sig{x\alpha_5} = x^2z \mbasis 2 = \sig{x^2\alpha_4}$. In other words,
$x^2\alpha_4 - x\alpha_5 = (x^2z\mbasis 2 - x^3\mbasis 1) - (x^2z\mbasis 2 -
  x^3\mbasis 1 + xy^2\mbasis 1) = -xy^2\mbasis 1$.
Thus we see that $x^2\alpha_4$ \sreduces{} to a syzygy $\gamma = x^2\alpha_4 -
x\alpha_5 - x^2y\alpha_1$.
\end{example}

\begin{remark} \
\begin{enumerate}
\item The implied condition
$\hdp{b\beta}\leq\hdp\alpha$ is equivalent to
$\hdp{\alpha-b\beta}\leq \hdp\alpha$, so during \sreduction{} it is
not allowed to increase the lead term. For tail \sreduction{} we
perform only those \sreduction{} steps that do not change the lead
term at all. Analogously, the condition $\sig{b\beta}\leq\sig\alpha$
is equivalent to $\sig{\alpha-b\beta}\leq\sig{\alpha}$, so during
\sreduction{} it is not allowed to increase the signature. For regular
\sreduction{}, we perform only those \sreduction{} steps that do not
change the signature at all.
\item Note that by Lemma~15 in \cite{epSig2011} the notion of ``being singular top
\sreducible{}'' is equivalent to what is sometimes in the literature also called
``sig-redundant''.
\end{enumerate}
\end{remark}

Note that analogously to the classic polynomial reduction \sreduction{}
is always with respect to a finite \emph{basis} $\basis\subseteq
\module$. The \sreducers{} in \sreduction{} are chosen from the basis
$\basis$.

\subsection{Signature \grobner{} bases}
\label{sec:sgb}
Having defined a polynomial reduction process taking signatures into account we
are now able to define signature \grobner{} bases analogously to classic
polynomial \grobner{} bases.

\begin{definition}
\label{def:sgb}
Let $I= \langle \gen f \rangle$ be an ideal in $\ring$.
A finite subset \basis{} of $\module$ is a \emph{signature \grobner{} basis in signature $T$} for $I$ if
all $\alpha\in \module$ with $\sig\alpha=T$ \sreduce{} to zero w.r.t.
\basis{}.
\basis{}
is a \emph{signature \grobner{} basis up to signature $T$} for $I$ if \basis{}
is a signature \grobner{} basis in all signatures $S$ such that
$S<T$.
\basis{} is a \emph{signature \grobner{} basis} for $I$ if it is a
signature \grobner{} basis for $I$ in all signatures.
\end{definition}

\begin{lemma}
Let $I= \langle \gen f \rangle$ be an ideal in $\ring$.
If \basis{} is a signature \grobner{} basis for $I$ then
$\setBuilder{\proj\alpha}{\alpha\in\basis{}}$ is a \grobner{} basis for $I$.
\end{lemma}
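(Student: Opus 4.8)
The plan is to reduce the statement to Buchberger's criterion (Theorem~\ref{thm:gb}), or equivalently to the standard-representation characterization of \grobner{} bases recalled just before Definition~\ref{def:spoly}. Write $G := \setBuilder{\proj\alpha}{\alpha\in\basis}$. First I would check $G \subseteq I$: each $\alpha \in \basis \subseteq \module$ satisfies $\proj\alpha = \sum a_i f_i \in \ideal{\gen f} = I$ by definition of the map $\alpha \mapsto \proj\alpha$, so $\ideal G \subseteq I$. For the reverse inclusion $I \subseteq \ideal G$ it suffices to note that $\basis$ being a signature \grobner{} basis forces $\mbasis i$ (or some module element with image $f_i$) to be handled: more carefully, one usually assumes a signature \grobner{} basis contains, or \sreduces{} correctly, the generators $\alpha_i = \mbasis i$; since $\sig{\mbasis i} = \mbasis i$ and $\mbasis i$ \sreduces{} to zero or to a module element whose image lies in $\ideal G$, one gets $f_i \in \ideal G$ for each $i$, hence $I = \ideal G$.

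The core of the argument is showing $G$ is a \grobner{} basis for $\ideal G$. I would argue that every nonzero $f \in I$ has a standard representation with respect to $G$. Pick any $\alpha \in \module$ with $\proj\alpha = f$ (such $\alpha$ exists since $f \in I$); among all such preimages, choose one with $\sig\alpha$ minimal — this is legitimate because the module monomial order is a well-order. Since $\basis$ is a signature \grobner{} basis in signature $\sig\alpha$, $\alpha$ \sreduces{} to zero, i.e. there is a chain of \sreduction{} steps $\alpha \to \alpha - b_1\beta_1 \to \cdots$ ending in a syzygy $\gamma$. Tracking the polynomial side, each step replaces $\proj\alpha$ by $\proj\alpha - b_j\proj{\beta_j}$ with $\hd{b_j\proj{\beta_j}} \leq \hdp\alpha$ (this is exactly Condition~(\ref{cond:preduce}) together with the remark that it implies $\hdp{b\beta} \le \hdp\alpha$), and the $\proj{\beta_j}$ lie in $G$. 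Since the chain terminates at a syzygy $\gamma$ with $\proj\gamma = 0$, summing the steps yields $f = \proj\alpha = \sum_j b_j \proj{\beta_j}$, and every term $\hd{b_j \proj{\beta_j}}$ is $\leq \hd f$. After collecting terms with equal reducers and discarding cancellations, this is a standard representation of $f$ over $G$. By the characterization recalled after Definition~\ref{def:standard-representation}, $G$ is a \grobner{} basis for $\ideal G = I$.

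The main obstacle I anticipate is the bookkeeping needed to turn the \sreduction{}-to-zero chain into a genuine standard representation: one must ensure the \sreduction{} sequence is finite (it is, since \sreduction{} terminates by definition of "result of \sreduction{}", and one can always complete to a full reduction since being reducible to a non-syzygy would contradict that $\gamma$ is the final element — here one needs that a non-syzygy residue could itself be further handled, which is where the well-ordering / induction on signature is actually used), and one must handle the possibility that intermediate $\proj{\beta_j}$ could coincide or that leading terms cancel. A cleaner route avoiding the explicit choice of minimal-signature preimage is a Noetherian induction: assume every $g \in I$ with $\hd g < \hd f$ has a standard representation, take any preimage $\alpha$ of $f$, \sreduce{} it one step using some $\beta \in \basis$ (which exists because otherwise $f = \proj\alpha$ would already be $\leq$-reduced against $G$ in a way that still needs the signature hypothesis to rule out), and apply induction to the strictly smaller tail. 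Either way the key input is precisely Definition~\ref{def:sgb} applied in the signature $\sig\alpha$, and the compatibility of the two orders guarantees the polynomial-side leading-term bound needed for the standard representation.
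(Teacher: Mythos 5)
Your core idea is right and is essentially the standard argument: take any preimage $\alpha\in\module$ of $f$, let $\basis$ being a signature \grobner{} basis in signature $\sig\alpha$ hand you a chain of \sreduction{} steps from $\alpha$ to a syzygy, and project that chain to $\ring$. Note, however, that the paper itself does not spell out a proof -- it refers to Section~2.2 of \cite{rs-2012} -- so there is no ``paper's proof'' to match step by step.

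Two remarks on your write-up. First, the detour through standard representations is both unnecessary and slightly gappy. Unnecessary because Definition~\ref{def:gb} defines a \grobner{} basis directly by ``every $f\in I$ reduces to zero w.r.t.\ $G$,'' and the projected chain $\proj\alpha \to \proj\alpha - b_1\proj{\beta_1} \to \cdots \to \proj\gamma = 0$ is \emph{verbatim} such a reduction to zero (each \sreduction{} step, stripped of its signature condition, is exactly a polynomial reduction step in the sense of Definition~\ref{def:reduce}, and each $\proj{\beta_j}$ lies in $G$). Gappy because ``after collecting terms with equal reducers'' you get coefficients that are in general polynomials, not monomials, so the result is not a standard representation in the sense of Definition~\ref{def:standard-representation}, which insists on monomial coefficients $m_i$ and pairwise-different $g_i$. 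You can repair this (the leading term of a sum of monomials is bounded by the maximum of the individual leading terms), but the cleanest path is just to stop at the reduction-to-zero chain and cite Definition~\ref{def:gb} directly.

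Second, the choice of a $\sigSym$-minimal preimage is superfluous: the hypothesis is that $\basis$ is a signature \grobner{} basis \emph{in every signature}, so any $\alpha$ with $\proj\alpha=f$ already \sreduces{} to zero. The minimal-signature device would be needed only under a weaker hypothesis (e.g.\ a basis up to some signature $T$), which is not the setting here. Likewise, the worry about $\ideal G = I$ can be dispensed with: Definition~\ref{def:gb} only asks for $G\subset I$ and reduction to zero of every $f\in I$, and the first is immediate from $\proj\alpha = \sum_i a_i f_i \in I$.

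Finally, your appeal to a well-ordering / Noetherian induction on the leading term is the right instinct for \emph{why} the \sreduction{} chain is finite, but it is not needed as a separate argument here -- the paper's definition of ``the result of \sreduction{}'' already packages termination, and the projected chain inherits finiteness automatically.
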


\begin{proof}
For example, see Section~$2.2$ in \cite{rs-2012}.\qed
\end{proof}

\begin{convention}
In the following, when denoting $\basis{}\subseteq\module$ ``a signature
\grobner{} basis (up to signature $T$)'' we always mean ``a signature
\grobner{} basis (up to signature $T$) for $I=\langle \gen f \rangle$''. We omit
the explicit notion of the input ideal whenever it is clear from the context.
\end{convention}

As in the classic polynomial setting we want to give an algorithmic description
of signature \grobner{} bases. For this we introduce the notion of S-pairs,
similar to Definition~\ref{def:spoly}.

\begin{definition}\
\begin{enumerate}
\item Let $\alpha,\beta \in \module$ such that $\proj\alpha \neq 0$, $\proj\beta
\neq 0$ and let
$\lambda=\lcm\left(\hdp\alpha,\hdp\beta\right)$ be the monic least common
multiple of $\hdp\alpha$ and $\hdp\beta$. The \emph{S-pair} between $\alpha$ and
$\beta$ is given by
\[\spair\alpha\beta \defeq \frac{\lambda}{\hdp\alpha}\alpha -
\frac{\lambda}{\hdp\beta}\beta.\]
\item $\spair\alpha\beta$ is \emph{singular} if
$\sig{\frac{\lambda}{\hdp\alpha}\alpha} \simeq \sig{\frac{\lambda}{\hdp\beta}\beta}$.
Otherwise it is \emph{regular}.
\end{enumerate}
\end{definition}

Note that $\spair\alpha\beta \in \module$ and $\proj{\spair\alpha\beta} =
\spoly{\proj\alpha}{\proj\beta}$.

\begin{theorem}
\label{thm:spairs}
Let $T$ be a module monomial of $\module$ and let $\basis{}\subseteq \module$ be a
finite basis. Assume that all regular S-pairs $\spair\alpha\beta$ with
$\alpha,\beta\in\basis$ and $\sig{\spair\alpha\beta} < T$ \sreduce{} to zero and all
$\mbasis i$ with $\mbasis i<T$ \sreduce{} to zero. Then $\basis$ is a
signature \grobner{} basis up to signature $T$.
\end{theorem}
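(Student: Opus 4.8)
The plan is to prove the contrapositive-style statement by a standard ``minimal counterexample in the signature order'' argument, exactly as one does for Buchberger's criterion but lifted to the module. First I would fix a signature $S < T$ and take an arbitrary $\alpha \in \module$ with $\sig\alpha = S$; I must show $\alpha$ \sreduces{} to zero w.r.t. $\basis$. I would set this up by well-founded induction on $S$ using the module monomial order (which is a well-order): assume $\basis$ is already known to be a signature \grobner{} basis up to signature $S$, i.e. every module element of signature strictly below $S$ \sreduces{} to zero, and the two hypotheses (all regular S-pairs of signature $<T$ \sreduce{} to zero, all $\mbasis i < T$ \sreduce{} to zero) are in force since $S<T$.

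The heart of the argument is the following. Given $\alpha$ with $\sig\alpha = S$, I would top-\sreduce{} it as far as possible; if it \sreduces{} to a syzygy we are done, so assume the \sreduced{} result $\gamma$ (with $\sig\gamma \le S$) is not a syzygy and cannot be top-\sreduced{}. If $\sig\gamma < S$ then by the induction hypothesis $\gamma$ \sreduces{} to zero and hence so does $\alpha$. So the remaining case is $\sig\gamma \simeq S$ and $\gamma$ not top-\sreducible{}. Now I would compare $\gamma$ against the elements of $\basis$ realizing the signature: among all $\beta \in \basis$ and monomials $b$ with $\sig{b\beta} = S$ (these exist because either $S$ is a multiple of some $\sig{\alpha_j}$ with $\alpha_j\in\basis$, reached by the standard-basis generators, or $S = \mbasis i$ which \sreduces{} to zero by hypothesis), pick one with $\hdp{b\beta}$ minimal. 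Then $\gamma - b\beta$ has signature strictly below $S$ (the top signature terms cancel since both equal $\const S$ for suitable constants), so $\gamma - b\beta$ \sreduces{} to zero by induction; thus it suffices to show $b\beta$ itself ``behaves well''. If $\hdp{b\beta} = \hdp\gamma$ this would be a singular top \sreduction{}, contradicting that $\gamma$ is not top-\sreducible{} only if it is also forbidden — here one uses that singular top reductions don't change the signature — actually the cleaner route is: if $\hdp{b\beta}=\hdp\gamma$ we directly get that $\gamma$ \sreduces{} to $\gamma - b\beta$ which has lower signature, done; if $\hdp{b\beta} \neq \hdp\gamma$ then (after rescaling) $\gamma - b\beta$ is, up to a unit, an S-pair between $b\beta$ and whatever $\basis$-element top-reduces $\hdp\gamma$ — this is where the regular S-pair hypothesis enters.

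More precisely, the key step is to recognize the obstruction as a regular S-pair of signature $< T$. Since $\gamma$ is not a syzygy, $\hdp\gamma$ is a genuine lead term; pick $\delta \in \basis$ and a monomial $c$ with $\hdp{c\delta} = \hdp\gamma$ and $\sig{c\delta} \le \sig\gamma = S$ (such a reducer exists because $\gamma$ being not-top-\sreducible{} forces the only candidates to have too-large signature — if no reducer of $\hdp\gamma$ existed at all then $\gamma$ would witness that $\basis$ fails even as an ordinary \grobner{} basis in this degree, but actually since $\gamma-b\beta$ \sreduces{} to zero and $b\beta$'s leading behavior is controlled, $\hdp\gamma \in L(\{\proj\beta\})$). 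Then $\spair{b\beta}{c\delta}$, or rather an appropriate monomial multiple of a regular S-pair among $\basis$-elements producing signature $\le S < T$, equals up to sign and scalar the element $b\beta - c\delta$, whose reduction behavior I need. By hypothesis that regular S-pair \sreduces{} to zero; combining with the facts that $\gamma - b\beta$ and $\gamma - c\delta$ have signature $< S$ (so \sreduce{} to zero), I can assemble a chain showing $\gamma$, and hence $\alpha$, \sreduces{} to zero.

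\textbf{Main obstacle.} The delicate point is the bookkeeping of signatures and coefficients when splicing together these \sreductions{}: one must verify that each intermediate step genuinely satisfies Condition~(\ref{cond:sreduce}) of Definition~\ref{def:sreduce}, i.e. that no reduction used along the way raises the signature above $S$, and that the ``regular'' qualifier on the S-pair is exactly what is needed (singular S-pairs give nothing new — their signature equality makes them \sreduce{} to syzygies trivially or be rewritable). Making the case distinction clean — top-\sreducible{} vs. not, $\sig\gamma < S$ vs. $\sig\gamma \simeq S$, $\hdp{b\beta} = \hdp\gamma$ vs. not — and checking that in the last case the difference is literally a (multiple of a) regular S-pair with signature $< T$, is the part that requires care; everything else is the standard Buchberger-criterion induction transported to $\module$. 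I would refer to the analogous proof in \cite{rs-2012} or \cite{erF5SB2013} for the full coefficient-level details rather than grinding them out here.
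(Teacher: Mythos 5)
The paper gives no self-contained proof of this theorem; the ``proof'' in the text is just a pointer to Theorem~2 of \cite{rs-ext-2012}. Your outline is in the same general spirit as the cited argument (well-founded induction on signature, reduce regularly to a regular top \sreduced{} $\gamma$, compare it against a basis multiple $b\beta$ of the same signature, and recognize the obstruction as an S-pair). But there are gaps and one outright error that go beyond ``coefficient-level bookkeeping'':

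First, existence of the comparison element. You justify the non-emptiness of $\setBuilder{b\beta}{\beta\in\basis,\ \sig{b\beta}=S}$ by saying $S$ is either a multiple of some $\sig{\alpha_j}$ with $\alpha_j\in\basis$ or equals some $\mbasis i$. The generic case is $S=s\mbasis i$ with $s\neq 1$, and nothing in the hypotheses forces some $\sig{\alpha_j}$ to divide $S$; this set can be empty. The monomial $s\mbasis i$ is of course always available as a representative of signature $S$, but then the resulting obstruction $b\beta-c\delta$ is no longer a monomial multiple of an S-pair between two elements of $\basis$, so the S-pair hypothesis does not directly apply to it. That case must instead be handled by the $\mbasis i$-hypothesis, via the observation that multiplying an \sreduction{}-to-zero chain of $\mbasis i$ by the monomial $s$ yields an \sreduction{}-to-zero chain for $s\mbasis i$. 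You do not make this distinction.

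Second, a concrete error: you assert that $\gamma-c\delta$ has signature $<S$ and invoke the induction hypothesis on it. But you only have $\sig{c\delta}\leq S=\sig\gamma$. In the subcase $\sig{c\delta}<S$ the signature of $\gamma-c\delta$ is $S$, not $<S$. In fact in that subcase the argument should terminate immediately and differently: a reducer with $\hdp{c\delta}=\hdp\gamma$ and $\sig{c\delta}<S$ is a regular top \sreducer{} of $\gamma$, contradicting the standing assumption that $\gamma$ is regular top \sreduced{}. Only the subcase $\sig{c\delta}\simeq S$ survives, and there $\gamma-c\delta$ is a singular top reduction; whether the signature then actually drops is exactly the ``super-regular'' subtlety dealt with by Lemma~\ref{lem:super-regular}.

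Third, the ``assemble a chain'' step is where the real work is and your sketch skips it. Knowing that $\gamma-b\beta$ and $b\beta-c\delta$ each \sreduce{} to zero does not, by linearity, give an \sreduction{} of $\gamma$ to zero; \sreduction{} chains at a fixed signature cannot simply be spliced. The load-bearing tool is the confluence statement Lemma~\ref{lem:p3}: any two regular top \sreduced{} module elements with the same signature have the same lead term or are both syzygies. You never invoke it, and without it the final step is genuinely hand-waved. It is Lemma~\ref{lem:p3}, together with the S-pair and $\mbasis i$ hypotheses, that lets one conclude that either a singular top \sreducer{} of $\gamma$ exists (so its signature can be pushed down), or $\gamma$ was already a syzygy.

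So the shape of your argument matches the one the paper defers to, but as written it contains a false signature assertion, omits the case where no $\beta\in\basis$ has signature dividing $S$, and relies implicitly on Lemma~\ref{lem:p3} without stating or using it.
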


\begin{proof}
For example, see Theorem~2 in \cite{rs-ext-2012}.\qed
\end{proof}

Note the similarity of Theorem~\ref{thm:spairs} and Buchberger's Criterion for
Gr\"obner bases (Theorem~\ref{thm:gb}).

The outcome of classic polynomial reduction depends on the choice of
reducer, so the choice of reducer can change what the intermediate
bases are in the classic Buchberger algorithm. Lemma \ref{lem:p3}
implies that all S-pairs with the same signature yield the same
regular \sreduced{} result as long as we process S-pairs in order of
increasing signature.

\begin{lemma}
\label{lem:p3}
Let $\alpha,\beta\in \module$ and let \basis{} be a signature \grobner{}
basis up to signature $\sig\alpha=\sig\beta$. If $\alpha$ and $\beta$
are both regular top \sreduced{} then $\hdp\alpha=\hdp\beta$ or
$\proj\alpha=\proj\beta=0$. Moreover, if $\alpha$ and $\beta$ are both regular
\sreduced{} then $\proj\alpha=\proj\beta$.
\end{lemma}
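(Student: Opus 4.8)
The plan is to study the difference $\gamma \defeq \alpha - \beta$. Since $\sig\alpha = \sig\beta$, the $\leq$-maximal terms of $\alpha$ and of $\beta$ coincide and cancel in $\gamma$, so either $\gamma = 0$ --- in which case $\alpha = \beta$ and both claims are trivial --- or $\sig\gamma < \sig\alpha =: T$. Assume the latter. Because $\basis$ is a signature \grobner{} basis up to signature $T$, every module element of signature strictly below $T$ \sreduces{} to zero (Definition~\ref{def:sgb}), so there is a chain of \sreduction{} steps $\gamma = \gamma_0 \to \gamma_1 \to \cdots \to \gamma_k$ with $\proj{\gamma_k} = 0$, each step performed by some element of $\basis$. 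By the remark following Definition~\ref{def:sreduce}, along such a chain signatures never increase and lead terms never increase, so $\sig{\gamma_j} \leq \sig\gamma < T$ for all $j$, and $\hdp{\gamma_j}$ is non-increasing in $j$.

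For the first claim, suppose for contradiction that $\hdp\alpha \neq \hdp\beta$ and that $\proj\alpha, \proj\beta$ do not both vanish. Since the hypothesis on $\alpha$ and $\beta$ is symmetric, we may assume $\proj\alpha \neq 0$ and, if $\proj\beta \neq 0$ as well, that $\hdp\alpha > \hdp\beta$; in either case the monomial of $\hdp\alpha$ cannot cancel in $\proj\gamma = \proj\alpha - \proj\beta$, so $\hdp{\gamma} = \hdp\alpha$ as terms. A reduction step of the chain acting on a term of monomial strictly below $\hdp\alpha$ leaves the coefficient of that monomial in $\proj{\gamma_j}$ unchanged, so the lead term of $\proj{\gamma_j}$ stays equal to $\hdp\alpha$ until the first step $\gamma_{j^\ast}\to\gamma_{j^\ast+1}$ that reduces it. That step supplies $\beta' \in \basis$ and a monomial $b$ with $\hdp{b\beta'} = \hdp\alpha$ and $\sig{b\beta'} \leq \sig{\gamma_{j^\ast}} \leq \sig\gamma < \sig\alpha$. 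But then $b\beta'$ top \sreduces{} $\alpha$ --- Condition~(\ref{cond:preduce}) holds because $\hdp{b\beta'}$ is a term of $\proj\alpha$, Condition~(\ref{cond:sreduce}) holds because $\sig{b\beta'} < \sig\alpha$, and the step is regular because $\sig{b\beta'}\not\simeq\sig\alpha$ --- contradicting that $\alpha$ is regular top \sreduced{}. Hence $\hdp\alpha = \hdp\beta$, unless $\proj\alpha = \proj\beta = 0$.

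For the ``moreover'' part, a regular \sreduced{} element is in particular regular top \sreduced{}, so by the first part either $\proj\alpha = \proj\beta = 0$ (and we are done) or $\hdp\alpha = \hdp\beta$. Assume the latter and suppose $\proj\alpha \neq \proj\beta$. Then $\proj\gamma \neq 0$, but now the leading terms of $\proj\alpha$ and $\proj\beta$ cancel, so $\hdp{\gamma} < \hdp\alpha$; let $x^v$ be the monomial of $\hdp{\gamma}$, which occurs with nonzero coefficient in $\proj\alpha$ or in $\proj\beta$. Exactly as before, no step of the chain changes the coefficient of $x^v$ until the first step that reduces a term of monomial $x^v$, and that step provides $\beta' \in \basis$ and a monomial $b$ --- after the obvious rescaling of $b$ --- with $\hdp{b\beta'}$ equal to the term of $\proj\alpha$ (resp.\ of $\proj\beta$) of monomial $x^v$ and $\sig{b\beta'} \leq \sig\gamma < \sig\alpha$. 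Since $x^v < \hdp\alpha = \hdp\beta$, the element $b\beta'$ \sreduces{} $\alpha$ (resp.\ $\beta$) by a regular tail step, contradicting that $\alpha$ (resp.\ $\beta$) is regular \sreduced{}. Therefore $\proj\alpha = \proj\beta$.

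The only delicate point is the bookkeeping in the interior of each reduction chain: one must argue that steps acting on terms below the distinguished monomial ($\hdp\alpha$ in the first case, $x^v$ in the second) leave that monomial's coefficient intact, so that the first step touching it is forced to use a reducer whose signature is $\leq \sig\gamma < T$; this is precisely what exhibits $\alpha$ or $\beta$ as not fully regularly \sreduced{}. Everything else is routine rescaling of monomials together with unwinding the definitions of regular, top, and tail \sreduction{}.
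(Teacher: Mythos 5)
Your proof is correct and takes the approach one expects (and that Lemma~3 of the cited \cite{rs-ext-2012} follows): form $\gamma = \alpha-\beta$, observe $\sig\gamma < \sig\alpha$, invoke the signature \grobner{} basis hypothesis to get a chain of \sreduction{} steps from $\gamma$ to a syzygy, and track the critical monomial along the chain to extract a reducer of signature $<\sig\alpha$ that would regular (top or tail) \sreduce{} $\alpha$ or $\beta$, a contradiction. One small slip: your WLOG ``$\hdp\alpha > \hdp\beta$'' does not cover the case where $\hdp\alpha$ and $\hdp\beta$ share a monomial but have different coefficients (the monomial order cannot separate them), and in that case ``$\hdp\gamma = \hdp\alpha$ as terms'' is false; but all the argument actually uses is that the \emph{monomial} of $\hdp\gamma$ equals that of $\hdp\alpha$, which holds in every subcase, so the reasoning goes through.
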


\begin{proof}
For example, see Lemma~3 in \cite{rs-ext-2012}.\qed
\end{proof}

Let us simplify our notations a bit using facts from the previous statements.

\begin{notation}\
\label{conv:spairs}
\begin{enumerate}
\item Due to Lemma~\ref{lem:p3} we assume in the following that $\basis$ always
denotes a finite subset of $\module$ with the
property that for $\alpha, \beta \in
\basis$ with $\sig\alpha\simeq\sig\beta$ it follows that
$\alpha=\beta$.
\label{conv:spairs:basis}
\item Theorem~\ref{thm:spairs} suggests to consider only regular S-pairs for the
computation of signature \grobner{} bases. Thus in the following ``S-pair''
always refers to ``regular S-pair''.\label{conv:spairs:item}
\end{enumerate}
\end{notation}

\begin{definition}
A signature \grobner{} basis is \emph{minimal} if no basis element top
\sreduces{} any other basis element.
\end{definition}

Lemma~\ref{lem:min-sgb}
implies that the minimal signature \grobner{} basis for an ideal $I \subset
\ring$ is unique and is contained in all signature \grobner{} bases for $I$
up to sig-lead pairs.

\begin{lemma}
\label{lem:min-sgb}
Let $A$ be a minimal signature \grobner{} basis and let $B$ be a
signature \grobner{} basis for $\langle \gen f \rangle$.
Then it holds for all $\alpha\in A$ that
there exists a non-zero scalar $\const \in\field$ and a $\beta\in B$ such that
$\sig\alpha=\const\sig\beta$ and $\hdp\alpha=\const\hdp\beta$.
\end{lemma}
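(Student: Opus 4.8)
The plan is to fix a non-syzygy $\alpha\in A$ (the syzygy elements of $A$, if any, carry no lead term, so the statement is to be read over the non-syzygy generators) and to locate its sig-lead pair, up to a scalar, inside $B$. Write $T:=\sig\alpha$. First I would record that $\alpha$ is regular top \sreduced{} with respect to $A$: any regular top \sreducer{} of $\alpha$ drawn from $A$ would be a basis element \emph{distinct} from $\alpha$ (an element cannot regular top \sreduce{} itself, since matching the lead term forces the multiplier to be a scalar, which makes the step singular) that top \sreduces{} $\alpha$, contradicting minimality of $A$; and by item~(a) of Notation~\ref{conv:spairs} no other basis element shares $\alpha$'s signature.

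The engine of the argument is a cross-basis strengthening of Lemma~\ref{lem:p3}: \emph{if $\basis\subseteq\module$ is a signature \grobner{} basis and $\delta$ is a non-syzygy that is regular top \sreduced{} with respect to $\basis$, then $\hdp\delta$ is the $\pleq$-minimal lead term among all non-syzygies of signature $\simeq\sig\delta$.} I would prove this in the spirit of Lemma~\ref{lem:p3}: if some non-syzygy $\delta_0$ of that signature had a strictly smaller lead term, rescale $\delta_0$ so that $\sig{\delta_0}=\sig\delta$; then $\delta-\delta_0$ has signature $\plt\sig\delta$ but lead term $\hdp{\delta-\delta_0}=\hdp\delta\neq0$, and since $\basis$ is a signature \grobner{} basis in that smaller signature, $\delta-\delta_0$ \sreduces{} to zero with respect to $\basis$; as its image is non-zero, its lead term has to be cancelled along the way, producing $\epsilon\in\basis$ and a monomial $d$ with $\hdp{d\epsilon}=\hdp\delta$ and $\sig{d\epsilon}\plt\sig\delta$, i.e.\ a regular top \sreducer{} of $\delta$ inside $\basis$ --- a contradiction. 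Applied to $\basis=A$ this pins down $\hdp\alpha$ as this minimal lead term, which I denote $u_T$.

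Next I would pass to $B$. Since $B$ is a signature \grobner{} basis in signature $T$ and $\proj\alpha\neq0$, the element $\alpha$ \sreduces{} to zero over $B$ and is therefore top \sreducible{} over $B$. It cannot, however, be \emph{regular} top \sreducible{} over $B$: otherwise, regular-top-reducing $\alpha$ over $B$ would yield $\alpha'$ with $\sig{\alpha'}=T$ and $\hdp{\alpha'}\plt\hdp\alpha$, and then either $\proj{\alpha'}\neq0$, so the engine lemma applied to $B$ gives $\hdp{\alpha'}=u_T=\hdp\alpha$, a contradiction, or $\proj{\alpha'}=0$, in which case $\alpha-\alpha'$ is a non-syzygy of signature $\plt T$ with lead term $\hdp\alpha$, which \sreduces{} to zero over $A$, hence is top \sreducible{} over $A$, handing back a regular top \sreducer{} of $\alpha$ in $A$ --- again a contradiction. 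Therefore $\alpha$ is \emph{singular} top \sreducible{} over $B$: there are $\gamma\in B$ and a monomial $b$ with $\sig{b\gamma}\simeq\sig\alpha$ and $\hdp{b\gamma}=\hdp\alpha$.

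Finally I would show $b$ is a scalar, so that $\gamma$ is the witness. If $b$ were not a scalar then $\sig\gamma\plt T$; since $A$ is a signature \grobner{} basis in signature $\sig\gamma$, the element $\gamma$ is top \sreducible{} over $A$, say by $e\delta$ with $\delta\in A$, $\hdp{e\delta}=\hdp\gamma$ and $\sig{e\delta}\pleq\sig\gamma$; then $be\delta$ top \sreduces{} $\alpha$ --- its lead term is $b\hdp\gamma=\hdp\alpha$ and its signature is $\pleq\sig{b\gamma}\simeq\sig\alpha$ --- while $\delta\neq\alpha$ because $be$ is not a scalar, so a basis element of $A$ top \sreduces{} another one, contradicting minimality. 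Hence $b$ is a scalar, $\sig\gamma\simeq\sig\alpha$ and $\hdp\gamma\simeq\hdp\alpha$; after rescaling $\gamma$ so that its signature coincides with $\sig\alpha$, the common constant $\const$ with $\sig\alpha=\const\sig\gamma$ and $\hdp\alpha=\const\hdp\gamma$ is obtained from the polynomial-comparison clause of Lemma~\ref{lem:p3} (both elements being regular top \sreduced{} of the same signature over their respective bases), which is the assertion. The step I expect to be the main obstacle is the engine lemma: Lemma~\ref{lem:p3} as stated only compares \sreductions{} inside a single basis, and the real work is upgrading it to the cross-basis statement that the lead term of a regular-top-\sreduced{} element is forced, via the ``cancel the leading module term, drop to a smaller signature, \sreduce{} to zero, and harvest a forbidden reducer'' trick; once that is available, the remainder is bookkeeping with signatures and the minimality hypothesis.
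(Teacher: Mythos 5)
Your proof is correct and takes the same route the paper intends: the "engine lemma" you isolate is precisely the content of Lemma~\ref{lem:p3} (uniqueness of the regular-top-\sreduced{} lead term at a fixed signature), so you are just unpacking the paper's "easy corollary of Lemma~\ref{lem:p3}." One small imprecision in the last step: Lemma~\ref{lem:p3} compares two elements regular top \sreduced{} with respect to a \emph{single} basis, so "over their respective bases" should be "both over $B$" (or both over $A$) --- you already showed $\alpha$ is regular top \sreduced{} over $B$, and the rescaled $\gamma$ is as well since any regular top \sreducer{} of it in $B$ would also regular top \sreduce{} $\alpha$.
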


\begin{proof}
This is an easy corollary of Lemma~\ref{lem:p3}.\qed 
\end{proof}

\section{Generic signature \grobner{} basis computation}
\label{sec:gsb}
In the following we present a generic
signature-based Gr\"obner basis algorithm \gsb{} (Algorithm~\ref{alg:gsb}).
This algorithm works the same way as the classic Gr\"obner basis algorihm
presented by Buchberger in \cite{bGroebner1965}. The main difference is that
in \gsb{} the computations
are lifted from $\ring$ to $\module$ in the way presented in
sections~\ref{sec:sreduction} and~\ref{sec:sgb}.

\gsb{} should be understood as a generic description which does not aim on performance.
We see in Section~\ref{sec:rewrite-bases} how we can vary \gsb{} to receive a
template that can be used as a common basis from which all known efficient signature-based
\grobner{} basis algorithms can be derived from.

The classic Buchberger algorithm proceeds by reducing S-polynomials.
If an S-polynomial
reduces to a polynomial $h \in \ring, h \neq 0$ then $h$ is added to the
basis so that the S-polynomial now reduces to zero by this larger basis.
The classic Buchberger algorithm terminates once all S-polynomials between
elements of the basis reduce to zero.

\gsb{} does the very same with S-pairs using \sreductions{}. Based on
Theorem~\ref{thm:spairs}, once all S-pairs
\sreduced{} to zero w.r.t. \basis{}, \gsb{} terminates with a signature
\grobner{} basis.

%%%%%%%%%%%%%%%%%%%%%%%%%%%%%%%%%%%%%%%%%%%%
%%              general SB                %%
%%%%%%%%%%%%%%%%%%%%%%%%%%%%%%%%%%%%%%%%%%%%

\begin{algorithm}
\begin{algorithmic}[1]
\Require Ideal $I=\langle \gen f \rangle \subset \ring$, monomial
order $\leq$ on $\ring$ and a compatible extension on $\module$, total order
$\pleq$ on the pairset $\pairset$ of S-pairs
\Ensure Signature \grobner{} basis \basis{} for $I$,
\grobner{} basis $\hdsyz$ for $\syz{\gen f}$
\State $\basis{}\gets\emptyset$, $\hdsyz \gets \emptyset$
%  \ (\basis{} will be the signature \grobner{} basis)
\State $\pairset\gets\set{\mbasis 1,\ldots,\mbasis m}$
%  \ ($P$ is the set of pending reductions)
\While{$\pairset\neq\emptyset$}
  \State $\beta\gets \min_\pleq  \pairset$\label{alg:gsb:pairchoice}
  \State $\pairset\gets \pairset\setminus\set \beta$
  \State $\gamma\gets$ result of regular \sreducing{}
  $\beta$\label{alg:gsb:reduction}
  \If {$\proj\gamma = 0$}
    \State $\hdsyz \gets \hdsyz \cup \set\gamma$\label{alg:gsb:addsyz}
  \ElsIf {$\gamma$ is not singular top reducible}\label{alg:gsb:singular}
    \State $\pairset\gets \pairset\cup\setBuilder
      {\spair\alpha\gamma}
      {\alpha\in\basis{}\text{ and $\spair\alpha\gamma$ is
                               regular}}$\label{alg:gsb:pairs}
  \State $\basis{}\gets\basis\cup\set\gamma$\label{alg:gsb:basis}
  \EndIf
\EndWhile
\State \textbf{return} $(\basis{},\hdsyz{})$
\end{algorithmic}
\caption{Generic signature-based \grobner{} basis algorithm \gsb.}
\label{alg:gsb}
\end{algorithm}

Thinking about correctness and termination of Algorithm~\ref{alg:gsb}
Line~\ref{alg:gsb:reduction} seems to be problematic: Only regular
\sreductions{} are done in \gsb{}. Moreover, if a reduction ends with an element
$\gamma$ that is singular top \sreducible{} w.r.t. \basis{}, $\gamma$ is not
even added to \basis{}. It turns out that singular top \sreductions{} are
useless for the computation of signature \grobner{} bases.

\begin{lemma}
\label{lem:singular}
Let $\alpha \in \module$ and let \basis{} be a signature \grobner{} basis up to
$\sig\alpha$. If $\alpha$ is singular top \sreducible{} w.r.t. \basis{} then $\gamma$
\sreduces{} to zero w.r.t. \basis{}.
\end{lemma}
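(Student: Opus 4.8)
The plan is to argue by contradiction, exploiting the single $\sreduction$ step that the hypothesis hands us together with Lemma~\ref{lem:p3}. Here $\gamma$ is the regular $\sreduced$ element produced in \gsb{} (so $\sig\gamma$ is the signature we started from, and $\gamma$ is in particular regular top $\sreduced$), and we assume it is singular top $\sreducible$ w.r.t.\ $\basis$ while $\basis$ is a signature \grobner{} basis up to $\sig\gamma$. Suppose, for contradiction, that $\gamma$ does not $\sreduce$ to zero.

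First I would unwind the hypothesis: singular top $\sreducibility$ gives a $\beta\in\basis$ and a monomial $b$ with $\hdp{b\beta}\simeq\hdp\gamma$ and $\sig{b\beta}\simeq\sig\gamma$. Carrying out this one $\sreduction$ step yields $\gamma_1:=\gamma-b\beta$, where the leading terms of $\proj\gamma$ and $\proj{b\beta}$ cancel, so that $\sig{\gamma_1}\pleq\sig\gamma$ and either $\proj{\gamma_1}=0$ — in which case $\gamma$ already $\sreduces$ to zero, contradiction — or $\hdp{\gamma_1}\plt\hdp\gamma$. In the latter case let $\gamma^\ast$ be any fully $\sreduced$ element obtained from $\gamma_1$ by further (arbitrary) $\sreduction$ steps; such a $\gamma^\ast$ exists by the termination of $\sreduction$ noted after Definition~\ref{def:sreduce}. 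Since $\sreduction$ never raises the lead term and never raises the signature, $\hdp{\gamma^\ast}\plt\hdp\gamma$ and $\sig{\gamma^\ast}\pleq\sig\gamma$, and $\gamma$ $\sreduces$ to $\gamma^\ast$.

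Now I would do a short case analysis on $\gamma^\ast$. If $\proj{\gamma^\ast}=0$, then $\gamma$ $\sreduces$ to zero and we are done. If $\proj{\gamma^\ast}\neq 0$ but $\sig{\gamma^\ast}\plt\sig\gamma$, then, $\basis$ being a signature \grobner{} basis up to $\sig\gamma$, every module element of signature $\sig{\gamma^\ast}$ — in particular $\gamma^\ast$ — $\sreduces$ to zero; but $\gamma^\ast$ is fully $\sreduced$, so it must already be a syzygy, contradicting $\proj{\gamma^\ast}\neq 0$. Hence $\proj{\gamma^\ast}\neq 0$ forces $\sig{\gamma^\ast}\simeq\sig\gamma$; after rescaling $\gamma^\ast$ we may assume $\sig{\gamma^\ast}=\sig\gamma$. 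But then $\gamma$ and $\gamma^\ast$ are both regular top $\sreduced$ with the same signature, and $\basis$ is a signature \grobner{} basis up to that signature, so Lemma~\ref{lem:p3} gives $\hdp\gamma=\hdp{\gamma^\ast}$ (the alternative $\proj\gamma=\proj{\gamma^\ast}=0$ is excluded). This contradicts $\hdp{\gamma^\ast}\plt\hdp\gamma$. Thus all cases but $\proj{\gamma^\ast}=0$ are impossible, so $\gamma$ $\sreduces$ to zero.

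The genuinely load-bearing ingredient is Lemma~\ref{lem:p3}: it is precisely what prevents two regular top $\sreduced$ elements of the same signature from acquiring different lead terms, and that is the contradiction the argument turns on. The remaining points are routine bookkeeping — termination of $\sreduction$ (already granted), the scalar rescaling needed before applying Lemma~\ref{lem:p3}, and the small observation that an element which is at once fully $\sreduced$ and $\sreduces$ to zero must be a syzygy. I expect the only real subtlety in writing this up to be stating the hypotheses sharply, namely making explicit that the element in question is a \emph{regular $\sreduced$} one (as it always is in \gsb{}), since that is what legitimises the appeal to Lemma~\ref{lem:p3}.
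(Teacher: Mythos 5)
Your argument is sound as far as it goes, but it takes an unnecessarily roundabout route and, as you yourself notice, proves a slightly weaker statement than the one in the lemma. The paper's own proof is a one-liner that needs neither Lemma~\ref{lem:p3} nor any case analysis nor the extra hypothesis that the element in question is already regular top \sreduced{}. The point you miss is that the single singular top \sreduction{} step $\alpha \mapsto \gamma := \alpha - b\beta$, with $\sig{b\beta}=\sig\alpha$ and $\hdp{b\beta}=\hdp\alpha$, already yields $\sig\gamma < \sig\alpha$: the leading signature terms cancel. At that point Definition~\ref{def:sgb} applies immediately, since $\basis$ is a signature \grobner{} basis up to $\sig\alpha$, so $\gamma$ \sreduces{} to zero and hence so does $\alpha$. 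No contradiction argument, no appeal to the uniqueness statement Lemma~\ref{lem:p3}, and no regular-top-\sreducedness{} hypothesis are needed.

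Concretely, the extra hypothesis you introduce (``$\gamma$ is regular top \sreduced{}'') is only there because you chose to continue \sreducing{} $\alpha - b\beta$ to a fully \sreduced{} $\gamma^\ast$ and then compare $\alpha$ with $\gamma^\ast$ via Lemma~\ref{lem:p3}; that lemma legitimately requires both elements to be regular top \sreduced{}. But that whole apparatus is a detour: the crucial drop happens at the \emph{signature}, not the lead term, and it happens in the very first step. You observe that $\hdp{\gamma_1}<\hdp\gamma$, which is true but not the driver; what drives the argument is $\sig{\gamma_1}<\sig\gamma$. Once you see that, the case analysis collapses — you are simply in the situation that the definition of ``signature \grobner{} basis up to $\sig\alpha$'' covers by fiat. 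So: the proof is correct under your added hypothesis and the reliance on Lemma~\ref{lem:p3} is legitimate, but the paper's route is strictly shorter, strictly more general, and is the one a reader should internalise; it is worth retraining the eye to look for the signature drop rather than the lead-term drop when a singular step occurs.
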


\begin{proof}
If $\alpha$ is singular top \sreducible{} w.r.t. \basis{} then there exists
$\beta \in \basis{}$ and $b \in \ring$ such that $\sig\alpha = \sig{b\beta}$ and
$\hdp\alpha = \hdp{b\beta}$. If $\gamma$ denotes the result of the reduction of
$\alpha$ by $b\beta$ then $\sig\gamma < \sig\alpha$. Since \basis{} is a
signature \grobner{} basis up to $\sig\alpha$ $\gamma$ \sreduces{} to zero
w.r.t. \basis{}.\qed
\end{proof}

\begin{theorem}
\label{thm:term-cor-gsb}
Given $I=\langle \gen f\rangle \subset \ring$ and a monomial order
$\leq$ on $\ring$ with a compatible extension on $\module$ \gsb{} is an algorithm that
computes a signature \grobner{} basis \basis{} for $I$ and a module $\hdsyz$
generated by a \grobner{} basis for $\syz{\gen f}$.
\end{theorem}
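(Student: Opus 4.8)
The statement breaks into three parts: that \gsb{} halts; that its first output \basis{} is a signature \grobner{} basis, so that by the Lemma immediately after Definition~\ref{def:sgb} the set $\setBuilder{\proj\alpha}{\alpha\in\basis}$ is a \grobner{} basis of $I$; and that \hdsyz{} is a \grobner{} basis of $\syz{\gen f}$. The plan is to organise everything around a Noetherian induction along the well-founded module monomial order, and I would assume — as is implicit in the discussion around Lemma~\ref{lem:p3} — that $\pleq$ refines the signature order, so S-pairs are dequeued with non-decreasing signature. Two bookkeeping facts would be used throughout: a regular \sreduction{} step leaves the signature fixed while a singular top \sreduction{} step strictly lowers it; and every regular S-pair between two elements of the final \basis{} gets created (at the insertion of the later of the two) and hence is eventually dequeued.

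For correctness of \basis{}, I would prove by induction on a module monomial $T$ the joint assertion: \basis{} is a signature \grobner{} basis up to $T$, and every S-pair (and every $\mbasis i$) of signature $T$ that \gsb{} processes \sreduces{} to zero with respect to the final \basis{}. The first clause at $T$ is immediate from the induction hypothesis below $T$. For the second, let $\delta$ be processed with $\sig\delta = T$ and $\gamma$ the result of regular \sreducing{} $\delta$, so $\sig\gamma = T$: if $\proj\gamma = 0$ then $\delta$ \sreduces{} to zero via the syzygy $\gamma$; if $\gamma$ is singular top \sreducible{} w.r.t. \basis{}, then Lemma~\ref{lem:singular} (applicable since \basis{} is a signature \grobner{} basis up to $T$) gives that $\gamma$, hence $\delta$, \sreduces{} to zero; and if $\gamma$ is appended to \basis{}, then $\delta$ \sreduces{} to zero, since the regular reduction of $\delta$ reaches $\gamma$, which now lies in \basis{} and is reduced to $0$ by the singular top step against itself. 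Plugging this into Theorem~\ref{thm:spairs} with the immediate successor $T^{+}$ of $T$ — its hypotheses for the bound $T^{+}$ ask exactly that all regular S-pairs between elements of \basis{} of signature $\leq T$ and all $\mbasis i \leq T$ \sreduce{} to zero, which the induction hypothesis and the previous line supply — upgrades ``up to $T$'' to ``up to $T^{+}$'', completing the induction. Since this covers every module monomial, \basis{} is a full signature \grobner{} basis.

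For the syzygy part, every $\gamma$ stored in \hdsyz{} has $\proj\gamma = 0$, so $\hdsyz \subseteq \syz{\gen f}$ and $\ideal{\sig\gamma \mid \gamma \in \hdsyz} \subseteq \ideal{\sig\sigma \mid 0 \neq \sigma \in \syz{\gen f}}$ for the module order on $\module$. The reverse inclusion I would again get by Noetherian induction on module monomials $T$, showing that every such $T$ is accounted for: using that \basis{} is now a complete signature \grobner{} basis, a $T$-homogeneous representative regular \sreduces{} either to a non-syzygy whose sig-lead pair is already realised by a basis element — in which case a strictly smaller signature has been reached and the hypothesis applies — or to a syzygy, and such a syzygy is produced by the algorithm precisely as a zero reduction of some processed S-pair or generator and is therefore in \hdsyz{}. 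Hence every syzygy signature is divisible by $\sig\gamma$ for some $\gamma \in \hdsyz$, i.e. \hdsyz{} is a \grobner{} basis of $\syz{\gen f}$.

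Termination is the part I expect to be the real obstacle and would treat with the most care. The scheme is the familiar Dickson/Noetherianity one: each element $\gamma$ appended to \basis{} must strictly enlarge a suitable finitely generated monomial structure built from the sig-lead pairs of the current basis, for otherwise the leading signature and leading term data of $\gamma$ would repeat a divisibility pattern already present among earlier basis elements and then — the current basis being, by the induction above, a signature \grobner{} basis up to $\sig\gamma$ — Lemma~\ref{lem:p3} would force $\gamma$ to be singular top \sreducible{}, contradicting that \gsb{} appended it. Dickson's lemma then bounds the number of insertions into \basis{}, and since new S-pairs are created only at insertions, finitely many per insertion, while each loop pass removes one pair from \pairset{}, the loop runs only finitely often. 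Nailing down the exact monomial invariant that increases — which is where the choice of $\pleq$ and of the singular-reducibility test genuinely enter, and which is what keeps \gsb{} tractable where more aggressive variants such as \ff{} need the dedicated termination analysis of Section~\ref{sec:f5-termination} — is the one step I would not want to argue merely by analogy.
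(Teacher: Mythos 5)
Your proposal makes a restricting assumption that the theorem does not: you explicitly decide to ``assume \ldots\ that $\pleq$ refines the signature order, so S-pairs are dequeued with non-decreasing signature.'' But the theorem, and Algorithm~\ref{alg:gsb} itself, are stated for an arbitrary total order $\pleq$ on $\pairset$; the standing convention $\pleq = \pleqs$ is only adopted \emph{after} Theorem~\ref{thm:term-cor-rba}, not here. Indeed Remark~\ref{rem:pairsetorder} stresses precisely the point you assume away: for general $\pleq$, when \gsb{} handles an S-pair of signature $T$, there may still be unprocessed S-pairs of signature $<T$ in $\pairset$, so $\basis$ is \emph{not} known to be a signature \grobner{} basis up to $T$ at that moment. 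This breaks the Noetherian induction you set up for correctness (whose inductive invariant is exactly ``$\basis$ is a signature \grobner{} basis up to $T$'' at the time signature $T$ is processed), and it also breaks the Dickson-style termination argument, because your justification that an appended $\gamma$ must enlarge some monomial structure is ``the current basis being, by the induction above, a signature \grobner{} basis up to $\sig\gamma$, Lemma~\ref{lem:p3} would force $\gamma$ to be singular top \sreducible{}'' --- a hypothesis you no longer have. The nontrivial content of the paper's proof is precisely that it does not assume $\pleqs$: for $\pleqs$ it simply delegates (Theorem~14 in \cite{epSig2011} for correctness, Theorem~20 in \cite{erF5SB2013} for termination), and the work it actually does is the new argument that for an arbitrary $\pleq$ there can be at most finitely many basis elements per signature, and finitely many steps between two of them, so the loop terminates.

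Within the $\pleqs$ case, your sketch for correctness and for $\hdsyz$ is in line with what the paper cites, and is fine modulo the termination invariant, which you candidly flag as unfinished (``the one step I would not want to argue merely by analogy''). That acknowledged gap is real: without pinning down the concrete monomial ideal or sig-lead module that strictly grows on each insertion, Dickson's lemma has nothing to bite on, and the argument does not yet establish termination even in the $\pleqs$ case. So the proposal is both narrower than the theorem (it omits the general-$\pleq$ case that the paper's proof is really about) and, on its chosen restriction, stops short of a complete termination argument.
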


\begin{proof}
Correctness of \gsb{} computing signature \grobner{} basis for $I$
is an easy generalization of Theorem~14 in \cite{epSig2011}. Allowing any
compatible module monomial order on $\module$ does not change the reasoning of
the corresponding proof there. On the other hand, using Lemma~\ref{lem:singular}
and the fact that \gsb{} computes \basis{} by increasing signatures it is an
easy exercise.
$\hdsyz$ being a \grobner{} basis for $\syz{\gen f}$ is clear by
Theorem~\ref{thm:syzygies}.

If $\pleq$ orders $\pairset$ by increasing signatures then termination of \gsb{}
follows by Theorem~20 in~\cite{erF5SB2013}. Otherwise it is possible that
\gsb{} adds several elements to \basis{} with the same signature: Assume
an intermediate state of \basis{} to consist of finitely many elements,
thus $\pairset$ is finite, too.
Next the S-pair $a\alpha - b\beta$ regular \sreduces{} to $\gamma$ w.r.t.
\basis{}.
\begin{enumerate}
\item $\gamma$ is top singular \sreducible{} and thus not added to
\basis{}.
\item There might exists $\delta \in \basis{}$ such that $\sig\gamma =
\sig\delta$ but $\hdp\gamma < \hdp\delta$. Note that $\hdp\gamma \geq 0$ so for
$\sig\gamma$ there are only finitely many elements in \basis{}.
\end{enumerate}
In the second situation there must be some element $\varepsilon$ added to
\basis{} inbetween $\gamma$ and $\delta$ such that such that
$\hdp{e\varepsilon} = \hdp\delta$ and $\sig{e\varepsilon} < \sig\delta$ for some
monomial $e$ in $\ring$.
We need to show that there cannot be infinitely many steps between
$\delta$ and $\gamma$. First of all only finitely many steps of
lower signature can be done due to our above discussion: There are only finitely
many elements in \basis{} per signature and there are only finitely many
signatures below $\sig\gamma$ handled since $\ring$ and $\module$ are
Noetherian. On the other hand, at the moment $\delta$ was added to
\basis{}, there were only finitely many S-pairs of signature $>\sig\delta$ in
$\pairset$. As in the situation above, in order to get a new element in a given
signature $T > \sig\delta$ new elements of signature $<T$ must be added to
\basis{}. Also for $T$ only finitely many sig-poly pairs
are possible. Moreover, between $\sig\delta$ and $T$ \gsb{} handles only
finitely many elements, again due to the Noetherianess of $\ring$ and $\module$.
All in all, between two elements of the same signature \gsb{} executes finitely
many steps. This completes the proof of \gsb{}'s termination.\qed
\end{proof}

The key to prove that \gsb{} computes a \grobner{} basis \hdsyz{} for the syzygy module
is Theorem~\ref{thm:syzygies} which implies that we can determine generators of the module
of syzygies from looking at those S-pairs and $\mbasis i$ that regular
\sreduce{} to zero.

\begin{theorem}
\label{thm:syzygies}
Let $\alpha\in \module$ be a syzygy and let \basis{} be a signature
\grobner{} basis up to signature $\sig\alpha$. Then there exists a
$\beta\in \module$ with $\sig\beta|\sig\alpha$ such that $\beta$ is an
S-pair or has the form $\mbasis i$ and such that $\beta$ regular
\sreduces{} to zero.
\end{theorem}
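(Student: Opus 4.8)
The plan is to induct on the signature $T := \sig\alpha$, exploiting that the module monomial order is a well-order, and to split off two clean ingredients.

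\textbf{Ingredient 1 (reduction via Lemma~\ref{lem:p3}).} I would first record that, under the hypotheses, \emph{every} $\eta\in\module$ with $\sig\eta = T$ regular \sreduces{} to a syzygy: regular \sreducing{} $\eta$ to a regular \sreduced{} $\eta'$ does not change the signature, so $\sig{\eta'}=T$, while $\alpha$ has $\proj\alpha=0$ and hence is vacuously regular \sreduced{} with signature $T$; so Lemma~\ref{lem:p3} (applicable since $\basis$ is a signature \grobner{} basis up to $T=\sig{\eta'}=\sig\alpha$) forces $\proj{\eta'}=\proj\alpha=0$. In particular any S-pair or $\mbasis i$ whose signature is \emph{exactly} $T$ automatically regular \sreduces{} to zero, so it remains to exhibit an S-pair or some $\mbasis i$ whose signature merely divides $T$.

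\textbf{Ingredient 2 (the base case $T=\mbasis i$).} If $T$ is a unit vector $\mbasis i$, take $\beta=\mbasis i$; its signature is $T$, so by Ingredient~1 it regular \sreduces{} to zero. (If $f_i=0$ this is immediate anyway, since then $\mbasis i$ is already a syzygy; the same remark disposes of every $T=a\mbasis i$ with $f_i=0$.)

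\textbf{The general case $T=a\mbasis i$, $a\neq 1$, $f_i\neq 0$.} Here I would apply Ingredient~1 to the module monomial $a\mbasis i$ itself: since $\proj{a\mbasis i}=af_i\neq 0$ while its regular \sreduced{} form is a syzygy, at least one regular \sreduction{} step occurs, and the step cancelling the lead term $a\hd{f_i}$ yields $\delta\in\basis$ and a monomial $b$ with $\hdp{b\delta}=a\hd{f_i}$ and $\sig{b\delta}<T$. Pairing $\delta$ with the behaviour of $\basis$ in the signature $\mbasis i$ (available since $\basis$ is in particular a signature \grobner{} basis in the signature $\mbasis i<T$) produces a regular S-pair $\beta$ with $\sig\beta=\tfrac{\lambda}{\hd{f_i}}\mbasis i$, where $\lambda=\lcm(\hdp\delta,\hd{f_i})$. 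From $\hdp\delta\mid a\hd{f_i}$ and $\hd{f_i}\mid a\hd{f_i}$ one gets $\lambda\mid a\hd{f_i}$, so $\sig\beta$ divides $a\mbasis i=T$; and the same computation shows the $\delta$-part of $\beta$ has signature dividing $\sig{b\delta}<T$, which is what makes $\beta$ regular with this signature. If $\sig\beta=T$ we finish by Ingredient~1; if $\sig\beta<T$ it still divides $T$, and one would close the argument by combining the induction hypothesis with Ingredient~1 at the smaller signature.

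\textbf{Main obstacle.} I expect the delicate point to be exactly this last descent: the divisor case genuinely occurs (e.g.\ for a principal syzygy $f_i\mbasis j-f_j\mbasis i$ when $\hd{f_i},\hd{f_j}$ share a factor, the relevant $\beta$ has signature a \emph{proper} divisor of $T$), and there one must show that $\beta$ \emph{regular} \sreduces{} to zero rather than merely \sreduces{} to zero, and that the descent always lands on a signature still dividing $T$. This is where Ingredient~1 alone is insufficient, and one must either choose the reducer $\delta$ and its partner so that $\sig\beta=T$ outright, or invoke the rewriting/chain structure that underlies the proof of Theorem~\ref{thm:spairs}; likewise the bookkeeping caused by $\basis$ not literally containing $\mbasis i$ must be routed through the fact that $\basis$ is a signature \grobner{} basis in every signature below $T$.
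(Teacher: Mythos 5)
The paper does not supply its own argument: the proof text is a single sentence declaring the statement clear from Definition~\ref{def:sgb} and deferring to Proposition~2.2 of \cite{gvwGVW2011}, so there is no in-paper derivation to compare yours against. Your Ingredient~1 (Lemma~\ref{lem:p3} applied with $\alpha$, which is vacuously regular \sreduced{} since $\proj\alpha=0$) and your base case $T=\mbasis i$ are correct.

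The gap is in the descent, and the two obstacles you flag are genuine, not bookkeeping. (i) The object $\spair{\mbasis i}{\delta}$ is an S-pair in the sense the theorem needs only if its second generator lives in $\basis$ (or is $\mbasis i$, counted separately); the hypothesis gives no guarantee that $\basis$ contains an element of signature $\mbasis i$ with lead term $\hd{f_i}$, and ``the behaviour of $\basis$ in the signature $\mbasis i$'' is not an object you can feed into the S-pair construction. (ii) More seriously, when $\sig\beta<T$ the inductive closure needs a nonzero syzygy of signature $\sig\beta$ to exist so that either the induction hypothesis or Ingredient~1 can be invoked at that smaller signature. But although the set of syzygy signatures is a monomial submodule of $\module$, a proper divisor of a syzygy signature need not itself be a syzygy signature. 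If one first passes (as one should) to $T$ a minimal generator of that monomial submodule, then any proper divisor $\sig\beta\mid T$ with $\sig\beta<T$ is \emph{guaranteed} not to be a syzygy signature; the regular \sreduced{} form of your $\beta$ then has nonzero image and the descent stalls. The missing ingredient is to force $\sig\beta=T$ exactly --- for instance by showing that some regular top \sreducer{} $\delta\in\basis$ of $a\mbasis i$ satisfies $\lcm\left(\hdp{\delta},\hd{f_i}\right)=a\hd{f_i}$ and that this $\delta$, together with a $\basis$-element realising signature $\mbasis i$, yields an S-pair of signature $T$ --- and this is precisely where an appeal to Ingredient~1 alone cannot finish.
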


\begin{proof}
The proof is clear by Definition~\ref{def:sgb}. A variant of
Theorem~\ref{thm:syzygies} is Proposition~2.2
in \cite{gvwGVW2011}.\qed
\end{proof}

\begin{remark}
\label{rem:pairsetorder}
Note that in~\cite{sw2011b} Sun and Wang where the first to introduce a description of
signature-based \grobner{} basis algorithms where the order in which S-pairs are
handled does not matter. Clearly, the above description of \gsb{} covers this,
since we do not restrict $\pleq$. We refer the reader interested in a proof
of Theorem~\ref{thm:term-cor-gsb} with an emphasis on the pair set order $\pleq$
to Theorem~2.2 in \cite{sw2011b}.

Note that due to $\pleq$ \basis{} might not be a signature \grobner{} basis up
to signature $T$ when \gsb{} has just handled an S-pair in signature $T$. There
might be S-pairs of signature $<T$ which are still in $\pairset$. Nevertheless,
once \gsb{} terminates \basis{} is a signature \grobner{} basis for $I$ and thus
a signature \grobner{} basis up to signature $T$ for all $T$.
\end{remark}

For the sake of efficiency one might choose $\pleq$ to order $\pairset$ by
increasing signatures of the S-pairs. As we see in
Section~\ref{sec:spair-elimination}
such an order respects the criteria to remove useless S-pairs best.

\begin{definition}
\label{def:increasing-pairset-order}
$\pleqs$ denotes the order $\pleq$ in a signature-based
\grobner{} basis algorithm which sorts $\pairset$ by increasing signature.
\end{definition}

\begin{lemma}
Let \gsb{} with $\pleqs$ pick the next S-pair $\alpha$ to be regular
\sreduced{} such that $\sig\alpha = T$. Then \basis{} is a signature
\grobner{} basis up to signature $T$.
\label{lem:gsb-with-pleqs}
\end{lemma}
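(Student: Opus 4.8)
The plan is to read the statement off from Theorem~\ref{thm:spairs} and Lemma~\ref{lem:singular}, once one establishes that the order $\pleqs$ forces every ``smaller'' pair to have been treated already. The first step is a monotonicity observation: \emph{the signatures of the elements picked in Line~\ref{alg:gsb:pairchoice} form a non-decreasing sequence}. Indeed, each element put into $\pairset$ in Line~\ref{alg:gsb:pairs} is a regular S-pair $\spair\alpha\gamma$ where $\gamma$ is the regular \sreduction{} of the element just picked, so $\sig\gamma$ equals that element's signature $T$; writing $\lambda=\lcm\left(\hdp\gamma,\hdp\alpha\right)$, the monomial $\frac{\lambda}{\hdp\gamma}$ is $\geq 1$, whence $\sig{\frac{\lambda}{\hdp\gamma}\gamma}\geq\sig\gamma=T$ and thus $\sig{\spair\alpha\gamma}\geq T$. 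Since the remaining members of $\pairset$ also have signature $\geq T$ right after a pick, the minimal signature in $\pairset$ never decreases. Consequently, at the moment \gsb{} is about to regular \sreduce{} an element of signature $T$, every element that was ever in $\pairset$ with signature $<T$ has already been picked and processed (it cannot still be in $\pairset$, and \gsb{} removes from $\pairset$ only by picking). This applies to each $\mbasis i$ with $\mbasis i<T$, and --- since the S-pair $\spair\alpha\beta$ is inserted into $\pairset$ at the iteration where the later of $\alpha,\beta$ enters $\basis$ --- to each regular S-pair $\spair\alpha\beta$ with $\alpha,\beta\in\basis$ and $\sig{\spair\alpha\beta}<T$.

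Next I would induct on the iterations of \gsb{} (equivalently, on the well-ordered signature $T$): assume that whenever, at an earlier iteration, \gsb{} picked an element of signature $S$, the basis present at that moment was a signature \grobner{} basis up to $S$. To prove the claim for the current signature $T$ it suffices, by Theorem~\ref{thm:spairs}, to show that every $\mbasis i<T$ and every regular S-pair $\spair\alpha\beta$ with $\alpha,\beta\in\basis$ and $\sig{\spair\alpha\beta}<T$ \sreduces{} to zero with respect to the current $\basis$. Fix such an object $\delta$. By the first paragraph it was picked and regular \sreduced{} to some $\gamma$ at an earlier iteration $j$, and $\sig\gamma=\sig\delta<T$. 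If $\proj\gamma=0$, then $\delta$ already \sreduces{} to zero and we are done. Otherwise $\gamma$ is either discarded because it is singular top \sreducible{} with respect to the basis $\basis_j$ present at iteration $j$, or it is added to $\basis$; in the latter case $\gamma$ is singular top \sreducible{} with respect to $\basis_j\cup\set\gamma$ via $\gamma$ itself (the singular top step $\gamma\mapsto\gamma-\gamma=0$). Either way $\gamma$ is singular top \sreducible{} with respect to some basis $\basis'$ with $\basis_j\subseteq\basis'\subseteq\basis$, and by the induction hypothesis $\basis_j$ --- hence also $\basis'$, which differs only by an element of signature $\sig\gamma$ --- is a signature \grobner{} basis up to $\sig\gamma$. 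Lemma~\ref{lem:singular} then gives that $\gamma$ \sreduces{} to zero with respect to $\basis'\subseteq\basis$, and therefore so does $\delta$. Theorem~\ref{thm:spairs} completes the inductive step, and the base case is vacuous since at the first iteration $\basis=\emptyset$ and no module monomial is strictly below $\min_i\mbasis i$.

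The point I expect to cause trouble is the apparent circularity: Lemma~\ref{lem:singular} can only dispose of a singular-top-reducible leftover $\gamma$ once the basis is already known to be a signature \grobner{} basis up to $\sig\gamma$, which is precisely the sort of conclusion we are after. Breaking this circle is exactly what the induction along the iterations --- legitimised by the monotonicity of the picked signatures --- accomplishes. The remaining ingredients (that a basis element trivially \sreduces{} to zero by itself, and the bookkeeping of when each relevant S-pair enters and leaves $\pairset$) are routine but should be spelled out with care.
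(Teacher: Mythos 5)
Your argument is correct, and it supplies the full induction that the paper's proof omits: where the paper says only that the statement ``is clear by Definition~\ref{def:sgb}'' once one notes that $\pleqs$ processes S-pairs by increasing signature, you have worked out exactly what that assertion rests on --- the monotonicity of picked signatures, the induction along iterations, and the combination of Theorem~\ref{thm:spairs} with Lemma~\ref{lem:singular}. This is indeed the right way to break the circularity you flagged, and it is the argument the paper's one-liner implicitly invokes. One small shortening is available: in the branch where the regular \sreduced{} result $\gamma$ is added to $\basis{}$, the ``singular top step'' $\gamma\mapsto\gamma-\gamma=0$ already yields a syzygy, so $\gamma$ \sreduces{} to zero by that one step alone; Lemma~\ref{lem:singular} is genuinely needed only in the branch where $\gamma$ was discarded because it was singular top \sreducible{} against the basis present at iteration $j$.
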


\begin{proof}
Since \gsb{} with $\pleqs$ handles S-pairs by increasing signature this is clear
by Definition~\ref{def:sgb}.\qed
\end{proof}

\begin{corollary}
\label{cor:gsb-minsigbasis}
\gsb{} with $\pleqs$ computes a minimal signature \grobner{} basis for the corresponding input.
\end{corollary}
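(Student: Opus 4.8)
Almost everything needed is already available: by Theorem~\ref{thm:term-cor-gsb}, \gsb{} terminates and returns a signature \grobner{} basis \basis{} for $I$, so the plan is to prove only that this \basis{} is \emph{minimal}, i.e. that no element of \basis{} top \sreduces{} another. Two features of Algorithm~\ref{alg:gsb} will carry the argument: Line~\ref{alg:gsb:reduction} puts into \basis{} only \emph{regular \sreduced{}} elements, and Line~\ref{alg:gsb:singular} refuses any element that is \emph{singular top \sreducible{}} w.r.t. the current basis. Since \basis{} only grows, each $\delta\in\basis{}$ is added at a well-defined step; writing $\basis_\delta$ for the value of \basis{} just before that step, $\delta$ is regular \sreduced{} and not singular top \sreducible{} w.r.t. $\basis_\delta$, and $\delta$ is the regular \sreduced{} form of an element of $\pairset$ (possibly an $\mbasis i$) whose signature is $\sig\delta$, since regular \sreductions{} do not change the signature.

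First I would record a monotonicity fact for $\pleqs$: the signatures of the successively processed elements of $\pairset$ form a non-decreasing sequence. When the processed element has signature $T$, everything still in $\pairset$ has signature $\pgeqs T$ (that element was the $\pleqs$-minimum), the element $\gamma$ then added to \basis{} (if any) has $\sig\gamma=T$, and each S-pair $\spair\alpha\gamma$ created afterwards at Line~\ref{alg:gsb:pairs} has signature $\pgeqs\sig\gamma=T$ — it is a monomial multiple of $\gamma$ minus a monomial multiple of $\alpha$, and multiplying by a monomial cannot lower a signature. Hence the next $\pleqs$-minimum is again $\pgeqs T$. In particular, whenever $\eta\in\basis{}$ is added after $\delta\in\basis{}$, one has $\sig\eta\pgeqs\sig\delta$.

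Now the core. Suppose, for a contradiction, that $\eta,\delta\in\basis{}$ with $\eta\neq\delta$ and $\eta$ top \sreduces{} $\delta$: there is a monomial $b$ with $\hdp{b\eta}=\hdp\delta$ and $\sig{b\eta}\pleqs\sig\delta$, so that $\sig\eta\pleqs\sig{b\eta}\pleqs\sig\delta$. If $\eta\in\basis_\delta$, then either $\sig{b\eta}\plts\sig\delta$ — in which case $\delta$ admits a regular top \sreduction{} step by $\eta\in\basis_\delta$, contradicting that $\delta$ is regular \sreduced{} — or $\sig{b\eta}\simeq\sig\delta$, in which case $\delta$ is singular top \sreducible{} by $\eta\in\basis_\delta$, contradicting the test on Line~\ref{alg:gsb:singular}. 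If instead $\eta\notin\basis_\delta$, then $\eta$ is added after $\delta$, so the monotonicity fact gives $\sig\delta\pleqs\sig\eta$; with $\sig\eta\pleqs\sig\delta$ this forces $\sig\eta\simeq\sig\delta$, and then $b$ can only be a scalar (a monomial order has $1$ as its least element), so $\hdp\eta\simeq\hdp\delta$. But then $\delta\in\basis_\eta$, and $\eta$ admits a singular top \sreduction{} step by a suitable scalar multiple of $\delta$, contradicting Line~\ref{alg:gsb:singular} applied to $\eta$. Every case yields a contradiction, so \basis{} is minimal.

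I expect the genuinely delicate point to be the last case — a basis element reducing one that was added \emph{earlier} — where the regular-\sreduced{} property of $\delta$ is useless and one must instead use $\pleqs$-monotonicity to collapse the two signatures, then check carefully that coinciding signatures force the reducing cofactor down to a scalar, so that the singular top \sreducibility{} test fires on $\eta$. The rest is routine bookkeeping: \basis{} grows monotonically and regular \sreduction{} preserves signatures.
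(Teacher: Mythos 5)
Your proof is correct and follows the same essential approach as the paper — the key ingredients (the singular-top-reducibility check on Line~\ref{alg:gsb:singular}, the fact that added elements are regular \sreduced{}, and signature-monotonicity of the processed S-pairs under $\pleqs$) are exactly what the paper's terse proof relies on via its citation of Lemma~\ref{lem:gsb-with-pleqs}. You simply spell out the case analysis (notably the delicate direction where a later-added $\eta$ would reduce an earlier $\delta$, which collapses to a signature coincidence and fires the singular check on $\eta$) that the paper leaves implicit.
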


\begin{proof}
A new element $\gamma$ with $\proj\gamma \neq 0$ is added to $\basis{}$ only if
$\gamma$ is not singular top \sreducible{} w.r.t. $\basis{}$. The minimality
then follows by Lemma~\ref{lem:gsb-with-pleqs}.\qed
\end{proof}

\begin{characteristics} \
\label{char:gsbend}
\begin{enumerate}
\item Note that Corollary~\ref{cor:gsb-minsigbasis} does not hold for arbitrary pair
set orders $\pleq$: Assume S-pair $\alpha$ being regular \sreduced{} by \gsb{} to
$\gamma$ and $\proj\gamma \neq 0$. W.l.o.g. we can assume that $\gamma$ is not
singular top \sreducible{}.\footnote{Otherwise there exists
$\delta\in\basis{}$ with $\sig\delta = c \sig\gamma$ and $\hdp\delta = c
\hdp\gamma$, thus we can just replace $\gamma$ by $\delta$ in the above
situation.} If, later on, \gsb{} regular \sreduces{} an S-pair from $\pairset$
to $\beta$ with $\sig\beta < \sig\gamma$ such that $\hdp\beta \mid \hdp\gamma$
then a new S-pair $\varepsilon = b\beta - \gamma$ is handled. Hereby
$\sig\varepsilon = \sig\gamma$ but $\hdp\varepsilon < \hdp\gamma$. 
So \basis{} can have several elements in the same signature $T$. 

Still, Lemma~\ref{lem:p3} is valid and makes sense: Once all S-pairs of
signature $T$ are handled by \gsb{} (in any given order $\pleq$)
  \basis{} is a signature \grobner{} basis up to signature $T$ and $\gamma$
can be further regular \sreduced{}, namely to $\varepsilon$.
Thus, in Section~\ref{sec:signature-gb} and in the following we often consider
signature \grobner{} bases
up to some signature $T$. Due to our considerations here, the second part of
Remark~\ref{rem:pairsetorder} and Lemma~\ref{lem:gsb-with-pleqs} this makes sense.
\label{char:gsbend:pairsetorders}
\item\label{char:gsbend:ssp} As one can easily see, Algorithm~\ref{alg:gsb} does only rely on data provided
by $\proj\alpha$ and $\sig\alpha$, but it does not need to store $\alpha$
completely. Thus instead of using $\alpha$ one can optimize an implementation
of \gsb{} by using $\spp\alpha$.

Moreover, if one is only interested in a \grobner{} basis for $\gen f$, \gsb{} can 
be optimized in the sense that one can restrict $\hdsyz$ to store only the initial module of the
corresponding syzygy module: Using only sig-poly pairs in Algorithm~\ref{alg:gsb} 
we are no longer able to store the full module element $\gamma$ in $\hdsyz$ at
Line~\ref{alg:gsb:addsyz}. Still one can compute at the same time the initial 
submodule $\hdsyz$ of the syzygy module of $\gen f$. In order to do so, one
needs to exchange Line~\ref{alg:gsb:addsyz} with

\begin{center}
\begin{algorithmic}[1]
\setcounter{ALG@line}{7}
   \State $\hdsyz\gets \hdsyz \cup \set{\sig\gamma}$
\end{algorithmic}
\end{center}

The fact that one can use signature-based \grobner{}
basis algorithms to compute the initial module of the module of corresponding syzygies
was first mentioned in \cite{ggvGGV2010}.
\end{enumerate}
\end{characteristics}

Signature-based \grobner{} basis algorithms like \gsb{} are in the vein of
a bigger class of algorithms computing the image and the kernel of a module
homomorphism at the same time: In our setting the image is the signature
\grobner{} basis \basis{} and the kernel is the syzygy module \hdsyz{}. Other
well-known, \grobner{} basis related algorithms of this type are, for example, the MMM
algorithm by Marinari, M\"oller and Mora (\cite{mmm93}) and the FGLM algorithm by Faug\`ere,
Gianni, Lazard and Mora (\cite{fglmFGLM1993}). Recently, Sun gave a nice
overview on the connections between those algorithms in \cite{sun2013}.

\section{S-Pair Elimination}
\label{sec:spair-elimination}

Until now we have introduced signature \grobner{} bases and their computation
only to receive a \grobner{} basis for some ideal $\langle \gen f \rangle$ and the
initial module of $\syz{\gen f}$. As mentioned in Section~\ref{sec:gsb} \gsb{}
should be understood as a template and common basis for all signature-based
\grobner{} basis algorithms. Thus, it is slow and not at all optimized.
One main bottleneck of \gsb{} is the high number
of \sreduction{}s to zero. As for the classic Buchberger algorithm (see
\cite{bGroebner1965,buchberger2ndCriterion1985}) we are searching
for criteria to discard such useless computations in advance like we have used
known syzygies in \mff{} in Section~\ref{sec:matrix-f5}.

Assume that \gsb{} regular \sreduces{} an S-pair in signature $T$ to $\gamma \in
\module$. Then three different situations can appear:
\begin{enumerate}
\item If $\gamma$ is a syzygy then $\gamma$ is added to $\hdsyz$ in
Line~\ref{alg:gsb:addsyz}.
\item If $\gamma$ is not syzygy but singular top \sreducible{} then by
  Lemma~\ref{lem:singular} $\gamma$ will \sreduce{} to zero. Thus it is
discarded in Line~\ref{alg:gsb:singular}.
\item Otherwise $\gamma$ is used to build new S-pairs with elements in
\basis{}
(Line~\ref{alg:gsb:pairs}) and later on itself added to \basis{}
(Line~\ref{alg:gsb:basis}).
\end{enumerate}

\begin{definition}
For the above three cases $T$ is respectively a \emph{syzygy},
\emph{singular} or \emph{basis} \emph{signature}.
%This is well defined due to Lemma \ref{lem:p3}.
\end{definition}

We are interested in the situations where elements are discarded. In the
following we take a closer look at syzygy and singular signatures.

\subsection{Eliminating S-pairs by known syzygies}
\label{sec:syzygy-criterion}
Clearly, we receive syzygies by \sreductions{} to zero in \gsb{}, but there are
also syzygies immediately known without precomputations as we have already seen
in the example computation of \mff{} in Section~\ref{sec:matrix-f5}.

\begin{definition}
\label{def:koszul}
The \emph{Koszul syzygy} between $\alpha,\beta\in\basis{}$ is
$\koz\alpha\beta\defeq \proj\beta\alpha-\proj\alpha\beta$. If
$\sig{\proj\beta\alpha}\not\simeq\sig{\proj\alpha\beta}$ then the
Koszul syzygy is \emph{regular}. By ``Koszul syzygy'' we always mean
``regular Koszul syzygy''.
\end{definition}

Trivial relations resp. principal syzygies are Koszul syzygies.
Using those and already computed zero reductions we are able to
flag a given signature being predictably syzygy.

\begin{definition}
A signature $T$ is \emph{predictably syzygy} if there exists a syzygy
$\sigma\in\module$ such that $\sig\sigma<T$ and $\sig\sigma|T$.
\end{definition}

Being predictably syzygy gives us a nice characterization when computing
\grobner{} bases.

\begin{lemma}[Syzygy criterion]
\label{lem:syzygy-criterion}
Let $\alpha, \beta \in \basis{}$, $\gamma = \spair\alpha\beta$ with $\sig\gamma$
being predictably syzygy, and let \basis{} be a signature \grobner{} basis up to
$\sig\gamma$. Then $\gamma$ \sreduces{} to zero w.r.t. \basis{}. 
Moreover, if $S$ is a syzygy signature and $S|T$ then $T$ is also a syzygy
signature.
\end{lemma}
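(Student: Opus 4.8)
The plan is to derive the first assertion from the definition of a signature \grobner{} basis in a strictly smaller signature, using the fact that adding a syzygy to a module element changes its signature but not its polynomial image; the second assertion will then follow immediately from closure of syzygies under multiplication by monomials. First I would set up the bookkeeping. Since $\sig\gamma$ is predictably syzygy, fix a syzygy $\sigma$ with $\sig\sigma\mid\sig\gamma$ and $\sig\sigma<\sig\gamma$, and choose a monomial $c$ with $\sig{c\sigma}\simeq\sig\gamma$ (possible since $\sig\sigma\mid\sig\gamma$). Then there is a scalar $\mu\in\field$ such that $\gamma'\defeq\gamma-\mu\,c\sigma$ either is $0$ or has $\sig{\gamma'}<\sig\gamma$, because the two leading module terms cancel. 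As $\proj\sigma=0$, we get $\proj{\gamma'}=\proj\gamma$. If $\proj\gamma=0$ then $\gamma$ is already a syzygy and there is nothing to prove, so assume $\proj\gamma\neq0$; then $\gamma'\neq0$ and $\sig{\gamma'}$ is a genuine signature with $\sig{\gamma'}<\sig\gamma$.

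The central step is then as follows. Because $\basis{}$ is a signature \grobner{} basis up to $\sig\gamma$, it is a signature \grobner{} basis in signature $\sig{\gamma'}$, so $\gamma'$ \sreduces{} to zero w.r.t. $\basis{}$; write this \sreduction{} as a chain $\gamma'=\delta_0\to\delta_1\to\cdots\to\delta_k$ with $\delta_k$ a syzygy and each step $\delta_{j+1}=\delta_j-b_j\eta_j$, $\eta_j\in\basis{}$. I claim that applying the very same reducers to $\gamma$ gives a valid \sreduction{} chain $\gamma=\varepsilon_0\to\cdots\to\varepsilon_k$ with $\varepsilon_{j+1}=\varepsilon_j-b_j\eta_j$. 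An induction using $\proj{\gamma'}=\proj\gamma$ gives $\proj{\varepsilon_j}=\proj{\delta_j}$, so Condition~(\ref{cond:preduce}) of Definition~\ref{def:sreduce} — that $\hdp{b_j\eta_j}$ is a term of $\proj{\varepsilon_j}$ — holds exactly as it did for $\delta_j$. For Condition~(\ref{cond:sreduce}), recall that in an \sreduction{} the signature never increases, so $\sig{b_j\eta_j}\leq\sig{\delta_j}\leq\sig{\delta_0}=\sig{\gamma'}<\sig\gamma$ for every $j$; hence the top module term of $\gamma$ is never cancelled, $\sig{\varepsilon_j}=\sig\gamma$ for all $j$, and $\sig{b_j\eta_j}<\sig\gamma=\sig{\varepsilon_j}$, as required. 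Consequently $\gamma$ \sreduces{} to $\varepsilon_k$, and $\proj{\varepsilon_k}=\proj{\delta_k}=0$, so $\varepsilon_k$ is a syzygy and $\gamma$ \sreduces{} to zero.

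For the second assertion I would argue directly. If $S$ is a syzygy signature, then there is a syzygy $\sigma$ whose signature divides $S$ — for instance $\sig\sigma=S$, using that regular \sreduction{} preserves the signature, so that the zero reduction witnessing that $S$ is a syzygy signature has signature exactly $S$. From $S\mid T$ we obtain $\sig\sigma\mid T$, and a suitable monomial multiple of $\sigma$ is again a syzygy certifying that $T$ is predictably syzygy; by the first part the S-pair of signature $T$ then \sreduces{} to zero, i.e. $T$ is a syzygy signature.

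The main obstacle is precisely the central step: one must check that the \sreduction{} chain for the smaller-signature element $\gamma'$ lifts verbatim to an \sreduction{} chain for $\gamma$ itself. This works because (a) $\gamma$ and $\gamma'$ have identical polynomial images — a consequence of $\sigma$ being a syzygy — so the polynomial side of every reduction step transfers unchanged, and (b) every reducer occurring in that chain has signature strictly below $\sig\gamma$, which makes the signature side-condition automatic once one observes that $\sig{\varepsilon_j}$ stays equal to $\sig\gamma$ throughout. Everything else is formal.
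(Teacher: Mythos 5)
Your proof is correct and follows the same strategy as the paper's: subtract a suitable scalar--monomial multiple of the syzygy to drop the signature of $\gamma$, invoke the signature \grobner{} basis property at the strictly smaller signature, and transfer the resulting \sreduction{} chain back to $\gamma$. You in fact supply the detail the paper compresses into ``thus also $\gamma$ behaves in this way'' -- namely that every reducer $b_j\eta_j$ in the chain has $\sig{b_j\eta_j}\leq\sig{\gamma'}<\sig\gamma$, so the chain lifts verbatim to a valid \sreduction{} of $\gamma$ to a syzygy.
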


\begin{proof}
If $\gamma$ is predictably syzygy then there exists a syzygy $\sigma \in
\module$ such that $\sig\sigma = \sig\gamma$. $\proj{\gamma -\sigma} =
\proj\gamma$ but $\sig{\gamma - \sigma} < \sig\gamma$. By
Definition~\ref{def:sgb} $\gamma - \sigma$ \sreduces{} to zero w.r.t.
\basis{}, thus also $\gamma$ behaves in this way.\qed
\end{proof}

The outcome of Lemma~\ref{lem:syzygy-criterion} is that whenever we
handle an S-pair $\gamma$ in a signature-based \grobner{} basis algorithm like
\gsb{} whose signature is divisible by the signature of a syzygy we can discard
$\gamma$.

\begin{remark}
Restricting Lemma~\ref{lem:syzygy-criterion} to principal syzygies and the
compatible module monomial order used to $\potl$ we get a
statement equivalent to the \ff{} criterion presented in Theorem~1
in~\cite{fF52002Corrected}.
\end{remark}
\subsection{Uniqueness of S-pairs at a given signature}
\label{sec:singular-criterion}
Next we are looking at the situation where the \sreduction{} of an S-pair ends
with a non-syzygy element $\gamma$ that is singular top \sreducible{} w.r.t.
\basis{}. We have already seen in Lemma~\ref{lem:singular} that we can discard
such S-pairs in the computations. The remaining question is how to detect such a
situation.

Being singular top \sreducible{} is a special case of the situation where there
are two or more S-pairs in the same signature $T$.
If so, we only have to regular \sreduce{} one of them as they all regular
\sreduce{} to the same thing by Lemma \ref{lem:p3}. Since
\sreduction{} proceeds by decreasing the lead term, we can for example try
to speed up the process by choosing an S-pair $\gamma$ in signature $T$
whose lead term $\hdp\gamma$ is minimal.
If $\sig{\spair\alpha\beta}=\sig{a\alpha}$, then
we get the same result from regular \sreducing{} $\spair\alpha\beta$
as for regular \sreducing{} $a\alpha$ by
Notation~\ref{conv:spairs}~(\ref{conv:spairs:item}). 

All in all we get the following nice description of the \emph{singular
  criterion}:

\begin{lemma}[Singular criterion]
For any signature $T$ we need to handle exactly one $a\alpha \in \module$ from
\begin{equation}
\rewriters T = \setBuilder{a\alpha}{ \alpha\in\basis\text{, } a\text{ is a monomial
    and } \sig{a\alpha}= T }
\label{eq:rewriterset}
\end{equation}
computing a signature \grobner{} basis.
\label{lem:singular-criterion}
\end{lemma}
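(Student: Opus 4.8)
The plan is to run the argument inside \gsb{}, processing signatures in increasing order, so that when signature $T$ is reached we may take $\basis$ to be a signature \grobner{} basis up to $T$ (the situation of Lemma~\ref{lem:gsb-with-pleqs}; Theorem~\ref{thm:spairs} then reduces the whole task to keeping this property stable as $T$ grows). First I would fix such a $T$, take two arbitrary elements $a\alpha,b\beta\in\rewriters T$, and regular \sreduce{} each of them, say to $\gamma_1$ and $\gamma_2$. Regular \sreduction{} does not move the signature, so $\sig{\gamma_1}=\sig{\gamma_2}=T$ and both are regular \sreduced{}; hence Lemma~\ref{lem:p3}, applied to the signature \grobner{} basis $\basis$ up to $T=\sig{\gamma_1}=\sig{\gamma_2}$, forces $\proj{\gamma_1}=\proj{\gamma_2}$, so in particular $\gamma_1$ is a syzygy exactly when $\gamma_2$ is. Thus the outcome of regular \sreducing{} a member of $\rewriters T$ is, up to sig-poly pairs, independent of the member chosen, and so is the branch taken by \gsb{}: record the result in $\hdsyz$ if it is a syzygy; drop it if it is non-syzygy but singular top \sreducible{} (here Lemma~\ref{lem:singular} guarantees it would \sreduce{} to zero anyway); and otherwise append it to $\basis$. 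This shows handling one element of $\rewriters T$ is no worse than handling all of them.

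Next I would check that handling a single representative is also \emph{sufficient}, i.e. it preserves the signature \grobner{} basis property as $T$ increases. By Theorem~\ref{thm:spairs} it is enough to see that every regular S-pair $\spair{\alpha'}{\beta'}$ with $\alpha',\beta'\in\basis$ and $\sig{\spair{\alpha'}{\beta'}}=T$ \sreduces{} to zero once the representative has been processed. Set $\lambda=\lcm\left(\hdp{\alpha'},\hdp{\beta'}\right)$; since the S-pair is regular, its signature is attained by exactly one of its two terms, say $\sig{\frac{\lambda}{\hdp{\alpha'}}\alpha'}=T\pgt\sig{\frac{\lambda}{\hdp{\beta'}}\beta'}$, so that $\frac{\lambda}{\hdp{\alpha'}}\alpha'\in\rewriters T$. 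The passage $\frac{\lambda}{\hdp{\alpha'}}\alpha'\mapsto \frac{\lambda}{\hdp{\alpha'}}\alpha'-\frac{\lambda}{\hdp{\beta'}}\beta'=\spair{\alpha'}{\beta'}$ is a single regular top \sreduction{} step (regular because the subtracted term has strictly smaller signature), so a regular \sreduction{} of $\frac{\lambda}{\hdp{\alpha'}}\alpha'$ may be routed through $\spair{\alpha'}{\beta'}$; by the path independence established above (Lemma~\ref{lem:p3}), the regular \sreduced{} forms of $\frac{\lambda}{\hdp{\alpha'}}\alpha'$, of $\spair{\alpha'}{\beta'}$, and of the chosen representative of $\rewriters T$ all agree. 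Hence after handling the representative every regular S-pair in signature $T$ \sreduces{} to zero w.r.t. the updated $\basis$, which is exactly what Theorem~\ref{thm:spairs} asks for; the unit vectors $\mbasis i$ with $\sig{\mbasis i}=T$ play the role of a representative of $\rewriters T$ in the same way. Finally, for ``exactly one'' rather than ``at most one'': if $T$ is a basis signature, then the representative's regular \sreduced{} form is a genuine new element of the minimal signature \grobner{} basis (Corollary~\ref{cor:gsb-minsigbasis}), so at least one element of $\rewriters T$ must actually be processed.

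I expect the only delicate point to be the bookkeeping in the second paragraph: justifying precisely that a regular \sreduction{} of $\frac{\lambda}{\hdp{\alpha'}}\alpha'$ is allowed to begin with the step producing $\spair{\alpha'}{\beta'}$, and then invoking Lemma~\ref{lem:p3} for the fact that the terminal regular \sreduced{} result does not depend on the (legitimate) sequence of steps chosen. The remaining ingredients — the three-way split into syzygy / singular / basis signature and the final appeal to Theorem~\ref{thm:spairs} — are routine once this path-independence is nailed down.
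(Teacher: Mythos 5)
Your proof is correct and rests on the same pillar as the paper's own justification, namely the path-independence guaranteed by Lemma~\ref{lem:p3} for regular \sreductions{} in a fixed signature (the paper states the lemma without a formal proof, its argument being embedded in the sentence just above: ``they all regular \sreduce{} to the same thing by Lemma~\ref{lem:p3}''). Your second paragraph, verifying sufficiency through Theorem~\ref{thm:spairs} and Corollary~\ref{cor:gsb-minsigbasis}, is sound but goes somewhat beyond what the lemma requires; the paper defers that bookkeeping to the correctness proof of the surrounding algorithm (Theorem~\ref{thm:term-cor-gsb}).
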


\begin{remark}\
\label{rem:singularcriterion}
\begin{enumerate}
\item Note that $\alpha$ might not be involved in any S-pair in signature $T$. In this
situation at signature $T$ no S-pair is computed resp. \sreduced{} at all.
\item Note that when computing signature \grobner{} bases by signature-based
algorithms with an arbitrary pair set order $\pleq$ uniqueness of the elements
in signature $T$ is not guaranteed. A situation as pointed out in
\characteristic~\ref{char:gsbend}~(\ref{char:gsbend:pairsetorders}) might appear and thus
after having already chosen and regular \sreduced{} an element from $\rewriters
T$ the algorithm might come back to signature $T$ and makes a new choice from
$\rewriters T$.
\item Lemma~\ref{lem:singular-criterion} corresponds to rewriting rows in
\mff{} as done in Section~\ref{sec:matrix-f5}. Choosing an element in signature
$T$ mirrors searching already reduced row echelon forms $N_d$ for better
representations of the row labelled by $T$.
\end{enumerate}
\end{remark}

What is now left to do is to make a good choice for $a\alpha$ from $\rewriters
T$. For this we need to introduce the notion of a rewriter in the following.

\section{Rewrite bases}
\label{sec:rewrite-bases}
In Section~\ref{sec:singular-criterion} we have seen that per signature $T$ we
only need to take care of one element. In order to make a choice of such an
element we need to define an order on $\rewriters T$. For this the notion of 
so-called \emph{rewriters} is introduced in the
following. In this section we present a first signature-based \grobner{} basis
algorithm using S-pair elimination as presented in
Section~\ref{sec:spair-elimination}.
This is then the fundamental algorithm we can derive all known, efficient
implementations from.

Similar attempts to achieve such a comprehensive representation of
signature-based \grobner{} basis algorithms are given, for example,
in~\cite{huang2010,sw2011b}. The algorithms presented there, called
\trb{} and \gbgc{} are included in Algorithm~\ref{alg:rba}, called
\rba{}. Note that in~\cite{erF5SB2013} there is already an algorithm called
\rba{}, here we generalized it further.

\subsection{Combining elimination criteria}
\label{sec:combined-criteria}
Before we introduce the concept of rewriter, let us shortly recall the syzygy
criterion: An element $\gamma$ is
discarded if there exists a syzygy $\sigma$ such that $\sig\sigma \mid
\sig\gamma$, or in other words, there exists a monomial $s \in \ring$ such that
$\sig{s\sigma} = \sig\gamma$. Thus we have again two elements of the
same signature and need to decide which one to handle. Of course, by
Remark~\ref{rem:singularcriterion} we take $s\sigma$ since we know that
$\proj{s\sigma} = 0$ already, so no further computations need to be done in
signature $\sig{s\sigma}$. But this is nothing else but a rewording of
Lemma~\ref{lem:syzygy-criterion}, the syzygy criterion. It follows that we can
generalize the set $\rewriters T$ to
\begin{equation}
\rewriters T = \setBuilder{a\alpha}{ \alpha\in\basis\cup\hdsyz\text{, } a\text{ is a monomial
    and } \sig{a\alpha}= T }
\label{eq:generalizedrewriterset}
\end{equation}

The only difference between Equation~\ref{eq:rewriterset}
and~\ref{eq:generalizedrewriterset} is that $\alpha$ is now allowed to be in
$\hdsyz$, too.
With this the two criteria from Section~\ref{sec:syzygy-criterion}
and~\ref{sec:singular-criterion} to find useless S-pairs unite to one single criterion.
Furthermore, with this only one question remains to be answered when
implementing signature-based \grobner{} basis algorithms: How to choose the
single element from $\rewriters T$?

Since we have seen that all elements from $\rewriters T$ ``rewrite'' the same
information for the input ideal $I = \ideal{\gen f}$ at signature $T$ the following
naming conventions are reasonable.

\begin{definition} \
\label{def:rewriter}
\begin{enumerate}
\item\label{def:rewriter:order} A \emph{rewrite order $\rleq$} is a partial order on
$\basis{}\cup\hdsyz$ such that $\rleq$ is a total order on \basis{}
%%%%%%%%%%%%%%%%%%%%%%%%%%%%%%%%%%%%%%%%%%%%%%%%%%%%%%%%%%%%%%%%%
% do we need to have the order in which elements are added to G?
% YES WE NEED THIS!
%%%%%%%%%%%%%%%%%%%%%%%%%%%%%%%%%%%%%%%%%%%%%%%%%%%%%%%%%%%%%%%%%
% NO WE DO NOT NEED THIS:
% moved the lemma with detecting the lower signature s-pair generator by
% rewritten to the next subsection and defined it w.r.t. F5 or SB rewrite order
% only
%%%%%%%%%%%%%%%%%%%%%%%%%%%%%%%%%%%%%%%%%%%%%%%%%%%%%%%%%%%%%%%%%
%and if $\beta$ is added to \basis{} after $\alpha$ has been added to
%\basis{} then $\alpha \rleq \beta$.
%%%%%%%%%%%%%%%%%%%%%%%%%%%%%%%%%%%%%%%%%%%%%%%%%%%%%%%%%%%%%%%%%
%$a\alpha - b\beta$ with $\sig{a\alpha} >

%\sig{b\beta}$ regular \sreduces{} to $\gamma$ then $\alpha \rleq \gamma$.
%such that $\sig\alpha \mid \sig\beta \Longrightarrow \alpha \preceq
%\beta$ holds for all $\alpha, \beta \in \basis\cup\hdsyz$.
\item An element $\alpha \in \basis$ is a \emph{rewriter in signature $T$} if
$\sig\alpha \mid T$. If for a monomial $a\in\ring$ $\sig{a\alpha} = T$ we also
say for convenience that $a\alpha$ is a rewriter in signature $T$.
The $\preceq$-maximal rewriter in signature $T$ is the \emph{canonical rewriter
  in signature $T$}.
A multiple $a\alpha$ of a basis element $\alpha$ is \emph{rewritable} if
$\alpha$ is not the canonical rewriter in signature $\sig{a\alpha}$.
\end{enumerate}
\end{definition}

\begin{remark}
Of course, the definition of a rewrite order in Definition~\ref{def:rewriter} is
rather generic and not practical. For example, it does not even take care of
the elements in \hdsyz{}. Clearly, for optimized computations one want $s\sigma$
be the canonical rewriter in signature $\sig{s\sigma}$ for
$\sigma\in\hdsyz$. Still, in terms of correctness, one do not need to restrict
Definition~\ref{def:rewriter}~(\ref{def:rewriter:order}) to this. We see in the
following how explicitly defined  rewrite orders can be used to reach efficient
implementations of signature-based criteria to discard useless S-pairs.
\end{remark}

\begin{example}
Looking again at Example~\ref{ex:sreduction} we see that $\sig{x\alpha_5} =
\sig{x^2\alpha_4} = x^2z\mbasis 2$. Defining a rewrite order $\rleq$ by
$\alpha \rleq \beta$ if $\sig\alpha \leq \sig\beta$ we can see
that $x^2\alpha_4$ is rewritable since $\alpha_5$ is the
canonical rewriter in signature $x^2z\mbasis 2$ due to $\sig{\alpha_4} = z
\mbasis 2 < xz \mbasis 2 = \sig{\alpha_5}$.
\end{example}

Definition~\ref{def:rewriter} gives us a choice for $\rewriters T$, namely we
can choose the canonical rewriter in signature $T$ from $\rewriters T$. Of
course, using Equation~\ref{eq:generalizedrewriterset} to find the canonical
rewriter w.r.t. $\rleq$ instead of using the syzygy criterion and the
rewritable criterion independently from each other we need to explain the
following: If a syzygy exists for signature $T$, then all S-pairs in signature
$T$ are removed. It turns out that in the general description of rewrite bases
we are giving here this need not be true at all. Of course it makes sense to
define $\alpha \rleq \beta$ whenever $\beta \in \hdsyz$. We come back to this
fact once we are explicitly defining rewrite orders in
Section~\ref{sec:best-rewrite-order}.

Analogously to Section~3.2 in~\cite{erF5SB2013} we introduce next
the important notion of a rewrite basis. Note that the combination of the
syzygy and the singular criterion lead to a much easier notation.
We see in the following a strong connection to signature \grobner{} bases.

\begin{definition}
\basis{} is a \emph{rewrite basis in signature $T$} if the canonical rewriter
in $T$ is not regular top \sreducible{}.
\basis{} is a \emph{rewrite basis up to signature $T$} if \basis{} is a rewrite
basis in all signatures $S<T$. \basis{} is a \emph{rewrite basis} if
\basis{} is a rewriter basis in all signatures.
\end{definition}

\begin{lemma}
If \basis{} is a rewrite basis up to signature $T$ then \basis{} is also a
signature \grobner{} basis up to $T$.
\label{lem:rewbasis-sigbasis}
\end{lemma}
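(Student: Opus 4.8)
The plan is to derive this from Theorem~\ref{thm:spairs}: it is enough to check that every $\mbasis i$ with $\mbasis i<T$ and every regular S-pair $\spair\alpha\beta$ (with $\alpha,\beta\in\basis{}$) of signature $\sig{\spair\alpha\beta}<T$ \sreduces{} to zero w.r.t. \basis{}. I would prove this by well-founded induction on the signature $S<T$ of such an element: assuming every $\mbasis i$ and every regular S-pair of signature strictly below $S$ \sreduces{} to zero, Theorem~\ref{thm:spairs} applied with $S$ in the role of $T$ tells us that \basis{} is a signature \grobner{} basis up to $S$, so it remains to handle the elements of signature exactly $S$.

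So let $\mu$ be $\mbasis i$ with $\sig{\mbasis i}=S$, or a regular S-pair of signature $S$, and let $\gamma$ be the result of regular \sreducing{} $\mu$ as far as possible w.r.t. \basis{}. Regular \sreduction{} never moves the signature, so $\sig\gamma=S$ and $\gamma$ is not regular top \sreducible{}; if $\proj\gamma=0$ we are done. Otherwise I would look at the canonical rewriter $c\delta$ in signature $S$ (Definition~\ref{def:rewriter}). If $c\delta$ lies in \hdsyz{}, then $S$ is predictably syzygy and $\mu$ \sreduces{} to zero --- directly by Lemma~\ref{lem:syzygy-criterion} when $\mu$ is an S-pair, and when $\mu=\mbasis i$ by subtracting the syzygy of signature $\mbasis i$ to land on an element of signature $<S$ with image $f_i$, which \sreduces{} to zero by the inductive hypothesis, whence so does $\mbasis i$ (applying the same reduction steps, whose signature constraints only get weaker). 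If instead $\delta\in\basis{}$, then since \basis{} is a rewrite basis in signature $S$ the element $c\delta$ is not regular top \sreducible{}; because being regular top \sreducible{} depends only on the leading term and the signature of an element, this property is preserved by tail \sreduction{}, so every regular \sreduction{} step available to $c\delta$ (and to its partial reductions) is a tail step, and the result $\gamma'$ of regular \sreducing{} $c\delta$ satisfies $\hdp{\gamma'}=\hdp{c\delta}$ and $\sig{\gamma'}=S$. Now $\gamma$ and $\gamma'$ are both regular \sreduced{} of signature $S$ and \basis{} is a signature \grobner{} basis up to $S$, so Lemma~\ref{lem:p3} forces $\hdp\gamma=\hdp{\gamma'}$ (the alternative $\proj\gamma=\proj{\gamma'}=0$ is excluded by $\proj\gamma\neq0$). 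Hence $\hdp\gamma=\hdp{c\delta}$ with $\sig{c\delta}=S\simeq\sig\gamma$, that is, $c\delta$ is a singular top \sreducer{} of $\gamma$; Lemma~\ref{lem:singular} then gives that $\gamma$, and therefore $\mu$, \sreduces{} to zero. This closes the induction, and a final application of Theorem~\ref{thm:spairs} yields that \basis{} is a signature \grobner{} basis up to $T$.

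The genuinely load-bearing point, and the one I would be most careful about, is the observation that ``not regular top \sreducible{}'' is invariant under tail \sreduction{}: this is what turns the rewrite-basis hypothesis --- which controls only a single element, the canonical rewriter, per signature --- into control over the regular \sreduced{} form of every S-pair at that signature through Lemma~\ref{lem:p3}. The remaining care is the interface with \hdsyz{}: one must know that a canonical rewriter exists at every signature below $T$ (this is implicit in the notion of a rewrite basis together with the assumption that the relevant $\mbasis i$ lie in $\basis{}\cup\hdsyz{}$), and the case where that canonical rewriter is a syzygy has to be treated separately, via the syzygy criterion (Lemma~\ref{lem:syzygy-criterion}) rather than via Lemma~\ref{lem:p3}, since a syzygy has no leading term to compare.
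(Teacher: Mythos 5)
Your argument is correct and supplies, in full, the proof that the paper only cites (Lemma~8 of~\cite{erF5SB2013}, adapted to the generalized rewrite order). The route --- reducing to Theorem~\ref{thm:spairs}, well-founded induction on the signature $S<T$, comparing the regular \sreduced{} form $\gamma$ at signature $S$ to that of the canonical rewriter via Lemma~\ref{lem:p3}, then discharging via Lemma~\ref{lem:singular} --- is the natural one and is sound. The observation you single out is indeed the crux: regular top \sreducibility{} of an element depends only on its sig-lead pair, hence is invariant under tail \sreduction{}, and this is what lets the pointwise rewrite-basis hypothesis about the canonical rewriter control the regular \sreduced{} normal form of \emph{every} element at signature $S$ through Lemma~\ref{lem:p3}.

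Two small caveats are worth recording. When the canonical rewriter $c\delta$ has $\delta\in\hdsyz$ and $c$ a constant, the paper's definition of ``predictably syzygy'' requires a syzygy of signature \emph{strictly} below $S$, so Lemma~\ref{lem:syzygy-criterion} does not apply verbatim; but the proof of that lemma (subtract a syzygy at signature $S$ to drop the signature, then invoke the signature \grobner{} basis property up to $S$ supplied by the inductive hypothesis) goes through unchanged, exactly as you already carried out for $\mu=\mbasis i$. And you rightly flag that a canonical rewriter must exist at every signature below $T$: this is automatic when $\mu$ is a regular S-pair (one of the two generators is itself a rewriter at $\sig\mu$), but for $\mu=\mbasis i$ it relies on $\mbasis i$, or an element of that signature, lying in $\basis\cup\hdsyz$ --- an assumption implicit in the algorithmic setup but not made explicit in the lemma's hypothesis.
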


\begin{proof}
The special case where a rewriter order is a total order on $\basis$
fulfilling $\sig\alpha \mid \sig\beta \Longrightarrow \alpha \rleq \beta$ is
presented in Lemma~8 in~\cite{erF5SB2013}. Generalizing this proof to our
setting is trivial.\qed
\end{proof}

\subsection{An algorithm computing rewrite bases}
\label{sec:rewrite-basis-algorithm}
Next we present an algorithm quite similar to Algorithm~\ref{alg:gsb} that
implements the above mentioned S-pair elimination in the sense that it computes
a rewrite basis. We show that depending on the chosen rewrite order the size of
the rewrite basis varies. 

%%%%%%%%%%%%%%%%%%%%%%%%%%%%%%%%%%%%%%%%%%%%
%%                RBA                     %%
%%%%%%%%%%%%%%%%%%%%%%%%%%%%%%%%%%%%%%%%%%%%

\begin{algorithm}
\begin{algorithmic}[1]
\Require Ideal $I=\langle \gen f \rangle \subset \ring$, monomial
order $\leq$ on $\ring$ and a compatible extension on $\module$, total order
$\pleq$ on the pairset $\pairset$ of S-pairs, a rewrite order
$\rleq$ on $\basis \cup \hdsyz$
\Ensure Rewrite basis \basis{} for $I$, \grobner{} basis $\hdsyz$ for $\syz{\gen f}$
\State $\basis{}\gets\emptyset$, $\hdsyz \gets \emptyset$
%  \ (\basis{} will be the signature \grobner{} basis)
\State $\pairset\gets\set{\mbasis 1,\ldots,\mbasis m}$
%  \ ($P$ is the set of pending reductions)
\State $\hdsyz\gets \set{f_i \mbasis j - f_j \mbasis i \mid 1\leq i < j \leq
  m}\subseteq \module$\label{alg:rba:initialsyz}
%  \ ($H$ will be the initial module of syzygies)
\While{$\pairset\neq\emptyset$}
  \State $\beta\gets \min_\pleq  \pairset$\label{alg:rba:choosespair}
  \State $\pairset\gets \pairset\setminus\set \beta$
  \If {$\notrewritable\beta$} \label{alg:rba:rewritecheck}
    \State $\gamma\gets$ result of regular \sreducing{}
    $\beta$\label{alg:rba:regular}
    \If {$\proj{\gamma}=0$}
      \State $\hdsyz\gets \hdsyz+\set{\gamma}$\label{alg:rba:addsyz}
    \Else \label{alg:rba:addsingularelement}
      \State $\pairset\gets \pairset\cup\setBuilder
        {\spair\alpha\gamma}
        {\alpha\in\basis{}\text{ and $\spair\alpha\gamma$ is
                                regular}}$\label{alg:rba:pairs}
      \State $\basis{}\gets\basis\cup\set\gamma$\label{alg:rba:basis}
    \EndIf
  \EndIf
\EndWhile
\State \textbf{return} $(\basis{}, \hdsyz)$
\end{algorithmic}
\caption{Rewrite basis algorithm \rba.}
\label{alg:rba}
\end{algorithm}

%%%%%%%%%%%%%%%%%%%%%%%%%%%%%%%%%%%%%%%%%%%%
%%             Rewritable                 %%
%%%%%%%%%%%%%%%%%%%%%%%%%%%%%%%%%%%%%%%%%%%%
\begin{algorithm}
\begin{algorithmic}[1]
\Require S-pair $a\alpha - b\beta \in\module$, finite subset $\basis\cup\hdsyz \in
\module$, rewrite order $\rleq$ on $\basis\cup\hdsyz$
\Ensure ``True'' if S-pair is rewritable; else ``false''
\If {$a\alpha$ is rewritable}
  \State \textbf{return} true
\EndIf
\If {$b\beta$ is rewritable}
  \State \textbf{return} true
\EndIf
\State \textbf{return} false
\end{algorithmic}
\caption{Rewritability check \rewritablenoarg{} for \rba{}.}
\label{alg:rewritable}
\end{algorithm}

Algorithm~\ref{alg:rba} differs from \gsb{} in three points:
\begin{enumerate}
\item In Line~\ref{alg:rba:initialsyz} \rba{} directly adds the known Koszul
syzygies to \hdsyz{}. This increases the number of possible canonical rewriters
in $\rewriters T$ in a given signature $T$.
\item In Line~\ref{alg:rba:rewritecheck} \rba{} uses Algorithm~\ref{alg:rewritable}
to check if the S-pair $\beta$ is rewritable or not. If so, \rba{} discards
$\beta$ and chooses the next S-pair in $\pairset$. \gsb{} does not provide any
such check.
\item In Lines~\ref{alg:rba:pairs} and~\ref{alg:rba:basis} \rba{} takes the
currently regular \sreduced{} $\gamma$, generates new regular S-pairs with
it and adds $\gamma$ to \basis{}. Whereas \gsb{} handles only not singular top
\sreducible{} $\gamma$, \rba{} runs these steps on all non-syzygy $\gamma$.
\end{enumerate}

Whereas the first two points are optimizations compared to \gsb{}, the
third change seems to be absurd. We have already seen that singular top
\sreducible{} elements are not needed for \basis{}, so why adding them? The
reason is that \rba{} computes rewrite bases, and in order to fulfill the
definition it has to add all these elements to \basis{} nevertheless they are
singular top \sreducible{} or not. Since \rba{} depends on the chosen rewrite
order $\rleq$ we need to store all elements, since they could lead to new
canonical rewriters. We see in Section~\ref{sec:best-rewrite-order} how different
rewrite orders can affect \rba{} quite a lot.

Analogously to Theorem~\ref{thm:term-cor-gsb} we receive the following
statement.

\begin{theorem}
\label{thm:term-cor-rba}
Given $I=\langle \gen f\rangle \subset \ring$, a monomial order
$\leq$ on $\ring$ with a compatible extension on $\module$, $\pleqs$ on
$\pairset$ and a rewrite order
$\rleq$ \rba{} is an algorithm that computes a rewrite basis \basis{} for $I$
and a module $\hdsyz$ generated by a \grobner{} basis
for $\syz{\gen f}$.
\end{theorem}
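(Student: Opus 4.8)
The plan is to mirror the proof of Theorem~\ref{thm:term-cor-gsb}, since \rba{} differs from \gsb{} only by (i) inserting the principal syzygies into \hdsyz{} up front, (ii) replacing the singular-top-reducibility test by the rewritability test of Algorithm~\ref{alg:rewritable}, and (iii) keeping \emph{all} non-syzygy regular \sreduced{} elements in \basis{}. This is the natural generalisation of the correctness argument for the special rewrite order in Section~3.2 of~\cite{erF5SB2013} to an arbitrary rewrite order $\rleq$. First I would establish correctness of \basis{} by induction over the well-ordered set of signatures handled under $\pleqs$, showing that, once all S-pairs of signature $\le T$ have been removed from $\pairset$, the canonical rewriter in every signature $S\le T$ — which is then fixed — is not regular top \sreducible{}. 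In the inductive step one fixes $T$ and lets $\delta$ be the canonical rewriter in $T$ w.r.t. the current $\basis\cup\hdsyz$. By the induction hypothesis and Lemma~\ref{lem:rewbasis-sigbasis}, \basis{} is already a signature \grobner{} basis up to $T$, hence Lemma~\ref{lem:p3} applies: every member of $\rewriters T$ (Equation~\ref{eq:generalizedrewriterset}) regular \sreduces{} to one and the same sig-poly pair. Because Algorithm~\ref{alg:rewritable} discards an S-pair of signature $T$ precisely when it is not built from the canonical rewriter, the only reduction \rba{} performs at signature $T$ is, up to Notation~\ref{conv:spairs}~(\ref{conv:spairs:item}), the one of the multiple $a\delta$ with $\sig{a\delta}=T$ — and none at all if $\delta\in\hdsyz$, or if $\delta$ participates in no S-pair of signature $T$, cf.\ Remark~\ref{rem:singularcriterion}. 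In each of these cases the canonical rewriter in $T$ is either a multiple of a known syzygy, or an earlier basis element (already completely regular \sreduced{} when it was added), or the freshly produced $\gamma$ with $\proj\gamma\neq0$, which by construction is a complete regular \sreduction{}; in every case it is not regular top \sreducible{}, which is the rewrite-basis condition at $T$. Thus the output \basis{} is a rewrite basis, hence by Lemma~\ref{lem:rewbasis-sigbasis} a signature \grobner{} basis, and (as recalled after Definition~\ref{def:sgb}) $\setBuilder{\proj\alpha}{\alpha\in\basis}$ is a \grobner{} basis for $I$.

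For the syzygy module I would argue as in the \gsb{} case: by Theorem~\ref{thm:syzygies}, for every syzygy there is an S-pair or some $\mbasis i$ of dividing signature that regular \sreduces{} to zero, so it suffices to exhibit, for each such S-pair resp.\ $\mbasis i$ of signature $S$, an element of \hdsyz{} whose signature divides $S$. If that S-pair (resp.\ $\mbasis i$) is not rewritable, \rba{} reduces it to some $\gamma$ with $\proj\gamma=0$ and $\sig\gamma=S$, and Line~\ref{alg:rba:addsyz} puts $\gamma$ into \hdsyz{}. If it is rewritable, let $\varepsilon$ be the canonical rewriter in $S$: if $\varepsilon\in\hdsyz$ we are done immediately; if $\varepsilon\in\basis$, then by the uniqueness of the regular \sreduced{} image at $S$ (Lemma~\ref{lem:p3}) the canonical multiple of $\varepsilon$ also \sreduces{} to an element with zero image, which \rba{} therefore adds to \hdsyz{} as well. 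Together with the principal syzygies inserted in Line~\ref{alg:rba:initialsyz}, the signatures occurring in \hdsyz{} generate the initial module of $\syz{\gen f}$, so \hdsyz{} is a \grobner{} basis for $\syz{\gen f}$.

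Termination I would deduce exactly as in the proof of Theorem~\ref{thm:term-cor-gsb} (equivalently, from Theorem~20 in~\cite{erF5SB2013}): since $\ring$ and $\module$ are Noetherian there are only finitely many basis signatures, and between any two basis elements of equal signature only finitely many signatures — each carrying only finitely many sig-poly pairs — are handled; the rewritability test of \rba{} only removes S-pairs and so can only shorten this, while the surplus elements kept in Line~\ref{alg:rba:basis} all carry the signature of the S-pair just processed and hence create no new signatures. The step I expect to be the main obstacle is the syzygy-module claim under a \emph{general} rewrite order: as observed after Definition~\ref{def:rewriter}, a syzygy at signature $T$ need no longer force \emph{all} S-pairs at $T$ to be discarded, so one must argue carefully that, whichever element the rewrite order elects as canonical at $T$, its S-pair still \sreduces{} to zero whenever some S-pair at $T$ does, and that the unit vectors $\mbasis i$ — whose signatures are divisible only by themselves — are handled correctly by the rewritability test. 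Everything else is the routine transcription of the argument already carried out for the special case in~\cite{erF5SB2013}.
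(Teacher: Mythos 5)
Your inductive structure is the right one, and you correctly flag the weak spot. The paper's own proof is simply a citation of Theorems~7 and~20 of~\cite{erF5SB2013}; you are attempting a genuine reconstruction from the lemmas available in the survey. However, that reconstruction has a gap precisely at the step you worried about, and also one you did not notice.

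First, your description of Algorithm~\ref{alg:rewritable} as ``discarding an S-pair of signature~$T$ precisely when it is not built from the canonical rewriter'' is not what the algorithm does: it also discards $a\alpha-b\beta$ when the \emph{second}, lower-signature generator $b\beta$ is rewritable, even if $a\alpha$ is the canonical multiple in~$T$. That additional discarding is justified in the survey only by Lemma~\ref{lem:rewritable-2nd-generator}, whose proof explicitly uses $\rleqff$ or $\rleqsb$; for a generic rewrite order as admitted by Definition~\ref{def:rewriter}~(\ref{def:rewriter:order}) this step requires a separate argument or an extra hypothesis on $\rleq$.

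Second, the inductive claim that the output is a rewrite basis at~$T$ does not follow from your three cases. Suppose the canonical rewriter in~$T$ is $a\alpha$ for an older $\alpha\in\basis$, $a\alpha$ is regular top \sreducible{}, and \rba{} therefore regular \sreduces{} the corresponding S-pair to~$\gamma$ with $\sig\gamma=T$ and $\hdp\gamma\neq\hdp{a\alpha}$. After $\gamma$ is added, the canonical rewriter in~$T$ is the $\rleq$-maximal of $\{a\alpha,\gamma,\dots\}$. Nothing in Definition~\ref{def:rewriter} forces $\gamma$ to be that maximum; if $\rleq$ still elects $a\alpha$, the rewrite-basis condition at~$T$ is \emph{violated}, since $a\alpha$ is by construction regular top \sreducible{}. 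Your phrase ``in every case it is not regular top \sreducible{}'' silently assumes that the fresh $\gamma$ becomes canonical, which is true for $\rleqff$ (by definition) and for $\rleqsb$ (because $\hdp\gamma$ is minimal at signature~$T$, cf.\ Lemma~\ref{lem:rewrite-order-no-singular-basis-element}), but not for an arbitrary $\rleq$. Indeed, the cited Lemma~8 and Theorem~7 of~\cite{erF5SB2013}, on which Lemma~\ref{lem:rewbasis-sigbasis} also leans, require the rewrite order to be a total order on~$\basis$ refining signature divisibility, i.e.\ $\sig\alpha\mid\sig\beta\Rightarrow\alpha\rleq\beta$; this is exactly the property that forces the fresh $\gamma$ to be canonical and that your argument is implicitly using. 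Either add that hypothesis on $\rleq$ (matching what~\cite{erF5SB2013} actually proves) or supply the missing argument; as written, your case analysis does not close the induction. The syzygy-module part inherits the same dependence and, as you note yourself, is not yet established for a general~$\rleq$.
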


\begin{proof}
See~\cite{erF5SB2013}: Theorem~7 for correctness and Theorem~20 for termination.\qed
\end{proof}

\begin{characteristics}\
\label{char:rba}
\begin{enumerate}
\item In~\cite{erF5SB2013} algorithm \rba{} is presented for the first time.
Here \rba{} is presented more general in the sense that different pair set orders
are allowed. Moreover, generalizing the idea of rewritability to include the
syzygy criterion is new in the current presentation.
\item\label{char:rba:potl}
If $\potl$ is used then \rba{} computes $\basis$ and $\hdsyz$ incremental
by increasing indices. Thus it makes sense to optimize Algorithm~\ref{alg:rba}
to recompute $\hdsyz$ once the computations in a new index $k$ starts: At this
point we have a \grobner{} basis $\proj\basis=\{\proj{\alpha_1},\ldots,
\proj{\alpha_{k-1}}\} \subset \ring$ for
$\langle f_1,\ldots,f_{i-1}\rangle$. Defining $\alpha_k=\mbasis k$ such that
$\proj{\alpha_k} := f_i$ we can add for $j<k$ $\proj{\alpha_j} \alpha_k
- \proj{\alpha_k} \alpha_j$ to $\hdsyz$.
\item Note that in spite of Theorem~\ref{thm:term-cor-gsb} we have to limit
Theorem~\ref{thm:term-cor-rba} for \rba{}: Whereas one can show that
\gsb{} terminates for any chosen pair set order $\pleq$ we restrict
\rba{} to $\pleqs$. The problem is the interplay between $\pleq$ and $\rleq$: It
is possible to choose both in a way such that \rba{} adds the same sig-poly pair
to \basis{}. This is possible due to the fact that \rba{} does not check for
singular top \sreducibility{} when adding new elements to \basis{} (since this
shall be handled by the more general and flexible rewritability criterion and thus
$\rleq$).

By the ideas of \cite{sw2011b} it is noted in \cite{gvwGVW2011} that
GVW can compute \grobner{} bases by handling S-pairs in any given order.
This coincides with our descriptions of \gsb{} and \rba{}. Moreover, we show
that not only \gvw{} can do so, but all known efficient implementations of
\rba{}, for example, also \ff{}.
\item Note that there is a strong connection between the signature and the
so-called \emph{sugar degree}. It is shown in~\cite{ederInhomog2013} that using
$\pleqs$ combined with a degree compatible monomial order $<$ a signature-based
\grobner{} basis algorithm refines the sugar degree order of critical pairs.
\item Since all known signature-based \grobner{} basis algorithms are special
cases of \rba{} their correctness and termination is clear with
Theorem~\ref{thm:term-cor-rba}. Later on, we discuss the topic of termination
further, especially for \ff{} in Section~\ref{sec:f5-termination}. There we do
not give full proofs, but refer the reader interested in more details on proving
termination to the corresponding papers. A small selection might be already
mentioned here:
\begin{itemize}
\item \cite{galkinTermination2012,panhuwang2012,erF5SB2013,panhuwang2013} for
\ff{} and variants.
\item
\cite{apF5CritRevised2011,gvwGVW2011,panhuwang2012,rs-2012,erF5SB2013,panhuwang2013}
for \gvw{}, \sba{} and variants.
\end{itemize}
\end{enumerate}
\end{characteristics}

Note that due to Lemma~\ref{lem:gsb-with-pleqs},
Corollary~\ref{cor:gsb-minsigbasis} as well as the definition of rewritability
in~\ref{def:rewriter} choosing $\pleqs$ is the best possible choice
for an efficient computation of \basis{} and \hdsyz{}. Thus we restrict
ourselves in Theorem~\ref{thm:term-cor-rba} to this situation.

Moreover, let us agree in the remaining of the paper on the following:

\begin{convention}
If not otherwise stated we assume $\pleq\; =\; \pleqs$.
\end{convention}

If \rba{} can make use of the rewritability checks, is the resulting rewrite
basis, and thus signature \grobner{} basis smaller?

\begin{lemma}
Given $I=\langle \gen f\rangle \subset \ring$ and a monomial order
$\leq$ on $\ring$ with a compatible extension on $\module$ the basis computed by
\gsb{} is always a subset of the one computed by \rba{} up to sig-poly pairs.
\label{lem:gsb-smaller-than-rba}
\end{lemma}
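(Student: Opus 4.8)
The plan is to read the comparison off from the classification already established. Under our standing convention $\pleq=\pleqs$, Corollary~\ref{cor:gsb-minsigbasis} tells us that \gsb{} returns the minimal signature \grobner{} basis of $I$, while Theorem~\ref{thm:term-cor-rba} together with Lemma~\ref{lem:rewbasis-sigbasis} tells us that \rba{} returns a (not necessarily minimal) signature \grobner{} basis of $I$. So we are exactly in the situation of Lemma~\ref{lem:min-sgb}, which immediately yields: for every element $\alpha$ of the \gsb{}-output there are a nonzero $\const\in\field$ and an element $\beta$ of the \rba{}-output with $\sig\alpha=\const\sig\beta$ and $\hdp\alpha=\const\hdp\beta$. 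This already proves the statement "up to sig-lead pairs".

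What is left --- and this is the real content --- is to promote this to "up to sig-poly pairs", i.e.\ to show $\proj\alpha=\proj{\const\beta}$. Put $T=\sig\alpha=\sig{\const\beta}$. The idea is that the image of the regular \sreduced{} form of a module element of a fixed signature $T$ depends, by the second assertion of Lemma~\ref{lem:p3}, only on $T$ and not on the particular signature \grobner{} basis up to $T$ against which one reduces (to compare two such bases one reduces against their union, which is again a signature \grobner{} basis since extra reducers never hurt). Now $\alpha$ was produced by \gsb{} as the outcome of regular \sreducing{} an S-pair (or some $\mbasis i$) of signature $T$ against the basis current in \gsb{} at that moment, which is a signature \grobner{} basis up to $T$ by Lemma~\ref{lem:gsb-with-pleqs}; and $\const\beta$ was produced inside \rba{} as the outcome of regular \sreducing{} an S-pair of signature $T$ against the basis current there, which is likewise a signature \grobner{} basis up to $T$ (by the time that S-pair is handled, all S-pairs of signature below $T$ have been processed, so by Theorem~\ref{thm:term-cor-rba} and Lemma~\ref{lem:rewbasis-sigbasis} the current basis is a signature \grobner{} basis up to $T$). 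Hence $\proj\alpha$ and $\proj{\const\beta}$ both coincide, up to a scalar absorbed into $\const$, with the normal form attached to $T$, and so they are equal; since distinct elements of the \gsb{}-output carry distinct signatures, this gives the desired inclusion of sig-poly pairs.

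The step I expect to need the most care is exactly this last promotion: one has to be sure that the two elements are reduced against signature \grobner{} bases up to $T$ in the first place, and this is where the use of $\pleqs$ --- hence the fact that S-pairs are handled by increasing signature --- is essential. A small auxiliary point worth checking is that no element added by \rba{} after signature $T$ can interfere with the argument by regular \sreducing{} $\const\beta$: every such element has signature $\geq T$, whereas a regular \sreduction{} step strictly decreases the signature. Apart from this the proof is a chain of citations to the lemmas above and involves no computation.
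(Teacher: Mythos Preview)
Your approach is essentially the same as the paper's: invoke Corollary~\ref{cor:gsb-minsigbasis} (minimality of the \gsb{} output) and conclude via containment of the minimal signature \grobner{} basis in any signature \grobner{} basis. The paper's proof is a single line citing only that corollary, whereas you carefully spell out the promotion from sig-lead pairs (which is all Lemma~\ref{lem:min-sgb} literally gives) to sig-poly pairs via Lemma~\ref{lem:p3}; this extra care is justified and fills a detail the paper leaves implicit. One small wording issue: your parenthetical ``extra reducers never hurt'' justifies that the union is again a signature \grobner{} basis, but what you actually need is that an element regular \sreduced{} with respect to one signature \grobner{} basis up to $T$ remains regular \sreduced{} with respect to the union---this follows because any would-be regular \sreducer{} from the other basis has signature below $T$ and hence, by the sig-\grobner{} property of the first basis, is itself top \sreducible{} there, yielding a regular \sreducer{} already in the first basis.
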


\begin{proof}
Due to $\pleqs$ this follows directly from Corollary~\ref{cor:gsb-minsigbasis}.\qed
\end{proof}

The optimization we achieve when switching from \gsb{} to \rba{} lies in the
fact that \gsb{} regular \sreduces{} many more elements to zero w.r.t.
\basis{}, whereas \rba{} can detect, and thus discard, such an
\sreduction{} in advance.

The following two lemmata are of importance when we compare different rewrite
rules and specific implementations of \rba{}.

\begin{lemma}[Slight variant of Lemma~11 in~\cite{erF5SB2013}]
Let $\alpha \in \module$, let \basis{} be a rewrite basis up to signature
$\sig\alpha$ and let $t$ be a regular \sreducible{} term of~$\proj\alpha$. Then
there exists a regular \sreducer{} $b\beta$ which is
\begin{itemize}
\item not regular top \sreducible{},
\item not rewritable and
\item not syzygy.
\end{itemize}
\label{lem:existence-regular-sreducer}
\end{lemma}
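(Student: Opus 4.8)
The plan is to construct the required \sreducer{} as the \emph{canonical rewriter} in the least signature in which $t$ admits a regular \sreducer{}. First I would consider all regular \sreducers{} of $t$, that is, all module elements $c\gamma$ with $\gamma\in\basis$, $c$ a monomial, $\hdp{c\gamma}=t$ and $\sig{c\gamma}<\sig\alpha$; this set is non-empty precisely because $t$ is, by hypothesis, a regular \sreducible{} term of $\proj\alpha$. Since the module monomial order is a well-order I may choose such a $c\gamma$ with $T\defeq\sig{c\gamma}$ minimal. Two properties of $c\gamma$ come for free: it is not a syzygy, since $\proj{c\gamma}$ has the nonzero lead term $t$; and it is not regular top \sreducible{}, because a regular top \sreduction{} step on $c\gamma$ would use a reducer $c'\gamma'$ with $\gamma'\in\basis$, $\hdp{c'\gamma'}=\hdp{c\gamma}=t$ and $\sig{c'\gamma'}<T<\sig\alpha$, i.e.\ yet another regular \sreducer{} of $t$ of strictly smaller signature, contradicting the minimality of $T$.

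Next I would pass from $c\gamma$ to the canonical rewriter $b\beta$ in signature $T$: this makes sense since $c\gamma$ is itself a rewriter in $T$ (because $\sig\gamma\mid T$) and $\rleq$ is total on \basis{}, and by construction $b\beta$ is not rewritable. As $T<\sig\alpha$ and \basis{} is a rewrite basis up to $\sig\alpha$, \basis{} is a rewrite basis in signature $T$, so $b\beta$ is not regular top \sreducible{}; moreover, by Lemma~\ref{lem:rewbasis-sigbasis}, \basis{} is a signature \grobner{} basis up to $T$. I would then apply Lemma~\ref{lem:p3} to $c\gamma$ and $b\beta$, which have the same signature and are both regular top \sreduced{}: hence $\hdp{c\gamma}=\hdp{b\beta}$ or $\proj{c\gamma}=\proj{b\beta}=0$, and since $\proj{c\gamma}\neq 0$ we get $\hdp{b\beta}=t$. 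Therefore $b\beta$ is a regular \sreducer{} of $t$ (indeed $\hdp{b\beta}=t$ and $\sig{b\beta}=T<\sig\alpha$) that is simultaneously not regular top \sreducible{}, not rewritable, and not a syzygy (its image has the nonzero lead term $t$), which is exactly the claim.

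The step I expect to be the crux is this last coupling: getting a \emph{single} element to satisfy all three negative conditions at once. Minimality of the signature delivers ``not a syzygy'' and ``not regular top \sreducible{}'' for the witness $c\gamma$, while the rewrite-basis hypothesis delivers ``not rewritable'' and ``not regular top \sreducible{}'' for the canonical rewriter $b\beta$; the non-obvious part is that replacing $c\gamma$ by $b\beta$ must not destroy the property of actually reducing $t$. That is exactly what Lemma~\ref{lem:p3} rescues: once $c\gamma$ and $b\beta$ sit in the same signature $T$, are both regular top \sreduced{}, and \basis{} is a signature \grobner{} basis up to $T$, they are forced to share the same polynomial lead term, so $b\beta$ still reduces $t$. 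A minor side issue is the existence of the canonical rewriter in $T$; this is guaranteed because $c\gamma$ witnesses that the set of rewriters in $T$ is non-empty and $\rleq$ is a total order on the relevant finite subset of \basis{}.
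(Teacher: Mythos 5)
Your construction is the right one (minimal signature $T$ among regular \sreducers{} of $t$, then the canonical rewriter $b\beta$ in $T$) and your verification of the first two bullet points and of $\hdp{b\beta}=t$ via Lemma~\ref{lem:p3} matches the paper's proof step for step. The gap is in the third bullet. You read ``not syzygy'' as $\proj{b\beta}\neq 0$, which is vacuously true for any regular \sreducer{} of $t$ (its lead term is $t\neq 0$), so it could not be worth stating as a separate condition. In this lemma ``not syzygy'' refers to the syzygy (i.e.\ \ff{}) criterion, the same way ``not rewritable'' refers to the rewrite criterion: one must show that the signature $T=\sig{b\beta}$ is not a syzygy signature, concretely that there is no $\sigma\in\ideal\hdsyz$ with $\sig\sigma=T$. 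This is precisely the clause that makes Lemma~\ref{lem:f5-reduction} go through, since \ff{} tests candidate reducers against \emph{both} criteria before using them, and one needs a reducer surviving both tests.

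That missing piece is itself another minimality argument, entirely parallel to the one you already made for ``not regular top \sreducible''. Suppose some $\sigma\in\ideal\hdsyz$ had $\sig\sigma=T$. Then $\proj{b\beta-\sigma}=\proj{b\beta}$, so its lead term is still $t$, while $\sig{b\beta-\sigma}<T<\sig\alpha$. Since $\basis$ is a rewrite basis (hence, by Lemma~\ref{lem:rewbasis-sigbasis}, a signature \grobner{} basis) up to $T$, the top term $t$ of $b\beta-\sigma$ is regular \sreducible{} by some multiple of a basis element with signature at most $\sig{b\beta-\sigma}<T$; this is a regular \sreducer{} of $t$ of signature strictly below $T$, contradicting the minimality by which $T$ was chosen. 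Add this argument and your proof is complete.
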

\begin{proof}
Let $M_t$ be the set of all regular \sreducers{} of $t$. Let $c\gamma\in M_t$ of
minimal possible signature $T$, and let $b\beta$ be the canonical rewriter in
signature $T$. By definition, $b\beta$ is not rewritable. Since $\sig{c\gamma}
< \sig\alpha$ $b\beta$ is not regular top \sreducible{}.

Moreover, there cannot exist a $d\delta \in M_t$ such that $d\delta$ regular top
\sreduces{} $c\gamma$ as otherwise $\sig{d\delta} < T$. By Lemma~\ref{lem:p3}
$\hdp{b\beta} = \hdp{c\gamma}$ and thus $b\beta\in M_t$. 

If there exists $\sigma \in \ideal\hdsyz$ such that $\sig\sigma = T$ then
$b\beta - \sigma \in M_t$ since $b\beta \in M_t$, but $\sig{b\beta-\sigma} < T$.
This is a contradiction.\qed
\end{proof}

\begin{lemma}
Let \basis{} be a rewrite basis up to signature $T$, and let $a\alpha$ be the
  canonical rewriter in signature $T$. Then \rba{} \sreduces{} an S-pair in
  signature $T$ if and only if $a\alpha$ is regular top \sreducible{} and $T$ is
  not predictably syzygy.
\label{lem:when-rb-reduces}
\end{lemma}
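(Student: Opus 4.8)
The plan is to reduce the event ``\rba{} regular \sreduces{} an S-pair of signature $T$'' to a condition on the canonical rewriter $a\alpha$ by analysing the rewritability test in Algorithm~\ref{alg:rewritable}, and then to establish the two implications with the help of Lemma~\ref{lem:existence-regular-sreducer} (existence of a well-behaved regular \sreducer{}), Lemma~\ref{lem:p3} (uniqueness of the regular-\sreduced{} result in a fixed signature) and Lemma~\ref{lem:syzygy-criterion} (the syzygy criterion). Throughout I use the natural convention discussed after Definition~\ref{def:rewriter}, namely $\delta\rleq\sigma$ for all $\delta\in\basis$ and $\sigma\in\hdsyz$, so that whenever $\rewriters T$ contains a multiple of a syzygy that multiple is the canonical rewriter in signature $T$; note also $\basis\cap\hdsyz=\emptyset$, since \rba{} puts an element into $\basis$ only when its image is nonzero.

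\textit{Reformulation.} If $\spair{\alpha'}{\beta'}=a'\alpha'-b'\beta'$ is a regular S-pair of signature $T$, then $\hdp{a'\alpha'}=\hdp{b'\beta'}$, exactly one of the two module terms -- say $a'\alpha'$ -- has signature $T$, and $\sig{b'\beta'}<T$. By Algorithm~\ref{alg:rewritable} this S-pair is not rewritable only if $\alpha'$ is the canonical rewriter in signature $T$; since $\rleq$ is total on $\basis$ and the monomial multiplier is forced by $\sig{a'\alpha'}=T$, this means $a'\alpha'=a\alpha$. Hence \rba{} regular \sreduces{} an S-pair of signature $T$ if and only if, when \rba{} reaches signature $T$, the pair set $\pairset$ contains a regular S-pair whose signature-$T$ term is $a\alpha$ and whose other term is not rewritable.

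\textit{The direction ``$\Leftarrow$''.} Assume $a\alpha$ is regular top \sreducible{} and $T$ is not predictably syzygy. Then $a\alpha\in\basis$, $a\alpha$ is the $\rleq$-maximal rewriter in $T$, and $\basis$ is a rewrite basis up to $\sig{a\alpha}=T$. Apply Lemma~\ref{lem:existence-regular-sreducer} to $a\alpha$ and its top term $t=\hdp{a\alpha}$: it yields a regular \sreducer{} $b\beta$ of $t$ with $\beta\in\basis$, $\sig{b\beta}<T$, such that $b\beta$ is not regular top \sreducible{}, not rewritable and not a syzygy. The key step is then to verify that the regular S-pair $\spair\alpha\beta$ has signature \emph{exactly} $T$: its $\beta$-side has signature dividing $\sig{b\beta}<T$, its $\alpha$-side has signature dividing $\sig{a\alpha}=T$, and one must rule out the $\alpha$-side dropping strictly below $T$ by exploiting that $b\beta$ is not regular top \sreducible{} together with Lemma~\ref{lem:p3} applied at the lower signatures that \rba{} (processing pairs by $\pleqs$) has already completed. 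Granting this, $\spair\alpha\beta$ is a regular S-pair of signature $T$ that \rba{} generated when the later of $\alpha,\beta$ entered $\basis$; its signature-$T$ term is $a\alpha$, which is the canonical rewriter and hence not rewritable, and its other term is $b\beta$, not rewritable by Lemma~\ref{lem:existence-regular-sreducer}. By the reformulation, \rba{} regular \sreduces{} an S-pair of signature $T$. This ``signature $=T$'' verification is the main obstacle: it is precisely the F5-style bookkeeping that makes signatures useful, and it is where one genuinely needs the \sreducer{} supplied by Lemma~\ref{lem:existence-regular-sreducer} to be simultaneously non-rewritable and not regular top \sreducible{}.

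\textit{The direction ``$\Rightarrow$''.} Conversely, suppose \rba{} regular \sreduces{} a regular S-pair $p=\spair{\alpha'}{\beta'}$ of signature $T$. By the reformulation its signature-$T$ term equals the canonical rewriter $a\alpha$, so $a\alpha\in\basis$. Since $p$ is an S-pair, its other term has the same lead term as $a\alpha$ but strictly smaller signature, hence is a regular top \sreducer{} of $\hdp{a\alpha}$; therefore $a\alpha$ is regular top \sreducible{}. Finally, if $T$ were predictably syzygy, then by Lemma~\ref{lem:syzygy-criterion} $T$ is a syzygy signature, and since by the time \rba{} reaches signature $T$ the module generated by the lead terms of $\hdsyz$ already contains every syzygy of signature below $T$ (as follows from the correctness analysis of \rba{} in~\cite{erF5SB2013}), there is $\sigma\in\hdsyz$ with $\sig\sigma\mid T$; the convention then gives $\alpha\rleq\sigma$, while $\alpha$ being the canonical rewriter forces $\sigma\rleq\alpha$, so $\alpha=\sigma$ by antisymmetry -- contradicting $\basis\cap\hdsyz=\emptyset$. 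Hence $T$ is not predictably syzygy, which completes the proof.
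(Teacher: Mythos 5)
The reformulation and the ``$\Rightarrow$'' direction are sound (the latter modulo the correctness fact about $\hdsyz$ that you cite). The ``$\Leftarrow$'' direction, however, has a genuine gap that you flag but do not close: after invoking Lemma~\ref{lem:existence-regular-sreducer} to obtain a regular \sreducer{} $b\beta$ of $t=\hdp{a\alpha}$, you must show that $\spair\alpha\beta$ has signature exactly $T$, equivalently that $\gcd(a,b)=1$ where $a,b$ are the monomials with $a\hdp\alpha=b\hdp\beta=t$; otherwise $\spair\alpha\beta$ lands at a strictly smaller signature and does not give you the S-pair you need. The mechanism you sketch (``exploiting that $b\beta$ is not regular top \sreducible{} together with Lemma~\ref{lem:p3} at lower signatures'') is not the right tool and does not obviously produce this conclusion.

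The hypothesis that actually closes the gap is the one you only invoke to apply Lemma~\ref{lem:existence-regular-sreducer} and then drop: that $\basis$ is a rewrite basis \emph{up to} $T$. Suppose $g:=\gcd(a,b)>1$, set $T':=T/g<T$ and $\lambda:=\lcm(\hdp\alpha,\hdp\beta)=t/g$. Since $\sig\alpha\mid T'\mid T$ and any rewriter in signature $T'$ is also a rewriter in signature $T$, $\alpha$ remains the canonical rewriter in $T'$. But $\frac{\lambda}{\hdp\beta}\beta=\frac{b}{g}\beta$ has lead term $\lambda=\hdp{\frac{\lambda}{\hdp\alpha}\alpha}$ and signature $\frac{b}{g}\sig\beta<\frac{a}{g}\sig\alpha=T'$, so it regular top \sreduces{} the canonical rewriter $\frac{\lambda}{\hdp\alpha}\alpha$ in signature $T'$. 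Hence $\basis$ is not a rewrite basis in $T'<T$, contradicting the hypothesis. Therefore $g=1$, $\spair\alpha\beta$ is a regular S-pair of signature exactly $T$, and by the same divisibility argument neither side is rewritable, so \rba{} regular \sreduces{} it. In short, your proof is incomplete at precisely the step you identify, and the repair runs through the rewrite-basis-up-to-$T$ hypothesis rather than through Lemma~\ref{lem:p3} or the non-\sreducibility{} of $b\beta$.
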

\begin{proof}
See Lemma~12 in~\cite{erF5SB2013}.\qed
\end{proof}

\begin{remark}
\label{rem:generalized-spairorder-gvw}
In \cite{sw2011b} Sun and Wang explain a generalized criterion for
signature-based \grobner{} basis algorithms which is used in
\cite{gvwGVW2011} by Gao, Volny and Wang to generalize the original description
of the GVW algorithm given in \cite{gvwGVW2010}. For this a partial order on
$\module \times \ring$ is defined. Note that this is included in our combined
crterion described in Section~\ref{sec:combined-criteria}. This is very similar
to the rewrite order we defined in~\ref{def:rewriter}. Still there are some
slight differences: Sun and Wang call a partial order on $\module \times \ring$
\emph{admissible} if for any S-pair $a\alpha - b\beta$ that \sreduced{} to
$\gamma$ with $\sig\gamma = \sig{a\alpha}$ it holds that $\alpha \rleq \gamma$. Clearly, this
is covered by our definition of a rewrite order. Still an admissible partial
order could lead to several chains of ordered elements in \basis{} which are not
connected to each other. This would mean that a possible canonical rewriter in
siganture $T$ in chain $C_i$ cannot be used to discard a useless S-pair which
consists of a generator in chain $C_j$. So for each chain $C_i$ we would receive
an own set of rewriters in signature $T$:
\[
\rewriters{T,C_i} = \setBuilder{a\alpha}{ \alpha\in\basis\cup\hdsyz\text{, } a\text{ is a monomial
    and } \sig{a\alpha}= T, \alpha \text{ is in chain }C_i  }.
\]
Note that correctness and also
termination of \rba{} is not effected by this, but the criterion is not as
efficient as it is using a total order $\rleq$ on \basis{}.

\end{remark}

All in all, the efficiency of \rba{} depends on 
\begin{enumerate}
\item the order in which S-pairs are handled, and
\item the strength of the detection of useless S-pairs.
\end{enumerate}
We know already that $\pleqs$ is the best possible order for
$\pairset$ in terms of the size of the resulting signature \grobner{} basis and
the efficiency of the \sreduction{} steps.
The second point, as well as the size of \basis{} also depend on the chosen
rewrite order. So as a final step on our way understanding signature-based
\grobner{} basis algorithms we have to investigate the overall impact of rewrite
orders.

\subsection{Choosing a rewrite order}
\label{sec:best-rewrite-order}
When thinking about a possible rewrite order to choose we should look again the
set of all possible rewriters in signature $T$:
\[
\rewriters T = \setBuilder{a\alpha}{ \alpha\in\basis\cup\hdsyz\text{, } a\text{ is a monomial
    and } \sig{a\alpha}= T }.
\]
We want to choose the canonical rewriter $a\alpha$ in $T$ for further considerations in
\rba{} and discard all other elements. It is clear that we want to choose
$a\alpha$ in terms of ``being easier to \sreduce{} than the other elements in
$\rewriters T$''. From the point of view of \grobner{} basis computations there
are two canonical selections:
\begin{enumerate}
\item $\alpha$ has been added to \basis{} latest for all $\beta\in\basis$ such
that $b\beta\in\rewriters T$. Here we hope that $\alpha$ is better
\sreduced{} w.r.t. \basis{} and thus $a\alpha$ might be easier to handle in the
following.
\item Let $\hdp{a\alpha} \leq \hdp{b\beta}$ for any $b\beta \in \rewriters T$.
Choosing $a\alpha$ as canonical rewriter in signature $T$ we expect the fewest possible
\sreduction{} steps.
\end{enumerate}

It turns out that all signature-based \grobner{} basis algorithms known until
now choose one of the above options. Thus it makes sense to have a closer look
at those. 

\begin{definition}
\label{def:specific-rewrite-rules}
Let $\alpha,\beta \in \basis \cup \hdsyz$ during a computation of \rba{}.
\begin{enumerate}
\item We say that $\alpha \rleqff \beta$ if
$\beta \in \hdsyz$ or $\alpha$ has been added to \basis{} before $\beta$ is
added to \basis{}. Break ties arbitrarily.
\item\label{def:specific-rewrite-rules:sb}
We say that $\alpha \rleqsb \beta$ if $\sig\alpha \hdp\beta < \sig\beta
\hdp\alpha$ or if $\sig\alpha \hdp\beta = \sig\beta \hdp\alpha$ and $\sig\alpha
< \sig\beta$.
\end{enumerate}
\end{definition}

\begin{remark}\
\begin{enumerate}
\item Using $\pleqs$ in \rba{} $\alpha \rleqff \beta$ for $\alpha,\beta \in \basis$
induces that $\sig\alpha < \sig\beta$.
\item The suffix ``rat'' of $\rleqsb$ refers to the usual notation of this
rewrite order, for example, in~\cite{galkinTermination2012,erF5SB2013}. There
the \emph{ratios} of the signature and the polynomial lead term are compared:
\[\frac{\sig\alpha}{\hdp\alpha} < \frac{\sig\beta}{\hdp\beta}.\]
Multiplying both sides of the inequality by $\hdp\alpha \hdp\beta$ we get
the representation of $\rleqsb$ as in
Definition~\ref{def:specific-rewrite-rules}~(\ref{def:specific-rewrite-rules:sb}). 
We prefer the notation without ratios due to two facts: First of all we do not
need to extend $<$ on the ratios and introduce negative exponents. Secondly, we
can handle $\hdp\alpha = 0$ for elements $\alpha\in\hdsyz$.
\end{enumerate}
\end{remark}
%Defining a rewrite order on the order in which new elements are added to
%\basis{} (note that due to Lemma~\ref{lem:combined-criterion-syz} the elements in
%\hdsyz{} need not be considered here) we can say that $\alpha \rleqff \beta$ if
%$\sig\alpha < \sig\beta$ or $\sig\alpha = \sig\beta$ and $\beta$ was added to
%\basis{} after $\alpha$ had been added.

\begin{lemma}
If there exists $\gamma \in \hdsyz$ such that $\gamma\in\rewriters T$ then
all S-pairs in signature $T$ are discarded in \rba{} using either $\rleqff$ or
$\rleqsb$.
%there exists $\delta \in \hdsyz$ such that $\delta$ is the canonical rewriter in
%signature $T$.
\label{lem:combined-criterion-syz}
\end{lemma}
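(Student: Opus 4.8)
The plan is to reduce the statement to one structural fact: under either $\rleqff$ or $\rleqsb$, every element of $\basis$ is strictly $\rleq$-below every element of $\hdsyz$. Granting this, if some multiple $c\gamma$ of a syzygy $\gamma\in\hdsyz$ lies in $\rewriters T$, then the canonical rewriter in signature $T$ (the $\rleq$-maximal element of $\rewriters T$, cf.\ Definition~\ref{def:specific-rewrite-rules} and the surrounding text) cannot have its underlying element in $\basis$; hence \emph{no} basis element is the canonical rewriter in $T$, and this is exactly what forces every S-pair of signature $T$ to be flagged rewritable by Algorithm~\ref{alg:rewritable} and thus discarded in Line~\ref{alg:rba:rewritecheck}.

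First I would settle the order comparison. For $\rleqff$ it is immediate from Definition~\ref{def:specific-rewrite-rules}: for $\beta\in\basis$ and $\gamma\in\hdsyz$ the first clause gives $\beta\rleqff\gamma$, whereas $\gamma\rleqff\beta$ fails because $\beta\notin\hdsyz$ and $\gamma$ was never added to $\basis$; so $\beta$ is strictly $\rleqff$-below $\gamma$. For $\rleqsb$ I would invoke the convention $\hdp\gamma=0$ for $\gamma\in\hdsyz$ (explained right after Definition~\ref{def:specific-rewrite-rules}): then $\sig\beta\,\hdp\gamma=0$, while $\sig\gamma\,\hdp\beta$ is a genuine nonzero module monomial since $\sig\gamma\neq 0$ and $\hdp\beta\neq 0$, so $\sig\beta\,\hdp\gamma<\sig\gamma\,\hdp\beta$, i.e.\ $\beta\rltsb\gamma$. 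Informally this is just the assertion that a syzygy has ``infinite ratio'' $\sig\gamma/\hdp\gamma$ and therefore dominates every non-syzygy.

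Then I would conclude. The set $\rewriters T$ is finite: $\basis\cup\hdsyz$ is finite, and since $\sig{a\alpha}=a\,\sig\alpha$ each underlying element contributes at most one multiple with signature $T$; hence a $\rleq$-maximum exists. If $\rewriters T$ contains $c\gamma$ with $\gamma\in\hdsyz$, the comparison above shows that no element of $\rewriters T$ with underlying element in $\basis$ can be maximal, so the canonical rewriter in $T$ lies over $\hdsyz$; in particular no $\alpha\in\basis$ is the canonical rewriter in $T$. Now let $\delta=\spair\alpha\beta$ be any S-pair handled by \rba{} with $\sig\delta=T$. Being a (regular) S-pair, exactly one of its two terms realizes the signature, say $a\alpha$ with $\sig{a\alpha}=T$ and $\alpha\in\basis$. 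Since $\basis$ and $\hdsyz$ are disjoint (basis elements have nonzero image, syzygies have zero image), $\alpha$ is not the canonical rewriter in $T$, so $a\alpha$ is rewritable and \rewritablenoarg{} returns true on $\delta$; thus \rba{} discards $\delta$ in Line~\ref{alg:rba:rewritecheck}. The same argument applies verbatim to an initial generator $\mbasis i$ should $T=\mbasis i$, its image $f_i$ being nonzero. As $\delta$ was arbitrary, all S-pairs of signature $T$ are discarded.

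The step needing the most care is the $\rleqsb$-comparison: one must make the convention $\hdp\gamma=0$ precise enough to guarantee that $0$ is strictly below every genuine module monomial in the relevant ordering (equivalently, that the syzygy ratio behaves like $+\infty$), and one should record that $\rleqff$ and $\rleqsb$ indeed restrict to total orders — so that ``the canonical rewriter in $T$'' is well defined — on the finite family of underlying elements occurring in $\rewriters T$, including the subcase where two distinct syzygies appear. These points are routine but worth stating explicitly so that the choice of canonical rewriter is unambiguous.
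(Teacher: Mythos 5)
Your proof is correct and takes essentially the same route as the paper's: both hinge on the single observation that under $\rleqff$ (by the first clause of the definition) and under $\rleqsb$ (via $\sig\beta\,\hdp\gamma = 0 < \sig\gamma\,\hdp\beta$ for $\gamma\in\hdsyz$ with the convention $\hdp\gamma = 0$), every element of $\basis$ lies below every element of $\hdsyz$, so the presence of a syzygy in $\rewriters T$ prevents any basis element from being the canonical rewriter in $T$. You simply spell out a few steps the paper elides — strictness of the comparison, finiteness of $\rewriters T$ so the $\rleq$-maximum exists, disjointness of $\basis$ and $\hdsyz$, and the translation into Algorithm~\ref{alg:rewritable} returning true — but the argument is the same one the paper gives in two sentences.
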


\begin{proof}
If $\gamma \in \hdsyz \cap \rewriters T$ then for all $\alpha$ in $\basis \cap
\rewriters T$ it holds by definition that $\alpha \rleqff \gamma$. Furthermore,
  $\alpha \rleqsb \gamma$ due to
$\sig\alpha \hdp\beta < \sig\beta\hdp\alpha$ where $\hdp\beta = 0$.
Thus no S-pair in signature $T$ is handled by \rba{}.\qed
\end{proof}

\begin{corollary}
If $f_1,\ldots,f_m \in \ring$ form a regular sequence then there is no
\sreduction{} to zero while \rba{} computes a signature \grobner{} basis for
$\ideal{f_1,\ldots,f_m}$ using $\potl$.
\label{cor:regular-sequence}
\end{corollary}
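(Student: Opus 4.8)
Here is a plan for proving Corollary~\ref{cor:regular-sequence}.

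The plan is to argue by contradiction, using the syzygy half of the combined criterion, Lemma~\ref{lem:combined-criterion-syz}. First I would assume that \rba{}, run with $\pleqs$, the module order $\potl$ and either $\rleqff$ or $\rleqsb$, performs an \sreduction{} to zero: it picks some $\beta$ from $\pairset$, which is \emph{not} rejected by the rewritability check (Algorithm~\ref{alg:rewritable}), and regular \sreduces{} it to a $\gamma$ with $\proj\gamma=0$. Regular \sreduction{} steps do not change the signature, hence $\gamma$ is a syzygy of $f_1,\ldots,f_m$ with common signature $T:=\sig\gamma=\sig\beta$. If $\beta$ is an S-pair, then, since $\beta$ was not discarded, the contrapositive of Lemma~\ref{lem:combined-criterion-syz} shows that no $\sigma\in\hdsyz$ satisfies $\sig\sigma\mid T$. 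So in this case the whole argument reduces to contradicting this statement, i.e. to showing that \hdsyz{} always contains a syzygy whose signature divides $T$; the case $\beta=\mbasis i$ will be handled separately.

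Here the assumption ``using $\potl$'' is essential. With $\potl$, \rba{} proceeds incrementally by increasing index and recomputes \hdsyz{} whenever a new index starts (\characteristic~\ref{char:rba}~(\ref{char:rba:potl})). Write $T=t\,\mbasis k$. Because $\pleqs$ picks S-pairs by increasing signature and all signatures of index $<k$ precede all signatures of index $k$ under $\potl$, by the time $\beta$ is treated every index-$<k$ S-pair has already been processed and no new one will be created; hence, by correctness of the already completed part of the run (Theorem~\ref{thm:term-cor-rba}), the set $\setBuilder{\proj\alpha}{\alpha\in\basis,\ \ind\alpha<k}$ is a \grobner{} basis of $\ideal{f_1,\ldots,f_{k-1}}$. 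Moreover, by the recomputation in \characteristic~\ref{char:rba}~(\ref{char:rba:potl}), upon entering index $k$ the set \hdsyz{} received the Koszul syzygies $\sigma_\alpha\defeq\proj\alpha\,\mbasis k-f_k\,\alpha$ between $f_k$ and these lower-index basis elements, each of which has $\sig{\sigma_\alpha}=\hd{\proj\alpha}\,\mbasis k$ since $\ind\alpha<k$. It therefore remains to find one $\alpha$ with $\hd{\proj\alpha}\,\mbasis k\mid T$.

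To produce it I would use regularity. Since $\sig\gamma=t\,\mbasis k$, all components of $\gamma$ of index $>k$ vanish, its $\mbasis k$-component $a_k$ is non-zero and $\hd{a_k}=t$; from $\proj\gamma=0$ one gets $a_kf_k=-\sum_{i<k}a_if_i\in\ideal{f_1,\ldots,f_{k-1}}$, and as $f_k$ is a non-zero-divisor on $\ring/\ideal{f_1,\ldots,f_{k-1}}$ this forces $a_k\in\ideal{f_1,\ldots,f_{k-1}}$. Hence $t=\hd{a_k}$ lies in $L(\ideal{f_1,\ldots,f_{k-1}})=\idealBuilder{\hd{\proj\alpha}}{\ind\alpha<k}$, so $\hd{\proj\alpha}\mid t$ for some such $\alpha$, and then $\sigma_\alpha\in\hdsyz$ with $\sig{\sigma_\alpha}=\hd{\proj\alpha}\,\mbasis k\mid t\,\mbasis k=T$ --- the desired contradiction. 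The remaining case, $\beta=\mbasis i$, is the same computation with $a_k$ a non-zero constant and gives $f_i\in\ideal{f_1,\ldots,f_{i-1}}$, impossible for a regular sequence; for $k=1$ no S-pairs arise at all. I would close by remarking that this recovers the classical ``no reduction to zero'' statements for \ff{}~\cite{fF52002Corrected} and for \gvw{}~\cite{gvwGVW2011,erF5SB2013}.

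The step I expect to be the main obstacle --- and the reason the $\potl$ hypothesis cannot be dropped --- is showing that \hdsyz{} is rich enough. The plain principal syzygies $f_i\,\mbasis j-f_j\,\mbasis i$ of the \emph{original} generators contribute only the signatures $\hd{f_i}\,\mbasis j$, whose span under divisibility is $\ideal{\hd{f_1},\ldots,\hd{f_m}}$, which is in general strictly smaller than the lead ideals $L(\ideal{f_1,\ldots,f_{k-1}})$ that appear above. Making rigorous that, in the incremental step for index $k$, \rba{} may and does replace $f_1,\ldots,f_{k-1}$ by the already-computed \grobner{} basis of $\ideal{f_1,\ldots,f_{k-1}}$ when forming the known syzygies is the delicate bookkeeping; once it is in place, the rest is the elementary commutative-algebra fact that a regular sequence forces $a_k\in\ideal{f_1,\ldots,f_{k-1}}$.
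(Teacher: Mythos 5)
Your proposal is correct, and it rests on the same crucial piece of infrastructure the paper uses --- \characteristic~\ref{char:rba}~(\ref{char:rba:potl}), by which \rba{} with $\potl$ re-populates \hdsyz{} at the start of each incremental step $k$ with the Koszul syzygies $\proj\alpha\,\mbasis k - f_k\alpha$ against the already-computed intermediate \grobner{} basis. Where you differ from the paper is in how you justify that these signatures suffice. The paper cites the vanishing of higher Koszul homology of a regular sequence as a black box and then asserts the conclusion; you derive the required divisibility by hand: take a hypothetical zero reduction $\gamma$ with $\sig\gamma = t\,\mbasis k$, extract its $\mbasis k$-component $a_k$ with $\hd{a_k}=t$, observe $a_k f_k\in\ideal{f_1,\ldots,f_{k-1}}$ and use regularity to conclude $a_k\in\ideal{f_1,\ldots,f_{k-1}}$, whence $t\in L(\ideal{f_1,\ldots,f_{k-1}})$ is divisible by $\hd{\proj\alpha}$ for some intermediate basis element $\alpha$ --- contradicting (via Lemma~\ref{lem:combined-criterion-syz}) that $\beta$ survived the rewritability check. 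Your route is more elementary, and it usefully makes explicit a step the paper's Koszul-homology invocation leaves implicit: the principal syzygies $f_i\,\mbasis j - f_j\,\mbasis i$ of the \emph{original} generators only yield signatures spanning $\ideal{\hd{f_1},\ldots,\hd{f_m}}\,\mbasis k$, which is in general strictly smaller than $L(\ideal{f_1,\ldots,f_{k-1}})\,\mbasis k$, so one genuinely needs the Koszul syzygies against the intermediate \grobner{} basis supplied by the $\potl$-driven recomputation. Your handling of the degenerate case $\beta=\mbasis i$ is a nice complement that the paper omits.
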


\begin{proof}
The homology of the Koszul complex $K^*$ associated to the regular sequence
$(f_1,\ldots,f_m)$ has the property that $H_\ell(K^*) = 0$ for $\ell>0$. Thus, there
exist only Koszul syzygies of the form $\proj{\alpha_i}\alpha_j -
\proj{\alpha_j} \alpha_i \in
\module$ where $\proj{\basis} = \{\proj{\alpha_1},\ldots,
\proj{\alpha_{k-1}}\}$ is the intermediate
\grobner{} basis for $\langle f_1,\ldots,f_{i-1}\rangle$ and $\alpha_k=\mbasis k \in
\module$ such that $\proj{\alpha_k} = f_i$.
By \characteristic~\ref{char:rba:potl} those syzygies are added in Line~\ref{alg:rba:initialsyz} of
Algorithm~\ref{alg:rba}. It follows that any zero reduction, corresponding to
such a syzygy is detected in advance.\qed
\end{proof}

\begin{corollary}
If $f_1,\ldots,f_m \in \ring$ form a homogeneous regular sequence then there is no
\sreduction{} to zero while \rba{} computes a signature \grobner{} basis for
$\ideal{f_1,\ldots,f_m}$ using $\dpotl$.
\label{cor:homog-regular-sequence}
\end{corollary}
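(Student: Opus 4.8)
The plan is to mirror the proof of Corollary~\ref{cor:regular-sequence}, now with $\dpotl$ in place of $\potl$; the one new ingredient is that $\dpotl$ is degree-compatible. First I would record the incremental structure of \rba{} under $\pleqs$ and $\dpotl$ on homogeneous input. Every module element $\alpha$ that occurs in the computation is homogeneous, so all terms of $\alpha$ map under $\alpha\mapsto\proj\alpha$ to polynomials of one and the same degree, namely $\deg\proj\alpha$; hence, on the signatures that arise, $\dpotl$ first compares the degree of the projection and, at a fixed such degree, restricts to $\potl$, i.e. to ``first by index, then by term''. Since \rba{} with $\pleqs$ handles S-pairs by nondecreasing signature, it handles them by nondecreasing degree, and within one degree by increasing index. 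Therefore, when \rba{} first reaches a signature $T$ of degree $d$, it has already computed a signature \grobner{} basis in all signatures of degree $<d$ and, within degree $d$, in all indices below $\ind{T}$; in particular the relevant part of $\proj\basis$ is an intermediate \grobner{} basis. This is the $\dpotl$-analogue of \characteristic~\ref{char:rba:potl}, and it lets \rba{} recompute $\hdsyz$ in the same degree/index-incremental fashion: whenever a new index $k$ is entered, with generator $\alpha_k=\mbasis k$, the Koszul syzygies $\proj{\alpha_i}\alpha_k-\proj{\alpha_k}\alpha_i$ between $\alpha_k$ and the intermediate basis elements $\alpha_i$, $i<k$, become available and are added to $\hdsyz$.

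Next I would invoke the regular-sequence hypothesis exactly as in Corollary~\ref{cor:regular-sequence}: the Koszul complex $K^*$ of the homogeneous sequence $(f_1,\ldots,f_m)$ has $H_\ell(K^*)=0$ for $\ell>0$, so $\syz{\gen f}$ is generated by the regular Koszul syzygies, and, degree by degree, by the Koszul syzygies built from the intermediate \grobner{} bases as above. Suppose, for contradiction, that during the run \rba{} regular \sreduces{} some S-pair $\beta$ with $\sig\beta=T$ to a syzygy; note first that no generator $\mbasis i$ can \sreduce{} to a syzygy, since that would force $f_i\in\ideal{f_1,\ldots,f_{i-1}}$, contradicting regularity. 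As regular \sreduction{} preserves the signature, there is a syzygy of signature exactly $T$, say of degree $d$. By the first paragraph, all Koszul syzygies covering signatures $<T$, together with the degree-$d$ Koszul syzygies of index $\le\ind{T}$, are already in $\hdsyz$ when $T$ is reached, so $T$ is predictably syzygy (or it is itself the signature of a Koszul syzygy in $\hdsyz$, which makes $\beta$ immediately rewritable). Hence, by Lemma~\ref{lem:syzygy-criterion} together with Lemma~\ref{lem:combined-criterion-syz} (which applies to the rewrite orders $\rleqff$ and $\rleqsb$ used by all known instances of \rba{}), $\beta$ is rewritable and is discarded in Line~\ref{alg:rba:rewritecheck}, contradicting that it was \sreduced{}. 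So no S-pair \sreduces{} to zero.

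I expect the main obstacle to be the claim hidden in the second paragraph: that the Koszul syzygies accumulated in $\hdsyz$ make \emph{every} syzygy signature predictably syzygy, i.e. that they form a \grobner{} basis of $\syz{\gen f}$ with respect to $\dpotl$ rather than merely a generating set. When the leading (signature) terms cancel in a Koszul combination representing a syzygy, one cannot read a divisor of that syzygy's signature off the Koszul signatures directly, and one has to rerun the Schreyer/Buchberger-chain bookkeeping for the ``second syzygies'' in the $\dpotl$-graded, degree-then-index incremental setting. For the incremental $\potl$ case this is exactly the classical statement that \ff{} computes no zero reduction on a regular sequence; the cleanest way to discharge it here is to transport that argument along the degree-compatibility of $\dpotl$ established in the first paragraph, or, equivalently, to lean on the already proven correctness of \rba{} (Theorem~\ref{thm:term-cor-rba}) together with the fact that under $\dpotl$ the syzygy module is processed degree by degree, just as $\potl$ processes it index by index.
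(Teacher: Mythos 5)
Your proposal takes the same route as the paper: reduce to Corollary~\ref{cor:regular-sequence} by using that, for homogeneous input, $\dpotl$ makes \rba{} proceed by increasing polynomial degree, so at each degree step $d$ the partial basis $\proj\basis$ is already a $d'$-\grobner{} basis for all $d'<d$, and the argument from the $\potl$ case then applies degree by degree. The technical point you honestly flag at the end --- that the accumulated Koszul syzygies must form a \grobner{} basis (not merely a generating set) of $\syz{\gen f}$ with respect to $\dpotl$ --- is also what the paper's two-sentence proof leaves implicit, discharging it exactly as you suggest by transporting Corollary~\ref{cor:regular-sequence} along the degree-compatibility of $\dpotl$.
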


\begin{proof}
This is clear by Corollary~\ref{cor:regular-sequence} and the fact that
\rba{} computes the signature \grobner{} basis for the input ideal by increasing
polynomial degree. Thus at each new degree step $d$ $\proj\basis$ is already a
$d'$-\grobner{} basis for $\ideal{\gen f}$ for all $d'<d$.\qed
\end{proof}

Another question to answer is why \rewritablenoarg{} is allowed to check both
generators of an S-pair and not only the one with higher signature.

\begin{lemma}
Assume \rba{} computing a signature \grobner{} basis for $\ideal{\gen f}$ using
$\rleqff$ or $\rleqsb$.
If \rewritablenoarg{} returns ``true'' for input S-pair $a\alpha-b\beta$,
   $\sig{a\alpha} > \sig{b\beta}$ due to
$b\beta$ being rewritable then \rba{} can discard $a\alpha-b\beta$.
\label{lem:rewritable-2nd-generator}
\end{lemma}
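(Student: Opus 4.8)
The plan is to show that discarding the S\nobreakdash-pair $\spair\alpha\beta=a\alpha-b\beta$ whenever its lower\nobreakdash-signature leg $b\beta$ is rewritable does not destroy the rewrite\nobreakdash-basis property, so that the conclusion of Theorem~\ref{thm:term-cor-rba} is unaffected. Put $T=\sig{a\alpha}$ and $S=\sig{b\beta}$, so $S<T$; since $\hdp{a\alpha}=\lcm(\hdp\alpha,\hdp\beta)=\hdp{b\beta}$, the multiple $b\beta$ is in fact a regular top \sreducer{} of $a\alpha$, and $\spair\alpha\beta$ is precisely the outcome of one \sreduction{} step of $a\alpha$ by $b\beta$; hence the result $\gamma$ of regular \sreducing{} $\spair\alpha\beta$ is the result of regular \sreducing{} $a\alpha$ (using $b\beta$ first), with $\sig\gamma=T$. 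First I would clear the trivial cases. If $a\alpha$ is itself rewritable, then $\alpha$ is not the canonical rewriter in $T$, and by Lemma~\ref{lem:when-rb-reduces} whether \rba{} must \sreduce{} an S\nobreakdash-pair of signature $T$ depends only on the canonical rewriter in $T$, not on $\spair\alpha\beta$; so $\spair\alpha\beta$ may be dropped. For the rewrite orders $\rleqff$ and $\rleqsb$ every syzygy element is $\rleq$-maximal (the argument behind Lemma~\ref{lem:combined-criterion-syz}), so whenever a syzygy element lies in $\rewriters T$ the element $a\alpha$ is already rewritable and we are in the previous case. Thus we may assume $a\alpha$ is not rewritable, so $\alpha$ is the canonical rewriter in $T$, and $T$ is not predictably syzygy.

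For the main case I would produce a surviving S\nobreakdash-pair of signature $T$ carrying the same data. Since $\pleq=\pleqs$, when \rba{} processes $\spair\alpha\beta$ the set \basis{} is a rewrite basis up to $T$, hence a signature \grobner{} basis up to $T$ (Lemma~\ref{lem:rewbasis-sigbasis}). The term $\hdp{a\alpha}$ is a regular \sreducible{} term of $\proj{a\alpha}$, so Lemma~\ref{lem:existence-regular-sreducer} yields a regular \sreducer{} $c\delta$ of $\hdp{a\alpha}$ that is not rewritable (so $\delta\in\basis$ is the canonical rewriter in $\sig{c\delta}$), not syzygy, and not regular top \sreducible{}; note $\delta\neq\beta$ since $\beta$ is rewritable. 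A one\nobreakdash-step regular top \sreduction{} of $a\alpha$ by $c\delta$ equals, up to a non\nobreakdash-zero scalar, the monomial multiple $\tfrac{\lambda}{\mu}\,\spair\alpha\delta$, where $\lambda=\hdp{a\alpha}=\hdp{c\delta}$ and $\mu=\lcm(\hdp\alpha,\hdp\delta)\mid\lambda$. Because $\alpha$ is the canonical rewriter in $T$, $\delta$ is the canonical rewriter in $\sig{c\delta}$, and the signature of each leg of $\spair\alpha\delta$ divides $T$ resp. $\sig{c\delta}$ (and is a multiple of the corresponding $\sig\alpha$ resp. $\sig\delta$), a basis element that is canonical in a signature $U$ is also canonical in every divisor signature it still reaches; so neither leg of $\spair\alpha\delta$ is rewritable, and $\spair\alpha\delta$ passes \rewritablenoarg{} and is processed by \rba{}. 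If $\mu=\lambda$, then $\spair\alpha\delta$ has signature exactly $T$, and by Lemma~\ref{lem:p3} it regular \sreduces{} to the same sig\nobreakdash-poly pair (up to a scalar) as $\spair\alpha\beta$ would, so nothing is lost. If $\mu<\lambda$, then $\spair\alpha\delta$ has signature $T'=\tfrac{\mu}{\lambda}T\mid T$ with $T'<T$, it is handled earlier by $\pleqs$, and I would transport the corresponding monomial multiple of its regular \sreduction{} back up to signature $T$, using Lemma~\ref{lem:p3} and the definition of the canonical rewriter to conclude that the canonical rewriter in $T$ is not regular top \sreducible{} in the final \basis{}. In either case the rewrite\nobreakdash-basis property in signature $T$ holds without processing $\spair\alpha\beta$.

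I expect the main obstacle to be exactly the second alternative $\mu<\lambda$: the replacing S\nobreakdash-pair then lives at a strictly smaller signature, and one has to argue that the element it produces there, multiplied up to signature $T$, together with the rewrite order $\rleq$ still forces the canonical rewriter in $T$ to be regular\nobreakdash-top\nobreakdash-\sreduced{}. This is a careful but routine induction on signatures of the same shape as the correctness proof of \rba{} (Theorem~7 in~\cite{erF5SB2013}); the only genuinely new ingredient is that the \sreducer{} $c\delta$ supplied by Lemma~\ref{lem:existence-regular-sreducer} again reduces the lead term $\hdp{a\alpha}$, whence $\hdp\delta\mid\lambda$ and $\mu\mid\lambda$, and that in the degenerate subcase one has $\hdp\delta\nmid\hdp\alpha$ (otherwise $\delta$ would regular top \sreduce{} $\alpha$, contradicting that $\alpha$, being the canonical rewriter in $\sig\alpha<T$, is not regular top \sreducible{}).
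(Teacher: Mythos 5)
Your proposal is substantively correct in spirit and lands on the same structural move as the paper's proof: keep the first leg $a\alpha$ fixed and produce a non\nobreakdash-rewritable replacement for the second leg $b\beta$ with the same lead term $\hdp{a\alpha}$, so that an S\nobreakdash-pair carrying the same information still passes \rewritablenoarg{}. Where you diverge is in how the replacement is produced. The paper works directly with the canonical rewriter $\gamma$ in $\sig{b\beta}$ and exploits the lead\nobreakdash-term monotonicity that is special to $\rleqff$ and $\rleqsb$ (namely $\beta\rleq\gamma$ forces $\hdp{c\gamma}\leq\hdp{b\beta}$), then splits on whether that inequality is an equality: if so, $a\alpha-c\gamma$ is the replacement outright; if not, it decomposes $b\beta-c\gamma$ over \basis{} (possible since \basis{} is a signature \grobner{} basis up to $\sig{b\beta}$) to locate a $\delta_k$ with $\hdp{d_k\delta_k}=\hdp{b\beta}$ and strictly smaller signature, so that $a\alpha-d_k\delta_k$ is a monomial multiple of a handled S\nobreakdash-pair. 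You instead invoke Lemma~\ref{lem:existence-regular-sreducer} on the term $\hdp{a\alpha}$ of $a\alpha$ to pull a non\nobreakdash-rewritable, non\nobreakdash-syzygy, non\nobreakdash-regular\nobreakdash-top\nobreakdash-\sreducible{} reducer $c\delta$ out of thin air. This is a legitimate and in some ways cleaner abstraction; it has the interesting feature that it does not visibly use the hypothesis that $\rleq$ is one of $\rleqff$, $\rleqsb$ (that hypothesis is doing real work in the paper's proof via the lead\nobreakdash-term inequality, and your route quietly reintroduces it only through the use of $\pleqs$ and the claim that canonicity passes to divisor signatures). Your subsidiary claim that a canonical rewriter in $T$ is also canonical in every divisor signature it still reaches is correct and worth stating, since it is what makes both legs of $\spair\alpha\delta$ survive \rewritablenoarg{}.

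The genuine gap is the one you flag yourself: when $\mu=\lcm(\hdp\alpha,\hdp\delta)<\lambda$, the surviving S\nobreakdash-pair $\spair\alpha\delta$ lives at a strictly smaller signature $T'$, and your sentence about "transporting the monomial multiple of its regular \sreduction{} back up to signature $T$" to conclude that "the canonical rewriter in $T$ is not regular top \sreducible{}" does not hold up as stated --- at the point the algorithm reaches $T$, the canonical rewriter multiple $a\alpha$ \emph{is} regular top \sreducible{} (by $b\beta$, and by your $c\delta$), so the rewrite\nobreakdash-basis property at $T$ is not obtained by the canonical rewriter becoming irreducible but by \rba{} processing \emph{some} S\nobreakdash-pair at $T$ (or a predictable syzygy appearing). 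Note that the paper's Case~2 also terminates with a monomial multiple $\lambda\,\spair{\alpha}{\delta_k}$, $\lambda\geq 1$, so it too allows the replacement to sit at a smaller signature and relies on the same background induction over $\pleqs$; your proposal just surfaces this more honestly. If you want to close the gap, the cleanest route is to phrase the conclusion as the paper implicitly does: what must be shown is not that no \sreduction{} happens at $T$, but that the sig\nobreakdash-lead pair that would have been contributed by \sreducing{} $a\alpha-b\beta$ is contributed (via Lemma~\ref{lem:p3}) by \sreducing{} the replacement S\nobreakdash-pair, and then appeal to the overall correctness induction of Theorem~\ref{thm:term-cor-rba}.
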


\begin{proof}
If $b\beta$ is rewritable then there exists $\gamma\in\basis\cup\hdsyz$, $\gamma
\neq \beta$ such
that $\gamma$ is the canonical rewriter in $\sig{b\beta}$. Let $\sig{c\gamma} =
\sig{b\beta}$ for some monomial $c$. Since $\beta \rleq \gamma$ and $\rleq$ is
either $\rleqff$ or $\rleqsb$ it follows from
Definition~\ref{def:specific-rewrite-rules} that  $\hdp{b\beta} \geq
\hdp{c\gamma}$. Two situations can happen:
\begin{enumerate}
\item If $\hdp{c\gamma} = \hdp{b\beta}$ then \rba{} handles the S-pair $a\alpha
- c\gamma$.
\item If $\hdp{c\gamma} < \hdp{b\beta}$ then there exists $\delta_i\in\basis$ and
monomials $d_i$ such that $\proj{b\beta}= \sum_{i=1}^\ell \proj{d_i \delta_i} + \proj{c\gamma}$
and $\sig{d_i\delta_i} < \sig{b\beta}$ for all $i \in \{1,\ldots,\ell\}$ since
\basis{} is a signature \grobner{} basis up to $\sig{b\beta}$.
Thus \rba{} handles for some $k \in \{1,\ldots,k\}$ $a\alpha - d_k\delta_k =
\lambda \spair\alpha{\delta_k}$ for some
monomial $\lambda \geq 1$. Note that this case includes $\gamma
\in \hdsyz$.\qed
\end{enumerate}
\end{proof}

Note that whereas we have to handle elements in \hdsyz{} explicitly for
$\rleqff$ there is no need to do so for $\rleqsb$: If $\beta \in \hdsyz$ then
for any $\alpha\in\basis{}$ $\sig\alpha \hdp\beta = 0 \leq \sig\beta\hdp\alpha$.

\begin{lemma}
If \rba{} uses $\rleqsb$ as rewrite order then there exists no singular top
\sreducible{} element in \basis{}.
\label{lem:rewrite-order-no-singular-basis-element}
\end{lemma}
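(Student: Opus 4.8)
The plan is to argue by contradiction, exploiting that with the rewrite order $\rleqsb$ the canonical rewriter in a signature $T$ is exactly the rewriter in $T$ of \emph{smallest polynomial lead term}. Suppose some $\gamma\in\basis{}$ is singular top \sreducible{} w.r.t.\ the final basis; then there are $\delta\in\basis{}$ with $\delta\neq\gamma$ and a monomial $b$ with $\sig{b\delta}\simeq\sig\gamma=:T$ and $\hdp{b\delta}\simeq\hdp\gamma$. In particular $\sig\delta\mid T$ (with cofactor $b=T/\sig\delta$), and $\hdp{b\delta}=\hdp\gamma$ rewrites as $\sig\delta\,\hdp\gamma=\sig\gamma\,\hdp\delta$, i.e.\ $\delta$ and $\gamma$ have the same ``ratio''. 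I would first record the elementary computation that for two rewriters $a\alpha$ and $b\delta$ in one and the same signature $T$ (so $a\sig\alpha=b\sig\delta=T$), multiplying the defining condition in Definition~\ref{def:specific-rewrite-rules} through by $ab$ shows $\delta\rleqsb\alpha$ if and only if $\hdp{a\alpha}<\hdp{b\delta}$, or $\hdp{a\alpha}=\hdp{b\delta}$ and $\sig\delta<\sig\alpha$; so $\delta\rleqsb\alpha$ always implies $\hdp{b\delta}\geq\hdp{a\alpha}$, and the $\rleqsb$-maximal rewriter in $T$ is the one minimizing the polynomial lead term.

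The heart of the argument is the following claim about \rba{}. \emph{Claim:} if \rba{} (with $\rleqsb$ and $\pleqs$) adds an element $\eta$ to \basis{} at signature $S:=\sig\eta$ and $c\epsilon$ is the canonical rewriter in $S$ at that moment, then $\epsilon\in\basis{}$ and $\hdp\eta<\hdp{c\epsilon}$; consequently, just after the addition, $\eta$ itself becomes the canonical rewriter in $S$. To prove it, observe that because $\pleq=\pleqs$, at the moment $\eta$'s defining S-pair of signature $S$ is regular \sreduced{}, all S-pairs of signature $<S$ have already been processed, so by the correctness of \rba{} (Theorem~\ref{thm:term-cor-rba}) \basis{} is a rewrite basis, hence (Lemma~\ref{lem:rewbasis-sigbasis}) a signature \grobner{} basis, up to $S$. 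Since \rba{} did \sreduce{} an S-pair in signature $S$, Lemma~\ref{lem:when-rb-reduces} gives that $S$ is not predictably syzygy and $c\epsilon$ is regular top \sreducible{}; regular top \sreducibility{} of $c\epsilon$ forces $\proj{c\epsilon}\neq 0$, hence $\epsilon\in\basis{}$. Regular \sreduce{} $c\epsilon$ to a regular \sreduced{} element $\eta'$: this requires at least one top step, so $\hdp{\eta'}<\hdp{c\epsilon}$ and $\proj{\eta'}\neq 0$ (otherwise Lemma~\ref{lem:p3} would give $\proj\eta=\proj{\eta'}=0$, contradicting that $\eta$ is non-syzygy), while $\sig{\eta'}=S=\sig\eta$. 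Both $\eta$ and $\eta'$ are regular \sreduced{} in signature $S$, so Lemma~\ref{lem:p3} gives $\proj\eta=\proj{\eta'}$, whence $\hdp\eta=\hdp{\eta'}<\hdp{c\epsilon}$; the ``consequently'' part then follows from the arithmetic recorded above.

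Finally I would assemble the contradiction. Because $\pleq=\pleqs$, once \rba{} processes an S-pair of a given signature it never again processes one of smaller signature, so basis elements are added in non-decreasing order of their signature. Case~1: $\delta$ is already in \basis{} when $\gamma$ is formed. Then $b\delta\in\rewriters T$ at that moment, so $\delta\rleqsb a\alpha$ for the canonical rewriter $a\alpha$ in $T$ there, giving $\hdp{b\delta}\geq\hdp{a\alpha}$; but the Claim, applied to $\eta=\gamma$, gives $\hdp\gamma<\hdp{a\alpha}$, contradicting $\hdp{b\delta}=\hdp\gamma$. Case~2: $\delta$ is added after $\gamma$. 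Then $\sig\delta\geq\sig\gamma=T$; together with $\sig\delta\mid T$ this forces $\sig\delta=T$, hence $b=1$ and $\hdp\delta=\hdp{b\delta}=\hdp\gamma$. When $\delta$ is formed from an S-pair of signature $T$, let $c\epsilon$ be the canonical rewriter in $T$ at that moment; since $\gamma$ was added earlier and $\sig\gamma=T\mid T$, we have $\gamma\in\rewriters T$, hence $\gamma\rleqsb c\epsilon$ and so $\hdp\gamma\geq\hdp{c\epsilon}$; but the Claim, applied to $\eta=\delta$, gives $\hdp{c\epsilon}>\hdp\delta=\hdp\gamma$, a contradiction. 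Hence no such $\gamma$ exists.

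I expect the delicate part to be the Claim, in particular making precise that when \rba{} actually reduces an S-pair of signature $S$ the current basis is a rewrite basis up to $S$, and that the reduced outcome shares its polynomial lead term with the fully regular \sreduced{} form of the canonical rewriter in $S$ — this is where Lemma~\ref{lem:when-rb-reduces} and Lemma~\ref{lem:p3} do the work. The remaining bookkeeping — which comparisons of rewriters occur at which stage, and that the only divisor relation $\sig\delta\mid\sig\gamma$ compatible with the monotone order of additions forced by $\pleqs$ is equality — is then routine.
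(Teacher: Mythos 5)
Your proof is correct and follows the same route as the paper's: both hinge on the fact that under $\rleqsb$ the canonical rewriter in a signature $T$ has minimal polynomial lead term among all rewriters in $T$, while a regular top \sreduction{} strictly decreases the lead term, so a newly added basis element has lead term strictly below every rewriter that was or will be considered in its signature. You are more careful than the paper's one-paragraph argument — in particular you spell out both temporal cases for $\gamma$ versus $\delta$, invoke Lemma~\ref{lem:p3} to identify the actually-returned \sreduced{} element with the regular \sreduction{} of the canonical rewriter, and note that $\pleqs$ guarantees the rewrite-basis invariant needed to apply Lemma~\ref{lem:when-rb-reduces} — but the underlying idea and decomposition are the same.
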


\begin{proof}
\rba{} only regular \sreduces{} an S-pair in a non-syzygy signature $T$ if
\basis{} is not already a rewrite basis in signature $T$ (see
Lemma~\ref{lem:when-rb-reduces}), i.e. only if the
canonical rewriter $a\alpha$ in $T$ is regular top \sreducible{}. Let $b\beta$
be such a regular \sreducer{} of $a\alpha$. Let $a\alpha - b\beta$ regular
\sreduce{} to $\gamma$. Assume there exists $\delta \in \basis$ such that
$\sig{d\delta} = T$ and $\hdp{d\delta} = \hdp\gamma$. Since $a\alpha$ is the
canonical rewriter in signature $T$ w.r.t. $\rleqsb$ it holds that
\[\hdp{d\delta} \geq \hdp{a\alpha} > \hdp\gamma.\]
This contradicts the existence of such an element $\delta \in \basis$.\qed
\end{proof}

\begin{corollary}
Using $\rleqsb$ as rewrite order \rba{} computes a minimal signature
\grobner{} basis.
\end{corollary}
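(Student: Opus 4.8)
The plan is to prove minimality by contradiction, reusing what is already available: by Theorem~\ref{thm:term-cor-rba} with $\rleq=\rleqsb$, \rba{} terminates and returns a rewrite basis \basis{}, which by Lemma~\ref{lem:rewbasis-sigbasis} is a signature \grobner{} basis for $I$; and by Lemma~\ref{lem:rewrite-order-no-singular-basis-element} no element of \basis{} is singular top \sreducible{} w.r.t. \basis{}. It only remains to rule out a \emph{regular} top \sreduction{} between two basis elements, and this I would extract from the way \rba{} builds \basis{} under $\pleqs$.

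So suppose, for contradiction, that $\alpha,\beta\in\basis{}$ are distinct and that $b\alpha$ top \sreduces{} $\beta$ for some monomial $b$, i.e. $\hdp{b\alpha}\simeq\hdp\beta$ and $\sig{b\alpha}\leq\sig\beta$. If $\sig{b\alpha}\simeq\sig\beta$, the step is singular, so $\beta$ is singular top \sreducible{} w.r.t. \basis{}, contradicting Lemma~\ref{lem:rewrite-order-no-singular-basis-element}. Hence we may assume $\sig{b\alpha}<\sig\beta$; since $b$ is a monomial, $\sig\alpha\leq\sig{b\alpha}$, so $\sig\alpha<\sig\beta$.

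For this remaining case I would use that \rba{} under $\pleqs$ adds elements to \basis{} in non-decreasing order of signature: each basis element is the regular \sreduced{} result of an S-pair (or of a unit vector $\mbasis i$) whose signature equals that of the element, S-pairs are removed from $\pairset$ by increasing signature, and every S-pair newly created in Line~\ref{alg:rba:pairs} has signature at least that of the basis element that created it (its just-added generator $\gamma$ contributes at least $\sig\gamma$, since multiplication by a monomial does not decrease signatures). Because $\sig\alpha<\sig\beta$, the element $\alpha$ has therefore already been added to \basis{} at the moment the S-pair of signature $\sig\beta$ is regular \sreduced{} to $\beta$. But the output of a regular \sreduction{} is, by definition, not regular \sreducible{} w.r.t. the then-current basis, so in particular $b\alpha$ cannot regular top \sreduce{} $\beta$ -- a contradiction. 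Hence no basis element top \sreduces{} another, i.e. \basis{} is a minimal signature \grobner{} basis.

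I expect the one delicate point -- and the main obstacle -- to be the ordering claim for \rba{} under $\pleqs$, i.e. that the reducer $\alpha$ is genuinely present in \basis{} when $\beta$ is produced; this is exactly where the restriction $\pleq=\pleqs$ matters, and it mirrors the reasoning behind Corollary~\ref{cor:gsb-minsigbasis} (one may equivalently note that \basis{} is a rewrite basis up to $\sig\beta$ at that point, hence $\beta$ is regular \sreduced{} with respect to it). An alternative route would combine Corollary~\ref{cor:gsb-minsigbasis} with the uniqueness statement of Lemma~\ref{lem:min-sgb}, but that needs the containment of \rba{}'s basis in \gsb{}'s, opposite to Lemma~\ref{lem:gsb-smaller-than-rba}, so the direct argument above seems preferable.
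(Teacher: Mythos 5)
Your proof is correct, and it is essentially the argument that the paper's very terse proof (``Clear by Lemma~\ref{lem:rewrite-order-no-singular-basis-element}; see also Section~3.3'') delegates to the external reference. Your key observation is exactly right and worth stressing: Lemma~\ref{lem:rewrite-order-no-singular-basis-element} by itself only rules out \emph{singular} top \sreducibility{} of basis elements, whereas the definition of a minimal signature \grobner{} basis requires that no basis element top \sreduces{} another, whether the step is regular or singular. The regular case is closed by your $\pleqs$-based argument, which mirrors the proof of Corollary~\ref{cor:gsb-minsigbasis}: if $b\alpha$ were a regular top \sreducer{} of $\beta$ with $\sig{b\alpha}<\sig\beta$, then $\sig\alpha<\sig\beta$, so under $\pleqs$ the element $\alpha$ was already present in \basis{} when the S-pair of signature $\sig\beta$ was fully regular \sreduced{} to $\beta$, contradicting that the output of that regular \sreduction{} cannot be further regular \sreducible{}. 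The delicate point you flag -- that the reducer is genuinely present -- is indeed where the restriction $\pleq=\pleqs$ is used, via the fact that \rba{} adds elements to \basis{} in non-decreasing signature order. Your reason for avoiding the alternative route through Lemma~\ref{lem:min-sgb} is also sound, since Lemma~\ref{lem:gsb-smaller-than-rba} gives the inclusion in the opposite direction to what that argument would need.
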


\begin{proof}
Clear by Lemma~\ref{lem:rewrite-order-no-singular-basis-element}. See also
Section~3.3 for more details.\qed
\end{proof}

The question is now if there exist examples where \rba{} using $\rleqff$
computes a  signature \grobner{} basis with more elements than the one achieved
by \rba{} using $\rleqsb$.

\begin{example}
\label{example:difference-rewrite-order}
Let $\field$ be the finite field with $7$ elements and let $\ring =
\field[x,y,z,t]$. Let $<$ be the graded reverse lexicographical monomial order
which we extend to $\potl$ on $\ring^3$. Consider the input ideal $I$ generated
by $f_1 = yz-2t^2$, $f_2 = xy+t^2$, and $f_3 = x^2z+3xt^2-2yt^2$. We present
the calculations done by \rba{} using $\rleqff$ in
Figure~\ref{figure:example:difference-rewrite-order}.

\begin{figure}
\centering
\setlength{\tabcolsep}{0.8em}
\begin{tabular}{cccc}
$\gbasis i\in\basis$ & reduced from & $\hd{\overline{\gbasis i}}$ & $\sig{\gbasis i}$ \\
\hline
$\gbasis 1$ & $\mbasis 1$ & $yz$ & $\mbasis 1$\\
$\gbasis 2$ & $\mbasis 2$ & $xy$ & $\mbasis 2$\\
$\gbasis 3$ & $\spair{\gbasis 2}{\gbasis 1}=z\gbasis 2 - x \gbasis 1$ &
$xt^2$ & $z \mbasis 2$\\
$\gbasis 4$ & $\mbasis 3$ & $x^2z$ & $\mbasis 3$\\
$\gbasis 5$ & $\spair{\gbasis 4}{\gbasis 2}=y \gbasis 4 - xz \gbasis 2$ &
$y^2t^2$ & $y \mbasis 3$\\
$\gbasis 6$ & $\spair{\gbasis 4}{\gbasis 3}=t^2 \gbasis 4 - xz \gbasis 3$ &
$z^3t^2$ & $t^2 \mbasis 3$\\
$\gbasis 7$ & $\spair{\gbasis 6}{\gbasis 1}=y \gbasis 6 - z^2t^2 \gbasis 1$ &
$y^2t^4$ & $yt^2 \mbasis 3$\\
\end{tabular}
\caption{Computations for \rba{} in
  Example~\ref{example:difference-rewrite-order}.}
\label{figure:example:difference-rewrite-order}
\end{figure}

\rba{} with $\rleqsb$ regular \sreduce{} the same S-pairs except the last one:
In signature $yt^2 \mbasis 3$ we have $y \gbasis 6, t^2 \gbasis 5 \in
\rewriters{yt^2 \mbasis 3}$. $\rleqff$ prefers $y \gbasis 6$ over $t_2
\gbasis 5$, thus the S-pair $y \gbasis 6 - z^2 t^2 \gbasis 1$ is handled.
$\rleqsb$ on the other hand has $t^2 \gbasis 5$ as canonical rewriter in
signature $yt^2 \mbasis 3$ as $\hdp{t^2 \gbasis 5} = y^2t^4 < yz^3t^2 = \hdp{y
\gbasis 6}$. With this choice no S-pair in signature $yt^2 \mbasis 3$ is
handled and thus \rba{} terminates.
\end{example}

Note that the canonical rewriter in signature $yt^2 \mbasis 3$ w.r.t. $\rleqsb$
is not regular top \sreducible{}. So by Lemma~\ref{lem:when-rb-reduces}
\rba{} does not reduce any S-pair in this signature. $\rleqff$ chooses its
canonical rewriter $y \gbasis 6$ wrong in the sense that $y \gbasis 6$ can be
further reduced, but only until it reaches $t^2 \gbasis 5$. Whereas this
computation is important for \rba{} in order to compute a rewrite basis w.r.t.
$\rleqff$, it is not needed to achieve a signature \grobner{} basis for $I$.

We conclude this section with the following summary: As we have seen \rba{} is mainly
parametrized by three properties:
\begin{enumerate}
\item the monomial order $<$ and its extension to $\module$,
\item the pair set order $\pleq$ on $\pairset$, and
\item the rewrite order $\rleq$.
\end{enumerate}
We see that even though there are so many different notions of signature-based
\grobner{} algorithms in the literature, all those implementations boil down to
variations of two of the above mentioned three orders: All known algorithms have in
common to use $\pleqs$ on $\pairset$.\footnote{There is some slight difference in the
original presentation of \ff{} in~\cite{fF52002Corrected} which is discussed in
\characteristic~\ref{char:f5}~(\ref{char:f5:pair-set-order}).}

\begin{remark}
Note that such attempts of generalizing the description of signature-based
\grobner{} basis algorithms have already been done, for example,
in~\cite{huang2010,sw2011b,panhuwang2012,panhuwang2013}. As we have already
pointed out in the introduction of this section all of these characterizations are
similar and included in our attempt using \rba{}. The difference in notation are
rather obvious (see also sections~\ref{sec:efficient-implementations-f5}
 --~\ref{sec:efficient-implementations-sb}), thus we relinquish to
give comparisons further than the ones
depicted already in Sections~\ref{sec:spair-elimination} and
~\ref{sec:rewrite-bases}. 
\end{remark}                        

Next we discuss known and efficient implementations of signature-based
\grobner{} basis algorithms as variants of \rba{}.
Note that all algorithms described in the following can be implemented
with any compatible extension to the monomial order. When algorithms were
initially presented with a fixed module monomial order we take care of this.
Still, the only real difference of the implements boils down to the rewrite
orders used.

\section{Faug\`ere's \ff{} algorithm and variants}
\label{sec:efficient-implementations-f5}
In 2002 Faug\`ere presented the \ff{} algorithm (\cite{fF52002Corrected}). This
was the first publication of a signature-based \grobner{} basis algorithm and
introduced the notion of a signature.

In~\cite{erF5SB2013} the connection between \rba{} and \ff{} is already given,
so we give a short review and refer for details to that paper. \ff{}, as
presented in~\cite{fF52002Corrected} uses $\potl$ as extension of the underlying
monomial order $<$.\footnote{\label{fn:potl}Strictly speaking this is not completely true,
  \ff{} as presented in~\cite{fF52002Corrected} uses $\potlp$ defined by
    $a\mbasis i \potlp b\mbasis j$ if and only if $i>j$ or $i=j$ and $a<b$. The
    only difference is to prefer the element of lower index instead of the one
    of higher index. In order to unify notations we assume in the
    following that \ff{} means ``\ff{} uses $\potl$ as module monomial order''.}

\begin{remark}
\label{rem:f5-rewrite-order}
In~\cite{erF5SB2013} it is assumed that \ff{} uses $\rleqff$ as
its rewrite order. Note that this is not true for the initial presentation of
\ff{} in~\cite{fF52002Corrected}:
\ff{} uses $\potl$, so it computes incrementally a \grobner{} basis for
$\ideal{f_1,\ldots,f_i}$ for increasing $i$. For each such index $i$ the
algorithm stores a list of so-called ``rewrite rules'': $\textsc{Rule}_i$.
The S-pairs are first taken by minimal possible degree $d := \deg(\proj{a\alpha})$
for $a\alpha$ being a generator of an S-pair. Once this choice is done this list of
S-pairs, denoted by $\pairset_d$ is handled by subalgorithm \textsc{Spol}.
There S-pairs are checked by the criteria and new rewrite rules are added to the
end of the list $\textsc{Rule}_i$. Once this step is done, the remaining S-pairs
in $\pairset_d$ are handed to the subalgorithm \text{Reduction}. Not until this
point the S-pairs in $\pairset_d$ are sorted by increasing signature. This leads
to the following effects:
\begin{enumerate}
\item If the input is homogeneous, \ff{} reduces S-pairs by increasing
signature, but the rewrite rules are \emph{not} sorted by increasing signature.
\item If the input is inhomogeneous then \ff{} need not even reduce S-pairs by
increasing signatures as it is pointed out in~\cite{ederInhomog2013}. Note that
this behaviour is still covered by \rba{} and using a corresponding pair set
order $\pleq \neq \pleqs$. Still, as discussed in sections~\ref{sec:gsb}
and~\ref{sec:rewrite-bases} the best possible pair set order is $\pleqs$ and it
is shown in~\cite{ederInhomog2013} that \ff{} can easily be equipped with it.
\end{enumerate}
The fact about not handling S-pairs by increasing signatures we describe in more
detail in \characteristic~\ref{char:f5}~(\ref{char:f5:pair-set-order}). The problem of
ordering the rewrite rules is more difficult: As described
in~\cite{fF52002Corrected}, \ff{} might not use $\rleqff$ as rewrite order: For
\ff{} the canonical rewriter in signature $T$ is the element in
$\textsc{Rule}_i$ which was added last. But at the time of concatenation the
S-pairs are not sorted by increasing signature! So the following situation can
happen: Assume we have two S-pairs in degree $d$ with signatures $z \mbasis i$
and $x \mbasis i$. We can assume that in $\pairset_d$ they are ordered like
$[\ldots, x \mbasis i, \ldots, z \mbasis i, \ldots]$.
Let us assume that both S-pairs are not rewritable, so we reduce both. Now,
after $\pairset_d$ is sorted by increasing signature, \ff{} first reduces the
S-pair with signature $z \mbasis i$ to $\alpha$, and later on the one with
signature $x \mbasis i$ to $\beta$. Generating new S-pairs we could have two
S-pairs in $\pairset_{d+2}$ with signature $xyz \mbasis i$: $\spoly \alpha \gamma$
and $\spoly \beta \delta$. In this situation, \ff{} would remove $\spoly \beta
\delta$ and keep $\spoly \alpha \gamma$ since the signature $z \mbasis i$ was added
to $\textsc{Rule}_i$ after $x \mbasis i$ had been added. So in our notation
$\alpha$ is the canonical rewriter in signature $xyz \mbasis i$. Clearly, using
$\rleqff$ $\beta$ is the canonical rewriter in $xyz \mbasis i$. 

Since $\beta$ was computed after $\alpha$ from the algorithm's point of view
$\beta$ might the better element. So it makes sense to optimize \ff{} as presented
in~\cite{fF52002Corrected} to use $\rleqff$.

Moreover, note that in~\cite{erF5SB2013} the authors assume this optimization
already. For a complete proof of termination of \ff{} as presented
in~\cite{fF52002Corrected} we refer the reader to~\cite{galkinTermination2012}.
\end{remark}

In the following we assume that \ff{} uses $\rleqff$ as rewrite order, then
the only difference left from its original description is now
the fact that \ff{} checks the possible \sreducers{} $b\beta$ of an element
$\alpha$ if they are not syzygy and not rewritable.

\begin{lemma}[Lemma~15 in~\cite{erF5SB2013}]
\label{lem:f5-reduction}
Let $\alpha\in\module$, let $t$ be a term of $\proj\alpha$ and let \basis{} be a
rewrite basis up to signature $\sig\alpha$. Then $t$ is regular
\sreducible{} if and only if it is reducible in \ff{}.
\label{lem:comparison-to-f5-reduction}
\end{lemma}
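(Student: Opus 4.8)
The plan is to prove the two implications separately, after noting that the \ff{}-style reduction recalled just before the statement is exactly a regular \sreduction{} step in which each candidate \sreducer{} is, in addition, required to be neither rewritable nor syzygy. With this identification in hand, the equivalence reduces to the assertion that whenever $t$ admits \emph{some} regular \sreducer{}, it admits one that also passes those two extra checks --- and that is precisely Lemma~\ref{lem:existence-regular-sreducer}, whose hypotheses (``$t$ a regular \sreducible{} term of $\proj\alpha$'' and ``\basis{} a rewrite basis up to $\sig\alpha$'') are exactly those of the present statement.

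First I would dispatch the direction ``reducible in \ff{} $\Rightarrow$ regular \sreducible{}''. This needs nothing beyond unwinding definitions: a reducer used by \ff{} is by construction a monomial multiple $b\beta$ of an element $\beta\in\basis$ with $\hdp{b\beta}=t$ and $\sig{b\beta}<\sig\alpha$, so it meets Conditions~(\ref{cond:preduce}) and~(\ref{cond:sreduce}) of Definition~\ref{def:sreduce} and is regular; hence $t$ is regular \sreducible{}. The rewrite-basis hypothesis plays no role in this direction.

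For the converse, suppose $t$ is a regular \sreducible{} term of $\proj\alpha$. Since \basis{} is a rewrite basis up to $\sig\alpha$, Lemma~\ref{lem:existence-regular-sreducer} applies and furnishes a regular \sreducer{} $b\beta$ of $t$ that is moreover not regular top \sreducible{}, not rewritable, and not syzygy. The last two properties are exactly what \ff{}'s reduction subroutine tests for before accepting a reducer (the rewritability test being Algorithm~\ref{alg:rewritable}, applied with $\basis\cup\hdsyz$ and $\rleqff$), so $b\beta$ is an admissible \ff{}-reducer and $t$ is reducible in \ff{}. This closes the equivalence.

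The one place I would be careful is the identification noted at the outset: that a candidate reducer $b\beta$ is rejected by \ff{} \emph{only} when it is rewritable or corresponds to a (principal) syzygy, and not for any further reason that a general regular \sreducer{} might fail. Once this bookkeeping between the original \ff{} formulation of~\cite{fF52002Corrected} and the \rba{} framework is pinned down --- essentially a translation of notation --- the statement is a direct corollary of Lemma~\ref{lem:existence-regular-sreducer} and presents no additional difficulty; I do not expect a genuine obstacle beyond this matching step.
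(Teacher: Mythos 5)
Your proposal is correct and follows exactly the route the paper takes: the paper's proof is the one-line remark ``Follows also from Lemma~\ref{lem:existence-regular-sreducer}'', and your write-up is simply the detailed unwinding of that remark --- the easy direction by definition-chasing, the converse by invoking Lemma~\ref{lem:existence-regular-sreducer} to produce a regular \sreducer{} that is neither rewritable nor syzygy and hence passes \ff{}'s reducer checks. The ``careful point'' you flag at the end (that \ff{} rejects reducers \emph{only} on the grounds of rewritability or being syzygy) is precisely the identification the paper sets up in the paragraph preceding the lemma, so there is no gap.
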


\begin{proof}
Follows also from Lemma~\ref{lem:existence-regular-sreducer}.\qed
\end{proof}

So from Lemma~\ref{lem:f5-reduction} it follows that checking possible reducers
by \rewritablenoarg{} in \rba{} does not change the algorithm's behaviour and
is thus optional. In Section~\ref{sec:f4-f5} we see that the idea of checking
the \sreducer{}s by the criteria comes from a linear algebra point of view.

Let us underline the following characteristics of \ff{}.

\begin{characteristics}\
\label{char:f5}
\begin{enumerate}
\item
\label{char:f5:handling-zero-reductions}
Note that, even so we assume the optimization of \ff{}'s rewrite order as
described in Remark~\ref{rem:f5-rewrite-order}, \ff{} still does not completely
implement $\rleqff$ but a slightly different rewrite order:
The requirement $\alpha \rleq \beta$
whenever $\beta \in \hdsyz$ from $\rleqff$ is relaxed to $\beta = \koz{\mbasis
i}{\mbasis j}$ for $1\leq i < j \leq m$. Thus non-Koszul syzygies in
\hdsyz{} have the same priority as elements in \basis{}. The idea to improve
computations by using zero reductions directly instead was introduced first
in an arXiv preprint of~\cite{apF5CritRevised2011} by Arri and Perry in 2009 
as well as in~\cite{ggvGGV2010} by Gao, Guan and Volny.
\item
\label{char:f5:pair-set-order}
Note that in~\cite{fF52002Corrected} the \ff{} algorithm is described in the
vein of using linear algebra for the reduction steps (see
Section~\ref{sec:f4-f5} for more details). Instead of ordering the
pair set by increasing signatures it is ordered by increasing degree of the
corresponding S-polynomial. A subset $\pairset_d$ of S-pairs at minimal given
degree $d$ is then handled by the \textsc{Reduction} procedure. There, all these
S-pairs (corresponding to degree $d$ polynomials) are sorted by increasing
signature. As already discussed in~\cite{ederInhomog2013}, for homogeneous input
this coincides with using $\pleqs$ since then the degree of the polynomial part
and the degree of the signature are the same. For inhomogeneous input
\ff{}'s attempt might not coincide with $\pleqs$.
In~\cite{galkinTermination2012} Galkin has given a proof for termination of \ff{}
taking care of this situation. Note that in such a situation one might either
prefer to use $\pleqs$ (as pointed out in~\cite{ederInhomog2013}) or saturate
resp. desaturate the elements during the computation of the algorithm.
\item Furthermore, thinking in terms of linear algebra also explains why
in~\cite{fF52002Corrected} higher signature
reductions lead to new S-pairs which are directly added to the \textsc{ToDo}
list in subalgorithm \textsc{TopReduction} and not prolonged to the situation
when a new element is
added \basis{} as it is done in \rba{}: Assuming homogeneous input, in a
Macaulay matrix $M_d$ (see, for example, Section~\ref{sec:matrix-f5}) all
corresponding rows are already stored. Thus a higher signature S-pair (in
\rba{} et al. due to single polynomial \sreduction{} prolonged to a later step)
corresponds to a reduction of a row by some other one below. All possible
S-pairs of degree $d$ are handled at once thus one can directly execute the new
S-pair without generating it later on.
\end{enumerate}
\end{characteristics}

Clearly, the \ff{} criterion and the Rewritten criterion are just special cases
of the syzygy criterion (Lemma~\ref{lem:syzygy-criterion}) and the singular
criterion (Lemma~\ref{lem:singular-criterion}), respectively. For even more details
on how to translate notions like ``canonical rewriter'' to \ff{} equivalents like
``rewrite rules'' we refer to~\cite{erF5SB2013} Section~5.

Moreover, \ff{} implements the \sreduction{} process different to the
description in \rba{}: Instead of prolonging an \sreduction{} $\alpha - b \beta$
with a reducer $b\beta$ of signature $\sig{b\beta} > \sig\alpha$ to the
generation of the S-pair $b\beta -\alpha$ later on, \ff{} directly adds $b\beta
-\alpha$ to the todo list of the current degree in \textsc{Reduction}. Assuming
homogeneous input this makes sense. Again, we see in Section~\ref{sec:f4-f5}
that this is coming from an \ffo{}-style implementation of the
\sreduction{} process.

In the last decade several optimizations and variants of \ff{} where presented.
Using \rba{} we can easily categorize them.

\begin{variants}
In~\cite{fF52002Corrected} three variants of \ff{} are mentioned shortly
without going into detail about their modifications:
\begin{enumerate}
\item \ffpr{} denotes a variant of \ff{} similar to \ffr{} (see
    Section~\ref{sec:f5r}) resp. \ffc{} (see Section~\ref{sec:f5c}): For
inhomogeneous input one can optimize computations by homogenizing the
computations of the intermediate \grobner{} basis $G_i$ for
$\ideal{f_1,\ldots,f_i}$. Before adding the homogenized $f_{i+1}$ one
dehomogenizes $G_i$ and interreduces $G^\textrm{deh}_i$ to $B_i$. This $B_i$ can
then be used for checks with the syzygy criterion as well as for reduction
purposes. We refer to sections~\ref{sec:f5r} and~\ref{sec:f5c} for details on
signature handling in this situation.
\item \ffprpr{} denotes the variant of \ff{} using $\dpotl$ as compatible module
monomial order. Thus, instead of an incremental computation w.r.t. the initial
generators $\gen f$ the algorithm handles elements by increasing degree. Note
that in case of regular input \ffprpr{} computes no zero reduction, whereas this
is possible for $\schl$.
\item The variant \mff{} which uses linear algebra for reduction purposes is described in
Section~\ref{sec:matrix-f5} in detail.
\end{enumerate}
\end{variants}

Note that besides the variants presented in the following there are even more
publications about optimizations and generalizations of the \ff{} algorithm for
computing \grobner{} bases, for
example, see~\cite{faugere-svartz-2012,faugeresvartz-2013,faugeresafeyverron-2013}. Also the main
results in these publications are presented for \ff{}, they do not depend on the
\grobner{} basis algorithm used. Here we are giving a survey especially for
signature-based \grobner{} basis algorithms, thus taking care of not
signature-based tailored research is out of scope of this publication.

Moreover, there are first works in using signature-based criteria for computing
involutive bases (\cite{gerdtHashemiG2V,ghmInvolutiveF5-2013}). 

\subsection{\ffr{} -- Improved lower-index \sreduction{}}
\label{sec:f5r}
In 2005 Stegers reviewed \ff{} in~\cite{stF5Rev2005}. There he introduced a new
variant of \ff{} improving the reduction process. Due to the incremental
structure of \rba{} when using $\potl$ one first computes a signature
\grobner{} basis for $\ideal{f_1,f_2}$, then for $\ideal{f_1,f_2,f_3}$, and so
on. 
Since the intermediate bases need not be minimal Stegers suggested to use in
step $k$ of the algorithm not $\basis_{k-1}$ for reduction pruposes. Instead it
is preferable to reduce the corresponding
\grobner{} basis $G_{k-1} = \set{\proj\alpha \mid \alpha \in \basis_{k-1}}$ to
the reduced \grobner{} basis $B_{k-1}$ for $\ideal{f_1,\ldots,f_{k-1}}$.
Since for all elements handled by \rba{} in iteration step $k$ the
signature has an index $k$ and all elements in $\basis_{k-1}$ have signature
index at most $k-1$ \sreductions{} are always allowed when using $\potl$ and the
signatures need not be checked.

Note that $B_{k-1}$ is only used for the reduction purposes, new S-pairs are
still generated using elements in $\basis_{k-1}$ since otherwise the signatures
would not be correct.

\subsection{\ffc{} -- Improved S-pair generation}
\label{sec:f5c}
Based on Stegers' idea, Eder and Perry presented in 2009 the \ffc{} algorithm
in~\cite{epF5C2009}. Whereas \ffr{} uses the reduced \grobner{} basis $B_{k-1}$
for $\ideal{f_1,\ldots,f_{k-1}}$ only for reduction purposes, \ffc{} extends this
to the generation of new S-pairs in iteration step $k$.

Once \rba{} finishes computing $\basis_{k-1}$ one reduces the corresponding
\grobner{} basis $\proj{\basis_{k-1}}$ to $B_{k-1}$ as described above. Let
$B_{k-1} := \set{g_1,\ldots,g_{m'}}$, then
one introduces $\basis'_{k-1} = \set{\mbasis 1,\ldots, \mbasis{m'}}$. Moreover,
one has to redefine the homomorphism $\alpha \mapsto \proj\alpha$ to go from $\ring^{m'}$
to $\ring$ by sending $\mbasis i$ to $g_i$ for all $i\in\set{1,\ldots,m'}$.

Starting iteration step $k$, \rba{} now computes the signature \grobner{} basis
for $\ideal{g_1,\ldots,g_{m'},f_k}$. Of course, at that point another extension
of the homomorphism $\alpha \mapsto \proj\alpha$ has to be done, since now we
are mapping $\ring^{m'+1} \rightarrow \ring$: We define that
$\proj{\mbasis{m'+1}} := f_k$.

It is shown in Theorem~32 and Corollary~33 of~\cite{epF5C2009} that with this
resetting of the signatures the number of useless \sreductions{} is not
increased, but instead the number of S-pairs generated in step $k$ is decreased.

\begin{variants} \
\begin{enumerate}
\item Due to \characteristic~\ref{char:f5}~(\ref{char:f5:handling-zero-reductions}) one also wants to implement
\ffc{} using $\rleqff$ in order to use zero reductions directly. In 2011, Eder
and Perry denoted this variant \ffa{} in~\cite{epSig2011}.
\item In~\cite{ederImprovedF52013} Eder improves the idea of \ffc{} slightly:
By symbolically generating S-pairs of elements in $\basis'_{k-1}$ (they all
already reduce to zero) signatures useful for discarding S-pairs in iteration
step $k$ can be made available a bit earlier. Thus, in terms of \rba{}, \hdsyz{}
is initialized not only with the signatures of the Koszul syzygies but also with the
signatures of other, already known syzygies. The idea presented there can be used
in any incremental signature-based \grobner{} basis algorithm. The corresponding
variants are denoted, for example, \iffc{} and \iggv{}.
\end{enumerate}
\end{variants}

\subsection{Extended \ff{} criteria}
\label{sec:ext-f5}
In 2010, Ars and Hashemi published~\cite{ahExtF52010} in which they generalized
the \ff{} criterion and the Rewritten criterion in the sense of using different
extensions of the monomial order $<$ on $\module$. These variants are achieved by
using \rba{} not with $\potl$ but one of the following two orders
proposed in~\cite{ahExtF52010}.

\begin{definition}
Let $<$ be a monomial order on $\ring$ and let $a e_i, b e_j$ be two module
monomials in $\module$.
\begin{enumerate}
\item $a e_i <_1 b e_j$ if and only if\footnote{Note that for $<_1$ to be a
total order we need to ensure that $\hdp{\mbasis i} \neq \hdp{\mbasis j}$
whenever $i \neq j$. Having the input ideal $I=\ideal{f_1,\ldots,f_m}$ this can
be achieved by an interreduction of the $f_i$s before entering \rba{}.} either
\begin{center}
$
\begin{array}{ccccccc}
a \hdp{\mbasis i} &<& b \hdp{\mbasis j} &
\text{ or}&&&\\
a \hdp{\mbasis i} &=& b \hdp{\mbasis j}
&\text{ and }& \hdp{\mbasis i} &<& \hdp{\mbasis j}.\\
\end{array}
$
\end{center}
\item $a e_i <_2 b e_j$ if and only if either
\begin{center}
$
\begin{array}{ccccccccccc}
\deg\left(\proj{a \mbasis i}\right) &<& \deg\left(\proj{b \mbasis j}\right) &
\text{ or}&&&&&&&\\
\deg\left(\proj{a \mbasis i}\right) &=& \deg\left(\proj{b \mbasis j}\right)
&\text{ and }& a &<& b &\text{ or}&&&\\
\deg\left(\proj{a \mbasis i}\right) &=& \deg\left(\proj{b \mbasis j}\right)
&\text{ and }& a &=& b &\text{ and }& i &<& j.
\end{array}
$
\end{center}
\end{enumerate}
\end{definition}

Ars and Hashemi implemented the original \ff{} algorithm and their variants of it in the
computer algebra system \magma{} and give timings for several \grobner{} basis
benchmarks. Their variants seem to be more efficient than the original
\ff{} algorithm in most of the examples. Still there exist input, for example
the \textsc{Schrans-Troost} benchmark, for which $\potl$ seems to be more
efficient. Using a framework like \rba{} such behaviour can be tested easily.

\section{Exploiting algebraic structures}
\label{sec:exploit-algebraic-structures}
In this section we present variants of \ff{} that use knowledge of underlying
algebraic structures in order to improve the computations. Note that there exist
more variants doing this besides the $3$ ones we are discussing here, see,
for
example,~\cite{faugere-svartz-2012,faugeresvartz-2013,faugeresafeyverron-2013}
(see also Figure~\ref{fig:decade-in-sig-based-algorithms}). 
The improvements in those variants are not specific to signature-based
\grobner{} basis algorithms, thus we waive to discuss them here.

It is clear that in the future a lot more improvements in this direction can be
expected. Exploiting algebraic structures helps to find more syzygies on the one
hand and to increase the independence of polynomials on the other hand. Both has
a positive influence on the computation of (signature) \grobner{} bases.

\subsection{\fftwo{} -- Improved computations over $\mathbb{F}_2$}
\label{sec:f52}
An easy way to improve \ff{}'s performance over small finite fields is to add
the field equations to \hdsyz{}. When breaking the first hidden field equations
(HFE) challenge in 2003 (\cite{FJ03}) the variant \fftwo{} was used which adds to $\gen f$
the field equations $x_i^2-x_i=0$ in $\mathbb{F}_2$. With this the rewritable
signature criterion is more powerful since Koszul syzygies generated by
those supplementary equations have low signatures. The HFE challenge consists of
$80$ equations in degree $2$. A \grobner{} basis computation of such a system
was intractable beforehand.

\subsection{An \ff{} variant for bihomogeneous ideals generated by polynomials
  of bidegree $(1,1)$}
\label{sec:f5-bihomog}
In 2012 Faug\`ere, Safey El-Din and Spaenlehauer published a variant of
\ff{} dedicated to multihomogeneous, in particular, bihomogeneous systems
generated by bilinear polynomials (\cite{FSS10b}). The main result is to
exploit the algebraic structure of bilinear systems to enlarge \hdsyz{}.

In Corollary~\ref{cor:regular-sequence} we see that \rba{} and thus also
\ff{} computes no reduction to zero if the input sequence is regular.
Whereas a randomly chosen homogeneous polynomial system is regular, this is not
the case for multihomogeneous polynomial systems. Those systems appear, for
example, in cryptography or coding theory. Due to the non-regularity
\ff{} does not remove all zero reductions.

Let $\gen f \in \field[x_0,\ldots,x_{n_x},y_0,\ldots,y_{n_y}]$ be bilinear
polynomials, let $F_i$ denote the sequence $f_1,\ldots,f_i$ and let $I_i$
denote the ideal $\ideal{F_i}$.
The main result is that the kernel of the Jacobian matrices $\jac x {F_i}$ and
$\jac y {F_i}$ w.r.t.
$x_0,\ldots,x_{n_x}$ and $y_0,\ldots,y_{n_y}$, respectively, correspond to those
reductions to zero \ff{} does not detect.
In general, all elements in these kernels are vectors of maximal minors of the
corresponding Jacobian matrices.

Assuming the incremental structure of \ff{} by using $\potl$ it is shown that
the ideal $I_{i-1} : f_i$ is spanned by $I_{i-1}$ and the maximal minors of
$\jac x {F_{i-1}}$ (for $i-1>n_y$) and $\jac y {F_{i-1}}$ (for $i-1>n_x$). The
lead ideal of $I_{i-1} : f_i$ corresponds to the zero reductions associated
to $f_i$. In order to get rid of them one needs to get results for the ideals
generated by the maximal minors of the Jacobian matrices. In~\cite{FSS10b} it is
shown that in general \grobner{} bases for these ideals w.r.t. the graded reverse
lexicographical order are linear combinations of the generators. Thus, once a
\grobner{} basis of $I_{i-1}$ is known (which we can assume due to the
incremental structure of \ff{}) one can efficiently compute a \grobner{} basis
of $I_{i-1}: f_i$. It follows that for generic bilinear systems this variant of
\ff{} does not compute any zero reduction.

It follows that for \rba{} all one has to do is to add the computation of the
maximal minors of the jacobian matrices and add the corresponding syzygies resp.
signatures to \hdsyz{} in Line~\ref{alg:rba:initialsyz} of
Algorithm~\ref{alg:rba}.

\subsection{An \ff{} variant for SAGBI \grobner{} bases}
\label{sec:f5-sagbi}
Faug\`ere and Rahmany presented in 2009 an adjusted variant of \ff{} for
computing so-called SAGBI \grobner{} bases (\cite{FR09}). A SAGBI
\grobner{} basis is the analogon of a \grobner{} basis for ideals in
$\field$-subalgebras. We introduce notation as much as needed to explain the
changes in \ff{}, in particular, \mff{}. For more details on the theory of
SAGBI bases we refer, for example, to~\cite{kapur-madlener-sagbi-1989}.

In this subsection let $G \subset \gl n$ be a subgroup of $n \times n$ invertible matrices over
$\field$. Moreover, we assume that $\field$ has characteristic zero or $p$ such
that $p$ and $\card G$ are coprime.

\begin{definition} \
\begin{enumerate}
\item A polynomial $f\in \ring$ is called \emph{invariant (w.r.t. $G$)} if
$f(Ax) = f(x)$ for all $A \in G$. The set of all polynomials of
$\ring$ invariant w.r.t. $G$ is denoted $R^G$.
\item For $\card G < \infty$ the \emph{Reynolds operator (for $G$)} is the map
$\rey: \ring \rightarrow \invring$ defined by $\rey(f) = \frac{1}{\card G}
\sum_{A \in G} f(Ax)$.
\end{enumerate}
\end{definition}

\begin{proposition}[\cite{cloIdeals2007}]
Let $\rey$ be the Reynolds operator for a finite group $G \subset \gl n$. Then
the following properties hold:
\begin{enumerate}
\item $\rey$ is $\field$-linear.
\item $f \in \ring \Longrightarrow \rey(f) \in \invring$.
\item $f \in \invring \Longrightarrow \rey(f) = f$.
\end{enumerate}
\end{proposition}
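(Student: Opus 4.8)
The statement to prove is the standard proposition on the Reynolds operator: for a finite group $G \subset \gl n$ whose order is invertible in $\field$, the map $\rey(f) = \frac{1}{\card G}\sum_{A\in G} f(Ax)$ is $\field$-linear, sends every polynomial into $\invring$, and restricts to the identity on $\invring$. Since this is a classical fact (attributed here to \cite{cloIdeals2007}), the plan is simply to verify each of the three items directly from the definition, using only elementary group-action bookkeeping.

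First I would fix notation: for $A\in G$ let $\sigma_A\colon \ring\to\ring$ be the $\field$-algebra homomorphism determined by $x\mapsto Ax$ (i.e.\ substituting the linear forms $(Ax)_i = \sum_j A_{ij}x_j$ for $x_i$); this is well-defined because the $(Ax)_i$ are again elements of $\ring$, and it is invertible with inverse $\sigma_{A^{-1}}$, so $\sigma_{AB} = \sigma_A\circ\sigma_B$ and $G\to \mathrm{Aut}_\field(\ring)$, $A\mapsto\sigma_A$, is a group action. Then $\rey = \frac{1}{\card G}\sum_{A\in G}\sigma_A$, and the hypothesis that $p\nmid\card G$ (or characteristic zero) guarantees $\card G$ is a unit in $\field$, so the scalar $\tfrac{1}{\card G}$ makes sense. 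For (a), $\field$-linearity: each $\sigma_A$ is $\field$-linear and a sum of $\field$-linear maps scaled by a field element is $\field$-linear; this is immediate. For (b): given $f\in\ring$, I must show $\sigma_B(\rey(f)) = \rey(f)$ for all $B\in G$. Compute
\[
\sigma_B(\rey(f)) = \frac{1}{\card G}\sum_{A\in G}\sigma_B(\sigma_A(f)) = \frac{1}{\card G}\sum_{A\in G}\sigma_{BA}(f) = \frac{1}{\card G}\sum_{C\in G}\sigma_C(f) = \rey(f),
\]
where the key step is the reindexing $C = BA$, valid because left multiplication by $B$ is a bijection of $G$. Hence $\rey(f)\in\invring$. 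For (c): if $f\in\invring$ then $\sigma_A(f)=f$ for every $A\in G$, so $\rey(f) = \frac{1}{\card G}\sum_{A\in G} f = \frac{\card G}{\card G}f = f$.

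There is no real obstacle here — the only point requiring a moment's care is the translation between the "functional" description $f\mapsto f(Ax)$ and the algebra-homomorphism description, and checking that $A\mapsto\sigma_A$ is a genuine (left) action so that the reindexing in (b) is legitimate; once that is set up, all three parts are one-line computations. I would present it essentially as written above, perhaps inserting a sentence noting that $\invring$ is a $\field$-subalgebra of $\ring$ (closed under the $\sigma_A$) so that all expressions stay inside $\ring$, and then conclude. Given the survey nature of the paper, I would expect the authors either to cite \cite{cloIdeals2007} for the proof or to give exactly this short argument; I would favour including the three-line verification for self-containedness.

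\begin{proof}
For $A\in G$ let $\sigma_A\colon\ring\to\ring$ be the $\field$-algebra homomorphism with $\sigma_A(f)=f(Ax)$. Then $\sigma_{AB}=\sigma_A\circ\sigma_B$, so $A\mapsto\sigma_A$ is a left action of $G$ on $\ring$ by $\field$-linear automorphisms, and $\rey=\frac{1}{\card G}\sum_{A\in G}\sigma_A$, the scalar $\frac{1}{\card G}$ being defined since $\card G$ is invertible in $\field$.
(1) Each $\sigma_A$ is $\field$-linear, hence so is the $\field$-linear combination $\rey$.
(2) For $f\in\ring$ and $B\in G$,
\[
\sigma_B\bigl(\rey(f)\bigr)=\frac{1}{\card G}\sum_{A\in G}\sigma_{BA}(f)=\frac{1}{\card G}\sum_{C\in G}\sigma_C(f)=\rey(f),
\]
using that $A\mapsto BA$ permutes $G$; thus $\rey(f)\in\invring$.
(3) If $f\in\invring$ then $\sigma_A(f)=f$ for all $A\in G$, so $\rey(f)=\frac{1}{\card G}\sum_{A\in G}f=f$.\qed
\end{proof}
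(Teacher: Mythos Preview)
Your proof is correct and is exactly the standard argument one finds in the cited reference. The paper itself does not give a proof of this proposition at all; it simply attributes the result to \cite{cloIdeals2007} and moves on, so your write-up is in fact more self-contained than what appears in the survey.
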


Even if $\invring$ might not be finite dimensional as $\field$-vector space,
there exists a decomposition in finite dimensional homogeneous components,
$\invring = \oplus_{d\geq 0}\invring_d$.\footnote{Note that $\ring = \oplus_{d\geq
  0} \ring_d$ and that the action of $G$ preserves the homogeneous components.}
For any term $t \in \ring$ $\rey(t)$ is a homogenous invariant, called
\emph{orbit sum}. Clearly, the set of orbit sums is a vector space basis for
$\invring$.

Here we assume that $f_1,\ldots,f_m$ are homogeneous, invariant polynomials in
$\ring$ and $I$ resp. $I^G$ represent the ideal generated by
$f_1\ldots,f_m$ in $\ring$ resp. $\invring$

\begin{definition} \
\begin{enumerate}
\item A subset $F \subseteq I^G$ is a \emph{SAGBI \grobner{} basis for $I^G$} (up to
degree $d$) if
$\set{\hd f \mid f \in F}$ generates the lead ideal of $I^G$ as an ideal over
the algebra $\ideal{\hd f \mid f \in \invring}$ (up to degree $d$).
\item Let $f,g,p \in \invring$ such that $f\neq0\neq p$. $f$ \emph{SG-reduces}
to $g$ modulo $p$ if there exists a term $t$ of $f$ such that there exists an
$s$ in the set of lead terms of $\invring$ such that $s \hd p = t$ and $g =
f - \frac{\lc t}{\lc p \lc{\rey(s)}} \rey(s) p$.
\end{enumerate}
\end{definition}

Clearly, one can speak of SG-reduction w.r.t. a finite subset $F \subseteq
\invring$. With this a SAGBI \grobner{} basis can be defined similar to a usual
\grobner{} basis:

\begin{proposition}
Let $F$ be a subset of an ideal $I^G \subseteq \invring$. The following are
equivalent:
\begin{enumerate}
\item $F$ is a SAGBI \grobner{} basis for $I^G$.
\item Every $h \in I^G$ SG-reduces to zero w.r.t. $F$.
\end{enumerate}
\end{proposition}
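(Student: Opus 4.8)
The plan is to transport the classical Buchberger argument --- ``the lead term of every ideal element lies in the ideal generated by the lead terms of the basis $\iff$ every ideal element reduces to zero'' --- to the present setting, the only novelty being that the role of the ground field is now played by the subalgebra $A := \ideal{\hd q \mid q\in\invring}$ of $\ring$. Two facts should be recorded first. \textbf{(a)} The set $E := \setBuilder{\hd q}{q\in\invring}$ is a submonoid of the monomials of $\ring$: it contains the unit monomial $\hd 1$, and $\hd{q_1}\hd{q_2} = \hd{q_1 q_2}$ since a product of invariants is invariant. Hence $A = \mathrm{span}_\field(E)$, and the $A$-ideal generated by $\setBuilder{\hd f}{f\in F}$ equals $\mathrm{span}_\field\setBuilder{s\,\hd f}{s\in E,\, f\in F}$, so a single term of $\ring$ lying in that $A$-ideal is, up to a nonzero scalar, of the form $s\,\hd f$ with $s\in E$, $f\in F$. \textbf{(b)} For $s\in E$ the orbit sum $\rey(s)$ is a nonzero invariant with $\hd{\rey(s)}\simeq s$; consequently, an SG-reduction step applied at the lead term of $h\in\invring\setminus\{0\}$, say with $s\,\hd p = \hd h$ for $p\in F$, $s\in E$, replaces $h$ by $h' = h - c\,\rey(s)\,p$ (the prescribed scalar $c\in\field$ cancels the leading coefficient, using $\hd{\rey(s)}\simeq s$), which again lies in $I^G$ (because $\rey(s)\in\invring$, $p\in I^G$, and $I^G$ is an ideal of $\invring$) and satisfies $\hd{h'} < \hd h$. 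More generally, any SG-reduction step leaves the terms above the reduced term untouched, removes that term, and adds only strictly smaller terms; so along an SG-reduction the lead term is weakly decreasing and drops strictly only at a step reducing the current lead term.

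For $(2)\Rightarrow(1)$ I would first observe that $L(I^G)$ is \emph{already} an $A$-ideal: for $g\in I^G$, $q\in\invring$ one has $gq\in I^G$ and $\hd{gq}=\hd g\,\hd q$, whence $A\cdot L(I^G)\subseteq L(I^G)$. Thus it suffices to prove that every $\hd h$ with $0\ne h\in I^G$ has the form $s\,\hd f$, $s\in E$, $f\in F$. Fix an SG-reduction of $h$ to zero; by the monotonicity recorded in \textbf{(b)} its running lead term stays equal to $\hd h$ until the first step at which it strictly drops, and such a step exists (otherwise zero is never reached) and is the reduction of the term $\hd h$ by some $p\in F$ with $s\,\hd p=\hd h$, $s\in E$. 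Together with the trivial inclusion $\hd f\in L(I^G)$ for $f\in F\subseteq I^G$, this shows $\setBuilder{\hd f}{f\in F}$ generates $L(I^G)$ over $A$, i.e.\ $F$ is a SAGBI \grobner{} basis for $I^G$.

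For $(1)\Rightarrow(2)$ I would induct on $\hd h$, which is well-founded since the monomial order well-orders the monomials. If $h=0$ there is nothing to show. Otherwise $\hd h\in L(I^G)$ lies, by hypothesis, in the $A$-ideal generated by $\setBuilder{\hd f}{f\in F}$, so by fact \textbf{(a)}, comparing leading monomials forces $\hd h\simeq s\,\hd f$ for some $s\in E$, $f\in F$. Then $h$ is SG-reducible modulo $p:=f$ at its lead term and, by fact \textbf{(b)}, reduces to some $h'\in I^G$ with $\hd{h'}<\hd h$; the induction hypothesis gives that $h'$ --- and hence $h$ --- SG-reduces to zero w.r.t. $F$.

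The one genuinely SAGBI-specific step, which I would isolate as a separate lemma and check with care, is the equality $\hd{\rey(s)}\simeq s$ for $s\in E$ used in fact \textbf{(b)}: it is exactly what makes the scalar in an SG-reduction step well-defined (it needs $\lc{\rey(s)}\ne 0$) and what makes such a step lower the lead term. Its verification rests on $s$ being a lead term of an invariant together with the compatibility of the chosen monomial order with the $G$-action, so that $s$ is the $\leq$-maximal monomial occurring in $\rey(s)$; everything else is the formal Buchberger-style bookkeeping, unchanged from the polynomial case (cf.\ \cite{FR09,kapur-madlener-sagbi-1989}).
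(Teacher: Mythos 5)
The paper, being a survey, states this proposition as background without proof and defers to \cite{FR09} and \cite{kapur-madlener-sagbi-1989}, so there is no in-text argument to compare against; your proposal should therefore be judged on its own. It is correct, and it is the standard Buchberger characterisation transported to the SAGBI setting. You rightly isolate the single SAGBI-specific ingredient on which everything hinges: that $\hd{\rey(s)}\simeq s$ for every $s$ that is the lead term of an invariant. Without this, $\lc{\rey(s)}$ could vanish (so the prescribed scalar in an SG-reduction step would be undefined) and the step would not be guaranteed to lower the eliminated term, which is exactly what both directions of your argument use. Granted that lemma, the remaining ingredients are all sound and carefully stated: the monoid property of the set $E$ of lead terms of invariants (so the $A$-ideal generated by $\setBuilder{\hd f}{f\in F}$ is spanned by terms $s\,\hd f$, forcing a membership test on a single monomial to yield a factorisation $s\,\hd f$); the observation that the span of $\setBuilder{\hd h}{0\neq h\in I^G}$ is automatically an $A$-module because $I^G$ is an ideal of $\invring$ and $\hd{}$ is multiplicative; the ``first strict drop of the running lead term'' device for $(2)\Rightarrow(1)$; and the well-founded induction on $\hd h$ for $(1)\Rightarrow(2)$.
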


Note that a SAGBI \grobner{} basis might not be finite. 

Instead of using elimination techniques in order to compute a SAGBI
\grobner{} basis for a given ideal $I^G \subseteq \invring$ one can use the
ideas of Thi\'ery who presented in~\cite{thiery-sagbi-2001} a variant of
Buchberger's algorithm.

Faug\`ere and Rahmany use in~\cite{FR09} the \mff{} description of \ff{} to present
the modifications: Let $\gen f \in \invring$ be the homogeneous input elements.
First one defines the so-called \emph{invariant Macaulay matrix}
$\mac d i$ generated by $\rey(t_{j,k}) f_k$ for $1\leq k \leq i$ and terms $t_{j,k}$ such
that $\deg(t_{j,k}) = d -\deg(f_k)$.
Two modifications to the usual Macaulay matrix have to be made:
\begin{enumerate}
\item Instead of labelling the rows of $\mac d i$ by $t_{j,k} \mbasis k$ one uses
$\rey(t_j,k) \mbasis k$. 
\item Instead of labelling the columns by the usual monomials $m_\ell$ they are indexed by $\rey(m_\ell)$.
\end{enumerate}

Besides this no further changes need to be done. The variant of \mff{} presented
here assumes $\potl$ as module monomial order and $\rleqff$ as rewrite order.
One checks for any row labelled by $\rey(t_{j,k}) \mbasis k$ if $f_k$ is the canonical
rewriter in signature $\sig{t_{j,k} \mbasis k}$ and removes the row otherwise. In the 
description of \mff{} this is equivalent to the existence of a row with corresponding
lead term $t_{j,k}$ in a matrix that was previously reduced to row echelon form.

\section{\ff{} and the quest of termination}
\label{sec:f5-termination}
Until 2012 there was still no complete proof of \ff{}'s termination
given. Thus a lot of variants of \ff{} where published in the meantime which have small
adjustments in order to ensure termination.

The main problem with the proof of \ff{}'s termination given
in~\cite{fF52002Corrected} is Theorem~2: It assumes that if the input of
\ff{} is a regular sequence of homogeneous elements then \ff{} does enlarge the
lead ideal after each call of the subalgorithm \textsc{Reduction}. In
Section~8 of~\cite{fF52002Corrected} an example of \ff{} computing a
\grobner{} basis for a regular sequence of three homogeneous elements is given.
In the last call of \textsc{Reduction} only one element, $r_{10}$, is added to
\basis{} with $\hdp{r_{10}} = y^6t^2$. In degree $d=7$ \ff{} has already added
element $r_8$ to \basis{} with $\hdp{r_8} = y^5t^2$. Thus the statement of
Theorem~2, on which the proof of termination of \ff{} in~\cite{fF52002Corrected}
is based on, is not true.

\subsection{Proving \ff{}'s termination}
\label{sec:f5-termination-directly}
At least since Galkin's proof in~\cite{galkinTermination2012} termination of \ff{} is clear.
Several other publications include proofs of \ff{}'s termination, most of them
are only slight variants or simplifications of Galkin's
(see~\cite{panhuwang2012,panhuwang2013}), some are proving termination for
slight variants of \ff{} (see~\cite{erF5SB2013}). The main idea is based on
partitioning \basis{} into sets
\[R_r := \left\{ \alpha_i \;\left\lvert \;\frac{\sig{\alpha_i}}{\hdp{\alpha_i}}
= r \right.\right\}\]
for given ratios $r$. The proof of \ff{}'s termination is then done in two
steps:
\begin{enumerate}
\item One shows that there are only finitely many non-empty sets $R_r$.
\item $\# R_r < \infty$ for any non-empty set $R_r$.
\end{enumerate}
As one can easily see, this attempt can be used for any signature-based
\grobner{} basis algorithm related to \rba{}, thus also termination of
\gvw{} and variants (see Section~\ref{sec:efficient-implementations-sb}) can be
handled in the same way.

In~\cite{panhuwang2012} and~\cite{panhuwang2013} Pan, Hu and Wang present
another attempt in proving \ff{}'s termination. For this they do not only focus
on \ff{} but give generalized algorithms in order to use known termination of
algorithms like \gvw{} (see Section~\ref{sec:gvw}). They give a generalized
\ff{} algorithm called \ffgen{} for which they can easily prove termination in
the vein of Eder and Perry's proof of termination of general signature-based
\grobner{} basis algorithms given in~\cite{epSig2011}. Both publications use the
notation introduced by \ggv{} resp. \gvw{} and then further adopted by Huang
in~\cite{huang2010}. We refer to the corresponding sections (\ref{sec:huang}
and~\ref{sec:gvw}, respectively) for a dictionary translating the notation given
here to theirs. Moreover, note that~\cite{panhuwang2013} takes care of the
problem with the insertion of rewrite rules in the original \ff{} algorithm
discussed in Remark~\ref{rem:f5-rewrite-order}: Instead of using lists
$\textsc{Rule}_i$ for rewrite rules they directly check rewritability by the
order of elements in \basis{} as done in \rba{}, too. \ffgen{} now has a
generalized insertion strategy for new elements in \basis{}, called
\textsc{insert\_F5GEN}. This mirrors the usage of different rewrite orders
$\rleq$ as explained in Section~\ref{sec:rewrite-bases}.
Whereas~\cite{panhuwang2013} focusses on \ff{},~\cite{panhuwang2012} covers also
\gvw{} and variants.

In~\cite{erF5SB2013} Eder and Roune give an easier proof for
\ff{}'s termination assuming that \ff{} uses $\rleqff$ as rewrite order, see
Remark~\ref{rem:f5-rewrite-order}.

\subsection{Variants of \ff{} to ensure termination algorithmically}
\label{sec:f5-termination-algorithmically}
The following variants are still not deprecated, they generate lower degree bounds for
an earlier termination of \ff{}. All the changes presented here can easily be transfered to
\rba{}. Furthermore, note that all the following ideas for modifying
\ff{} to ensure termination assume homogeneous input. The main difference to
proving \ff{}'s termination directly as explained in
Section~\ref{sec:f5-termination-directly} is that the variants presented next
provide algorithmic, termination ensuring modifications to \ff{}.

\subsubsection{\fft{} -- Using the Macaulay bound.}
In~\cite{gashPhD2008} and~\cite{gashF5t2009} Gash presents the variant
\fft{} which makes use of the Macaulay bound $M$ (see, for
    example,~\cite{Macaulay02,laz83,BFS05}) for regular sequences.
Once the degree of the polynomials treated in the algorithm exceed $2M$ redudant
elements (i.e. elements $\alpha$ such that $\hdp\alpha$ is already in the
lead ideal of the current partly computed \grobner{} basis) are added to a
different set $D$. Whenever \ff{} returns such a redundant element $\alpha$,
$\proj\alpha$ is reduced (\emph{not} \sreduced{}!) completely w.r.t. $\basis
\cup D$. All corresponding signatures and rewrite rules are marked to be
invalid. Any newly computed S-pair with one generator out of $D$ is handled
without signature-based criteria checks and just completely reduced (again,
\emph{not} \sreduced{}!) w.r.t. $G \cup D$. Whereas termination and correctness
are ensured in this approach, performance really becomes a problem. Depending on
the input it often introduces an enormous number of zero reductions for elements
generated out of $D$. Moreover, as for \ffb{}, taking care of two different lists
of elements at the same time, is a bottleneck, too.

\subsubsection{Using Buchberger's chain criterion.}
In 2005, Ars defended his PhD thesis (\cite{arsPhd2005}). There a different
variant of \ff{} is presented which was later on denoted by \ffb{} in~\cite{egpF52011}.
In this variant a degree bound of the algorithm is computed with the
help of Buchberger's chain criterion. Besides the usual pair set $\pairset$ a
second set $\pairset^*$ is stored. Whereas $\pairset$ is still used for the
actual computations with \ff{} $\pairset^*$ has only the purpose to find a degree
bound $d$ for the algorithm. Whenever new S-pairs are computed the ones which are
not detected by Buchberger's chain criterion are added to $\pairset^*$.
After updating $\pairset^*$ $d$ is set to the highest degree of any S-pair in
$\pairset^*$. Once the degrees of all S-pairs in $\pairset$ exceed $d$ then by
Buchberger's chain criterion the polynomial part of the computed signature
\grobner{} basis up to degree $d$ is already a \grobner{} basis for the input
ideal.

\begin{characteristics}
\ffb{} uses linear algebra instead of polynomial \sreduction{}. We refer
to Section~\ref{sec:f4-f5} for further details on such an implementation of the
reduction process.
\end{characteristics}

\subsubsection{\ffp{} -- Keeping track of redundancy.}
In 2011, as a last termination dedicated variant
before Galkin's proof in~\cite{galkinTermination2012}, Eder, Gash and Perry present
\ffp{} in~\cite{egpF52011}. The main contribution is the distinction between
so-called ``GB-critical pairs'' and ``F5-critical pairs''. A GB-critical pair
corresponds to an S-pair $a\alpha-b\beta$ whereas $\hdp\alpha$ and $\hdp\beta$
are not already in the lead ideal of the current state of the computed
\grobner{} basis. An F5-critical pair is an S-pair which does not correspond to
a GB-critical pair, i.e. at least one generator is redundant. Whereas
GB-critical pairs are needed to be checked for the resulting \grobner{} basis,
F5-critical pairs seem to be superfluous, but this is not always the case:
Due to the rewritable signature
criterion it might happen that an GB-critical pair is discarded and instead a
corresponding F5-critical pair is \sreduced{} later on. Only since the
F5-critical pair is taken care of the algorithm's correctness is ensured. This
means that even if at a given degree $d$ there is no GB-critical pair left, one
might need to \sreduce{} corresponding F5-critical pairs in this degree.
The idea is now to store all, by \ff{}'s signature-based criteria discarded
GB-critical pairs in a second list $\pairset^*$, and keep all usual critical
pairs (resp. S-pairs) in $\pairset$. As long as the degree of the currently
handled elements in $\pairset$ is smaller or equal to the maximal degree of
elements in $\pairset^*$ the algorithm needs to carry on due to the above
discussion. Once the degree exceeds the maximal degree of an element in
$\pairset^*$ Buchberger's chain criterion is used: If all elements in $\pairset^*$
can be removed by it then the algorithm can terminate. This is due to the fact
that in $\pairset^*$ all for the resulting \grobner{} basis needed, but due to
rewritings discarded GB-critical pairs are stored. Once it is ensured (by
Buchberger's chain criterion) that those reduce to zero, we know that we already
reached a \grobner{} basis of the input.

\begin{characteristics}
\ffp{} starts checking $\pairset^*$ only once the degree of elements in
$\pairset$ exceeds the maximal degree of all
GB-critical pairs removed by \ff{}'s signature-based criteria, not before. Since
\ffb{} does not take care of the connection between F5-critical pairs and
GB-critical pairs, it has to check $\pairset^*$ in each step.

Moreover, \ffp{} stores and checks in $\pairset^*$ only GB-critical pairs that are
also discarded by \ff{}'s signature-based criteria. Only for such a GB-critical
pair a corresponding F5-critical pair might be necessary for the correctness of
the algorithm.
\end{characteristics}

\begin{variants}
For a generic system \ffb{} might find a lower degree bound than
\ffp{}. Moreover, note that both variants are able to terminate the algorithm
once a constant is found: Due to checking $\pairset^*$ by Buchberger's chain
criterion all other S-pairs are removed at this point.
\end{variants}

\section{Signature-based \grobner{} basis algorithms using $\rleqsb$}
\label{sec:efficient-implementations-sb}
Besides \ff{} all other known signature-based \grobner{} basis algorithms use
$\rleqsb$.\footnote{There are some minor exceptions we take care of in the
followng, too.} We can easily see that those instantiations of \rba{}, like
\gvw{} or \sba{}, mostly coincide and just differ in notation.

\subsection{Arri and Perry's work -- \ap{}}
\label{sec:arri-perry}
Aberto Arri released in 2009 a first preprint of his paper with John Perry,
~\cite{apF5CritRevised2011}. There the first mention of $\rleqsb$ can be found.
The paper reviews \ff{}'s criteria given in~\cite{fF52002Corrected} and
presents a signature-based \grobner{} basis algorithm depending on one
criterion only. There it is also called ``\ff{} criterion'' but it is equivalent
to choosing the canonical rewriter in signature $T$ from $\rewriters T$ w.r.t.
$\rleqsb$.

\begin{vocabulary}
The notions ``$\mathcal{S}$-reduction'' and
''$\mathcal{S}$-\grobner{} basis'' coincide with \sreduction{} and signature
\grobner{} basis, respectively.
\end{vocabulary}

\begin{characteristics}\
\begin{enumerate}
\item \ap{} implements \rba{} with $\rleqsb$ and can use any compatible module
monomial order $<$.
\item \ap{} is (nearly simultaneously with \ggv{}, see Section~\ref{sec:g2v}) the first signature
\grobner{} basis algorithm adding signatures of zero reductions directly to
$\hdsyz$.
\item \ap{}'s $\mathcal{S}$-reduction is (also nearly simultaneously with
\ggv{}'s implementation of \sreduction{},
see Section~\ref{sec:g2v}) the first one without checking the reducers
with the signature-based criteria (see also Lemma~\ref{lem:when-rb-reduces}). 
\end{enumerate}
\end{characteristics}

\subsection{The \trb{} algorithm -- top reductional basis}
\label{sec:huang}
Lei Huang was one of the first researchers comparing different signature-based
\grobner{} basis algorithms. In 2010 he presented his \trb{} algorithm
in~\cite{huang2010}, where the name comes from the wording ``top reductional
basis''.

\begin{vocabulary}
A \emph{top reductional prime element} coincides with the notion
``not regular top \sreducible{}'' given in Section~\ref{sec:sreduction} and a
\emph{top reductional basis} is just a signature \grobner{} basis.
\end{vocabulary}

The \trb{} algorithm does not focus on efficiency, but is a more general
algorithmic presentation of signature-based
compuations and included in \rba{}: In~\cite{huang2010} specialzations of \trb{}
are given that coincide with other known algorithms, like \trbff{},
\trbeff{}\footnote{See Section~\ref{sec:ext-f5}.} and \trbgvw{}.\footnote{See
Section~\ref{sec:gvw}.} Moreover, the most optimized variant \trbmj{} is
presented which coincides with \rba{} using $\rleqsb$ and $\pleqs$. Hereby
``MJ'' stands for ``minimal joint multiplied pair'' which corresponds to choose
at a given signature $T$ the canonical rewriter with minimal possible lead term,
that means using $\rleqsb$.

\subsection{The \gbgc{} algorithm -- generalized criteria}
\label{sec:sun-wang}
In 2011, Sun and Wang presented the \gbgc{} algorithm in~\cite{sw2011b}. This
algorithm is also a general one and included in \rba{}. This is, besides
\rba{}, the only signature-based \grobner{} basis algorithm that considers
different pair set orders $\pleq$. As already mentioned in
Remark~\ref{rem:generalized-spairorder-gvw} \gbgc{} is presented to use
partial orders on \basis{} as rewrite orders which is not efficient for
discarding useless S-pairs.

\begin{vocabulary} \
\begin{enumerate}
\item The ``generalized criterion'' the algorithm's name comes from can be directly
translated to choosing the canonical rewriter in signature $T$ in $\rewriters
T$ w.r.t. a given rewrite order $\rleq$.

\item Note that whereas we decide to call the element \emph{maximal} w.r.t. a rewrite
order the canonical rewriter in a given signature, in~\cite{sw2011b} the
\emph{minimum} is chosen. More particular, $\alpha \rleq \beta$ chosen there
coindices with $\frac1\alpha \rleqsb \frac1\beta$. So \gbgc{} still implements
\rba{} with $\rleqsb$, there is just a slight difference in notation.
\end{enumerate}
\end{vocabulary}

\begin{characteristics}
\gbgc{} implements the test for regular \sreduction{} considering the
coefficients of the signatures. Thus, a reduction of a term $t$ of $\proj\alpha$ with
some $b\beta$ such that $\sig\alpha = \sig{b\beta}$ is called
\emph{super-regular} if the coefficients of $\sig{\alpha}$ and $\sig{b\beta}$
differ. This definition comes initially from~\cite{ggvGGV2010}.
By the definitions in Section~\ref{sec:sreduction} we call this a singular top
\sreduction{}.
\end{characteristics}

The following lemma shows that there is no need to consider
coefficients of signatures at all, i.e. there cannot exist a super-regular
top reduction without a regular top \sreduction{}.

\begin{lemma}
\label{lem:super-regular}
In \rba{} there cannot exist a super-regular reduction of a term $t$
without a regular \sreduction{} of $t$.
\end{lemma}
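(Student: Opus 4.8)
The plan is to show that whenever a term $t$ of $\proj\alpha$ admits a super-regular \sreducer{}, it also admits a regular one, by turning the coefficient mismatch of the two signatures into an actual signature drop. First I would unwind the hypothesis: a super-regular \sreducer{} of $t$ is a multiple $b\beta$ of a basis element $\beta\in\basis$ with $\hdp{b\beta}=t$ and $\sig{b\beta}\simeq\sig\alpha$ but $\sig{b\beta}\neq\sig\alpha$. Writing $\sig\alpha=c_1S$ and $\sig{b\beta}=c_2S$ with $S$ a monic module monomial and $c_1\neq c_2$ in $\field\setminus\set0$, and $t=\kappa x^v$, I would form the module element
\[\eta \;:=\; \tfrac{1}{c_1}\,\alpha \;-\; \tfrac{1}{c_2}\,b\beta .\]
The normalisation is chosen so that $\sig{\frac1{c_1}\alpha}=\sig{\frac1{c_2}b\beta}=S$ with coefficient exactly $1$ in both; hence the $S$-components cancel in $\eta$ and $\sig\eta\prec\sig\alpha$.

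For a super-regular \emph{top} reduction one has $t=\hdp\alpha$, so both $\proj{\frac1{c_1}\alpha}$ and $\proj{\frac1{c_2}b\beta}$ have their leading term at the monomial $x^v$, with coefficients $\frac{\kappa}{c_1}$ and $\frac{\kappa}{c_2}$ which differ; consequently they do not cancel and $\hdp\eta$ again lies at the monomial $x^v$, in particular $\proj\eta\neq0$. Since \rba{} processes S-pairs by increasing signature, while $\alpha$ is being \sreduced{} the current \basis{} is a rewrite basis up to $\sig\alpha$, hence a signature \grobner{} basis up to $\sig\alpha$ by Lemma~\ref{lem:rewbasis-sigbasis}. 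As $\sig\eta\prec\sig\alpha$ and $\proj\eta\neq0$, $\eta$ \sreduces{} to zero w.r.t. \basis{}; the leading monomial $x^v$ of $\eta$ can only be removed in the course of this reduction by a top \sreduction{} step performed at that monomial, so there exist $\beta'\in\basis$ and a monomial $b'$ with $\hdp{b'\beta'}$ at the monomial $x^v$ and $\sig{b'\beta'}\preceq\sig\eta\prec\sig\alpha$. Rescaling $b'$ by $\kappa/\lc{\proj{b'\beta'}}$ turns this into a reducer with $\hdp{b'\beta'}=\kappa x^v=t$ whose signature still has monomial $\prec S$, i.e.\ $\sig{b'\beta'}\prec\sig\alpha$. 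This $b'\beta'$ is exactly a regular \sreducer{} of $t$, which is what we wanted.

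For a general term $t$ (a \emph{tail} super-regular reduction) the leading term of $\eta$ need no longer sit at $x^v$, and this is the step I expect to be the main obstacle. Here I would argue by a minimal-witness descent: among all $\zeta\in\module$ with $\sig\zeta\prec\sig\alpha$ and $x^v$ in the support of $\proj\zeta$ -- a set that contains $\eta$ -- pick one with $\hdp\zeta$ minimal, using that the polynomial monomials are well-ordered. If $\hdp\zeta$ is at $x^v$ we conclude exactly as in the top case; otherwise, top-\sreducing{} $\hdp\zeta$ by some $b_1\beta_1\in\basis$ and replacing $\zeta$ by $\zeta-b_1\beta_1$ either keeps $x^v$ in the support, contradicting minimality, or forces $x^v$ into the support of $\proj{b_1\beta_1}$ as a non-leading term, and one then has to use that every element of \basis{} produced by \rba{} is fully regular \sreduced{} -- so none of the tail terms of $\beta_1$, in particular the one on $x^v$, is regular \sreducible{} -- together with the observation that under $\pleqs$ a later basis element can never re-open an earlier one to regular \sreduction{}, in order to close the loop. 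The bookkeeping of this descent is the only technical point; everything else reduces to the coefficient-cancellation construction of $\eta$ above.
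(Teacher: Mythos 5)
The paper's own proof of this lemma is nothing more than a pointer to Fact~24 of \cite{epSig2011}, so there is no inline argument to compare against; you have supplied a self-contained one. Your treatment of the top case is correct: the normalised difference $\eta=\frac{1}{c_1}\alpha-\frac{1}{c_2}b\beta$ has signature monomial strictly below $S$, its lead monomial is exactly $x^v$ because the lead coefficients $\kappa/c_1$ and $\kappa/c_2$ of the two summands differ, and since $\basis$ is a signature Gr\"obner basis below $\sig\alpha$ (Lemma~\ref{lem:rewbasis-sigbasis}), $\eta$ $\sigma$-reduces to zero; the first top $\sigma$-reduction step in that chain is performed at the lead monomial $x^v$ and therefore hands you, after rescaling, a reducer of $t$ whose signature monomial is strictly below $S$ --- a regular $\sigma$-reducer. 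That is the content the paper actually uses, and the surrounding text frames the lemma purely in terms of super-regular \emph{top} reductions.

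Your tail-case descent, however, has a genuine gap. In the cancellation branch, $\zeta-b_1\beta_1$ no longer contains $x^v$, so it is not a witness and no contradiction with minimality of $\hdp\zeta$ arises; the burden then falls entirely on the assertion that the monomial $x^v$, occurring as a tail monomial of $\proj{b_1\beta_1}$, cannot be regular $\sigma$-reducible. But $\beta_1$ being fully regular $\sigma$-reduced only forbids a reducer of the corresponding tail term of $\proj{\beta_1}$ with signature monomial below $\sig{\beta_1}$, whereas what you need to exclude is a reducer of $x^v$ with signature monomial below $S$; since $b_1\sig{\beta_1}<S$ the latter is a strictly weaker constraint, so the exclusion does not follow from the regular-reducedness of $\beta_1$, and the appeal to $\pleqs$ does not repair this. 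If one insists on proving the statement for an arbitrary term $t$, the cleaner route is via Lemma~\ref{lem:p3}: if $\frac{1}{c_2}b\beta$ is regular top $\sigma$-reducible then its regular top $\sigma$-reducer is itself a regular $\sigma$-reducer of $t$ with signature monomial below $S$; if not, then (assuming $\alpha$ has already been regular top $\sigma$-reduced by the time a tail reduction of $t$ is attempted) Lemma~\ref{lem:p3} applied to $\frac{1}{c_1}\alpha$ and $\frac{1}{c_2}b\beta$ forces $\hdp\alpha\simeq t$, contradicting that $t$ is a tail term. This dispenses with the well-foundedness bookkeeping altogether.
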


\begin{proof}
See Fact~24 in~\cite{epSig2011}.\qed
\end{proof}

Thus \gbgc{} can be completely described by \rba{}. 

\begin{variants}\
\label{var:sw}
\begin{enumerate}
\item In~\cite{sw2011a} Sun and Wang use a signature \grobner{} basis resulting
from a computation of \rba{} to decide the ideal membership problem for $I$. This is
straightforward since the polynomial part of \basis{} is already a polynomial
\grobner{} basis. The other fact is that signatures can be used for the
representation problem of an element in $I$. Also this is straightforward, since
if you compute with the full module element $\alpha \in \module$ the signature
\grobner{} basis \basis{} stores already the full information. If one is using
\rba{} with sig-poly pairs~\cite{sw2011a} proposes just an algorithm to recover
the full module representation of elements in the \grobner{} basis.
\item\label{var:sw:solvable-algebra}
In 2012, Sun, Wang, Ma and Zhang have presented the \sgb{} algorithm
in~\cite{swmz2012}. \sgb{} is a signature-based \grobner{} basis algorithm for
computations in algebras of solvable type (for example, see~\cite{kw-solvable-type-1990})
like the Weyl algebra or quantum groups.
As a rewrite order $\rleqsb$ is used, which they denote as
``GVW-order''\footnote{More details on the changes in \gvw{}'s rewrite order
over the years can be found in Section~\ref{sec:gvw}.}. Besides adjusting the
polynomial arithmetic for the corresponding algebras no changes with respect
of the signature-based tools have to be made.
\end{enumerate}
\end{variants}

\subsection{The \ggv{} algorithm}
\label{sec:g2v}
The \ggv{} algorithm refers to Gao, Guan and Volny and was first presented in
2010. Its description is published in~\cite{ggvGGV2010}. A high-level
implementation in \singular{} is available under
\begin{center}
\url{http://www.math.clemson.edu/~sgao/code/g2v.sing}.
\end{center}
As mentioned already in
\characteristic~\ref{char:f5}~(\ref{char:f5:handling-zero-reductions}) \ggv{} was, after 
the description in~\cite{apF5CritRevised2011}, the first algorithm who used
non-Koszul syzygies directly in the syzygy criterion. The algorithm is described
in the vein of \ff{}'s description in~\cite{fF52002Corrected} and thus based on
using $\potl$ as module monomial order, which leads to an incremental
\grobner{} basis algorithm.

In~\cite{ggvGGV2010} the authors describe for the first time how \ggv{} and thus
signature-based \grobner{} basis algorithms in general can be used to compute a
\grobner{} basis for the syzygy module, by considering not only the signatures,
  but the full module representations. Still, this leads to the overhead of
  carrying out all computations in $\module$, too.
\begin{characteristics}
\label{char:g2v}
The two most important new features in \ggv{} compared to \ff{} as presented
in~\cite{fF52002Corrected} are
\begin{enumerate}
\item to take coefficients into account for signatures, and
\item to implement no real rewrite rule as done for \ff{}.
\end{enumerate}
\end{characteristics}

Whereas the first point enables so-called ``super-regular reductions'' that
might be not possible in \ff{} it turns out that this not the case: As already
mentioned in Section~\ref{sec:sun-wang} and proven in Fact~24
in~\cite{epSig2011} resp. Lemma~\ref{lem:super-regular},
whenever there exists a super-regular \sreduction{} then there exists also a
regular \sreduction{}. It follows that when it comes to signatures, coefficients
need not be taken into account at all.

In order to discuss the second difference, let us first introduce some
vocabulary.

\begin{vocabulary}
In~\cite{ggvGGV2010} notation is a bit different:
\begin{enumerate}
\item Instead of considering sig-poly pairs $(\sig\alpha,\proj\alpha)$, pairs $(u,v) \in \ring^2$ are
considered. This is possible since \ggv{} is presented only for $\potl$, thus an
incremental computation of $\basis$ is achieved. So any signature $\sig\alpha \in \module$ is always
of the type $\sig\alpha = s \mbasis k$ where $s \in \mon$ and $k$ is the
currently highest index of an element considered. So one can remove $\mbasis k$
without any problem, since all signatures share this module generator for the
current incremental step. So one gets a representation $(s,\proj\alpha)
\in \ring^2$ corresponding to $(u,v)$.
\item Next pairs $(u_1,v_1)$ and $(u_2,v_2)$ are
considered. Let $\lambda = \lcm\left(\hd{v_1},\hd{v_2}\right)$ and define $t_i
:= \frac{\lambda}{\hd{v_i}}$. Then
$\left(t_1\left(u_1,v_1\right),t_2\left(u_2,v_2\right)\right)$ is called the
\emph{J-pair} of $(u_1,v_1)$ and $(u_2,v_2)$. This corresponds to the notion of
our S-pairs. ``J'' denotes ``joint'', thus also parts of the J-pair have special
notation: In the above setting $t_i v_i$ are called \emph{J-polynomials} and
$t_i \hd{u_i}$ are \emph{J-signatures}.
\end{enumerate}
\end{vocabulary}

The other difference to \ff{} mentioned in \characteristic~\ref{char:g2v} is not so obvious at the first
look. Whenever a new $c\gamma$ may be added to $\pairset$ the authors state in the
pseudo code of the algorithm to ``store only one J-pair for each distinct
J-signature''. This
clearly is a rewritable signature criterion, but no explicit statement on which
element shall be kept and which shall be removed. Looking into the
\singular{} code of \ggv{} provided by the authors (see link above) one can see that in the
procedure \textsc{insertPairs} the newly generated element by $c\gamma$ is taken
whereas $a\alpha$, previously added to $\pairset$, is removed
if $\sig{a\alpha} = \sig{c\gamma}$.
Thus \ggv{}
implements $\rleqff$ as rewrite rule and not $\rleqsb$. The reason we keep
\ggv{} in this section is that it is the historical predecessor of
\gvw{} which uses (in its current version) $\rleqsb$ (see
below).

One difference left is the fact that in the provided code for \ggv{} only one
generator of an S-pair resp. J-pair is stored in $\pairset$. Thus an
S-pair reduction $a\alpha - b\beta$ might not take place, but instead there might
exist a better reducer $c\gamma$ instead of $b\beta$. This is an implicit statement
of the rewritable criterion on the second generator of the S-pair.

\begin{lemma}
After adding $a\alpha$ from the S-pair $a\alpha -b\beta$ to
$\pairset$ in \ggv{}, if there exists another regular top \sreducer{}
$c\gamma$ of $a\alpha$ which is not rewritable then $b\beta$ is rewritable.
\label{lem:ggv-rewritable}
\end{lemma}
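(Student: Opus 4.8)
The plan is to prove Lemma~\ref{lem:ggv-rewritable} by comparing the polynomial lead terms of the competing rewriters in signature $T \defeq \sig{a\alpha}$, exactly as in the proof of Lemma~\ref{lem:rewritable-2nd-generator}, but now reading the implication in the reverse direction. First I would fix the setup: \ggv{} uses $\rleqff$ as its rewrite order (see \characteristic~\ref{char:g2v} and the discussion of the \textsc{insertPairs} code), and it stores only one generator per J-signature in $\pairset$, namely the more recently computed one. So when the S-pair $a\alpha - b\beta$ with $\sig{a\alpha} > \sig{b\beta}$ is processed and $a\alpha$ is put into $\pairset$, the element $\alpha$ is (at that moment) the canonical rewriter in signature $T$ w.r.t. $\rleqff$.

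Now suppose that there is another regular top \sreducer{} $c\gamma$ of $a\alpha$ which is \emph{not} rewritable, i.e. $\gamma$ is the canonical rewriter in signature $\sig{c\gamma} = T$. Since at most one element can be the canonical rewriter in signature $T$ and we have both $\alpha$ (from the S-pair insertion) and $\gamma$ (by assumption) claiming this role at different times, $\gamma$ must have been added to \basis{} \emph{after} $\alpha$; otherwise $\alpha \rleqff \gamma$ would fail and $a\alpha$ could not have been the one retained. Hence $\alpha \rleqff \gamma$, and by the definition of $\rleqff$ together with the fact that regular top \sreduction{} can only decrease (never increase) the polynomial lead term, we get $\hdp{c\gamma} \leq \hdp{a\alpha}$. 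On the other hand, $c\gamma$ being a regular top \sreducer{} of $a\alpha$ means $\hdp{c\gamma} = \hdp{a\alpha}$, and the two have signatures $T$, so they are equal up to sig-lead pairs. The point is then that the S-pair $a\alpha - b\beta$ was generated with $b\beta$ as the other generator; because $c\gamma$ with $\sig{c\gamma} = T$ and $\hdp{c\gamma} = \hdp{a\alpha}$ sits in \basis{}, the pair $a\alpha - c\gamma$ would be singular (its two generators agree on both signature and lead term up to scalars), hence $a\alpha - c\gamma$ is not even a regular S-pair. I would argue that this forces $\hdp{b\beta}$ to differ from $\hdp{a\alpha}$ only by being the cofactor that made $a\alpha - b\beta$ a genuine regular S-pair, and then the existence of $c\gamma$ reducing $a\alpha$ means $\beta$ is strictly dominated: since \basis{} is a signature \grobner{} basis up to $\sig{b\beta}$ (Lemma~\ref{lem:gsb-with-pleqs}), $\proj{b\beta}$ admits a standard representation whose top term is realized by $c\gamma$, so $\gamma$ (or its relevant multiple) is the canonical rewriter in signature $\sig{b\beta}$ as well, witnessing that $b\beta$ is rewritable.

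Concretely I would run the same two-case dichotomy as in Lemma~\ref{lem:rewritable-2nd-generator}: either $\hdp{c'\gamma} = \hdp{b\beta}$ for the multiple $c'\gamma$ of signature $\sig{b\beta}$, in which case $\gamma \neq \beta$ is the canonical rewriter in $\sig{b\beta}$ and we are done; or $\hdp{c'\gamma} < \hdp{b\beta}$, and then using that \basis{} is a signature \grobner{} basis up to $\sig{b\beta}$ we expand $\proj{b\beta}$ as $\sum_i \proj{d_i\delta_i} + \proj{c'\gamma}$ with $\sig{d_i\delta_i} < \sig{b\beta}$, so that one of the $\delta_i$ (or $\gamma$ itself) is a rewriter in signature $\sig{b\beta}$ of strictly smaller polynomial lead term than $\beta$, whence $\beta \rleqff$-dominated and $b\beta$ rewritable. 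The main obstacle I anticipate is being careful about the \emph{timing} of insertions into \basis{} in \ggv{}: the $\rleqff$ order is "order of addition to \basis{}", so I must justify that the \sreducer{} $c\gamma$ that exists when $a\alpha$ is being handled was indeed added \emph{after} $\alpha$, ruling out the degenerate possibility that $\alpha$ itself was the canonical rewriter all along (which would contradict $a\alpha$ being in $\pairset$ in the first place only if one is precise about what "canonical rewriter" means at the moment of the rewritability check). Everything else is a routine transcription of the argument in Lemma~\ref{lem:rewritable-2nd-generator}, specialized to $\rleqff$ and to \ggv{}'s one-generator-per-J-signature bookkeeping.
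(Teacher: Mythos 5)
There is a genuine error in your setup that the rest of the argument inherits. You write that the non-rewritable reducer satisfies ``$\gamma$ is the canonical rewriter in signature $\sig{c\gamma} = T$'' and later ``the two have signatures $T$,'' where $T = \sig{a\alpha}$. But $c\gamma$ is a \emph{regular} top \sreducer{} of $a\alpha$, which by definition means $\sig{c\gamma} < \sig{a\alpha}$ strictly --- a reducer of equal signature would be singular, not regular. So $c\gamma$ is not an element of $\rewriters{T}$ at all, and the competition you set up between $\alpha$ and $\gamma$ ``for the role of canonical rewriter in signature $T$'' never takes place. Everything downstream of that claim (``they are equal up to sig-lead pairs,'' ``the pair $a\alpha - c\gamma$ would be singular,'' ``$\hdp{b\beta}$ must differ from $\hdp{a\alpha}$'') is reasoning from a false premise; in fact $\hdp{b\beta} = \hdp{a\alpha} = \hdp{c\gamma}$ since both $b\beta$ and $c\gamma$ top-reduce the lead term of $a\alpha$, so none of them differ.

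The equality $\hdp{b\beta} = \hdp{c\gamma}$ together with $\sig{b\beta}, \sig{c\gamma} < \sig{a\alpha}$ is exactly what the paper's proof exploits, and it is the piece your proposal never uses. The paper compares $\sig{b\beta}$ and $\sig{c\gamma}$ directly, not against $T$: if they are equal, one of $\gamma,\beta$ is the canonical rewriter in that shared signature and we are done; if they differ, say $\sig{c\gamma} > \sig{b\beta}$, then (because they share a lead term) $c\gamma - b\beta$ is a multiple of the regular S-pair $\spair\gamma\beta$, which has signature $\sig{c\gamma} < T$ and so was already processed, producing a $\delta \in \basis$ with $\sig\delta = \sig{c\gamma}$; by $\rleqff$ (``later is bigger''), $\delta$ dominates $\gamma$, so $c\gamma$ is rewritable, contradicting your hypothesis. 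The symmetric case $\sig{b\beta} > \sig{c\gamma}$ gives a $\delta$ with $\sig\delta = \sig{b\beta}$ dominating $\beta$, which is the desired conclusion. Your closing appeal to the dichotomy of Lemma~\ref{lem:rewritable-2nd-generator} also needs $\sig\gamma \mid \sig{b\beta}$ to speak of ``the multiple $c'\gamma$ of signature $\sig{b\beta}$,'' and that divisibility is not established by anything you said; the paper's route avoids needing it by passing through the S-pair $\spair\gamma\beta$ and the element $\delta$ it produces.
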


\begin{proof}
If there exists another regular \sreducer{} $\gamma \in \basis$ which is,
at the moment $a\alpha$ is started to be regular \sreduced{}, not rewritable,
then instead of $a\alpha - b\beta$ the regular \sreduction{} $a\alpha - c\gamma$
takes place for some monomial $c$.
Since $\hdp{b\beta} = \hdp{c\gamma}$ and $\sig{b\beta}$, $\sig{c\gamma} <
\sig{a\alpha}$ three situations may happen:
\begin{enumerate}
\item If $\sig{c\gamma} = \sig{b\beta}$ then we can assume w.l.o.g. that $\gamma$ is the
canonical rewriter in signature $\sig{b\beta}$. Thus $b\beta$ is rewritable.
\item If $\sig{c\gamma} > \sig{b\beta}$ then the S-pair $c\gamma - b\beta$ has
been already \sreduced{} to an element $\delta \in \basis$. Since $\sig\delta =
\sig{c\gamma}$ and \ggv{} uses $\rleqff$ $\delta$ is the canonical rewriter in
signature $\sig{c\gamma}$ and thus $c\gamma$ is rewritable, a contradiction to
our assumption.
\item If $\sig{b\beta} > \sig{c\gamma}$ then the S-pair $b\beta-c\gamma$ has
been already reduced to an element $\delta \in \basis$, this time $\sig\delta =
\sig{b\beta}$. By the same argument as above $b\beta$ is rewritable.\qed
\end{enumerate}
\end{proof}

All in all, \ggv{} (as presented in~\cite{ggvGGV2010})implements \rba{}
with $\potl$ and $\rleqff$.

\begin{characteristics}
Lemma~\ref{lem:ggv-rewritable} might suggest that \ggv{} as presented
in~\cite{ggvGGV2010} makes use of the rewritability. Looking at the
\singular{} code provided it turns out that this is not the fact: In procedure
\textsc{findReductor} a reducer of the same index is searched for in \basis{}.
This search starts from the initially added element of current index to \basis{}.
Thus the first possible regular top \sreducer{} found might not be a ``better''
choice, where ``better'' is meant in terms of the rewrite order $\rleqff$.
\end{characteristics}

\subsection{The \gvw{} algorithm}
\label{sec:gvw}
Later in 2010, Gao, Volny and Wang published~\cite{gvwGVW2010} in which they
describe the algorithm \gvw{}.\footnote{Please note that there are different
versions of the \gvw{} paper which refer to~\cite{gvwGVW2010}
~\cite{gvwGVW2011} and~\cite{gvwGVW2013} respectively.} In this first presentation \gvw{}
generalizes \ggv{} in the sense that compatible module monomial orders can be
used freely instead of restricting to only $\potl$. Still, $\rleqff$ is used as
rewrite order in this version of \gvw{}. 

The work of Huang (see Section~\ref{sec:huang}) and Sun and Wang (see
Section~\ref{sec:sun-wang}) resulted in an algorithm denoted \gvwhs{} in Volny's
PhD thesis (\cite{volnyGVW2011}) in 2011. \gvwhs{} uses $\rleqsb$ as rewrite
order, besides this fact it coincides with \gvw{}.

In 2011 and later, the initial \gvw{} paper~\cite{gvwGVW2010} has been updated
to~\cite{gvwGVW2011}. There \gvw{} already uses $\rleqsb$ as rewrite order.
\begin{vocabulary} \
\label{voc:gvw-notations}
\begin{enumerate}
\item In the current state\footnote{March 2014} of the \gvw{} paper defining
the canonical rewriter w.r.t. $\rleqsb$ is called \emph{eventually super
top-reducible}
resp. \emph{covered by \basis{}}.
Moreover, note that a \emph{strong \grobner{} basis} in the setting of the \gvw{}
paper\footnote{Note that usually the term \emph{strong \grobner{} basis} denotes
special \grobner{} bases in polynomial rings over Euclidean domains like $\Z$.}
coincides with the union $\basis \cup \hdsyz$ here.

\item With the above definition of a strong \grobner{} bases, speaking of detecting
all useless S-pairs resp. J-pairs
the ``uselessness'' needs to be understood in terms of $\basis \cup \hdsyz$: Clearly,
a zero reduction of an S-pair is not useless in these terms since it leads to a
new syzygy that is not a multiple of an element of \hdsyz{} already. Thus one
needs to be careful and not mix this up with the uselessness of an S-pair w.r.t.
a usual polynomial \grobner{} basis resp. a signature \grobner{} basis \basis{}.
\end{enumerate}
\end{vocabulary}

Let us sum up the historic development of \gvw{}: \ggv{} implements
\rba{} with $\potl$ and $\rleqff$. \gvw{} is introduced as \ggv{} with the
option to use different compatible module monomial orders, but still
implementing $\rleqff$. Due to the work of Huang (\cite{huang2010}) and
Sun and Wang (\cite{sw2011b}), \gvw{} nowadays is understood as the algorthim
Volny denotes in his PhD thesis as \gvwhs{}: \rba{} with no restriction on the
compatible module monomial order and $\rleqsb$ as rewrite order.

Note that in~\cite{gvwGVW2013} the 2013 revision of \gvw{} a new step in
considering more principal syzygies is added. We discuss this in
Section~\ref{sec:buch}.

\subsection{The \sba{} algorithm}
\label{sec:sb}
Roune and Stillman presented the \sba{} algorithm at ISSAC'12
(\cite{rs-2012}). As Eder and Roune have already pointed out
in~\cite{erF5SB2013} \sba{} is \rba{} implemented with $\rleqsb$ as rewrite
order. The \rba{} algorithm presented here is only a slight generalization
of the one given in~\cite{erF5SB2013}, allowing different pair set orders and
reduce the syzygy criterion to a special case of the rewritable signature criterion.

\begin{remark}
Note that Roune and Stillman lay an emphasis on implementational aspects and data
structures. For this purpose an extended version of their ISSAC'12 paper is
available (\cite{rs-ext-2012}) in which different data representations
are compared and discussed extensively.
\end{remark}

\subsection{The \ssg{} algorithm}
\label{sec:ssg}
In 2012 Galkin described in~\cite{galkinSimple2012} the \ssg{} algorithm, where
``ssg'' stands for ``simple signature-based''. Comparing \ssg{} to
\rba{} both coincide once we choose $\rleqsb$ as rewrite order.
In~\cite{galkinSimple2012} Galkin defines a partial order $<_H$ on
sig-poly pairs ($H$ denotes the set of all sig-poly pairs) in the following way:
\begin{center}
$
\begin{array}{rcl}
\spp\alpha \;<_H \;\spp\beta &\Longleftrightarrow &\sig\beta \hdp\alpha \;<\;
\sig\alpha \hdp\beta.
\end{array}
$
\end{center}
 Moreover, syzygies are treated to be smaller
w.r.t. $<_H$ then any non-syzygy. From this it follows that $\spp\alpha <_H \spp\beta$
coincides with $\frac1\alpha \rleqsb \frac1\beta$. In part 4~(b) of the pseudo code of
the \ssg{} algorithm the rewritable signature criterion is then implemented
in the following way (adjusted to our notation):
\[
\pairset \gets \pairset \setminus \set{\alpha \in \pairset \mid \exists \beta
  \in \basis \text{ such that } \frac1\beta \rleqsb \frac1\alpha \text{ and
  }\sig\beta \mid \sig\alpha}
\]
With the above described connection between $<_H$ and $\rleqsb$ one directly
sees that this is just \rba{}'s rewrite procedure using $\rleqsb$.

\section{Using Buchberger's criteria in signature-based \grobner{} basis algorithms}
\label{sec:buch}
A natural question coming to one's mind is how \rba{}'s rewrite criterion is related
to Buchberger's Product and Chain
criterion,~\cite{bGroebner1965,kollBuchberger1978,bGroebnerCriterion1979}. 
Both predict useless computations
in advance, but how do both attempts relate to each other? Does one include the other,
or are there cases where one side is not able to cover the other side completely?
It turns out that one can easily combine both classes of criteria, even more one
can show that the rewrite criterion includes Buchberger's criteria ``most of the
time''. It is more or less a question about how much overhead one wants to add
to \rba{} in order to track principal syzygies on the go. For a detailed
discussion on the algebraic nature of this relation we refer
to~\cite{eder-predicting-zero-reductions-2014}.

In 2008~\cite{gashPhD2008} Gash presented a first discussion on using Buchberger's Product
and Chain criterion in signature-based algorithms. Moreover, Gerdt and Hashemi
presented an improved variant of \ggv{} in~\cite{gerdtHashemiG2V} making use of
these criteria. In 2013, Gao, Volny and Wang presented a revised version of
\gvw{} in~\cite{gvwGVW2013} that adds another step to store more principal
syzygies. We shortly cover these variants in the following.

\subsection{Buchberger's criteria}
Let us give a short review of Buchberger's Product and Chain criterion:

\begin{lemma}[Product criterion~\cite{bGroebner1965,bGroebnerCriterion1979}]
Let $f,g \in \ring$ with $\lcm\left(\hd f,\hd g\right) = \hd f \hd g$.
Then $\spoly f g$ reduces to zero w.r.t.
$\left\{f,g\right\}$.
\label{lem:prod-crit}
\end{lemma}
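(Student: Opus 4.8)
\textbf{Proof plan for the Product criterion (Lemma~\ref{lem:prod-crit}).}
The statement is the classical Buchberger Product criterion, so I would give the standard Buchberger-style argument. Write $\spoly f g = \frac{\lambda}{\hd f}f - \frac{\lambda}{\hd g}g$ with $\lambda = \lcm(\hd f, \hd g) = \hd f\,\hd g$ (up to the monic convention), so that $\frac{\lambda}{\hd f} = \hd g$ (monic) and $\frac{\lambda}{\hd g} = \hd f$ (monic); hence up to a scalar $\spoly f g = \hd g \cdot f - \hd f \cdot g$. Now substitute $f = \hd f + f'$ and $g = \hd g + g'$ where $f' = f - \hd f$ and $g' = g - \hd g$ collect the lower (strictly $\leq$-smaller) terms. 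Then $\spoly f g$ equals, up to scalar, $\hd g\, f' - \hd f\, g' = g'\cdot f - f'\cdot g$ after regrouping (the leading terms cancel and one rearranges the cross terms). The point of this rewriting is to exhibit $\spoly f g$ as a combination $g' f - f' g$ in which every monomial appearing is a multiple of $\hd f$ coming from $g' f$, or a multiple of $\hd g$ coming from $f' g$.

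The key steps, in order, are: (1) set up the monic conventions and reduce $\spoly f g$ to the form $g\, f - f\, g$ pattern modulo leading-term cancellation; (2) verify the algebraic identity $\spoly f g \simeq g' f - f' g$ where $g' = g - \hd g$, $f' = f - \hd f$; (3) observe that $g' f$ is a sum of terms each reducible by $f$, and $f' g$ a sum of terms each reducible by $g$, so that the whole expression has a standard representation with respect to $\{f,g\}$ in the sense of Definition~\ref{def:standard-representation} --- indeed every $\hd{m_i g_i}$ occurring is $\leq \hd{g' f} = \hd{g'}\hd f < \hd g\,\hd f = \hd{\spair{}{}}$... here I must be a little careful about the degree bookkeeping, but the upshot is $\max \hd{m_i g_i} \leq \hd{\spoly f g}$; (4) conclude that $\spoly f g$ reduces to zero, using the coprimality $\lcm(\hd f,\hd g) = \hd f\,\hd g$ precisely to guarantee that the reductions by $f$ and by $g$ do not interfere --- any term of $g' f$ that is also a multiple of $\hd g$ is handled consistently because such a term is a multiple of $\hd f\,\hd g$, and symmetrically.

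The main obstacle is step (4): showing that the natural reduction process actually terminates at zero rather than merely possessing a standard representation (recall the excerpt explicitly warns, citing Exercise~5.63 in~\cite{bwkGroebnerBases1993}, that a standard representation does not by itself imply reducibility to zero). The clean way around this is the classical lemma that if $h = m_1 g_1 + m_2 g_2$ with $\hd{m_1 g_1} = \hd{m_2 g_2}$ strictly greater than $\hd h$, and $\hd{g_1}, \hd{g_2}$ are coprime, then $h$ reduces to zero w.r.t. $\{g_1,g_2\}$; this is proved by an explicit induction that rewrites $h$ as $\frac{m_1}{\hd{g_2}}\spoly{g_1}{g_2}\cdot(\text{something}) + (\text{smaller})$, i.e. a telescoping argument on the overlap. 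Rather than reproduce this in full, I would simply cite the standard references (e.g. \cite{bGroebner1965,bGroebnerCriterion1979,bwkGroebnerBases1993}), since the paper's stated policy is to omit long or routine proofs and point to the literature; the survey character makes a one-line citation the appropriate treatment here. So concretely the proof reads: reduce to the identity $\spoly f g \simeq (g-\hd g)f - (f - \hd f)g$, note coprimality, and invoke the classical overlap lemma to conclude reduction to zero.
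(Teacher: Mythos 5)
The paper does not actually give a proof of Lemma~\ref{lem:prod-crit}; it states the Product criterion as a classical result and simply cites Buchberger's original works, in keeping with the survey's stated policy of omitting routine proofs. Your sketch is therefore going beyond what the paper does, and it is essentially the standard textbook argument, so the two are compatible in approach.

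One small algebraic slip in step (2): with $f = \hd f + f'$ and $g = \hd g + g'$ (monic leading terms), one computes $\hd g\, f - \hd f\, g = \hd g\, f' - \hd f\, g'$, while $g' f - f' g = g'\hd f - f'\hd g = -\bigl(\hd g\, f' - \hd f\, g'\bigr)$; so your chain ``$\hd g\, f' - \hd f\, g' = g' f - f' g$'' is off by a sign. Since the whole discussion is carried out up to $\simeq$, this does not affect the conclusion, but it is worth fixing if you write it out. Your step (4) correctly identifies the real content: coprimality of $\hd f$ and $\hd g$ forces $\hd{g'}\hd f \neq \hd{f'}\hd g$ (otherwise $\hd g \mid \hd{g'}$, contradicting $\hd{g'} < \hd g$), so the two summands in $g' f - f' g$ have distinct leading terms and one can peel them off by reducing alternately with $f$ and $g$, inducting on the number of remaining terms; this is exactly what makes the reduction terminate at zero rather than merely furnishing a standard representation. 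Deferring this induction to the cited references is reasonable here, and is in fact what the paper itself does for this lemma.
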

In the above situation we also say that the S-polynomial \emph{$\spoly f g$ fulfills the
Product criterion}.

\begin{lemma}[Chain criterion~\cite{kollBuchberger1978,bGroebnerCriterion1979}]
Let $f,g,h \in \ring$, and let $G \subset \ring$ be a finite subset. If it holds
that $\hd h \mid \lcm\left(\hd f,
\hd g\right)$, and if $\spoly f h$ and $\spoly h g$ have a standard representation
w.r.t. $G$ resp., then $\spoly f g$ has a standard representation w.r.t. $G$.
\label{lem:chain-crit}
\end{lemma}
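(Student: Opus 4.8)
The plan is to express $\spoly f g$ through the two S-polynomials for which standard representations are assumed, and then to clean up the resulting representation. Set $\lambda=\lcm\left(\hd f,\hd g\right)$, $\lambda_1=\lcm\left(\hd f,\hd h\right)$ and $\lambda_2=\lcm\left(\hd h,\hd g\right)$; the hypothesis $\hd h\mid\lambda$ forces $\lambda_1\mid\lambda$ and $\lambda_2\mid\lambda$, so $\tfrac{\lambda}{\lambda_1}$ and $\tfrac{\lambda}{\lambda_2}$ are genuine monomials. First I would verify the identity
\[
\spoly f g \;=\; \tfrac{\lambda}{\lambda_1}\,\spoly f h \;+\; \tfrac{\lambda}{\lambda_2}\,\spoly h g,
\]
which holds because, after expanding the right-hand side with Definition~\ref{def:spoly}, the two multiples of $h$ are $-\tfrac{\lambda}{\hd h}h$ and $+\tfrac{\lambda}{\hd h}h$ and cancel, leaving $\tfrac{\lambda}{\hd f}f-\tfrac{\lambda}{\hd g}g=\spoly f g$ (if one of the S-polynomials vanishes the corresponding term is simply $0$ and there is nothing to do for it).

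Next I would substitute the hypotheses. Writing the given standard representations as $\spoly f h=\sum_i a_i g_i$ and $\spoly h g=\sum_j b_j g_j$ with $g_i,g_j\in G$, $\hd{a_i g_i}\leq\hd{\spoly f h}$ and $\hd{b_j g_j}\leq\hd{\spoly h g}$, and using that the lead term of a nonzero S-polynomial is \emph{strictly} below its $\lcm$, every summand of $\tfrac{\lambda}{\lambda_1}\spoly f h$ has head $\leq\tfrac{\lambda}{\lambda_1}\hd{\spoly f h}<\tfrac{\lambda}{\lambda_1}\lambda_1=\lambda$, and likewise for $\tfrac{\lambda}{\lambda_2}\spoly h g$. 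Collecting equal basis elements, the displayed identity thus produces a representation $\spoly f g=\sum_k m_k g_k$ over $G$ with pairwise distinct $g_k$ and $\hd{m_k g_k}<\lambda$ for all $k$; this is already the form in which the chain criterion is classically stated (cf.~\cite{bwkGroebnerBases1993}).

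The main work, and the delicate point, is to improve the bound from $\lambda$ down to $\hd{\spoly f g}$, which is what Definition~\ref{def:standard-representation} literally demands. I would do this by Noetherian induction over the monomial order, carried out inside the standard deployment of the criterion in which S-pairs are treated by increasing $\lcm$, so that every pair of elements of $G$ with $\lcm$ strictly below $\lambda$ is already known to possess a standard representation. If $\tau:=\max_k\hd{m_k g_k}$ exceeds $\hd{\spoly f g}$, the contributions of the $m_k g_k$ to $\tau$ must sum to zero; by the classical cancellation (telescoping) lemma this $\tau$-part is a $\field$-linear combination of monomial multiples $\tfrac{\tau}{\lcm(\hd{g_k},\hd{g_{k'}})}\spoly{g_k}{g_{k'}}$ of S-polynomials of pairs from $G$, and since $\lcm\left(\hd{g_k},\hd{g_{k'}}\right)$ divides $\tau$ it is $\leq\tau<\lambda$. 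Substituting the inductively available standard representations of these $\spoly{g_k}{g_{k'}}$ and multiplying through keeps all resulting heads strictly below $\tau$, so replacing the $\tau$-part yields a representation of $\spoly f g$ over $G$ with strictly smaller maximal head. As $\leq$ is a well-order this process terminates, and the final representation has all heads $\leq\hd{\spoly f g}$, i.e.\ it is a standard representation. I expect the telescoping bookkeeping and the need to place the lemma inside the increasing-$\lcm$ induction (the algebraic identity alone only delivers the $\lambda$-bounded representation of the previous paragraph) to be the subtle ingredients; complete details are in~\cite{kollBuchberger1978,bGroebnerCriterion1979,bwkGroebnerBases1993}.\qed
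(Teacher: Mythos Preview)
The paper does not give its own proof of this lemma; it is stated as a classical result with citations to \cite{kollBuchberger1978,bGroebnerCriterion1979}, and the surrounding text immediately turns to the question of how Buchberger's criteria relate to the signature-based rewrite criterion. So there is no in-paper proof to compare against.

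Your proof plan is the standard classical argument and is essentially correct. The key identity
\[
\spoly f g \;=\; \tfrac{\lambda}{\lambda_1}\,\spoly f h \;+\; \tfrac{\lambda}{\lambda_2}\,\spoly h g
\]
is exactly right, and you correctly observe that substituting the assumed standard representations only yields a representation with all heads strictly below $\lambda$, not yet below $\hd{\spoly f g}$ as Definition~\ref{def:standard-representation} literally requires. Your handling of this gap---placing the lemma inside the usual increasing-$\lcm$ processing order and using the telescoping/cancellation lemma to push the maximal head down by Noetherian induction---is precisely how the classical sources close it. It is worth flagging explicitly (as you partly do) that this step imports an ambient hypothesis not visible in the bare lemma statement: one needs that the relevant lower-$\lcm$ S-polynomials among elements of $G$ already admit standard representations. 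In the literature the criterion is often stated with the weaker ``$t$-representation'' bounded by the $\lcm$ (as in \cite{bwkGroebnerBases1993}), in which case the first paragraph of your argument already suffices; the paper's phrasing with ``standard representation'' is a mild informality that your plan resolves correctly.
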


The question is now how do those criteria relate to the rewrite criterion in
signature-based \grobner{} basis algorithms. Gash gave a first proof that the
Product criterion can be used in a signature-based algorithm without any
problem due to the fact that the reductions w.r.t. $\left\{\alpha,\beta\right\}$ are
regular \sreductions{} when considering $\proj\alpha = f$ and $\proj\beta=g$ in
Lemma~\ref{lem:prod-crit}.
Furthermore Gash proved that a version of Lemma~\ref{lem:chain-crit} where the
signatures corresponding to $f$, $g$, and $h$ are restricted can be used in
\rba{}.

In~2014 Eder presented in~\cite{eder-predicting-zero-reductions-2014} a proof
that the Chain criterion is completely included in the rewrite criterion of
\rba{}, without any further restrictions. Moreover, the problem of being not
able to predict all zero reductions
that are found by the Product criterion is explained there in detail. A
small counterexample for \rba{} using $\schl$ is given. Furthermore, it is still
an open question whether \rba{} using $\potl$ completely covers the Product
criterion. So it seems that the relation between Buchberger's criteria and
signature-based ones are depending on the chosen module monomial order.

Also the question of using Buchberger's criteria in \rba{} is answered, there
are two possible implementations of a combination of the criteria:
The first one explicitly, the second one more subtle.

\subsection{\gggv{} -- a Gebauer-M\"oller-like \ggv{}}
In~\cite{gerdtHashemiG2V} Gerdt and Hashemi present \gggv{}, a variant of
\ggv{}. In their variant they add $3$ new conditions to be checked which
coincides with the three steps in Gebauer-M\"oller's implementation of
Buchberger's algorithm, see~\cite{gmInstallation1988}. Moreover, they show that
adding these conditions can be done without corrupting signatures, thus
\gggv{} is still a correct and terminating signature-based \grobner{} basis
algorithm with the rewrite criterion implemented as usual (see Theorem~4.1
in~\cite{gerdtHashemiG2V}). 

Note that due to the results in~\cite{eder-predicting-zero-reductions-2014} it
is not needed to check the Chain criterion explicitly since it is completely
covered by the rewrite criterion. 

\subsection{\gvw{}'s 2013 revision}
In 2013 Gao, Volny and Wang revised \gvw{} again, with the current status being
presented in~\cite{gvwGVW2013}. In this version of \gvw{} a new step is
inserted, namely an additional computation of principal syzygies even though the
regular \sreduced{} $\gamma$ might not fulfill $\proj\gamma = 0$.
In~\cite{gvwGVW2013} this is Step $4b~(b1)$ of Figure~3.1. In our
notation this would be after Line~\ref{alg:rba:addsingularelement} of
Algorithm~\ref{alg:rba}. Even though $\proj\gamma$ is not zero, all new possible
principal syzygies are generated and added to \hdsyz{}. Afterwards
\hdsyz{} is interreduced. This has two impacts:
\begin{enumerate}
\item On the one hand new syzygies might be added such that more useless
computations can be predicted and removed in advance. Clearly, with this attempt
also all useless computations predicted by Buchberger's Product criterion
(representing exactly some of these principal syzygies) are detected, too.
\item On the other hand a lot of these new principal syzygies added to
\hdsyz{} may have signatures that are just multiples of signatures already
available in \hdsyz{}. Thus the overhead might be rather high compared to the
benefits.
\end{enumerate}

Clearly, \gvw{}'s attempt adding all possible principal syzygies does not give
more information to the rewrite criterion than testing for Buchberger's Product
criterion directly and adding the corresponding signature to \hdsyz{}
accordingly. In terms of efficient implementations it seems that checking the
Product criterion explicitly introduces less overhead than generating new
principal syzygies whenever a new element $\gamma$ is added to \basis{}.

Whereas the first variant adds $1$ syzygy resp. signature to \hdsyz{} when it is
needed, the second one always tries to recover all such relations and afterwards
checks, which ones can be removed from \hdsyz{} being just multiples of each
other.

\section{\sreductions{} using linear algebra}
\label{sec:f4-f5}
As already pointed out in Section~\ref{sec:efficient-implementations-f5} \ff{}
is presented in~\cite{fF52002Corrected} in the vein of implemeting the
\sreduction{} process using linear algebra. \mff{}, presented in
Section~\ref{sec:matrix-f5}, is efficient once the system of polynomial
equations is dense. Clearly, this is not always the case, and thus, selecting
S-pairs to be reduced is more convenient compared to building full Macaulay
matrices at a given degree $d$. The first presentation of such an S-pair
generating algorithm using linear algebra for reduction purposes is the
\ffo{} algorithm (\cite{fF41999}).  Here we present a variant of \ffo{} that
uses signature-based criteria to detect reductions to zero resp. rows reducing
to zero in advance. This leads to smaller changes in the implementation of some
subalgorithms of \ffo{} corresponding to the switch from usual polynomial
reduction to \sreduction{}. Albrecht and Perry describe a possible
implementation of this, called \ffs{} in~\cite{apF452010}.

\begin{characteristics}
Note that the variant \ffs{} described in~\cite{apF452010} differs from \ff{}
by more than replacing the polynomial \sreduction{} by linear algebra:
\begin{enumerate}
\item Instead of incrementally computing the \grobner{} basis for $\ideal{\gen
  f}$ computations are done by increasing degrees: Whereas \ff{} proceeds by
  index first, \ffs{} prefers the degree of the polynomials over the index. This
  corresponds to switching from $\potl$ to $\dpotl$.
\item Instead of sorting the generators by decreasing index, they are ordered by
increasing index (see also Footnote~\ref{fn:potl}).
\item Due to the switch from $\potl$ to $\dpotl$ the rewrite rules
$\textsc{Rule}_i$ might not be sorted by increasing degree when only appending
new rules as done in~\cite{fF52002Corrected}. Thus the subalgorithm \textsc{Add
  Rule} takes care of sorting $\textsc{Rule}_i$ by increasing degree. Note that
  as mentioned in Remark~\ref{rem:f5-rewrite-order} this still need not ensure a
  sorting of $\textsc{Rule}_i$ by increasing signature.
\end{enumerate}
\end{characteristics}

Giving a full description of the ideas behind the \ffo{} algorithm out of scope
of this survey, we refer the readers interested to~\cite{fF41999}. Here we explain in
detail an \ffo{}-style variant of \rba{}. With this any known implementation of
signature-based \grobner{} basis algorithms described in
sections~\ref{sec:efficient-implementations-f5}
and~\ref{sec:efficient-implementations-sb} can be modified in the same way to
use linear algebra for reduction purposes.

%%%%%%%%%%%%%%%%%%%%%%%%%%%%%%%%%%%%%%%%%%%%
%%               F4- RBA                  %%
%%%%%%%%%%%%%%%%%%%%%%%%%%%%%%%%%%%%%%%%%%%%

\begin{algorithm}
\begin{algorithmic}[1]
\Require Ideal $I=\langle \gen f \rangle \subset \ring$, monomial
order $\leq$ on $\ring$ and a compatible extension on $\module$, total order
$\pleq$ on the pairset $\pairset$ of S-pairs, a rewrite order
$\rleq$ on $\basis \cup \hdsyz$
\Ensure Rewrite basis \basis{} for $I$, \grobner{} basis $\hdsyz$ for $\syz{\gen f}$
\State $\basis{}\gets\emptyset$, $\hdsyz \gets \emptyset$, $d \gets 0$
%  \ (\basis{} will be the signature \grobner{} basis)
\State $\pairset\gets\set{\mbasis 1,\ldots,\mbasis m}$
%  \ ($P$ is the set of pending reductions)
\State $\hdsyz\gets \set{f_i \mbasis j - f_j \mbasis i \mid 1\leq i < j \leq
  m}\subseteq \module$\label{alg:ffrba:initialsyz}
%  \ ($H$ will be the initial module of syzygies)
\While{$\pairset\neq\emptyset$}
  \State $d\gets d+1$
  \State $\pairset_d\gets \select\pairset$\label{alg:ffrba:choosespair}
  \State $\pairset\gets \pairset\setminus\pairset_d$
  \State $\mathcal L_d \gets \left\{a\alpha,b\beta \mid a\alpha - b\beta \in
  \pairset_d\right\}$\label{alg:ffrba:ld}
  \State $\mathcal L_d \gets \symbolicpreprocessing{}(\mathcal L_d,\basis)$
  \State $M_d \gets \text{matrix gen. by rows corr. to $\proj{a\alpha}$ for
    $a\alpha \in \mathcal L_d$ (sorted by signatures)}$
  \State $N_d \gets $ row echelon form of $M_d$ computed without row swapping
  \label{alg:ffrba:row-echelon-form}
  \State $\basis_d \gets \left\{\gamma \mid \proj{\gamma} \text{ corresponding
    to a row in } N_d \right\}$
  \State $\basis^+_d \gets \left\{\gamma \in \basis_d \mid \hdp\gamma \neq
  \hdp{a\alpha} \text{ for } a\alpha \in \mathcal L_d, \sig\gamma =
  \sig{a\alpha}\right\}$ \label{alg:ffrba:new-elements}
  %\State $\basis^+_d \gets \left\{\gamma \in \basis_d \mid \hdp\gamma \notin
  %\ideal{\hdp\alpha \mid \alpha\in
  %  \basis}\right\}$\label{alg:ffrba:newleadterms}
  \While{$\basis^+_d \neq \emptyset$} \label{alg:ffrba:choose}
    \State $\gamma \gets \min_<\basis^+_d$
    \State $\basis^+_d \gets \basis^+_d \setminus \set \gamma$
    \If {$\proj{\gamma}=0$}
      \State $\hdsyz\gets \hdsyz+\set{\gamma}$\label{alg:ffrba:addsyz}
    \Else \label{alg:ffrba:addsingularelement}
      \State $\pairset\gets \pairset\cup\setBuilder
        {\spair\alpha\gamma}
        {\alpha\in\basis{}\text{ and $\spair\alpha\gamma$ is
                                regular}}$\label{alg:ffrba:pairs}
      \State $\basis{}\gets\basis\cup\set\gamma$\label{alg:ffrba:basis}
    \EndIf
  \EndWhile
\EndWhile
\State \textbf{return} $(\basis{}, \hdsyz)$
\end{algorithmic}
\caption{Rewrite basis algorithm using linear algebra \ffrba{}.}
\label{alg:ffrba}
\end{algorithm}

The main difference between \rba{} and \ffrba{} is the usage of linear algebra
for the reduction process in the later one. Instead of fulfilling
\sreductions{} on each new S-pair, \ffrba{} implements a variant of \ffo{}'s reduction
process: In Line~\ref{alg:ffrba:choosespair} we no longer need to choose only
one single S-pair as done in \rba{} but a subset of $\pairset$ can be taken at
once. The generators of those symbolic S-pairs are then stored in $\mathcal L_d$
(Line~\ref{alg:ffrba:ld}). Subalgorithm \symbolicpreprocessing{} is then
precomputing all possible reducers of the elements in $\mathcal L_d$. Due to the
additional structure of the signatures one has to change this part slightly
compared to an implementation in the \ffo{} Algorithm. This is discussed in
\characteristic{}~\ref{char:differences-f4-f5}. After all elements needed to
execute in the $d$th reduction step of the algorithm are stored in $\mathcal L_d$ a
corresponding matrix $M_d$ w.r.t. $<$ is constructed: The rows of $M_d$
represent the elements $\proj{a\alpha}$ for $a\alpha \in \mathcal L_d$, the
columns represent the corresponding monomials in $\ring$ ordered w.r.t. $<$. As in the
\mff{} Algorithm each row has a signature, namely $\sig{a\alpha}$. As mentioned
already in Section~\ref{sec:sreduction} \sreductions{} on the polynomial side correspond
to fixing an order on the rows in $M_d$. Thus the computation
of the row echelon form of $M_d$ in Line~\ref{alg:ffrba:row-echelon-form} is
done without row swapping.

\begin{variants}\
\begin{enumerate}
\item As already mentioned in \characteristic~\ref{char:gsbend}~(\ref{char:gsbend:ssp}) for
an efficient implementation one would use $\spp\alpha$ instead of $\alpha$ in
\ffrba{}. Algorithm~\ref{alg:ffrba} as presented here works with full module
elements, that means when computing the row echelon form one needs to keep track
of all corresponding module operations in $a\alpha$ for each such row in $M_d$.
Focussing on the computation of a \grobner{} basis and using only $\spp{a\alpha}$
this overhead disappears completely due to the fact that row swappings are not
allowed and thus the signatures corresponding to rows in $M_d$ do not change
throughout the whole process.
\item In \ffo{} all polynomials corresponding to rows in $N_d$ are added to the
\grobner{} basis which lead term is not already included in the lead ideal.
Signatures lead to \sreductions{}. We have seen already in
Section~\ref{sec:rewrite-bases} that elements $\gamma$ might be added to
\basis{} even so there exists some $\alpha \in \basis$ such that $\hdp\alpha
\mid \hdp\gamma$. Thus we cannot discard those elements. In
Line~\ref{alg:ffrba:new-elements} we choose the elements $\gamma$ that need to be added
to \basis{} (or \hdsyz{} if $\proj\gamma = 0$): If the polynomial lead term
corresponding to a signature $\sig{a\alpha}$ has not changed during the
computation of the row echelon form $N_d$ of $M_d$ then we do not need to add
this element to \basis{}. In any other case, we do so.
\end{enumerate}
\end{variants}

In Algorithm~\ref{alg:symbolic-preprocessing} we state the pseudo code of a
signature respecting variant of \textbf{Symbolic Preprocessing} from~\cite{fF41999}.

%%%%%%%%%%%%%%%%%%%%%%%%%%%%%%%%%%%%%%%%%%%%
%%         Symbolic Preprocessing         %%
%%%%%%%%%%%%%%%%%%%%%%%%%%%%%%%%%%%%%%%%%%%%

\begin{algorithm}
\begin{algorithmic}[1]
\Require a finite subset $\mathcal{U}$ of $\module$, a finite subset $\basis$ of
$\module$
%, a finite family $\mathcal{F} = (\mathcal{F}_k)_{k=1,\ldots,\ell}$ where each
%$\mathcal{F}_k$ is a finite subset of $\module$
\Ensure a finite subset $\mathcal{U}$ of $\module$

\State $D \gets \left\{\hdp\beta \mid \beta \in \mathcal{U}\right\}$
\State $C \gets \left\{\text{monomials of }\proj\beta \mid \beta \in
\mathcal{U}\right\}$
\While{$C \neq D$}
  \State $m \gets \max_< \left(C \setminus D\right)$
  \State $D \gets D \cup \{m\}$
  \State $\mathcal{V} \gets \emptyset$
  \For {$\gamma \in \basis$}\label{alg:symbolic-preprocessing:for-start}
    \If {$\exists c\in\mon$ such that $m = \hdp{c\gamma}$ and
      $\notrewritable{c\gamma}$}\label{alg:symbolic-preprocessing:choice}
      \State $\mathcal{V} \gets \mathcal{V} \cup \left\{c\gamma\right\}$
  \EndIf
  \EndFor\label{alg:symbolic-preprocessing:for-stop}
  \State $e\varepsilon \gets$ element of minimal signature in $\mathcal{V}$
  \label{alg:symbolic-preprocessing:min-sig}
  \State $\mathcal{U} \gets \mathcal{U} \cup
  \left\{e\varepsilon\right\}$
  \State $C \gets C \cup \left\{\text{monomials of } \proj{e\varepsilon}\right\}$
\EndWhile
\State \textbf{return} $\mathcal{U}$
\end{algorithmic}
\caption{\symbolicpreprocessing{} respecting signatures.}
\label{alg:symbolic-preprocessing}
\end{algorithm}

\begin{characteristics}
\label{char:differences-f4-f5}
Algorithm~\ref{alg:symbolic-preprocessing} has undergone several small
changes compared to the version presented in~\cite{fF41999}:
\begin{enumerate}
\item From lines~\ref{alg:symbolic-preprocessing:for-start}
to~\ref{alg:symbolic-preprocessing:for-stop} the algorithm loops over all
elements $\gamma \in \basis$ searching for a possible, not rewritable reducer of the
monomial $m$. If successful we add the multiplied reducer to an intermediate
set $\mathcal{V}$. Instead to the original \symbolicpreprocessing{} algorithm
we do not stop after finding a first possible reducer her. The idea is to take
in Line~\ref{alg:symbolic-preprocessing:min-sig} the single reducer
$e\varepsilon$ of minimal signature from $\mathcal V$. The smaller the signature
of $e\varepsilon$ the bigger is the probbility that $e\varepsilon$ might be an
allowed reducer of some other row in $M_d$ for term $\hdp{e\varepsilon}$.
\item Let $a\alpha \in \mathcal{U}$ such that $m$ is a monomial in
$\proj{a\alpha}$ and $a\alpha$ is of maximal signature for all such elements in
$\mathcal{U}$. Note that it is still possible that $\sig{e\varepsilon} > \sig{a\alpha}$.
If $m = \hdp{a\alpha}$ this corresponds to the creation of a new S-pair
$\spair{\varepsilon}{\alpha} = e\varepsilon - a\alpha$. Note that in Algorithm~\ref{alg:rba}
the generation of this S-pair is postponed: There only regular \sreductions{} are
computed in Line~\ref{alg:rba:regular}, $\spair\epsilon\alpha$ is generated in
Line~\ref{alg:rba:pairs} first. Moreover, note that there does not exist another
reducer $e'\varepsilon'$ such that $m-\proj{e'\varepsilon'}$ corresponds to a
regular \sreduction{} since $e\varepsilon$ is chosen to be minimal w.r.t. its
signature.
\item\label{char:differences-f4-f5:unique-reducer}
Due to Lines~\ref{alg:symbolic-preprocessing:choice}
and~\ref{alg:symbolic-preprocessing:min-sig} the reducer for $m$ is
uniquely defined. This choice depends on the chosen rewrite order $\rleq$ as
well as the module monomial order $<$. Furthermore, one can exchange
Line~\ref{alg:symbolic-preprocessing:min-sig} by another choice, for example,
the element in $\mathcal V$ which is most sparse or the one which has the lowest
coefficient bound. Thus using the ideas of~\cite{bSlimGB2005} is possible.
Note that such changes may put a penalty on the efficiency of
the algorithm due to introducing many more S-pairs as the chosen reducer might not
be of minimal possible signature. Still, correctness and
termination are not affected.
\end{enumerate}
\end{characteristics}

An optimization of \ffo{} given in~\cite{fF41999} is the usage of the
\simplify{} subalgorithm: \simplify{} tries to exchange generators of
S-polynomials and found reducers in \symbolicpreprocessing{} with ``better
ones'': Polynomial products $u f \in \ring$ are tried to be
exchanged by elements $\frac u t g$ where $\hd g = \hd{tf}$ for a divisor
$t$ of $u$. In~\cite{fF41999} the normal strategy for choosing critical
pairs is used, that means, computations are done by increasinig polynomial
degree and thus $g$ can be found in a previously constructed matrix $M_d$in degree
$d := \deg(tf)$. $g$ might not be added to the intermediate \grobner{} basis as
$\hd f \mid \hd g$. Still, $g$ might be further reduced than $f$ and thus one
can prevent the algorithm in degree $\deg(uf)$ from redoing reduction steps
already performed in degree $d$ by exchanging $uf$ by $\frac u t g$.

Due to the signatures this is not so easy in our setting: What if a
simplification of $a\alpha$ by $\frac a b \beta$ leads to $\sig{\frac a b \beta}
> \sig{a\alpha}$? In
\characteristic~\ref{char:differences-f4-f5}~(\ref{char:differences-f4-f5:unique-reducer})
we have seen that the rewrite order $\rleq$ as well as the module monomial order
$<$ uniquely define the reducer of a monomial $m$. This definition incorporates
the ideas of \simplify{} in the signature-based world.

\begin{variants}
Let us finish with the following notes on the idea of simplification in \ffo{}-like
signature-based \grobner{} basis algorithms.
\begin{enumerate}
\item Besides the way \simplify{} is presented in~\cite{fF41999} other
ways of choosing a better reducer are possible. In~\cite{bSlimGB2005}
Brickenstein gives various choices. In the signature-based world this is
reflected by the different implementations of the rewrite order $\rleq$ and the
module monomial order $<$.
\item If we assume $\potl$ as module monomial order then we can make use of the
incremental behaviour of the computations: Assume that we are computing the
\grobner{} basis for $\ideal{f_1,\ldots,f_i}$ having already computed one for
$\ideal{f_1,\ldots,f_{i-1}}$, say $\proj{\basis_{i-1}}$. Now we can implement \ffo{}'s \simplify{}
routine without any changes for elements in $\proj{\basis_{i-1}}$: All reducers
from $\proj{\basis_{i-1}}$
have a lower signature due to
its index $<i$. Thus, as already described in~\cite{fF52002Corrected} we do not need to
check them by any criterion. Moreover, simplifying any such reducer by another
element from a computation during a previous iteration step the corresponding
signature still has index $<i$. Furthermore, assuming \ffc{} (see
Section~\ref{sec:f5c}) we can assume $\proj{\basis_{i-1}}$ to be reduced to $B_{i-1}$ which
optimizes the choice of reducers even more. Since adding
\simplify{} to \ffrba{} respectively \symbolicpreprocessing{} is straight
forward in this situation we do not give explicit pseudo code for this.
\item Moreover, exchanging $\basis^+_d$ with $\basis_d$ in the argument of the
\textbf{while} loop in Line~\ref{alg:ffrba:choose} of Algorithm~\ref{alg:ffrba}
one can trigger a \simplify{}-like process: Since all non-zero elements are
added to $\basis$, only the S-pairs generated by the best reduced elements
are not rewritten. Of course this feature is paid dearly for by generating
all the useless S-pairs in first place due to the redundant elements in \basis{}.
\end{enumerate}
\end{variants}

\section{Experimental results}
\label{sec:available-implementations}
In the following we present experimental results of \grobner{} basis benchmarks
and random systems. All systems are computed over a field of characteristic $32003$, with graded
reverse lexicographical monomial order. The random systems
are defined by $3$ parameters on the input generators:
\begin{center}
$
\begin{array}{rl}
\hrandom\left(\text{\# vars=\# equations},\text{minimal degree},\text{maximal
    degree}\right) & \text{(homogeneous)}\\
\random\left(\text{\# vars=\# equations},\text{minimal degree},\text{maximal
    degree}\right) & \text{(affine)}\\
\end{array}
$
\end{center}

Polynomials are random dense in the corresponding number of variables. The
systems are available under
\begin{center}
\url{https://github.com/ederc/singular-benchmarks}.
\end{center}

The implementation is done in the computer algebra system \singular{}
(\cite{singular400}). Signature-based \grobner{} basis
algorithms are officially available in \singular{} starting version
4-0-0\footnote{All examples in this survey are computed with the commit
5d25c42ce5a7cfe24a13632fa0f7cc6b85961ccb available under
\url{https://github.com/Singular/Sources}.}.

We do not add timings since we do not want to start a fastest implementation
contest. We are interested in presenting the size of the basis, the number of
syzygies found and used, the number of reductions as well as the complete number of
operations, that means, multiplications. Those are the numbers that are unique to
the different variants of signature-based \grobner{} basis implementations. Any
real new variant might compute numbers different to those presented in the following.

All algorithms using $\potl$ are implemented with the ideas of \ffpr{} resp. \ffc{}, that
means, inbetween the incremental steps of computing the signature
\grobner{} basis the intermediate bases are reduced and new signatures are
generated (see Section~\ref{sec:f5c}). This leads to three facts:
\begin{enumerate}
\item The number of elements in \hdsyz{} increases. The number is usually much
higher than the ones for the computation w.r.t. $\schl$ or $\dpotl$.
\item The difference in the size of the resulting signature \grobner{} basis
between using $\rleqff$ and $\rleqsb$ diminishes: Since both computations are
starting the last iteration step with the same number of elements (using the reduced
\grobner{} basis) only differences during the last incremental step are
captured. Thus mostly the differences in the size of \basis{} are much bigger for the
computations w.r.t. $\schl$.
\item When counting the number of reduction steps as well as the number of
overall operations, one needs to distinguish between the
\sreductions{} done by \rba{} and the number of usual reductions done inbetween
two incremental steps when interreducing the intermediate \grobner{} bases.
In the tables below we give for computations w.r.t. $\potl$ the values for
\sreductions{} as well as the values for all reductions including the
interreduction steps. Clearly, for $\schl$ and $\dpotl$ there is no
interreduction due to non-incremental execution.
\end{enumerate}

\begin{remark}\
\begin{enumerate}
\item Note that the behaviour for computations w.r.t. $\dtopl$ is not optimal.
Choosing this module monomial order leads to very long running times in most of the
cases. Thus we do not include the corresponding results.
\item The differences when adding Buchberger's Product and Chain criterion to
\rba{} as described in Section~\ref{sec:buch} are subtle and do not change the
overall behaviour of \rba{}. In order not to overload our tables with even more
variants of \rba{} we do not cover those differences here. With the information
and discussions given in this survey the reader is able to understand the differences to
experimental results given in~\cite{gerdtHashemiG2V,gvwGVW2013,
eder-predicting-zero-reductions-2014} which focus on this setting.  
\end{enumerate}
\end{remark}

We have to distinguish different ways of computation in the following:
\begin{enumerate}
\item \rba{} can fulfill only top \sreductions{} or full \sreductions{}
(including tail \sreductions{}).
\item The examples can be affine or homogeneous.
\end{enumerate}

Note that the differences between only top \sreductions{} and full
\sreductions{} are only found in the number of \sreductions{} and the number of
operations. Therefore the other tables do not include a differentiation between
those two. Next we present the results for homogeneous respectively affine
input. These values have two different ways of being used:
\begin{enumerate}
\item The reader new to signature-based \grobner{} basis algorithms can get a
feeling for the behaviour of this kind of algorithm. One can easily
compare the results presented here with the outcome of \singular{}'s
Gebauer-M\"oller implementation.
\item For researchers trying to improve signature-based \grobner{} basis
algorithms those numbers are good reference points in order to see what kind of
optimizations are achieved.
\end{enumerate}

The corresponding figures after the corresponding tables give a graphical
overview of the behaviour of the different variants for the random systems
w.r.t. increasing number of generators. 

Moreover note that we stopped the computations for affine random systems at $12$
resp. for homogeneous random systems at $13$ generators for variants using only
top \sreductions{} since running time was too long. For full \sreductions{}
we could go on until $14$ generators.
\vfill

\subsection{Experimental results for homogeneous systems}

% for tables
\setlength{\tabcolsep}{3pt}
\renewcommand{\arraystretch}{1.2}

\begin{table}[H]
\begin{minipage}{.50\linewidth}
\resizebox{0.90\textwidth}{!}{%\centering
\begin{tabu} to \linewidth {@{}c|S[table-format=5.0]
  S[table-format=5.0]|S[table-format=5.0]
  S[table-format=5.0]|S[table-format=5.0]
  S[table-format=5.0] @{}}
  \multirow{2}{*}{\bf Benchmark} & \multicolumn{2}{c|}{$\potl$} &
  \multicolumn{2}{c|}{$\schl$} & \multicolumn{2}{c}{$\dpotl$}\\ 
  & \multicolumn{1}{c}{$\rleqff$} & \multicolumn{1}{c|}{$\rleqsb$}
  & \multicolumn{1}{c}{$\rleqff$} & \multicolumn{1}{c|}{$\rleqsb$}
  & \multicolumn{1}{c}{$\rleqff$} & \multicolumn{1}{c}{$\rleqsb$}\\ 
\hline
cyclic-7 & 36 & 36 & 145 & 145 & 36 & 36\\ 
cyclic-8 & 244 & 244 & 672 & 672 & 244 & 244\\ 
eco-10 & 247 & 247 & 367 & 367 & 247 & 247\\ 
eco-11 & 502 & 502 & 749 & 749 & 502 & 502\\ 
f-633 & 3 & 3 & 9 & 9 & 3 & 3\\ 
f-744 & 190 & 190 & 259 & 259 & 190 & 190\\ 
katsura-11 & 0 & 0 & 353 & 353 & 0 & 0\\ 
katsura-12 & 0 & 0 & 640 & 640 & 0 & 0\\ 
noon-8 & 0 & 0 & 294 & 294 & 0 & 0\\ 
noon-9 & 0 & 0 & 682 & 682 & 0 & 0\\ 
$\hrandom(6,2,2)$ & 0 & 0 & 26 & 26 & 0 & 0\\ 
$\hrandom(7,2,2)$ & 0 & 0 & 49 & 49 & 0 & 0\\ 
$\hrandom(7,2,4)$ & 0 & 0 & 80 & 80 & 0 & 0\\ 
$\hrandom(7,2,6)$ & 0 & 0 & 635 & 635 & 0 & 0\\ 
$\hrandom(8,2,2)$ & 0 & 0 & 102 & 102 & 0 & 0\\ 
$\hrandom(8,2,4)$ & 0 & 0 & 345 & 345 & 0 & 0\\ 
$\hrandom(9,2,2)$ & 0 & 0 & 181 & 181 & 0 & 0\\ 
$\hrandom(10,2,2)$ & 0 & 0 & 339 & 339 & 0 & 0\\ 
$\hrandom(11,2,2)$ & 0 & 0 & 590 & 590 & 0 & 0\\ 
$\hrandom(12,2,2)$ & 0 & 0 & 1083 & 1083 & 0 & 0\\ 
$\hrandom(13,2,2)$ & 0 & 0 & 1867 & 1867 & 0 & 0\\ 
$\hrandom(14,2,2)$ & 0 & 0 & 3403 & 3403 & 0 & 0\\ 
\hline
\end{tabu}
}
\vspace*{3mm}
\caption{\# zero reductions (homogeneous)}
\end{minipage}
\qquad
\begin{minipage}{.50\linewidth}
\resizebox{\textwidth}{!}{%\centering
\begin{tabu} to \linewidth {@{}c|S[table-format=6.0]
  S[table-format=6.0]|S[table-format=6.0]
  S[table-format=6.0]|S[table-format=6.0]
  S[table-format=6.0] @{}}
  \multirow{2}{*}{\bf Benchmark} & \multicolumn{2}{c|}{$\potl$} &
  \multicolumn{2}{c|}{$\schl$} & \multicolumn{2}{c}{$\dpotl$}\\ 
  & \multicolumn{1}{c}{$\rleqff$} & \multicolumn{1}{c|}{$\rleqsb$}
  & \multicolumn{1}{c}{$\rleqff$} & \multicolumn{1}{c|}{$\rleqsb$}
  & \multicolumn{1}{c}{$\rleqff$} & \multicolumn{1}{c}{$\rleqsb$}\\ 
\hline
cyclic-7 & 758 & 658 & 871 & 848 & 949 & 751\\ 
cyclic-8 & 3402 & 2614 & 4074 & 3658 & 5534 & 3884\\ 
eco-10 & 677 & 508 & 541 & 478 & 934 & 567\\ 
eco-11 & 1423 & 1016 & 1092 & 965 & 2372 & 1168\\ 
f-633 & 60 & 58 & 61 & 59 & 61 & 57\\ 
f-744 & 616 & 465 & 377 & 348 & 745 & 573\\ 
katsura-11 & 700 & 700 & 553 & 553 & 2188 & 2161\\ 
katsura-12 & 1383 & 1384 & 1076 & 1076 & 6020 & 6020\\ 
noon-8 & 1384 & 1390 & 1384 & 1389 & 1384 & 1389\\ 
noon-9 & 3743 & 3750 & 3743 & 3749 & 3743 & 3749\\ 
$\hrandom(6,2,2)$ & 52 & 52 & 39 & 39 & 62 & 62\\ 
$\hrandom(7,2,2)$ & 101 & 101 & 67 & 67 & 124 & 124\\ 
$\hrandom(7,2,4)$ & 333 & 333 & 249 & 249 & 349 & 349\\ 
$\hrandom(7,2,6)$ & 4066 & 4066 & 2928 & 2928 & 4247 & 4247\\ 
$\hrandom(8,2,2)$ & 185 & 185 & 128 & 128 & 242 & 242\\ 
$\hrandom(8,2,4)$ & 1397 & 1397 & 997 & 997 & 1507 & 1507\\ 
$\hrandom(9,2,2)$ & 365 & 365 & 223 & 223 & 479 & 479\\ 
$\hrandom(10,2,2)$ & 676 & 676 & 426 & 426 & 932 & 932\\ 
$\hrandom(11,2,2)$ & 1326 & 1326 & 767 & 767 & 1832 & 1832\\ 
$\hrandom(12,2,2)$ & 2492 & 2492 & 1463 & 1463 & 3557 & 3557\\ 
$\hrandom(13,2,2)$ & 4879 & 4879 & 2708 & 2708 & 6973 & 6973\\ 
$\hrandom(14,2,2)$ & 9259 & 9259 & 5142 & 5142 & 13524 & 13524\\ 
\hline
\end{tabu}
}
\vspace*{2mm}
\caption{Size of \basis{} (homogeneous)}
\end{minipage}
\label{table:size-of-basis-homogeneous-systems}
\end{table}

%\begin{center}
\begin{table}[h]
\centering
\begin{minipage}{.62\linewidth}
\resizebox{\textwidth}{!}{%\centering
\begin{tabu} {@{}c|S[table-format=8.0]
  S[table-format=8.0]|S[table-format=8.0]
  S[table-format=8.0]|S[table-format=8.0]
  S[table-format=8.0] @{}}
  \multirow{2}{*}{\bf Benchmark} & \multicolumn{2}{c|}{$\potl$} &
  \multicolumn{2}{c|}{$\schl$} & \multicolumn{2}{c}{$\dpotl$}\\ 
  & \multicolumn{1}{c}{$\rleqff$} & \multicolumn{1}{c|}{$\rleqsb$}
  & \multicolumn{1}{c}{$\rleqff$} & \multicolumn{1}{c|}{$\rleqsb$}
  & \multicolumn{1}{c}{$\rleqff$} & \multicolumn{1}{c}{$\rleqsb$}\\ 
\hline
cyclic-7 & 7260 & 7260 & 187 & 187 & 439 & 338\\ 
cyclic-8 & 103285 & 103285 & 761 & 761 & 3691 & 2599\\ 
eco-10 & 30508 & 30508 & 412 & 412 & 1111 & 848\\ 
eco-11 & 118110 & 118110 & 804 & 804 & 2978 & 1750\\ 
f-633 & 122 & 122 & 37 & 37 & 40 & 40\\ 
f-744 & 16616 & 16616 & 316 & 316 & 654 & 641\\ 
katsura-11 & 24976 & 24976 & 408 & 408 & 2728 & 2670\\ 
katsura-12 & 92235 & 92235 & 706 & 706 & 9065 & 9148\\ 
noon-8 & 406 & 406 & 322 & 322 & 84 & 84\\ 
noon-9 & 666 & 666 & 718 & 718 & 120 & 120\\ 
$\hrandom(6,2,2)$ & 231 & 231 & 41 & 41 & 57 & 57\\ 
$\hrandom(7,2,2)$ & 780 & 780 & 70 & 70 & 119 & 119\\ 
$\hrandom(7,2,4)$ & 3160 & 3160 & 123 & 123 & 152 & 152\\ 
$\hrandom(7,2,6)$ & 162735 & 162735 & 681 & 681 & 950 & 950\\ 
$\hrandom(8,2,2)$ & 2278 & 2278 & 130 & 130 & 243 & 243\\ 
$\hrandom(8,2,4)$ & 41328 & 41328 & 402 & 402 & 573 & 573\\ 
$\hrandom(9,2,2)$ & 8256 & 8256 & 217 & 217 & 485 & 485\\ 
$\hrandom(10,2,2)$ & 24976 & 24976 & 384 & 384 & 964 & 964\\ 
$\hrandom(11,2,2)$ & 90951 & 90951 & 645 & 645 & 1896 & 1896\\ 
$\hrandom(12,2,2)$ & 294528 & 294528 & 1149 & 1149 & 3728 & 3728\\ 
$\hrandom(13,2,2)$ & 1070916 & 1070916 & 1945 & 1945 & 7285 & 7285\\ 
$\hrandom(14,2,2)$ & 3667986 & 3667986 & 3494 & 3494 & 14258 & 14258\\ 
\hline
\end{tabu}
}
\vspace*{2mm}
\caption{Size of \hdsyz{} (homogeneous)}
\label{table:size-of-hdsyz-homogeneous-systems}
\end{minipage}
\end{table}
%\end{center}

\begin{center}
\begin{table}[H]
\resizebox{0.90\textwidth}{!}{%\centering
\begin{tabu} to \linewidth {@{}c|S[table-format=7.0]
  S[table-format=7.0]|S[table-format=7.0]
  S[table-format=7.0]|S[table-format=7.0]
  S[table-format=7.0]|S[table-format=7.0]
  S[table-format=7.0]|S[table-format=7.0]
  S[table-format=7.0]|S[table-format=7.0]
  S[table-format=7.0] @{}}
  \multirow{3}{*}{\bf Benchmark} & \multicolumn{6}{c|}{only top \sreductions{}} 
  & \multicolumn{6}{c}{full \sreductions{}}\\ 
  & \multicolumn{2}{c|}{$\potl$} &
  \multicolumn{2}{c|}{$\schl$} & \multicolumn{2}{c|}{$\dpotl$} 
  & \multicolumn{2}{c|}{$\potl$} &
  \multicolumn{2}{c|}{$\schl$} & \multicolumn{2}{c}{$\dpotl$}\\ 
  & \multicolumn{1}{c}{$\rleqff$} & \multicolumn{1}{c|}{$\rleqsb$}
  & \multicolumn{1}{c}{$\rleqff$} & \multicolumn{1}{c|}{$\rleqsb$}
  & \multicolumn{1}{c}{$\rleqff$} & \multicolumn{1}{c|}{$\rleqsb$}
  & \multicolumn{1}{c}{$\rleqff$} & \multicolumn{1}{c|}{$\rleqsb$}
  & \multicolumn{1}{c}{$\rleqff$} & \multicolumn{1}{c|}{$\rleqsb$}
  & \multicolumn{1}{c}{$\rleqff$} & \multicolumn{1}{c}{$\rleqsb$}\\ 
\hline
cyclic-7 & \num{1e17.161} & \num{1e16.798} & \num{1e18.257} & \num{1e18.127} & \num{1e18.094} & \num{1e17.630} & \num{1e16.844} & \num{1e16.492} & \num{1e17.267} & \num{1e17.134} & \num{1e17.347} & \num{1e16.939}\\ 
(incl. interred) & \num{1e17.455} & \num{1e17.209} &  &  &  &  & \num{1e16.936} & \num{1e16.619} &  &  &  & \\ 
\tabucline[0.1pt on 1pt off 4pt]{-}
cyclic-8 & \num{1e22.575} & \num{1e21.346} & \num{1e22.755} & \num{1e22.331} & \num{1e23.638} & \num{1e22.325} & \num{1e22.529} & \num{1e21.303} & \num{1e21.916} & \num{1e21.529} & \num{1e22.981} & \num{1e21.696}\\ 
(incl. interred) & \num{1e22.967} & \num{1e22.156} &  &  &  &  & \num{1e22.730} & \num{1e21.727} &  &  &  & \\ 
\tabucline[0.1pt on 1pt off 4pt]{-}
eco-10 & \num{1e18.901} & \num{1e18.565} & \num{1e19.569} & \num{1e19.507} & \num{1e19.232} & \num{1e18.859} & \num{1e19.938} & \num{1e18.986} & \num{1e18.564} & \num{1e18.503} & \num{1e19.946} & \num{1e19.075}\\ 
(incl. interred) & \num{1e19.038} & \num{1e18.726} &  &  &  &  & \num{1e19.972} & \num{1e19.053} &  &  &  & \\ 
\tabucline[0.1pt on 1pt off 4pt]{-}
eco-11 & \num{1e21.519} & \num{1e20.976} & \num{1e21.909} & \num{1e21.851} & \num{1e21.735} & \num{1e21.123} & \num{1e22.996} & \num{1e21.290} & \num{1e21.126} & \num{1e21.034} & \num{1e22.772} & \num{1e21.531}\\ 
(incl. interred) & \num{1e21.644} & \num{1e21.149} &  &  &  &  & \num{1e23.014} & \num{1e21.352} &  &  &  & \\ 
\tabucline[0.1pt on 1pt off 4pt]{-}
f-633 & \num{1e9.767} & \num{1e9.604} & \num{1e10.321} & \num{1e10.236} & \num{1e9.658} & \num{1e9.397} & \num{1e9.484} & \num{1e9.522} & \num{1e10.086} & \num{1e9.801} & \num{1e9.433} & \num{1e9.044}\\ 
(incl. interred) & \num{1e9.864} & \num{1e9.713} &  &  &  &  &  &  &  &  &  & \\ 
\tabucline[0.1pt on 1pt off 4pt]{-}
f-744 & \num{1e16.895} & \num{1e16.857} & \num{1e17.256} & \num{1e17.150} & \num{1e17.248} & \num{1e17.157} & \num{1e17.175} & \num{1e16.763} & \num{1e17.055} & \num{1e16.936} & \num{1e17.347} & \num{1e17.162}\\ 
(incl. interred) & \num{1e17.126} & \num{1e17.121} &  &  &  &  & \num{1e17.208} & \num{1e16.811} &  &  &  & \\ 
\tabucline[0.1pt on 1pt off 4pt]{-}
katsura-11 & \num{1e18.953} & \num{1e18.708} & \num{1e22.403} & \num{1e22.384} & \num{1e20.638} & \num{1e20.527} & \num{1e22.393} & \num{1e22.257} & \num{1e22.018} & \num{1e22.067} & \num{1e22.040} & \num{1e21.985}\\ 
(incl. interred) & \num{1e22.556} & \num{1e22.514} &  &  &  &  & \num{1e22.815} & \num{1e22.712} &  &  &  & \\ 
\tabucline[0.1pt on 1pt off 4pt]{-}
katsura-12 & \num{1e21.496} & \num{1e21.110} & \num{1e24.661} & \num{1e24.596} & \num{1e23.183} & \num{1e23.063} & \num{1e25.507} & \num{1e25.319} & \num{1e24.257} & \num{1e24.287} & \num{1e24.521} & \num{1e24.455}\\ 
(incl. interred) & \num{1e25.692} & \num{1e25.654} &  &  &  &  & \num{1e25.977} & \num{1e25.840} &  &  &  & \\ 
\tabucline[0.1pt on 1pt off 4pt]{-}
noon-8 & \num{1e15.745} & \num{1e14.634} & \num{1e16.310} & \num{1e15.662} & \num{1e15.745} & \num{1e14.634} & \num{1e18.166} & \num{1e18.023} & \num{1e18.230} & \num{1e18.094} & \num{1e18.165} & \num{1e18.022}\\ 
(incl. interred) & \num{1e15.747} & \num{1e14.638} &  &  &  &  &  &  &  &  &  & \\ 
\tabucline[0.1pt on 1pt off 4pt]{-}
noon-9 & \num{1e18.303} & \num{1e16.820} & \num{1e18.756} & \num{1e17.843} & \num{1e18.303} & \num{1e16.820} & \num{1e20.787} & \num{1e20.606} & \num{1e20.835} & \num{1e20.660} & \num{1e20.787} & \num{1e20.606}\\ 
(incl. interred) &  & \num{1e16.822} &  &  &  &  &  &  &  &  &  & \\ 
\tabucline[0.1pt on 1pt off 4pt]{-}
$\hrandom(6,2,2)$ & \num{1e10.039} & \num{1e10.229} & \num{1e11.126} & \num{1e11.126} & \num{1e10.764} & \num{1e10.966} & \num{1e10.137} & \num{1e10.364} & \num{1e10.537} & \num{1e10.537} & \num{1e10.707} & \num{1e10.880}\\ 
(incl. interred) & \num{1e10.982} & \num{1e11.094} &  &  &  &  & \num{1e10.408} & \num{1e10.599} &  &  &  & \\ 
\tabucline[0.1pt on 1pt off 4pt]{-}
$\hrandom(7,2,2)$ & \num{1e12.189} & \num{1e12.308} & \num{1e13.279} & \num{1e13.279} & \num{1e13.046} & \num{1e13.209} & \num{1e12.103} & \num{1e12.294} & \num{1e12.298} & \num{1e12.298} & \num{1e13.015} & \num{1e13.122}\\ 
(incl. interred) & \num{1e13.227} & \num{1e13.327} &  &  &  &  & \num{1e12.435} & \num{1e12.589} &  &  &  & \\ 
\tabucline[0.1pt on 1pt off 4pt]{-}
$\hrandom(7,2,4)$ & \num{1e16.664} & \num{1e16.701} & \num{1e16.782} & \num{1e16.782} & \num{1e17.093} & \num{1e17.202} & \num{1e15.190} & \num{1e15.298} & \num{1e14.911} & \num{1e14.911} & \num{1e16.557} & \num{1e16.796}\\ 
(incl. interred) & \num{1e17.314} & \num{1e17.365} &  &  &  &  & \num{1e15.268} & \num{1e15.370} &  &  &  & \\ 
\tabucline[0.1pt on 1pt off 4pt]{-}
$\hrandom(7,2,6)$ & \num{1e23.748} & \num{1e23.763} & \num{1e23.614} & \num{1e23.614} & \num{1e24.113} & \num{1e24.175} & \num{1e22.136} & \num{1e22.130} & \num{1e21.400} & \num{1e21.400} & \num{1e23.783} & \num{1e23.906}\\ 
(incl. interred) & \num{1e24.421} & \num{1e24.458} &  &  &  &  & \num{1e22.217} & \num{1e22.211} &  &  &  & \\ 
\tabucline[0.1pt on 1pt off 4pt]{-}
$\hrandom(8,2,2)$ & \num{1e14.104} & \num{1e14.290} & \num{1e15.300} & \num{1e15.300} & \num{1e15.342} & \num{1e15.462} & \num{1e14.073} & \num{1e14.232} & \num{1e14.127} & \num{1e14.127} & \num{1e15.306} & \num{1e15.408}\\ 
(incl. interred) & \num{1e15.431} & \num{1e15.538} &  &  &  &  & \num{1e14.534} & \num{1e14.652} &  &  &  & \\ 
\tabucline[0.1pt on 1pt off 4pt]{-}
$\hrandom(8,2,4)$ & \num{1e20.898} & \num{1e20.923} & \num{1e21.097} & \num{1e21.097} & \num{1e21.574} & \num{1e21.644} & \num{1e19.426} & \num{1e19.468} & \num{1e18.918} & \num{1e18.918} & \num{1e21.209} & \num{1e21.402}\\ 
(incl. interred) & \num{1e21.660} & \num{1e21.694} &  &  &  &  & \num{1e19.593} & \num{1e19.631} &  &  &  & \\ 
\tabucline[0.1pt on 1pt off 4pt]{-}
$\hrandom(9,2,2)$ & \num{1e16.135} & \num{1e16.239} & \num{1e17.331} & \num{1e17.331} & \num{1e17.719} & \num{1e17.804} & \num{1e16.200} & \num{1e16.315} & \num{1e15.758} & \num{1e15.758} & \num{1e17.685} & \num{1e17.745}\\ 
(incl. interred) & \num{1e17.586} & \num{1e17.679} &  &  &  &  & \num{1e16.703} & \num{1e16.785} &  &  &  & \\ 
\tabucline[0.1pt on 1pt off 4pt]{-}
$\hrandom(10,2,2)$ & \num{1e18.291} & \num{1e18.369} & \num{1e19.271} & \num{1e19.271} & \num{1e20.099} & \num{1e20.156} & \num{1e18.249} & \num{1e18.339} & \num{1e17.491} & \num{1e17.491} & \num{1e20.072} & \num{1e20.120}\\ 
(incl. interred) & \num{1e19.893} & \num{1e19.949} &  &  &  &  & \num{1e18.889} & \num{1e18.947} &  &  &  & \\ 
\tabucline[0.1pt on 1pt off 4pt]{-}
$\hrandom(11,2,2)$ & \num{1e20.217} & \num{1e20.299} & \num{1e21.169} & \num{1e21.169} & \num{1e22.524} & \num{1e22.563} & \num{1e20.448} & \num{1e20.510} & \num{1e19.105} & \num{1e19.105} & \num{1e22.490} & \num{1e22.507}\\ 
(incl. interred) & \num{1e22.013} & \num{1e22.057} &  &  &  &  & \num{1e21.078} & \num{1e21.118} &  &  &  & \\ 
\tabucline[0.1pt on 1pt off 4pt]{-}
$\hrandom(12,2,2)$ & \num{1e22.508} & \num{1e22.552} & \num{1e23.200} & \num{1e23.200} & \num{1e24.955} & \num{1e24.980} & \num{1e22.636} & \num{1e22.679} & \num{1e21.024} & \num{1e21.024} & \num{1e24.928} & \num{1e24.949}\\ 
(incl. interred) & \num{1e24.358} & \num{1e24.380} &  &  &  &  & \num{1e23.367} & \num{1e23.394} &  &  &  & \\ 
\tabucline[0.1pt on 1pt off 4pt]{-}
$\hrandom(13,2,2)$ & \num{1e24.684} & \num{1e24.805} & \num{1e25.310} & \num{1e25.310} & \num{1e27.409} & \num{1e27.427} & \num{1e24.937} & \num{1e24.966} & \num{1e22.506} & \num{1e22.506} & \num{1e27.370} & \num{1e27.373}\\ 
(incl. interred) & \num{1e26.562} & \num{1e26.606} &  &  &  &  & \num{1e25.651} & \num{1e25.669} &  &  &  & \\ 
\tabucline[0.1pt on 1pt off 4pt]{-}
$\hrandom(14,2,2)$ &  &  &  &  &  &  & \num{1e26.989} & \num{1e27.011} & \num{1e24.273} & \num{1e24.273} & \num{1e29.792} & \num{1e29.801}\\ 
(incl. interred) &  &  &  &  &  &  & \num{1e27.844} & \num{1e27.856} &  &  &  & \\ 
\hline
\end{tabu}}
\vspace{2mm}
\caption{\# \sreductions{} (incl. interreductions) (homogeneous)}
\label{table:number-of-sreductions-incl-interreductions-homogeneous-systems}
\end{table}
\end{center}

\begin{center}
\begin{table}[H]
\resizebox{0.92\textwidth}{!}{%\centering
\begin{tabu} to \linewidth {@{}c|S[table-format=7.0]
  S[table-format=7.0]|S[table-format=7.0]
  S[table-format=7.0]|S[table-format=7.0]
  S[table-format=7.0]|S[table-format=7.0]
  S[table-format=7.0]|S[table-format=7.0]
  S[table-format=7.0]|S[table-format=7.0]
  S[table-format=7.0] @{}}
  \multirow{3}{*}{\bf Benchmark} & \multicolumn{6}{c|}{only top \sreductions{}} 
  & \multicolumn{6}{c}{full \sreductions{}}\\ 
  & \multicolumn{2}{c|}{$\potl$} &
  \multicolumn{2}{c|}{$\schl$} & \multicolumn{2}{c|}{$\dpotl$} 
  & \multicolumn{2}{c|}{$\potl$} &
  \multicolumn{2}{c|}{$\schl$} & \multicolumn{2}{c}{$\dpotl$}\\ 
  & \multicolumn{1}{c}{$\rleqff$} & \multicolumn{1}{c|}{$\rleqsb$}
  & \multicolumn{1}{c}{$\rleqff$} & \multicolumn{1}{c|}{$\rleqsb$}
  & \multicolumn{1}{c}{$\rleqff$} & \multicolumn{1}{c|}{$\rleqsb$}
  & \multicolumn{1}{c}{$\rleqff$} & \multicolumn{1}{c|}{$\rleqsb$}
  & \multicolumn{1}{c}{$\rleqff$} & \multicolumn{1}{c|}{$\rleqsb$}
  & \multicolumn{1}{c}{$\rleqff$} & \multicolumn{1}{c}{$\rleqsb$}\\ 
\hline
cyclic-7 & \num{1e24.276} & \num{1e23.871} & \num{1e24.024} & \num{1e23.996} & \num{1e23.851} & \num{1e23.467} & \num{1e24.050} & \num{1e23.598} & \num{1e24.257} & \num{1e24.133} & \num{1e24.319} & \num{1e23.866}\\ 
(incl. interred) & \num{1e24.429} & \num{1e24.076} &  &  &  &  & \num{1e24.130} & \num{1e23.709} &  &  &  & \\ 
\tabucline[0.1pt on 1pt off 4pt]{-}
cyclic-8 & \num{1e31.011} & \num{1e29.800} & \num{1e29.796} & \num{1e29.543} & \num{1e30.427} & \num{1e29.361} & \num{1e30.890} & \num{1e29.641} & \num{1e30.162} & \num{1e29.738} & \num{1e31.363} & \num{1e30.127}\\ 
(incl. interred) & \num{1e31.287} & \num{1e30.374} &  &  &  &  & \num{1e31.086} & \num{1e30.066} &  &  &  & \\ 
\tabucline[0.1pt on 1pt off 4pt]{-}
eco-10 & \num{1e24.459} & \num{1e24.148} & \num{1e24.120} & \num{1e24.102} & \num{1e23.676} & \num{1e23.394} & \num{1e25.484} & \num{1e24.560} & \num{1e24.046} & \num{1e23.975} & \num{1e25.292} & \num{1e24.480}\\ 
(incl. interred) & \num{1e24.582} & \num{1e24.294} &  &  &  &  & \num{1e25.514} & \num{1e24.619} &  &  &  & \\ 
\tabucline[0.1pt on 1pt off 4pt]{-}
eco-11 & \num{1e27.765} & \num{1e27.229} & \num{1e26.904} & \num{1e26.895} & \num{1e26.556} & \num{1e26.111} & \num{1e29.186} & \num{1e27.564} & \num{1e27.216} & \num{1e27.098} & \num{1e28.630} & \num{1e27.505}\\ 
(incl. interred) & \num{1e27.881} & \num{1e27.392} &  &  &  &  & \num{1e29.203} & \num{1e27.619} &  &  &  & \\ 
\tabucline[0.1pt on 1pt off 4pt]{-}
f-633 & \num{1e12.149} & \num{1e12.024} & \num{1e12.776} & \num{1e12.744} & \num{1e12.029} & \num{1e11.764} & \num{1e12.069} & \num{1e12.166} & \num{1e12.716} & \num{1e12.607} & \num{1e12.120} & \num{1e11.809}\\ 
(incl. interred) & \num{1e12.228} & \num{1e12.109} &  &  &  &  &  &  &  &  &  & \\ 
\tabucline[0.1pt on 1pt off 4pt]{-}
f-744 & \num{1e21.443} & \num{1e21.480} & \num{1e21.888} & \num{1e21.799} & \num{1e21.654} & \num{1e21.656} & \num{1e21.767} & \num{1e21.414} & \num{1e21.798} & \num{1e21.695} & \num{1e21.842} & \num{1e21.683}\\ 
(incl. interred) & \num{1e21.613} & \num{1e21.664} &  &  &  &  & \num{1e21.795} & \num{1e21.452} &  &  &  & \\ 
\tabucline[0.1pt on 1pt off 4pt]{-}
katsura-11 & \num{1e27.790} & \num{1e27.539} & \num{1e29.423} & \num{1e29.366} & \num{1e28.251} & \num{1e28.072} & \num{1e29.937} & \num{1e29.809} & \num{1e29.268} & \num{1e29.314} & \num{1e29.290} & \num{1e29.208}\\ 
(incl. interred) & \num{1e30.118} & \num{1e30.064} &  &  &  &  & \num{1e30.408} & \num{1e30.315} &  &  &  & \\ 
\tabucline[0.1pt on 1pt off 4pt]{-}
katsura-12 & \num{1e30.965} & \num{1e30.650} & \num{1e32.378} & \num{1e32.238} & \num{1e31.579} & \num{1e31.352} & \num{1e33.522} & \num{1e33.337} & \num{1e32.214} & \num{1e32.240} & \num{1e32.537} & \num{1e32.421}\\ 
(incl. interred) & \num{1e33.729} & \num{1e33.679} &  &  &  &  & \num{1e34.073} & \num{1e33.947} &  &  &  & \\ 
\tabucline[0.1pt on 1pt off 4pt]{-}
noon-8 & \num{1e20.100} & \num{1e19.657} & \num{1e20.365} & \num{1e19.983} & \num{1e20.142} & \num{1e19.681} & \num{1e22.212} & \num{1e22.292} & \num{1e22.249} & \num{1e22.327} & \num{1e22.212} & \num{1e22.292}\\ 
(incl. interred) &  &  &  &  &  &  &  &  &  &  &  & \\ 
\tabucline[0.1pt on 1pt off 4pt]{-}
noon-9 & \num{1e22.901} & \num{1e22.234} & \num{1e23.042} & \num{1e22.455} & \num{1e22.901} & \num{1e22.234} & \num{1e25.142} & \num{1e25.212} & \num{1e25.165} & \num{1e25.234} & \num{1e25.142} & \num{1e25.212}\\ 
(incl. interred) &  &  &  &  &  &  &  &  &  &  &  & \\ 
\tabucline[0.1pt on 1pt off 4pt]{-}
$\hrandom(6,2,2)$ & \num{1e14.628} & \num{1e14.812} & \num{1e15.453} & \num{1e15.453} & \num{1e15.403} & \num{1e15.585} & \num{1e14.626} & \num{1e14.827} & \num{1e14.614} & \num{1e14.614} & \num{1e15.180} & \num{1e15.340}\\ 
(incl. interred) & \num{1e15.502} & \num{1e15.615} &  &  &  &  & \num{1e14.932} & \num{1e15.096} &  &  &  & \\ 
\tabucline[0.1pt on 1pt off 4pt]{-}
$\hrandom(7,2,2)$ & \num{1e17.592} & \num{1e17.729} & \num{1e18.409} & \num{1e18.409} & \num{1e18.430} & \num{1e18.566} & \num{1e17.448} & \num{1e17.602} & \num{1e17.275} & \num{1e17.275} & \num{1e18.217} & \num{1e18.316}\\ 
(incl. interred) & \num{1e18.505} & \num{1e18.608} &  &  &  &  & \num{1e17.806} & \num{1e17.927} &  &  &  & \\ 
\tabucline[0.1pt on 1pt off 4pt]{-}
$\hrandom(7,2,4)$ & \num{1e23.387} & \num{1e23.432} & \num{1e23.509} & \num{1e23.509} & \num{1e23.906} & \num{1e24.003} & \num{1e22.172} & \num{1e22.241} & \num{1e21.820} & \num{1e21.820} & \num{1e22.992} & \num{1e23.136}\\ 
(incl. interred) & \num{1e23.763} & \num{1e23.808} &  &  &  &  & \num{1e22.246} & \num{1e22.312} &  &  &  & \\ 
\tabucline[0.1pt on 1pt off 4pt]{-}
$\hrandom(7,2,6)$ & \num{1e33.815} & \num{1e33.837} & \num{1e33.676} & \num{1e33.676} & \num{1e34.197} & \num{1e34.256} & \num{1e32.492} & \num{1e32.495} & \num{1e31.905} & \num{1e31.905} & \num{1e33.185} & \num{1e33.239}\\ 
(incl. interred) & \num{1e34.120} & \num{1e34.142} &  &  &  &  & \num{1e32.541} & \num{1e32.544} &  &  &  & \\ 
\tabucline[0.1pt on 1pt off 4pt]{-}
$\hrandom(8,2,2)$ & \num{1e20.337} & \num{1e20.506} & \num{1e21.167} & \num{1e21.167} & \num{1e21.455} & \num{1e21.544} & \num{1e20.288} & \num{1e20.398} & \num{1e19.965} & \num{1e19.965} & \num{1e21.233} & \num{1e21.315}\\ 
(incl. interred) & \num{1e21.430} & \num{1e21.532} &  &  &  &  & \num{1e20.723} & \num{1e20.806} &  &  &  & \\ 
\tabucline[0.1pt on 1pt off 4pt]{-}
$\hrandom(8,2,4)$ & \num{1e29.458} & \num{1e29.498} & \num{1e29.617} & \num{1e29.617} & \num{1e30.086} & \num{1e30.171} & \num{1e28.285} & \num{1e28.307} & \num{1e27.830} & \num{1e27.830} & \num{1e29.250} & \num{1e29.361}\\ 
(incl. interred) & \num{1e29.869} & \num{1e29.904} &  &  &  &  & \num{1e28.397} & \num{1e28.418} &  &  &  & \\ 
\tabucline[0.1pt on 1pt off 4pt]{-}
$\hrandom(9,2,2)$ & \num{1e23.220} & \num{1e23.348} & \num{1e23.913} & \num{1e23.913} & \num{1e24.515} & \num{1e24.572} & \num{1e23.209} & \num{1e23.282} & \num{1e22.474} & \num{1e22.474} & \num{1e24.250} & \num{1e24.295}\\ 
(incl. interred) & \num{1e24.351} & \num{1e24.436} &  &  &  &  & \num{1e23.656} & \num{1e23.710} &  &  &  & \\ 
\tabucline[0.1pt on 1pt off 4pt]{-}
$\hrandom(10,2,2)$ & \num{1e26.107} & \num{1e26.187} & \num{1e26.596} & \num{1e26.596} & \num{1e27.576} & \num{1e27.609} & \num{1e26.082} & \num{1e26.133} & \num{1e25.097} & \num{1e25.097} & \num{1e27.301} & \num{1e27.332}\\ 
(incl. interred) & \num{1e27.328} & \num{1e27.377} &  &  &  &  & \num{1e26.589} & \num{1e26.625} &  &  &  & \\ 
\tabucline[0.1pt on 1pt off 4pt]{-}
$\hrandom(11,2,2)$ & \num{1e28.985} & \num{1e29.069} & \num{1e29.278} & \num{1e29.278} & \num{1e30.684} & \num{1e30.703} & \num{1e29.008} & \num{1e29.041} & \num{1e27.607} & \num{1e27.607} & \num{1e30.351} & \num{1e30.362}\\ 
(incl. interred) & \num{1e30.234} & \num{1e30.277} &  &  &  &  & \num{1e29.512} & \num{1e29.535} &  &  &  & \\ 
\tabucline[0.1pt on 1pt off 4pt]{-}
$\hrandom(12,2,2)$ & \num{1e31.888} & \num{1e31.953} & \num{1e32.059} & \num{1e32.059} & \num{1e33.791} & \num{1e33.802} & \num{1e31.914} & \num{1e31.936} & \num{1e30.326} & \num{1e30.326} & \num{1e33.462} & \num{1e33.472}\\ 
(incl. interred) & \num{1e33.203} & \num{1e33.232} &  &  &  &  & \num{1e32.468} & \num{1e32.483} &  &  &  & \\ 
\tabucline[0.1pt on 1pt off 4pt]{-}
$\hrandom(13,2,2)$ & \num{1e34.837} & \num{1e34.975} & \num{1e34.881} & \num{1e34.881} & \num{1e36.959} & \num{1e36.966} & \num{1e34.856} & \num{1e34.870} & \num{1e32.855} & \num{1e32.855} & \num{1e36.581} & \num{1e36.583}\\ 
(incl. interred) & \num{1e36.137} & \num{1e36.198} &  &  &  &  & \num{1e35.405} & \num{1e35.415} &  &  &  & \\ 
\tabucline[0.1pt on 1pt off 4pt]{-}
$\hrandom(14,2,2)$ &  &  &  &  &  &  & \num{1e37.720} & \num{1e37.729} & \num{1e35.562} & \num{1e35.562} & \num{1e39.738} & \num{1e39.741}\\ 
(incl. interred) &  &  &  &  &  &  & \num{1e38.315} & \num{1e38.321} &  &  &  & \\ 
\hline
\end{tabu}}
\vspace{2mm}
\caption{\# multiplications (incl. interreductions) (homogeneous)}
\label{table:number-of-multiplications-incl-interreductions-homogeneous-systems}
\end{table}
\end{center}

\subsection{Experimental results for affine systems}

\begin{center}
\begin{table}[H]
\begin{minipage}{.50\linewidth}
\resizebox{0.90\textwidth}{!}{%\centering
\begin{tabu} to \linewidth {@{}c|S[table-format=5.0]
  S[table-format=5.0]|S[table-format=5.0]
  S[table-format=5.0]|S[table-format=5.0]
  S[table-format=5.0] @{}}
  \multirow{2}{*}{\bf Benchmark} & \multicolumn{2}{c|}{$\potl$} &
  \multicolumn{2}{c|}{$\schl$} & \multicolumn{2}{c}{$\dpotl$}\\ 
  & \multicolumn{1}{c}{$\rleqff$} & \multicolumn{1}{c|}{$\rleqsb$}
  & \multicolumn{1}{c}{$\rleqff$} & \multicolumn{1}{c|}{$\rleqsb$}
  & \multicolumn{1}{c}{$\rleqff$} & \multicolumn{1}{c}{$\rleqsb$}\\ 
\hline
cyclic-7 & 36 & 36 & 145 & 145 & 36 & 36\\ 
cyclic-8 & 244 & 244 & 672 & 672 & 244 & 244\\ 
eco-10 & 0 & 0 & 367 & 367 & 367 & 367\\ 
eco-11 & 0 & 0 & 749 & 749 & 749 & 749\\ 
f-633 & 1 & 1 & 9 & 9 & 3 & 3\\ 
f-744 & 1 & 1 & 259 & 259 & 188 & 188\\ 
katsura-11 & 0 & 0 & 353 & 353 & 0 & 0\\ 
katsura-12 & 0 & 0 & 640 & 640 & 0 & 0\\ 
noon-8 & 0 & 0 & 294 & 294 & 0 & 0\\ 
noon-9 & 0 & 0 & 682 & 682 & 0 & 0\\ 
$\random(6,2,2)$ & 0 & 0 & 26 & 26 & 0 & 0\\ 
$\random(7,2,2)$ & 0 & 0 & 49 & 49 & 0 & 0\\ 
$\random(8,2,2)$ & 0 & 0 & 102 & 102 & 0 & 0\\ 
$\random(9,2,2)$ & 0 & 0 & 181 & 181 & 0 & 0\\ 
$\random(10,2,2)$ & 0 & 0 & 339 & 339 & 0 & 0\\ 
$\random(11,2,2)$ & 0 & 0 & 590 & 590 & 0 & 0\\ 
$\random(12,2,2)$ & 0 & 0 & 1083 & 1083 & 0 & 0\\ 
$\random(13,2,2)$ & 0 & 0 & 1867 & 1867 & 0 & 0\\ 
$\random(14,2,2)$ & 0 & 0 & 3403 & 3403 & 0 & 0\\ 
\hline
\end{tabu}}
\vspace{3mm}
\caption{\# zero reductions (affine)}
\label{table:number-of-zero-reductions-affine-systems}
\end{minipage}
\qquad
\begin{minipage}{.50\linewidth}
\resizebox{\textwidth}{!}{%\centering
\begin{tabu} to \linewidth {@{}c|S[table-format=6.0]
  S[table-format=6.0]|S[table-format=6.0]
  S[table-format=6.0]|S[table-format=6.0]
  S[table-format=6.0] @{}}
  \multirow{2}{*}{\bf Benchmark} & \multicolumn{2}{c|}{$\potl$} &
  \multicolumn{2}{c|}{$\schl$} & \multicolumn{2}{c}{$\dpotl$}\\ 
  & \multicolumn{1}{c}{$\rleqff$} & \multicolumn{1}{c|}{$\rleqsb$}
  & \multicolumn{1}{c}{$\rleqff$} & \multicolumn{1}{c|}{$\rleqsb$}
  & \multicolumn{1}{c}{$\rleqff$} & \multicolumn{1}{c}{$\rleqsb$}\\ 
\hline
cyclic-7 & 779 & 679 & 871 & 848 & 949 & 751\\ 
cyclic-8 & 3559 & 2775 & 4074 & 3658 & 5534 & 3884\\ 
eco-10 & 522 & 405 & 541 & 478 & 782 & 671\\ 
eco-11 & 1055 & 774 & 1092 & 965 & 1717 & 1415\\ 
f-633 & 60 & 58 & 61 & 59 & 61 & 57\\ 
f-744 & 468 & 442 & 377 & 348 & 502 & 464\\ 
katsura-11 & 762 & 743 & 553 & 553 & 2188 & 2161\\ 
katsura-12 & 1473 & 1474 & 1076 & 1076 & 6020 & 6020\\ 
noon-8 & 1384 & 1390 & 1384 & 1389 & 1384 & 1389\\ 
noon-9 & 3743 & 3750 & 3743 & 3749 & 3743 & 3749\\ 
$\random(6,2,2)$ & 58 & 58 & 39 & 39 & 62 & 62\\ 
$\random(7,2,2)$ & 113 & 113 & 67 & 67 & 124 & 124\\ 
$\random(8,2,2)$ & 212 & 212 & 128 & 128 & 242 & 242\\ 
$\random(9,2,2)$ & 365 & 366 & 223 & 223 & 479 & 479\\ 
$\random(10,2,2)$ & 677 & 677 & 426 & 426 & 932 & 932\\ 
$\random(11,2,2)$ & 1327 & 1357 & 767 & 767 & 1832 & 1832\\ 
$\random(12,2,2)$ & 2502 & 2537 & 1463 & 1463 & 3557 & 3557\\ 
$\random(13,2,2)$ & 4879 & 4879 & 2708 & 2708 & 6973 & 6973\\ 
$\random(14,2,2)$ & 9259 & 9259 & 5142 & 5142 & 13924 & 13934\\ 
\hline
\end{tabu}}
\vspace{2mm}
\caption{Size of \basis{} (affine)}
\label{table:size-of-basis-affine-systems}
\end{minipage}
\end{table}
\end{center}

\begin{center}
\begin{table}[H]
\centering
\begin{minipage}{.62\linewidth}
\resizebox{\textwidth}{!}{%\centering
\begin{tabu} to \linewidth {@{}c|S[table-format=8.0]
  S[table-format=8.0]|S[table-format=8.0]
  S[table-format=8.0]|S[table-format=8.0]
  S[table-format=8.0] @{}}
  \multirow{2}{*}{\bf Benchmark} & \multicolumn{2}{c|}{$\potl$} &
  \multicolumn{2}{c|}{$\schl$} & \multicolumn{2}{c}{$\dpotl$}\\ 
  & \multicolumn{1}{c}{$\rleqff$} & \multicolumn{1}{c|}{$\rleqsb$}
  & \multicolumn{1}{c}{$\rleqff$} & \multicolumn{1}{c|}{$\rleqsb$}
  & \multicolumn{1}{c}{$\rleqff$} & \multicolumn{1}{c}{$\rleqsb$}\\ 
\hline
cyclic-7 & 10011 & 10011 & 187 & 187 & 439 & 338\\ 
cyclic-8 & 186966 & 189420 & 761 & 761 & 3691 & 2599\\ 
eco-10 & 21528 & 21528 & 412 & 412 & 2809 & 2531\\ 
eco-11 & 73153 & 71253 & 804 & 804 & 6869 & 5914\\ 
f-633 & 120 & 120 & 37 & 37 & 40 & 40\\ 
f-744 & 20503 & 19701 & 316 & 316 & 641 & 628\\ 
katsura-11 & 40755 & 35511 & 408 & 408 & 2728 & 2670\\ 
katsura-12 & 134940 & 134940 & 706 & 706 & 9065 & 9148\\ 
noon-8 & 406 & 406 & 322 & 322 & 84 & 84\\ 
noon-9 & 666 & 666 & 718 & 718 & 120 & 120\\ 
$\random(6,2,2)$ & 378 & 378 & 41 & 41 & 57 & 57\\ 
$\random(7,2,2)$ & 1326 & 1326 & 70 & 70 & 119 & 119\\ 
$\random(8,2,2)$ & 4465 & 4465 & 130 & 130 & 243 & 243\\ 
$\random(9,2,2)$ & 8256 & 8385 & 217 & 217 & 485 & 485\\ 
$\random(10,2,2)$ & 25200 & 25200 & 384 & 384 & 964 & 964\\ 
$\random(11,2,2)$ & 91378 & 104653 & 645 & 645 & 1896 & 1896\\ 
$\random(12,2,2)$ & 302253 & 330078 & 1149 & 1149 & 3728 & 3728\\ 
$\random(13,2,2)$ & 1070916 & 1070916 & 1945 & 1945 & 7285 & 7285\\ 
$\random(14,2,2)$ & 3667986 & 3667986 & 3494 & 3494 & 15122 & 15122\\ 
\hline
\end{tabu}}
\vspace{2mm}
\caption{Size of \hdsyz{} (affine)}
\label{table:size-of-hdsyz-affine-systems}
\end{minipage}
\end{table}
\end{center}

\begin{center}
\begin{table}[H]
\resizebox{0.92\textwidth}{!}{%\centering
\begin{tabu} to \linewidth {@{}c|S[table-format=7.0]
  S[table-format=7.0]|S[table-format=7.0]
  S[table-format=7.0]|S[table-format=7.0]
  S[table-format=7.0]|S[table-format=7.0]
  S[table-format=7.0]|S[table-format=7.0]
  S[table-format=7.0]|S[table-format=7.0]
  S[table-format=7.0] @{}}
  \multirow{3}{*}{\bf Benchmark} & \multicolumn{6}{c|}{only top \sreductions{}} 
  & \multicolumn{6}{c}{full \sreductions{}}\\ 
  & \multicolumn{2}{c|}{$\potl$} &
  \multicolumn{2}{c|}{$\schl$} & \multicolumn{2}{c|}{$\dpotl$} 
  & \multicolumn{2}{c|}{$\potl$} &
  \multicolumn{2}{c|}{$\schl$} & \multicolumn{2}{c}{$\dpotl$}\\ 
  & \multicolumn{1}{c}{$\rleqff$} & \multicolumn{1}{c|}{$\rleqsb$}
  & \multicolumn{1}{c}{$\rleqff$} & \multicolumn{1}{c|}{$\rleqsb$}
  & \multicolumn{1}{c}{$\rleqff$} & \multicolumn{1}{c|}{$\rleqsb$}
  & \multicolumn{1}{c}{$\rleqff$} & \multicolumn{1}{c|}{$\rleqsb$}
  & \multicolumn{1}{c}{$\rleqff$} & \multicolumn{1}{c|}{$\rleqsb$}
  & \multicolumn{1}{c}{$\rleqff$} & \multicolumn{1}{c}{$\rleqsb$}\\ 
\hline
cyclic-7 & \num{1e17.136} & \num{1e16.774} & \num{1e18.247} & \num{1e18.117} & \num{1e18.093} & \num{1e17.630} & \num{1e16.792} & \num{1e16.492} & \num{1e16.977} & \num{1e16.881} & \num{1e16.984} & \num{1e16.829}\\ 
(incl. interred) & \num{1e17.383} & \num{1e17.133} &  &  &  &  & \num{1e16.854} & \num{1e16.569} &  &  &  & \\ 
\tabucline[0.1pt on 1pt off 4pt]{-}
cyclic-8 & \num{1e22.533} & \num{1e21.343} & \num{1e22.742} & \num{1e22.312} & \num{1e23.598} & \num{1e22.286} & \num{1e22.454} & \num{1e21.273} & \num{1e21.810} & \num{1e21.439} & \num{1e22.693} & \num{1e21.504}\\ 
(incl. interred) & \num{1e22.890} & \num{1e22.093} &  &  &  &  & \num{1e22.669} & \num{1e21.730} &  &  &  & \\ 
\tabucline[0.1pt on 1pt off 4pt]{-}
eco-10 & \num{1e17.827} & \num{1e16.397} & \num{1e20.148} & \num{1e20.124} & \num{1e20.251} & \num{1e20.114} & \num{1e19.858} & \num{1e17.368} & \num{1e19.964} & \num{1e19.807} & \num{1e20.017} & \num{1e19.773}\\ 
(incl. interred) & \num{1e18.181} & \num{1e17.170} &  &  &  &  & \num{1e19.911} & \num{1e17.633} &  &  &  & \\ 
\tabucline[0.1pt on 1pt off 4pt]{-}
eco-11 & \num{1e20.872} & \num{1e18.652} & \num{1e22.605} & \num{1e22.567} & \num{1e22.720} & \num{1e22.563} & \num{1e23.620} & \num{1e19.822} & \num{1e22.497} & \num{1e22.307} & \num{1e22.511} & \num{1e22.290}\\ 
(incl. interred) & \num{1e21.208} & \num{1e19.755} &  &  &  &  & \num{1e23.651} & \num{1e20.177} &  &  &  & \\ 
\tabucline[0.1pt on 1pt off 4pt]{-}
f-633 & \num{1e9.644} & \num{1e9.426} & \num{1e10.828} & \num{1e10.768} & \num{1e10.131} & \num{1e9.833} & \num{1e9.401} & \num{1e9.202} & \num{1e9.977} & \num{1e9.713} & \num{1e9.386} & \num{1e9.031}\\ 
(incl. interred) & \num{1e9.730} & \num{1e9.526} &  &  &  &  &  &  &  &  &  & \\ 
\tabucline[0.1pt on 1pt off 4pt]{-}
f-744 & \num{1e15.037} & \num{1e14.422} & \num{1e17.500} & \num{1e17.526} & \num{1e17.500} & \num{1e17.579} & \num{1e15.305} & \num{1e14.829} & \num{1e16.996} & \num{1e16.872} & \num{1e17.123} & \num{1e17.089}\\ 
(incl. interred) & \num{1e15.535} & \num{1e15.361} &  &  &  &  & \num{1e15.601} & \num{1e15.100} &  &  &  & \\ 
\tabucline[0.1pt on 1pt off 4pt]{-}
katsura-11 & \num{1e18.856} & \num{1e18.627} & \num{1e22.411} & \num{1e22.395} & \num{1e20.837} & \num{1e20.730} & \num{1e22.191} & \num{1e22.055} & \num{1e21.081} & \num{1e21.280} & \num{1e21.994} & \num{1e21.944}\\ 
(incl. interred) & \num{1e22.478} & \num{1e22.438} &  &  &  &  & \num{1e22.620} & \num{1e22.537} &  &  &  & \\ 
\tabucline[0.1pt on 1pt off 4pt]{-}
katsura-12 & \num{1e21.302} & \num{1e21.254} & \num{1e24.540} & \num{1e24.528} & \num{1e23.412} & \num{1e23.288} & \num{1e24.980} & \num{1e24.867} & \num{1e23.689} & \num{1e23.741} & \num{1e24.488} & \num{1e24.437}\\ 
(incl. interred) & \num{1e25.391} & \num{1e25.372} &  &  &  &  & \num{1e25.648} & \num{1e25.575} &  &  &  & \\ 
\tabucline[0.1pt on 1pt off 4pt]{-}
noon-8 & \num{1e15.693} & \num{1e14.519} & \num{1e16.097} & \num{1e15.308} & \num{1e15.691} & \num{1e14.529} & \num{1e17.602} & \num{1e17.408} & \num{1e17.758} & \num{1e17.582} & \num{1e17.667} & \num{1e17.479}\\ 
(incl. interred) & \num{1e15.694} & \num{1e14.523} &  &  &  &  &  &  &  &  &  & \\ 
\tabucline[0.1pt on 1pt off 4pt]{-}
noon-9 & \num{1e18.197} & \num{1e16.651} & \num{1e18.523} & \num{1e17.422} & \num{1e18.202} & \num{1e16.650} & \num{1e20.203} & \num{1e19.962} & \num{1e20.312} & \num{1e20.084} & \num{1e20.243} & \num{1e20.003}\\ 
(incl. interred) & \num{1e18.198} & \num{1e16.652} &  &  &  &  &  &  &  &  &  & \\ 
\tabucline[0.1pt on 1pt off 4pt]{-}
$\random(6,2,2)$ & \num{1e10.008} & \num{1e10.189} & \num{1e13.068} & \num{1e13.068} & \num{1e10.752} & \num{1e10.946} & \num{1e11.008} & \num{1e11.274} & \num{1e11.696} & \num{1e11.696} & \num{1e11.659} & \num{1e11.814}\\ 
(incl. interred) & \num{1e11.342} & \num{1e11.434} &  &  &  &  & \num{1e11.218} & \num{1e11.451} &  &  &  & \\ 
\tabucline[0.1pt on 1pt off 4pt]{-}
$\random(7,2,2)$ & \num{1e12.137} & \num{1e12.267} & \num{1e15.076} & \num{1e15.076} & \num{1e13.018} & \num{1e13.169} & \num{1e13.110} & \num{1e13.322} & \num{1e13.393} & \num{1e13.393} & \num{1e14.010} & \num{1e14.110}\\ 
(incl. interred) & \num{1e13.632} & \num{1e13.752} &  &  &  &  & \num{1e13.382} & \num{1e13.559} &  &  &  & \\ 
\tabucline[0.1pt on 1pt off 4pt]{-}
$\random(8,2,2)$ & \num{1e13.984} & \num{1e14.212} & \num{1e17.269} & \num{1e17.269} & \num{1e15.297} & \num{1e15.406} & \num{1e15.168} & \num{1e15.345} & \num{1e15.285} & \num{1e15.285} & \num{1e16.353} & \num{1e16.435}\\ 
(incl. interred) & \num{1e15.911} & \num{1e16.021} &  &  &  &  & \num{1e15.561} & \num{1e15.698} &  &  &  & \\ 
\tabucline[0.1pt on 1pt off 4pt]{-}
$\random(9,2,2)$ & \num{1e16.106} & \num{1e16.210} & \num{1e19.294} & \num{1e19.294} & \num{1e17.656} & \num{1e17.729} & \num{1e17.438} & \num{1e17.562} & \num{1e16.906} & \num{1e16.906} & \num{1e18.755} & \num{1e18.805}\\ 
(incl. interred) & \num{1e18.019} & \num{1e18.225} &  &  &  &  & \num{1e17.876} & \num{1e17.968} &  &  &  & \\ 
\tabucline[0.1pt on 1pt off 4pt]{-}
$\random(10,2,2)$ & \num{1e18.013} & \num{1e18.096} & \num{1e21.381} & \num{1e21.381} & \num{1e20.021} & \num{1e20.070} & \num{1e19.532} & \num{1e19.629} & \num{1e18.703} & \num{1e18.703} & \num{1e21.156} & \num{1e21.193}\\ 
(incl. interred) & \num{1e20.089} & \num{1e20.170} &  &  &  &  & \num{1e20.071} & \num{1e20.139} &  &  &  & \\ 
\tabucline[0.1pt on 1pt off 4pt]{-}
$\random(11,2,2)$ & \num{1e20.117} & \num{1e20.165} & \num{1e23.397} & \num{1e23.397} & \num{1e22.418} & \num{1e22.450} & \num{1e21.859} & \num{1e21.923} & \num{1e20.377} & \num{1e20.377} & \num{1e23.591} & \num{1e23.605}\\ 
(incl. interred) & \num{1e22.409} & \num{1e22.476} &  &  &  &  & \num{1e22.391} & \num{1e22.436} &  &  &  & \\ 
\tabucline[0.1pt on 1pt off 4pt]{-}
$\random(12,2,2)$ & \num{1e22.039} & \num{1e22.087} & \num{1e25.447} & \num{1e25.447} & \num{1e24.833} & \num{1e24.854} & \num{1e24.089} & \num{1e24.134} & \num{1e22.169} & \num{1e22.169} & \num{1e26.028} & \num{1e26.044}\\ 
(incl. interred) & \num{1e24.512} & \num{1e24.597} &  &  &  &  & \num{1e24.737} & \num{1e24.766} &  &  &  & \\ 
\tabucline[0.1pt on 1pt off 4pt]{-}
$\random(13,2,2)$ &  &  &  &  &  &  & \num{1e26.472} & \num{1e26.501} & \num{1e23.917} & \num{1e23.917} & \num{1e28.463} & \num{1e28.465}\\ 
(incl. interred) &  &  &  &  &  &  & \num{1e27.079} & \num{1e27.097} &  &  &  & \\ 
\tabucline[0.1pt on 1pt off 4pt]{-}
$\random(14,2,2)$ &  &  &  &  &  &  & \num{1e28.496} & \num{1e28.517} & \num{1e25.697} & \num{1e25.697} & \num{1e30.897} & \num{1e30.886}\\ 
(incl. interred) &  &  &  &  &  &  & \num{1e29.255} & \num{1e29.268} &  &  &  & \\ 
\hline
\end{tabu}}
\vspace{2mm}
\caption{\# \sreductions{} (incl. interreductions) (affine)}
\label{table:number-of-sreductions-incl-interreductions-affine-systems}
\end{table}
\end{center}

\begin{center}
\begin{table}[H]
\resizebox{0.92\textwidth}{!}{%\centering
\begin{tabu} to \linewidth {@{}c|S[table-format=7.0]
  S[table-format=7.0]|S[table-format=7.0]
  S[table-format=7.0]|S[table-format=7.0]
  S[table-format=7.0]|S[table-format=7.0]
  S[table-format=7.0]|S[table-format=7.0]
  S[table-format=7.0]|S[table-format=7.0]
  S[table-format=7.0] @{}}
  \multirow{3}{*}{\bf Benchmark} & \multicolumn{6}{c|}{only top \sreductions{}} 
  & \multicolumn{6}{c}{full \sreductions{}}\\ 
  & \multicolumn{2}{c|}{$\potl$} &
  \multicolumn{2}{c|}{$\schl$} & \multicolumn{2}{c|}{$\dpotl$} 
  & \multicolumn{2}{c|}{$\potl$} &
  \multicolumn{2}{c|}{$\schl$} & \multicolumn{2}{c}{$\dpotl$}\\ 
  & \multicolumn{1}{c}{$\rleqff$} & \multicolumn{1}{c|}{$\rleqsb$}
  & \multicolumn{1}{c}{$\rleqff$} & \multicolumn{1}{c|}{$\rleqsb$}
  & \multicolumn{1}{c}{$\rleqff$} & \multicolumn{1}{c|}{$\rleqsb$}
  & \multicolumn{1}{c}{$\rleqff$} & \multicolumn{1}{c|}{$\rleqsb$}
  & \multicolumn{1}{c}{$\rleqff$} & \multicolumn{1}{c|}{$\rleqsb$}
  & \multicolumn{1}{c}{$\rleqff$} & \multicolumn{1}{c}{$\rleqsb$}\\ 
\hline
cyclic-7 & \num{1e24.253} & \num{1e23.830} & \num{1e24.008} & \num{1e23.989} & \num{1e23.866} & \num{1e23.482} & \num{1e24.065} & \num{1e23.612} & \num{1e24.220} & \num{1e24.105} & \num{1e24.229} & \num{1e23.832}\\ 
(incl. interred) & \num{1e24.366} & \num{1e23.986} &  &  &  &  & \num{1e24.122} & \num{1e23.679} &  &  &  & \\ 
\tabucline[0.1pt on 1pt off 4pt]{-}
cyclic-8 & \num{1e30.906} & \num{1e29.738} & \num{1e29.788} & \num{1e29.534} & \num{1e30.435} & \num{1e29.367} & \num{1e30.885} & \num{1e29.649} & \num{1e30.261} & \num{1e29.837} & \num{1e31.273} & \num{1e30.076}\\ 
(incl. interred) & \num{1e31.169} & \num{1e30.276} &  &  &  &  & \num{1e31.091} & \num{1e30.092} &  &  &  & \\ 
\tabucline[0.1pt on 1pt off 4pt]{-}
eco-10 & \num{1e22.815} & \num{1e21.728} & \num{1e24.428} & \num{1e24.509} & \num{1e24.608} & \num{1e24.492} & \num{1e25.262} & \num{1e22.866} & \num{1e25.001} & \num{1e24.889} & \num{1e24.996} & \num{1e24.758}\\ 
(incl. interred) & \num{1e23.141} & \num{1e22.336} &  &  &  &  & \num{1e25.302} & \num{1e23.062} &  &  &  & \\ 
\tabucline[0.1pt on 1pt off 4pt]{-}
eco-11 & \num{1e26.502} & \num{1e24.807} & \num{1e27.357} & \num{1e27.421} & \num{1e27.574} & \num{1e27.433} & \num{1e29.367} & \num{1e25.900} & \num{1e28.161} & \num{1e27.961} & \num{1e28.087} & \num{1e27.867}\\ 
(incl. interred) & \num{1e26.823} & \num{1e25.662} &  &  &  &  & \num{1e29.398} & \num{1e26.194} &  &  &  & \\ 
\tabucline[0.1pt on 1pt off 4pt]{-}
f-633 & \num{1e11.817} & \num{1e11.694} & \num{1e13.360} & \num{1e13.352} & \num{1e12.530} & \num{1e12.196} & \num{1e11.772} & \num{1e11.676} & \num{1e12.636} & \num{1e12.487} & \num{1e11.998} & \num{1e11.737}\\ 
(incl. interred) & \num{1e11.870} & \num{1e11.751} &  &  &  &  &  &  &  &  &  & \\ 
\tabucline[0.1pt on 1pt off 4pt]{-}
f-744 & \num{1e19.292} & \num{1e18.824} & \num{1e22.257} & \num{1e22.351} & \num{1e22.040} & \num{1e22.231} & \num{1e19.776} & \num{1e19.329} & \num{1e21.737} & \num{1e21.626} & \num{1e21.839} & \num{1e21.774}\\ 
(incl. interred) & \num{1e19.701} & \num{1e19.571} &  &  &  &  & \num{1e20.018} & \num{1e19.574} &  &  &  & \\ 
\tabucline[0.1pt on 1pt off 4pt]{-}
katsura-11 & \num{1e27.943} & \num{1e27.683} & \num{1e29.565} & \num{1e29.518} & \num{1e28.714} & \num{1e28.531} & \num{1e29.884} & \num{1e29.747} & \num{1e28.906} & \num{1e29.074} & \num{1e29.449} & \num{1e29.381}\\ 
(incl. interred) & \num{1e30.080} & \num{1e30.010} &  &  &  &  & \num{1e30.353} & \num{1e30.264} &  &  &  & \\ 
\tabucline[0.1pt on 1pt off 4pt]{-}
katsura-12 & \num{1e31.121} & \num{1e30.819} & \num{1e32.451} & \num{1e32.365} & \num{1e32.092} & \num{1e31.867} & \num{1e33.503} & \num{1e33.263} & \num{1e32.052} & \num{1e32.173} & \num{1e32.712} & \num{1e32.620}\\ 
(incl. interred) & \num{1e33.642} & \num{1e33.586} &  &  &  &  & \num{1e34.102} & \num{1e33.946} &  &  &  & \\ 
\tabucline[0.1pt on 1pt off 4pt]{-}
noon-8 & \num{1e20.056} & \num{1e19.653} & \num{1e20.264} & \num{1e19.887} & \num{1e20.114} & \num{1e19.689} & \num{1e21.588} & \num{1e21.714} & \num{1e21.678} & \num{1e21.798} & \num{1e21.623} & \num{1e21.747}\\ 
(incl. interred) & \num{1e20.057} & \num{1e19.654} &  &  &  &  &  &  &  &  &  & \\ 
\tabucline[0.1pt on 1pt off 4pt]{-}
noon-9 & \num{1e22.844} & \num{1e22.216} & \num{1e22.945} & \num{1e22.356} & \num{1e22.854} & \num{1e22.218} & \num{1e24.490} & \num{1e24.615} & \num{1e24.546} & \num{1e24.667} & \num{1e24.511} & \num{1e24.634}\\ 
(incl. interred) &  &  &  &  &  &  &  &  &  &  &  & \\ 
\tabucline[0.1pt on 1pt off 4pt]{-}
$\random(6,2,2)$ & \num{1e15.986} & \num{1e16.176} & \num{1e18.567} & \num{1e18.567} & \num{1e16.548} & \num{1e16.683} & \num{1e16.617} & \num{1e16.799} & \num{1e17.303} & \num{1e17.303} & \num{1e16.988} & \num{1e17.122}\\ 
(incl. interred) & \num{1e16.883} & \num{1e17.001} &  &  &  &  & \num{1e16.823} & \num{1e16.982} &  &  &  & \\ 
\tabucline[0.1pt on 1pt off 4pt]{-}
$\random(7,2,2)$ & \num{1e18.938} & \num{1e19.090} & \num{1e21.285} & \num{1e21.285} & \num{1e19.618} & \num{1e19.708} & \num{1e19.548} & \num{1e19.674} & \num{1e19.885} & \num{1e19.885} & \num{1e19.970} & \num{1e20.057}\\ 
(incl. interred) & \num{1e19.896} & \num{1e20.017} &  &  &  &  & \num{1e19.792} & \num{1e19.900} &  &  &  & \\ 
\tabucline[0.1pt on 1pt off 4pt]{-}
$\random(8,2,2)$ & \num{1e21.745} & \num{1e21.956} & \num{1e24.219} & \num{1e24.219} & \num{1e22.720} & \num{1e22.777} & \num{1e22.463} & \num{1e22.556} & \num{1e22.706} & \num{1e22.706} & \num{1e22.990} & \num{1e23.056}\\ 
(incl. interred) & \num{1e22.865} & \num{1e22.990} &  &  &  &  & \num{1e22.758} & \num{1e22.834} &  &  &  & \\ 
\tabucline[0.1pt on 1pt off 4pt]{-}
$\random(9,2,2)$ & \num{1e24.760} & \num{1e24.885} & \num{1e26.994} & \num{1e26.994} & \num{1e25.853} & \num{1e25.887} & \num{1e25.443} & \num{1e25.505} & \num{1e25.278} & \num{1e25.278} & \num{1e25.952} & \num{1e25.990}\\ 
(incl. interred) & \num{1e25.888} & \num{1e26.020} &  &  &  &  & \num{1e25.756} & \num{1e25.807} &  &  &  & \\ 
\tabucline[0.1pt on 1pt off 4pt]{-}
$\random(10,2,2)$ & \num{1e27.602} & \num{1e27.690} & \num{1e29.876} & \num{1e29.876} & \num{1e29.022} & \num{1e29.043} & \num{1e28.360} & \num{1e28.404} & \num{1e28.060} & \num{1e28.060} & \num{1e28.986} & \num{1e29.012}\\ 
(incl. interred) & \num{1e28.820} & \num{1e28.889} &  &  &  &  & \num{1e28.709} & \num{1e28.743} &  &  &  & \\ 
\tabucline[0.1pt on 1pt off 4pt]{-}
$\random(11,2,2)$ & \num{1e30.564} & \num{1e30.645} & \num{1e32.708} & \num{1e32.708} & \num{1e32.214} & \num{1e32.226} & \num{1e31.334} & \num{1e31.362} & \num{1e30.725} & \num{1e30.725} & \num{1e31.979} & \num{1e31.989}\\ 
(incl. interred) & \num{1e31.836} & \num{1e31.880} &  &  &  &  & \num{1e31.692} & \num{1e31.715} &  &  &  & \\ 
\tabucline[0.1pt on 1pt off 4pt]{-}
$\random(12,2,2)$ & \num{1e33.432} & \num{1e33.515} & \num{1e35.602} & \num{1e35.602} & \num{1e35.437} & \num{1e35.444} & \num{1e34.272} & \num{1e34.291} & \num{1e33.523} & \num{1e33.523} & \num{1e35.028} & \num{1e35.037}\\ 
(incl. interred) & \num{1e34.771} & \num{1e34.821} &  &  &  &  & \num{1e34.674} & \num{1e34.688} &  &  &  & \\ 
\tabucline[0.1pt on 1pt off 4pt]{-}
$\random(13,2,2)$ &  &  &  &  &  &  & \num{1e37.253} & \num{1e37.266} & \num{1e36.278} & \num{1e36.278} & \num{1e38.084} & \num{1e38.085}\\ 
(incl. interred) &  &  &  &  &  &  & \num{1e37.651} & \num{1e37.661} &  &  &  & \\ 
\tabucline[0.1pt on 1pt off 4pt]{-}
$\random(14,2,2)$ &  &  &  &  &  &  & \num{1e40.141} & \num{1e40.149} & \num{1e39.080} & \num{1e39.080} & \num{1e41.140} & \num{1e41.132}\\ 
(incl. interred) &  &  &  &  &  &  & \num{1e40.582} & \num{1e40.588} &  &  &  & \\ 
\hline
\end{tabu}}
\vspace{2mm}
\caption{\# multiplications (incl. interreductions) (affine)}
\label{table:number-of-multiplications-incl-interreductions-affine-systems}
\end{table}
\end{center}

\subsection{Observations}
\begin{figure}
\begin{tikzpicture}[domain=6:14,scale=0.6,transform shape]
    \draw[very thin,dotted,draw=ggrey, xstep=2.0cm] (0,0) grid (16.9,30.9);
    \fill[fill=ggrey, fill opacity=0.1] (0,0) rectangle (16.9,30.9);
    \draw[->] (-0.2,0) -- (17.2,0) node[right] {$x$};
    \draw[->] (0pt,-2pt) -- (0,31.2) node[above] {\# operations (log $2$) for $\hrandom(x,2,2)$};

    \foreach \x/\xtext in {6,...,14}
      \draw[shift={(2*\x-12,0)}] (0pt,2pt) -- (0pt,-2pt) node[below] {$\xtext$};

    \foreach \y/\ytext in {12,14,...,42}
      \draw[shift={(0,(\y-12)/2)}] (2pt,0pt) -- (-2pt,0pt) node[left] {$\ytext$};
    \draw [myblued!80!ggrey] plot [] coordinates {
    (0,2.9318)
    (2,5.8056)
    (4,8.7234)
    (6,11.6565)
    (8,14.5893)
    (10,17.5121)
    (12,20.4684)
    (14,23.4051)
    (16,26.3148)
    (16.6,27.187710000000003)
    };
    \draw [myblued!40!ggrey] plot [] coordinates {
    (0,3.0961)
    (2,5.9265)
    (4,8.8057)
    (6,11.7104)
    (8,14.6249)
    (10,17.5354)
    (12,20.4834)
    (14,23.4150)
    (16,26.3213)
    (16.6,27.193190000000005)
    };
    \draw [mygreend!80!ggrey] plot [] coordinates {
    (0,2.6137)
    (2,5.2753)
    (4,7.9654)
    (6,10.4744)
    (8,13.0967)
    (10,15.6073)
    (12,18.3257)
    (14,20.8545)
    (16,23.5622)
    (16.6,24.37451)
    };
    \draw [mygreend!40!ggrey] plot [] coordinates {
    (0,2.6137)
    (2,5.2753)
    (4,7.9654)
    (6,10.4744)
    (8,13.0967)
    (10,15.6073)
    (12,18.3257)
    (14,20.8545)
    (16,23.5622)
    (16.6,24.37451)
    };
    \draw [myyellowd!30!ggrey] plot [] coordinates {
    (0,3.1796)
    (2,6.2172)
    (4,9.2331)
    (6,12.2498)
    (8,15.3010)
    (10,18.3507)
    (12,21.4617)
    (14,24.5814)
    (16,27.7378)
    (16.6,28.68472)
    };
    \draw [myyellowd!10!ggrey] plot [] coordinates {
    (0,3.3400)
    (2,6.3164)
    (4,9.3146)
    (6,12.2951)
    (8,15.3323)
    (10,18.3620)
    (12,21.4725)
    (14,24.5829)
    (16,27.7411)
    (16.6,28.688560000000003)
    };
    \draw [myredd!90!myyellowd] plot [] coordinates {
    (0,3.5025)
    (2,6.5045)
    (4,9.4295)
    (6,12.3509)
    (8,15.3281)
    (10,18.2337)
    (12,21.2027)
    (14,24.1372)
    (14.6,25.01755)
    };
    \draw [myredd!60!myyellowd] plot [] coordinates {
    (0,3.6151)
    (2,6.6083)
    (4,9.5323)
    (6,12.4357)
    (8,15.3767)
    (10,18.2771)
    (12,21.2322)
    (14,24.1983)
    (14.6,25.08813)
    };
    \draw [mypinkd!80!ggrey] plot [] coordinates {
    (0,3.4527)
    (2,6.4088)
    (4,9.1674)
    (6,11.9131)
    (8,14.5957)
    (10,17.2779)
    (12,20.0595)
    (14,22.8812)
    (14.6,23.727710000000002)
    };
    \draw [mypinkd!50!ggrey] plot [] coordinates {
    (0,3.4527)
    (2,6.4088)
    (4,9.1674)
    (6,11.9131)
    (8,14.5957)
    (10,17.2779)
    (12,20.0595)
    (14,22.8812)
    (14.6,23.727710000000002)
    };
    \draw [orange!80!ggrey] plot [] coordinates {
    (0,3.4032)
    (2,6.4296)
    (4,9.4546)
    (6,12.5152)
    (8,15.5760)
    (10,18.6835)
    (12,21.7913)
    (14,24.9592)
    (14.6,25.90957)
    };
    \draw [orange!50!ggrey] plot [] coordinates {
    (0,3.5851)
    (2,6.5658)
    (4,9.5436)
    (6,12.5718)
    (8,15.6090)
    (10,18.7033)
    (12,21.8024)
    (14,24.9659)
    (14.6,25.914950000000005)
    };
    \node[draw=ggrey,fill=ggrey,fill opacity=0.2,text opacity=1,
       align=left,inner sep=10pt,rounded corners=3pt] at (12,8){
    \color{myredd!90!myyellowd}{($\potl$ | $\rleqff$ | top)}\\ 
    \color{myredd!60!myyellowd}{($\potl$ | $\rleqsb$ | top)}\\ 
    \color{mypinkd!80!ggrey}{($\schl$ | $\rleqff$ | top)}\\ 
    \color{mypinkd!50!ggrey}{($\schl$ | $\rleqsb$ | top)}\\ 
    \color{orange!80!ggrey}{($\dpotl$ | $\rleqff$ | top)}\\ 
    \color{orange!50!ggrey}{($\dpotl$ | $\rleqsb$ | top)}\\ 
    \color{myblued!80!ggrey}{($\potl$ | $\rleqff$ | full)}\\ 
    \color{myblued!40!ggrey}{($\potl$ | $\rleqsb$ | full)}\\ 
    \color{mygreend!80!ggrey}{($\schl$ | $\rleqff$ | full)}\\ 
    \color{mygreend!40!ggrey}{($\schl$ | $\rleqsb$ | full)}\\ 
    \color{myyellowd!30!ggrey}{($\dpotl$ | $\rleqff$ | full)}\\ 
    \color{myyellowd!10!ggrey}{($\dpotl$ | $\rleqsb$ | full)}
    };
\end{tikzpicture}
\caption{Number of multiplications for homogeneous random examples by increasing number of generators, all of degree $2$}
\label{fig:number-of-multiplications-for-homogeneous-random-examples-by-increasing-number-of-generators,-all-of-degree-$2$}
\end{figure}
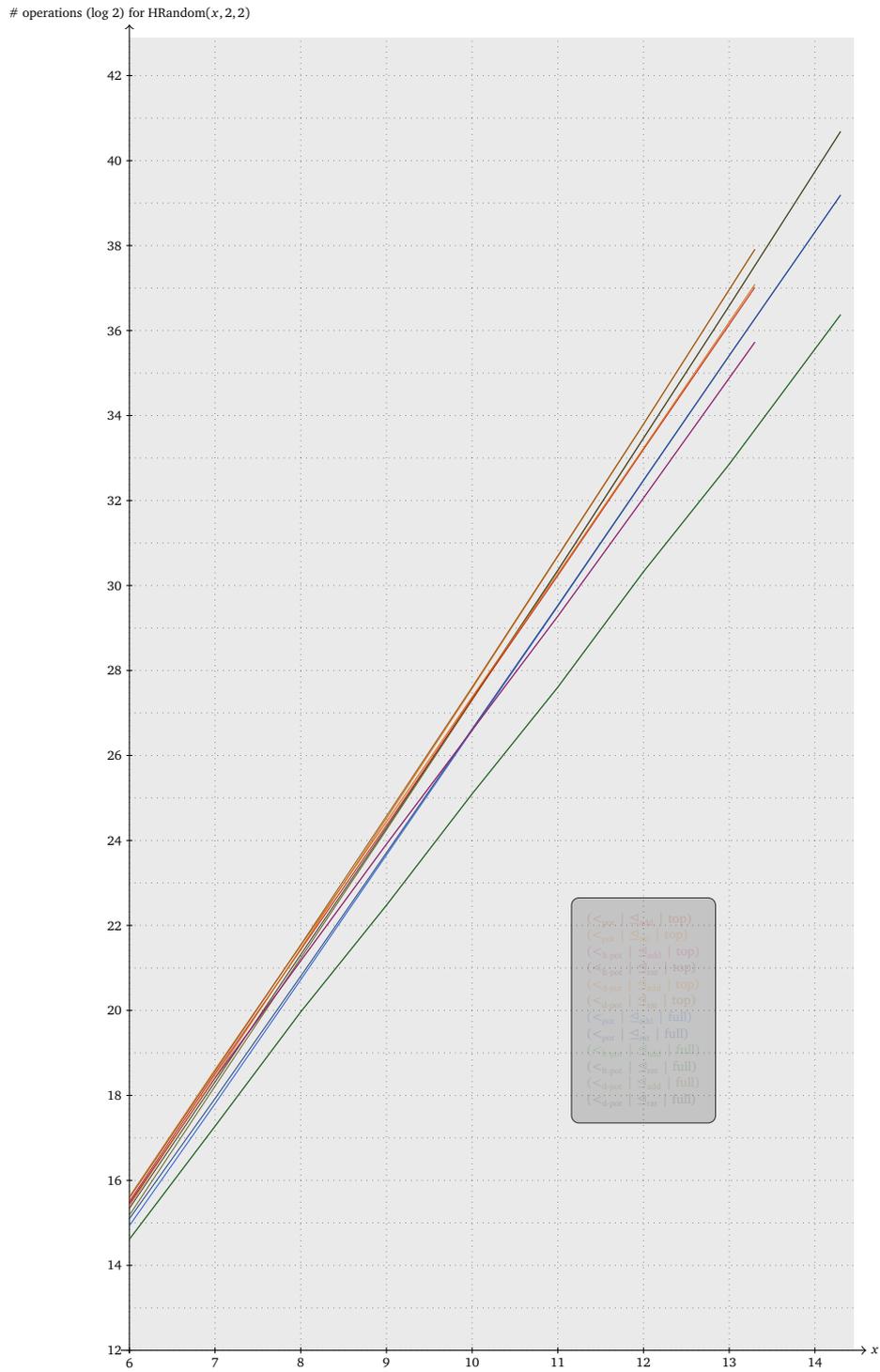
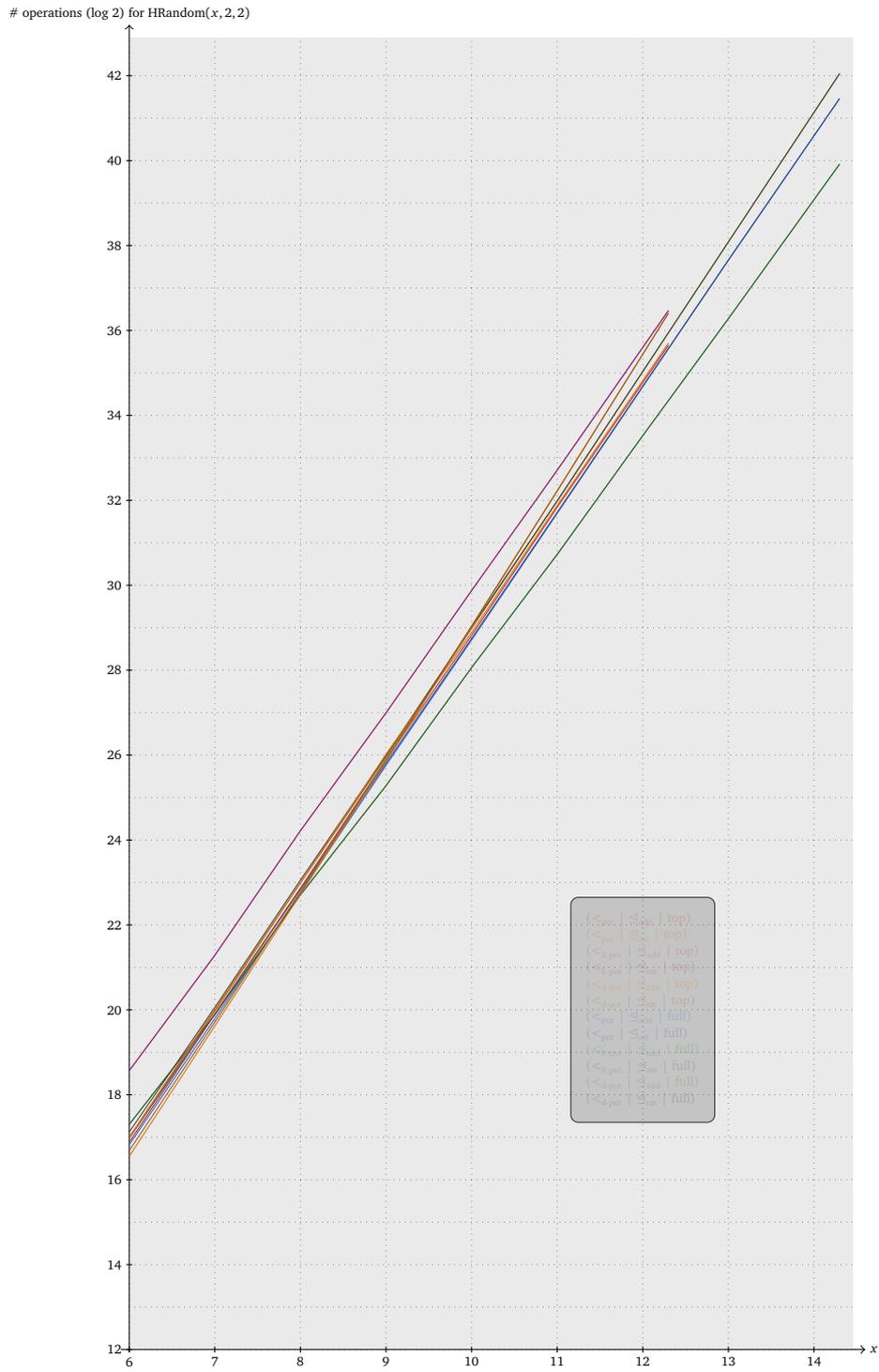
\begin{figure}
\begin{tikzpicture}[domain=6:14,scale=0.6,transform shape]
    \draw[very thin,dotted,draw=ggrey, xstep=2.0cm] (0,0) grid (16.9,30.9);
    \fill[fill=ggrey, fill opacity=0.1] (0,0) rectangle (16.9,30.9);
    \draw[->] (-0.2,0) -- (17.2,0) node[right] {$x$};
    \draw[->] (0pt,-2pt) -- (0,31.2) node[above] {\# operations (log $2$) for $\hrandom(x,2,2)$};

    \foreach \x/\xtext in {6,...,14}
      \draw[shift={(2*\x-12,0)}] (0pt,2pt) -- (0pt,-2pt) node[below] {$\xtext$};

    \foreach \y/\ytext in {12,14,...,42}
      \draw[shift={(0,(\y-12)/2)}] (2pt,0pt) -- (-2pt,0pt) node[left] {$\ytext$};
    \draw [myblued!80!ggrey] plot [] coordinates {
    (0,4.8229)
    (2,7.7925)
    (4,10.7578)
    (6,13.7564)
    (8,16.7091)
    (10,19.6922)
    (12,22.6738)
    (14,25.6513)
    (16,28.5821)
    (16.6,29.461340000000003)
    };
    \draw [myblued!40!ggrey] plot [] coordinates {
    (0,4.9817)
    (2,7.9000)
    (4,10.8344)
    (6,13.8066)
    (8,16.7435)
    (10,19.7147)
    (12,22.6882)
    (14,25.6607)
    (16,28.5881)
    (16.6,29.46632)
    };
    \draw [mygreend!80!ggrey] plot [] coordinates {
    (0,5.3027)
    (2,7.8846)
    (4,10.7062)
    (6,13.2785)
    (8,16.0596)
    (10,18.7253)
    (12,21.5234)
    (14,24.2778)
    (16,27.0799)
    (16.6,27.920530000000003)
    };
    \draw [mygreend!40!ggrey] plot [] coordinates {
    (0,5.3027)
    (2,7.8846)
    (4,10.7062)
    (6,13.2785)
    (8,16.0596)
    (10,18.7253)
    (12,21.5234)
    (14,24.2778)
    (16,27.0799)
    (16.6,27.920530000000003)
    };
    \draw [myyellowd!30!ggrey] plot [] coordinates {
    (0,4.9882)
    (2,7.9700)
    (4,10.9904)
    (6,13.9518)
    (8,16.9858)
    (10,19.9785)
    (12,23.0278)
    (14,26.0837)
    (16,29.1395)
    (16.6,30.056240000000006)
    };
    \draw [myyellowd!10!ggrey] plot [] coordinates {
    (0,5.1223)
    (2,8.0569)
    (4,11.0559)
    (6,13.9902)
    (8,17.0124)
    (10,19.9890)
    (12,23.0374)
    (14,26.0850)
    (16,29.1325)
    (16.6,30.046750000000003)
    };
    \draw [myredd!90!myyellowd] plot [] coordinates {
    (0,4.8826)
    (2,7.8963)
    (4,10.8653)
    (6,13.8884)
    (8,16.8196)
    (10,19.8356)
    (12,22.7709)
    (12.6,23.651490000000003)
    };
    \draw [myredd!60!myyellowd] plot [] coordinates {
    (0,5.0005)
    (2,8.0166)
    (4,10.9904)
    (6,14.0195)
    (8,16.8893)
    (10,19.8797)
    (12,22.8209)
    (12.6,23.703260000000004)
    };
    \draw [mypinkd!80!ggrey] plot [] coordinates {
    (0,6.5670)
    (2,9.2847)
    (4,12.2188)
    (6,14.9939)
    (8,17.8756)
    (10,20.7084)
    (12,23.6023)
    (12.6,24.47047)
    };
    \draw [mypinkd!50!ggrey] plot [] coordinates {
    (0,6.5670)
    (2,9.2847)
    (4,12.2188)
    (6,14.9939)
    (8,17.8756)
    (10,20.7084)
    (12,23.6023)
    (12.6,24.47047)
    };
    \draw [orange!80!ggrey] plot [] coordinates {
    (0,4.5479)
    (2,7.6183)
    (4,10.7202)
    (6,13.8531)
    (8,17.0222)
    (10,20.2140)
    (12,23.4367)
    (12.6,24.40351)
    };
    \draw [orange!50!ggrey] plot [] coordinates {
    (0,4.6832)
    (2,7.7077)
    (4,10.7774)
    (6,13.8869)
    (8,17.0426)
    (10,20.2257)
    (12,23.4435)
    (12.6,24.408839999999998)
    };
    \node[draw=ggrey,fill=ggrey,fill opacity=0.2,text opacity=1,
       align=left,inner sep=10pt,rounded corners=3pt] at (12,8){
    \color{myredd!90!myyellowd}{($\potl$ | $\rleqff$ | top)}\\ 
    \color{myredd!60!myyellowd}{($\potl$ | $\rleqsb$ | top)}\\ 
    \color{mypinkd!80!ggrey}{($\schl$ | $\rleqff$ | top)}\\ 
    \color{mypinkd!50!ggrey}{($\schl$ | $\rleqsb$ | top)}\\ 
    \color{orange!80!ggrey}{($\dpotl$ | $\rleqff$ | top)}\\ 
    \color{orange!50!ggrey}{($\dpotl$ | $\rleqsb$ | top)}\\ 
    \color{myblued!80!ggrey}{($\potl$ | $\rleqff$ | full)}\\ 
    \color{myblued!40!ggrey}{($\potl$ | $\rleqsb$ | full)}\\ 
    \color{mygreend!80!ggrey}{($\schl$ | $\rleqff$ | full)}\\ 
    \color{mygreend!40!ggrey}{($\schl$ | $\rleqsb$ | full)}\\ 
    \color{myyellowd!30!ggrey}{($\dpotl$ | $\rleqff$ | full)}\\ 
    \color{myyellowd!10!ggrey}{($\dpotl$ | $\rleqsb$ | full)}
    };
\end{tikzpicture}
\caption{Number of multiplications for affine random examples by increasing number of generators, all of degree $2$}
\label{fig:number-of-multiplications-for-affine-random-examples-by-increasing-number-of-generators,-all-of-degree-$2$}
\end{figure}

From the experimental results stated here one can make several observations when
it comes to signature-based \grobner{} basis algorithms:
\begin{enumerate}
\item For homogeneous input systems the number of zero reductions computed by
\rba{} using $\potl$ and $\dpotl$ is the very same: This is clear due to the
fact that \rba{} computes for each new degree $d$ $d$-\grobner{} bases step by
step. Clearly, those numbers mostly differ for affine input systems, see
Table~\ref{table:number-of-zero-reductions-affine-systems}. Even though the
number of zero reductions for \rba{} using $\schl$ is mostly higher than those
numbers for $\potl$ resp. $\dpotl$, due to $\schl$ \rba{} can handle S-pairs
more freely (not incremental, not degree-wise) and thus performs better with
almost always less \sreductions{} and operations overall.
\item Full \sreductions{} mostly behave better than top \sreductions{}.
Performing tail \sreductions{} in earlier stages of the algorithm leads to fewer
reduction steps overall. In
figures~\ref{fig:number-of-multiplications-for-homogeneous-random-examples-by-increasing-number-of-generators,-all-of-degree-$2$}
and~\ref{fig:number-of-multiplications-for-affine-random-examples-by-increasing-number-of-generators,-all-of-degree-$2$}
one can see this behaviour quite good.
\item As we can see also in
figures~\ref{fig:number-of-multiplications-for-homogeneous-random-examples-by-increasing-number-of-generators,-all-of-degree-$2$}
and~\ref{fig:number-of-multiplications-for-affine-random-examples-by-increasing-number-of-generators,-all-of-degree-$2$}
the differences between using $\rleqff$ and $\rleqsb$ vanish for bigger
computations: In each figure $12$ variants are presented, but we can only see $6$
lines: both rewrite order behave the same. Even though we can see in the above
tables that $\rleqsb$ usually leads to smaller bases and generates less S-pairs, the
choice of reducers w.r.t. $\rleqff$ is better in terms of sparsity.
\item Overall one can see the $\schl$ seems to be the best choice as module
monomial order when it comes to the number \sreductions{} resp. the number of
operations executed: Its numbers are always smaller than the ones for the
corresponding setting of $\rleq$ with other module monomial orders.
\end{enumerate}

Thus the chosen rewrite order $\rleq$ seems to be not as important as generally
accepted. The main differences lay in the module monomial order.

\section{Concluding remarks}
In this survey we covered all known variants of signature-based \grobner{} basis
algorithms. We gave a complete classification based on a generic algorithmic
framework called \rba{} which can be implemented in various different ways. The
variations are based on $3$ different orders:
\begin{enumerate}
\item $<$ denotes the monomial order as well as the compatible module monomial
order. We have seen in Section~\ref{sec:available-implementations} that this
order has the biggest impact.
\item $\rleq$ denotes the rewrite order. If \rba{} handles various elements of
the same signature, only one needs to be further \sreduced{}. The rewrite order
give a unique choice which element is chosen and which are removed. In
Section~\ref{sec:available-implementations} we have seen that the outcomes of
using different implementations of $\rleq$, namely $\rleqff$ and $\rleqsb$ are
nearly equivalent when it comes to the number of operations.
\item $\pleq$ denotes the order in which S-pairs are handled in \rba{}. Nearly
all known efficient implementations use $\pleqs$, so S-pairs are handled by
increasing signature.
\end{enumerate}

Thus any known algorithm, like \ff{} or \gvw{} can be implemented with any of
the above $3$ choices, so the difference are rather small. Even so some of those
algorithms are presented in a restricted setting, for example \ggv{} for $\potl$
only, they all can be seen as different, specialized implementatons of \rba{}
and thus are just slight variants of each other and not complete new algorithms
as possibly assumed. We covered all variants known and gave a dictionary for
translating different notations used in the corresponding publications. Thus
this survey can also be used as a reference for researcher interested in this
topic.

Important aspects when optimizing \rba{} and further open questions are the following:
\begin{enumerate}
\item Ensuring termination algorithmically as presented in
Section~\ref{sec:f5-termination-algorithmically} can lead to earlier termination
and thus improved behaviour of the algorithm by using different techniques to
detect the completeness of \basis{}.
\item Exploiting algebraic structures is an area of high research at the moment
(Section~\ref{sec:exploit-algebraic-structures}). Developments in this direction
might have a huge impact on the computations of (signature) \grobner{} bases in
the near future and are promising in decreasing the complexity of computations.
\item Using linear algebra for the reduction process as illustrated in
Section~\ref{sec:f4-f5} is another field where a lot more optimizations can be
expected. At the moment, restrictions to \sreductions{} lead to restrictions
swapping rows during the Gaussian Elimination. Getting more flexible and
possibly able to use (at least some of) the ideas from~\cite{FL10b} is still an
open problem.
\item If we are only interested in computing a \grobner{} basis for some input
system, can one generalize the usage of signatures and find an intermediate
representation between sig-poly pairs $(\sig\alpha,\proj\alpha) \in \module
\times \ring$ and full module representations $\alpha \in \module$? Where is the
breaking point of using more terms from the module representation in order to
interreduce the syzygy elements even further and not adding too much overhead in
time and memory?
\end{enumerate}

Even though quite different notations are used by researchers,
the algorithms are two of a kind, mostly they are even just the same. We hope
that this survey helps to give a better understanding on signature-based
\grobner{} basis algorithms. Moreover, we would like to give researchers
new to this area a guide to find their way through the enormous number of
publications that have been released on this topic over the last years.
Even more, we hope to encourage
experts with this survey to collaborate and to push the field of \grobner{} basis
computations even further.

%\bibliographystyle{plain}
%\bibliography{../../bibliography/bib}

\end{document}